\documentclass[12pt,reqno]{amsart}
\usepackage{amsmath,a4wide}
\usepackage{xfrac}
\usepackage{stmaryrd,mathrsfs,bm,amsthm,mathtools,yfonts,amssymb,color}
\usepackage{xcolor}
\usepackage{mathtools}
\usepackage{enumerate}
\numberwithin{equation}{section}

\usepackage[margin=3cm]{geometry} 
\usepackage{graphicx}           
\usepackage{amsmath}             
\usepackage{amsfonts}            
\usepackage{amsthm}              
\usepackage{amssymb}
\usepackage{bbm}
\usepackage[activeacute, english]{babel}
\usepackage[utf8]{inputenc}
\usepackage{multicol}
\usepackage{mathrsfs}
\usepackage{tikz-cd}
\usepackage{wrapfig}
\usetikzlibrary{calc}
\usetikzlibrary{matrix}

\newtheorem{theorem}{Theorem}[section]
\newtheorem{lemma}[theorem]{Lemma}
\newtheorem{proposition}[theorem]{Proposition}
\newtheorem{corollary}[theorem]{Corollary}

\newtheorem{resultado}{Theorem}

\theoremstyle{definition}
\newtheorem{definition}[theorem]{Definition}

\usepackage{xcolor}

\newtheorem{remark}[theorem]{Remark}

\newtheorem{assumption}[theorem]{Assumption}

\newcommand{\RR}{\mathbb{R}}

\newcommand{\HH}{\mathcal{H}}

\newcommand{\dist}{\text{dist}}

\newcommand{\p}{\mathbf{p}}
\newcommand{\bB}{\mathbf{B}}
\def\a#1{\left\llbracket{#1}\right\rrbracket}

\newcommand\res{\mathop{\hbox{\vrule height 7pt width .3pt depth 0pt
\vrule height .3pt width 5pt depth 0pt}}\nolimits}

\title[Uniqueness of tangent cones at minimum density boundary points]{On the uniqueness of tangent cones to area minimizing currents at boundaries with arbitrary multiplicity}
\author{Ian Fleschler}

\begin{document}
\begin{abstract}
We consider an area minimizing current $T$ in a $C^2$ submanifold $\Sigma$  of $\RR^{m+n}$, with arbitrary integer boundary multiplicity $\partial T=Q\a{\Gamma}$ where $\Gamma$ is a $C^2$ submanifold of $\Sigma$. We show that at every density $Q/2$ boundary point the tangent cone to $T$ is unique and there is a power rate of convergence to the unique tangent cone.

In particular, if $\Gamma$ is a closed manifold which lies at the boundary of a uniformly convex set $\Omega$ and $\Sigma=\mathbb{R}^{m+n}$ then $T$ has a unique tangent cone at every boundary point.

As a structural consequence of the uniqueness of the tangent cone, we obtain a decomposition theorem which is the starting point of the boundary regularity theory we develop in another paper in collaboration with Reinaldo Resende. The regularity theory we obtain generalizes Allard's boundary regularity theorem to a higher multiplicity setting. \end{abstract}

\maketitle
\tableofcontents
\begin{section}{Introduction}
The present work, together with \cite{ianreinaldo2024regularity} and \cite{ian2024example}, provides a sharp answer to a long-standing open question posed in Allard’s 1969 PhD thesis \cite{AllPhD}, on the boundary regularity of area-minimizing \( m \)-currents in arbitrary dimension, codimension, and boundary multiplicity, under a convexity assumption. Our results sharply extend Allard’s celebrated 1975 boundary regularity theorem \cite{AllB} to the much more delicate and previously unexplored case of higher boundary multiplicity, in the minimizing setting. Before our work, even the existence of a single regular boundary point was not known in this context.

In this first paper of the series, we prove the uniqueness of the tangent cone to an area-minimizing current with smooth multiplicity \( Q \) boundary, at boundary points where the density is the lowest possible, that is, \( Q/2 \).

The problem of understanding higher multiplicity boundaries, in the precise form we study here, was first raised in Allard’s 1969 PhD thesis \cite{AllPhD}, and has remained open ever since. It was later highlighted in White’s famous list of open problems from the 1984 AMS Summer Institute in Geometric Measure Theory \cite[Problem~4.19]{GMT_prob}, and more recently in the ICM 2022 survey by De Lellis \cite{de2022regularity}.

The uniqueness of the tangent cone at density \( Q/2 \) boundary points, together with a decomposition theorem—both established in this paper—provide the structural foundation for the boundary regularity theory we develop in collaboration with Reinaldo Resende. The culmination of the theory appears in \cite{ianreinaldo2024regularity}, where we prove the \emph{sharp} \( \mathcal{H}^{m-3} \)-rectifiability of the boundary singular set at points of density \( Q/2 \), and show that the boundary regular set—without any density assumption—is both open and dense. Our \textit{sharp} theorems apply under a convex barrier hypothesis—a general setting previously considered by Allard in the multiplicity 1 case— since in this general setting all boundary points must have density \( Q/2 \).

The \emph{dimensional sharpness} of this regularity theory is confirmed in the companion article \cite{ian2024example}, where we construct a broad class of essential boundary singularities of density 1.
More specifically, we use the results and methods of this paper to construct a general family of essential boundary singularities for 3-dimensional area-minimizing currents in \( \mathbb{R}^5 \), with a multiplicity 2 boundary that can be taken to be real analytic. The existence of these singularities is established through an excess decay theorem at points of ``corner type,'' where smooth boundary surfaces meet along a smooth \( (m\!-\!2) \)-dimensional common interface. For this class of boundaries, we prove quantitative decay to a unique tangent cone at the interface.

We note that, beyond its significance for the development of regularity theory, the uniqueness of tangent cones for area-minimizing currents is an \textit{independent} and long-standing problem in its own right. In this work, we settle the challenging and \textit{intrinsically interesting} question of uniqueness of tangent cones in the higher multiplicity boundary setting under consideration. In contrast to the work of De Lellis–Nardulli–Steinbrüchel \cite{delellis2021uniqueness}, which establishes uniqueness in the two-dimensional case using the epiperimetric inequality, our result holds in all dimensions and relies instead on Simon’s \cite{simon1993cylindrical} method for proving uniqueness of cylindrical tangent cones, as developed in De Lellis–Minter–Skorobogatova \cite{de2023fineIII}. In fact, even in the two-dimensional setting, this paper gives a new proof of the uniqueness of tangent cones at minimum density points.

For a short survey, which announces the results in this article together with \cite{ianreinaldo2024regularity} and \cite{ian2024example}, we refer the reader to the proceedings article \cite{ian2024allard}.

\subsection{History}
In his 1993 paper \cite{simon1993cylindrical}, Simon investigated the uniqueness of tangent cones for a class of stationary varifolds with multiplicity 1. In this work, he developed what are now referred to as Simon's estimates, using the remainder of the monotonicity formula under the presence of points of suitable density, often called the "no holes" condition. These estimates show that the $L^2$ height of the varifold does not concentrate at the spine of the cone, thus reducing the task of proving uniqueness of the tangent cone to studying the energy decay for the associated linear problem.

Also in an interior regularity setting, Wickramasekera in \cite{WickramasekeraSt} extended Simon’s work to a higher multiplicity setting in codimension 1. He studied stationary varifolds that are stable in the regular part and have no classical singularities. More recently, De Lellis, Minter, and Skorobogatova \cite{de2023fineIII}, along with Krummel and Wickramasekera \cite{krummel2023analysisI,krummel2023analysisII}, independently used Simon's techniques at the interior for area minimizing currents with higher multiplicity and higher codimension.

The recent work \cite{DMSModp} studies area minimizing currents mod $p$, where the tangent cones under consideration include open books (composed of planes meeting at a common spine) and classical planes with multiplicity. In the present paper, we study, for the first time, the case of area minimizing currents with boundary taken with integer multiplicity $Q > 1$ for an integral current of arbitrary dimension and codimension. We mention that in this context, the "no holes" condition is naturally satisfied at every point, allowing us to establish the uniqueness of tangent cones at \textbf{every} boundary point of minimum density.

An important difference from \cite{DMSModp} is that, while their work also considers tangent cones that includes open books in the $m-1$ strata, the currents do not have boundary mod $p$. Their results instead address, in a sense, an interior regularity problem for a class of stationary varifolds. Furthermore, their cones satisfy a balancing condition: $\sum Q_i\nu_i=0$, where $\nu_i$ are the normals to the spine, and the multiplicities $Q_i$ fulfill $Q_i \leq p/2$ and $\sum Q_i = p$. For a related uniqueness of tangent cones result concerning a subclass of open books in codimension $1$, see \cite{Minter2024structure}.

The approaches to studying the uniqueness of the tangent cone differ substantially in the context of $2$d area minimizing currents since they rely on the epiperimetric inequality which is only available in $2$d. Indeed in \cite{White1983tangent}, White showed the uniqueness of tangent cones at interior points in this setting using an epiperimetric inequality. Hirsch and Marini later adapted White’s result to the case of \textit{boundary} uniqueness for $2$d area minimizing currents with boundary multiplicity 1 \cite{hirsch2019uniqueness}. More recently, De Lellis, Nardulli, and Steinbrüchel \cite{delellis2021uniqueness} extended these results to arbitrary boundary multiplicity.
\subsection{Results}This paper studies regularity properties at the boundary of an area-minimizing current at points of minimum density. We assume the current has a $C^2$ boundary and is area minminimizing in a $C^2$ manifold.
\begin{assumption}\label{A:general}
Let $m,n\geq 2$, $\bar{n}\geq 1$, $Q\geq 1$ be integers. Consider $\Sigma$ an $(m+\bar{n})$-manifold of class $C^2$ in $\RR^{m+n}$, $\Gamma$ a $C^2$ oriented $(m-1)$-submanifold of $\Sigma$ without boundary, and assume $0\in\Gamma$. Let $T$ be an integral $m$-dimensional area minimizing current in $\Sigma\cap\bB_1$ with boundary $\partial T \res \bB_1=Q\a{\Gamma \cap \bB_1}$.
\end{assumption}

The convex barrier condition in Euclidean space is the following:  
\begin{assumption}[Convex Barrier] \label{convexbarrier} Let $\Omega \subset \RR^{m+n}$ be a domain such that $\partial \Omega$ is a $C^2$ uniformly convex submanifold of $\RR^{m+n}$. We say that $\sum_{i=1}^N Q_i \a{\Gamma_i}$ has a convex barrier if $Q_i$ are positive integers and $\Gamma_i \subset \partial \Omega$ are disjoint $C^2$ closed oriented submanifolds of $\partial \Omega$. In this setting we consider $T$ an area minimizing current with $\partial T=\sum_i Q_i \a{\Gamma_i}.$ 
\end{assumption}

We will also consider Assumption \ref{a:convexbarrierlocal}, a more general version of the convex barrier condition, later in the paper. This condition is local in nature and includes the possibility of an ambient Riemannian manifold.

We start the program by obtaining a classification of the area minimizing cones. We define open books as follows:
\begin{definition}
A cone $C$ is an \textit{open book} if it can be written as
 $C=\sum_{i=1}^N Q_i\a{H_i}$ where $H_i$ are distinct half-planes with  $\partial\a{H_i}=\a{V}$
where $V$ is an $m-1$ dimensional plane, referred to as the spine of the cone, and $Q_i$ are positive integers.  
\end{definition}

\begin{resultado}[Classification of the tangent cones]\label{thm:classification}Let $C$ be an $m$-dimensional oriented area minimizing cone in $\RR^{m+n}$ with $\partial \a{C}=Q \a{\RR^{m-1}\times \left\{0\right\}}$.
Then $\Theta(C,0) \geq Q/2$. The equality case holds if and only if $C$ is an open book. Moreover, there is a density jump: if $C$ is not an open book then $\Theta(C,0) \geq Q/2 +\varepsilon(m,n)$ where $\varepsilon(m,n)>0$.
\end{resultado}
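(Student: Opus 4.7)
My plan is to prove the three assertions in sequence, via the boundary monotonicity formula, cone splitting, and compactness.

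The lower bound $\Theta(C,0)\geq Q/2$ is the standard density bound at a smooth boundary of multiplicity $Q$: it follows from the boundary monotonicity formula applied to the flat boundary $V:=\RR^{m-1}\times\{0\}$, which guarantees that the $Q$ sheets of $C$ issuing from $V$ contribute $Q/2$ to the density at $0$. This is a general fact for area minimizing currents with smooth multiplicity-$Q$ boundary and does not use that $C$ is a cone.

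For the equality case, assume $\Theta(C,0)=Q/2$; the strategy is cone splitting. First I would show $\Theta(C,x)=Q/2$ for every $x\in V$: the lower bound applied at $x$ gives $\Theta(C,x)\geq Q/2$, while the boundary monotonicity formula at $x$, combined with the fact that $C$ is a cone at $0$ (so that the density at infinity from any point equals $\Theta(C,0)$), yields $\Theta(C,x)\leq \Theta(C,0)=Q/2$. The rigidity case of the monotonicity formula then implies that $C$ is a cone at each $x\in V$. A current which is a cone at two distinct points is translation invariant along the line through them, so running $x$ over a basis of $V$ yields translation invariance of $C$ along all of $V$. By the slicing/product theorem, $C=\a{V}\times C'$ where $C'$ is a $1$-dimensional integral area minimizing cone in $V^\perp\cong\RR^{n+1}$. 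Any such cone is a finite sum of rays $C'=\sum_i Q_i\a{R_i}$, and matching $\partial C=Q\a{V}$ forces $\sum_i Q_i=Q$. Hence $C=\sum_i Q_i\a{V\times R_i}$ is an open book, and conversely any open book has density exactly $Q/2$ by direct computation.

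For the density jump, I argue by compactness and contradiction. If no such $\varepsilon(m,n)>0$ existed, I could extract a sequence $C_k$ of non-open-book area minimizing cones with $\partial C_k=Q\a{V}$ and $\Theta(C_k,0)\to Q/2$. By compactness of area minimizing integral currents with bounded mass and fixed boundary, a subsequence converges in flat norm to an area minimizing cone $C_\infty$ with $\Theta(C_\infty,0)=Q/2$, which by the equality case is an open book. The required contradiction is that $C_k$ itself must be an open book for $k$ large. Closing this gap is the main obstacle, since compactness alone does not preclude a sequence of non-open-books converging to an open book; what is needed is a rigidity / $\varepsilon$-regularity statement near open-book configurations, to the effect that a cone which is flat-close to an open book must itself be an open book. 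The natural tool is Allard-type boundary $\varepsilon$-regularity at density-$Q/2$ points, applied together with the cone structure of $C_k$ to propagate smoothness from the annular region (where $C_\infty$ is a union of half-planes) all the way to the spine, concluding that $C_k$ is composed of those same half-planes and hence is itself an open book.
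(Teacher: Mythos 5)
Your cone-splitting argument for the equality case is essentially the paper's first proof of Theorem 3.1: sandwich to get $\Theta(C,x)=Q/2$ for all $x\in V$ (lower bound at $x$ plus, via the monotonicity formula and the cone structure at $0$, the upper bound $\Theta(C,x)\leq \lim_{r\to\infty}\Theta(C,x,r)=\Theta(C,0)=Q/2$), conclude $C$ is a cone at every point of $V$ and hence translation invariant along $V$, and reduce to the 1-dimensional classification. That part matches the paper.

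The lower bound $\Theta(C,0)\geq Q/2$, however, is not actually established by what you wrote. Saying the boundary monotonicity formula "guarantees that the $Q$ sheets of $C$ issuing from $V$ contribute $Q/2$" presupposes the very sheet structure Theorem A is proving, so the step is circular. The paper gives two non-circular arguments, neither of which is what you invoke: (i) dimension reduction --- if $\Theta(C,0)<Q/2$, blow up at a point of $V\setminus\{0\}$ and iterate (the density cannot increase and the number of symmetries grows) until you reach a $1$-dimensional area minimizing cone with boundary $Q\a{0}$, whose density is exactly $Q/2$, a contradiction; or (ii) push-forward under the $1$-Lipschitz circular projection $\pi_{\circ}(x,y)=(x,|y|)$ onto $\RR^{m-1}\times\RR^{+}$, where the constancy theorem gives $(\pi_{\circ})_{\#}C=Q\a{\RR^{m-1}\times\RR^{+}}$ and Lipschitz contraction of mass gives $|C|(\bB_1)\geq \tfrac{Q}{2}\omega_m$. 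Either is short, but one of them (or an equivalent) is needed; the bound is not a pre-existing "standard fact" at this multiplicity.

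For the density jump your diagnosis of the gap is correct, and it is the same gap the paper faces: compactness produces a limiting open book $C_\infty$, but by itself does not rule out a sequence of non-open-book cones converging to it. The paper explicitly defers the density jump to Section 5, after the excess-decay machinery and Theorem \ref{t:sexcessuniqueness} are in place, and then closes the gap in a way that is cleaner than the annular $\varepsilon$-regularity propagation you sketch: once $\hat{\mathbb{E}}(C_k,C_\infty,\bB_1)$ is small, Theorem \ref{t:sexcessuniqueness} applies to $C_k$ at $0$ and forces the tangent cone of $C_k$ at $0$ to be a unique open book; since the tangent cone of a cone at its vertex is the cone itself, $C_k$ is that open book. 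No propagation of regularity from an annulus to the spine is used (nor would classical Allard-type $\varepsilon$-regularity suffice, since the sheets of $C_k$ carry multiplicity). So your overall structure agrees with the paper, but the lower-bound step needs an actual argument, and the density-jump argument is correctly flagged as requiring the paper's later machinery rather than being provable here.
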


Under Assumption \ref{A:general},  a monotonicity formula for the density holds at boundary points (Theorem \ref{t:monotonicityformula}). As a consequence of the classification of Theorem \ref{thm:classification}, the boundary density must be at least $Q/2$ and equality only holds when every tangent cone is an open book. The density jump mentioned is only obtained at the end of the program as a consequence of the techniques for the main theorem rather than at the start.

Moreover, if $T$ is an area minimizing current with $\partial T=\sum_{i=1}^N Q_i \a{\Gamma_i}$ which satisfies the convex barrier Assumption \ref{convexbarrier}, then for any $p \in \Gamma_i$, $\Theta(T,p)=Q_i/2$, and thus it belongs to the class of minimum density points, which we are studying.

The main theorem of the work is the following:
\begin{resultado}[Uniqueness of the tangent cone] \label{t:uniquenessintroduction}
Let $T, \Gamma, \Sigma$ be as in Assumption \ref{A:general}. Further assume that 
$\Theta(T,0)=Q/2$. Then, the tangent cone at $0$ is an open book and it is unique, meaning it does not depend on the blow up sequence. Moreover, the current $T$ converges to the unique tangent cone with a power rate.
\end{resultado}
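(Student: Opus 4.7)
The plan is to implement the Simon scheme for uniqueness of tangent cones, adapted to our higher multiplicity boundary setting. By the monotonicity formula every blowup sequence has a subsequence converging to an area minimizing cone $C$ with $\partial C = Q\a{\RR^{m-1}\times\{0\}}$ and $\Theta(C,0)=Q/2$. By the classification in Theorem A, each such $C$ is an open book $\sum_i Q_i\a{H_i}$ with $\sum_i Q_i = Q$, and moreover the density jump in Theorem A guarantees that at sufficiently small scales $T$ cannot approach any cone of density strictly larger than $Q/2$. All blowups therefore live in the open book stratum. Uniqueness with power rate is then equivalent to a quantitative decay estimate of the form
\[
E(T,C',\bB_\theta) \;\leq\; \theta^{2\alpha}\,E(T,C,\bB_1),
\]
where $E(\cdot,\cdot,\cdot)$ denotes an $L^2$ excess between the current and an open book, $\theta\in(0,1)$ and $\alpha>0$ are fixed, and $C'$ is an open book comparable to $C$. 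Iterating along a dyadic sequence of scales produces a summable control of the excess, which simultaneously yields the uniqueness of the tangent cone and the power rate of convergence.

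The decay estimate itself proceeds by a Simon-type splitting of the excess into two contributions. The first is a non-concentration statement for the height of $T$ near the spine $V=\RR^{m-1}\times\{0\}$. In the interior setting Simon needed a \emph{no holes} hypothesis here; in our boundary framework this hypothesis is automatic because $\Gamma$ is a $C^2$ submanifold tangent to $V$ and, by the boundary monotonicity formula combined with Theorem A, every point of $\Gamma$ close to $0$ has density at least $Q/2$, providing density $Q/2$ points densely distributed along the spine at every scale. The monotonicity remainder then gives quadratic control of the tilted $L^2$ distance from $T$ to its cone in the spine directions. The second contribution is the energy away from the spine, obtained by approximating $T$ with a multivalued graph over the regular part of $C$ in the spirit of Almgren and De~Lellis--Spadaro; this graph is a Dirichlet almost-minimizer, and its decay from scale $1$ to scale $\theta$ is inherited from the decay of $Q$-valued Dir-minimizers with open-book-type boundary data, with rate determined by the spectral gap of the associated Jacobi operator on the open book.

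I expect the main obstacle to be the construction of the multivalued graphical parametrization in a neighborhood of the spine. Unlike the classical interior Almgren construction, here the sheets $H_i$ meet at $V$ with distinct tangent planes and the approximation must respect prescribed boundary data along $\Gamma$. One must therefore produce separate $Q_i$-valued graphs over each half-plane $H_i$, glue them across the spine with uniform $L^2$ and energy estimates, and match traces along $\Gamma$ consistently with the ambient $C^2$ manifold $\Sigma$. Together with analyzing the linearized problem — a system of $Q$-valued Dirichlet problems on half-planes with traces prescribed by $\Gamma$ — this is the technical heart of the argument. Once such an approximation is in place with quantitative bounds, the Simon-type iteration closes and delivers both the uniqueness of the tangent cone and the claimed power rate of convergence.
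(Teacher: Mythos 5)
Your overall scheme is the right one — Simon's blowup/decay iteration, automatic satisfaction of the "no-holes" condition on $\Gamma$ since $\Theta\geq Q/2$ everywhere on the boundary, a non-concentration estimate near the spine, and a linearization to $Q$-valued Dir-minimizers with zero boundary data on $V$. However, there are two genuine gaps that would prevent the argument as stated from closing.

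First, the $L^2$ excess $E(T,C,\bB_r)=r^{-(m+2)}\int_{\bB_r}\dist(x,C)^2\,d|T|$ is not a strong enough quantity to iterate in this multiplicity-$Q$ boundary setting. The $L^2$ excess does not see the multiplicity arrangement over each half-plane: $T$ can be $L^2$-small relative to $C=\sum Q_i\a{H_i}$ while the sheets of $T$ lie over the half-planes with a completely different distribution of multiplicity, and this can change from scale to scale without any $L^2$ penalty. The paper's key technical device is the stronger, measure-theoretic excess $\mathbb{E}(T,C,\bB_r)$ defined through a Wasserstein-type distance $d(\phi_r d|T|,\phi_r d|C|)$, which penalizes mismatched masses and therefore pins down the multiplicities. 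The iteration is then run on $\hat{\mathbb{E}}$, and a separate "reverse excess" lemma (Lemma \ref{l:reverseexcessbound}) is needed to show that $\mathbb{E}$ at a smaller scale is in turn controlled by the $L^2$ excess at a larger scale. Without something of this sort your decay step does not yield a cone $C'$ that you can feed back into the next iteration with the correct multiplicities.

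Second, the decay step cannot be stated uniformly in the cone $C$: as you iterate, the half-planes of the reference open book can either split apart (revealing more sheets) or collapse together, and the graphical approximation used in the linearization only works when the excess is small relative to the square of the minimal angle $\alpha(C)$ of the cone. The paper deals with this by proving a whole family of decay lemmas indexed by the number $N$ of sheets under the smallness condition $\mathbb{E}(T,C,\bB_1)\leq\varepsilon_N\alpha(C)^2$, and by a pruning procedure (Proposition \ref{prop:anglecases}) that collapses nearly coincident sheets when the angle is too small relative to the excess. You will also need to allow the number of sheets to increase along the iteration — in the proof of the excess decay one replaces $C$ by a cone $C'_{k,j}$ obtained by splitting sheets according to the linear functions $L_{i,j}$ from the Dir-minimizing blowup, so an argument keyed to a fixed spectral gap of a Jacobi operator on a fixed open book does not straightforwardly apply. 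A side remark: the invocation of the density jump in your first paragraph is circular; the density jump is a consequence of the uniqueness theorem, not an input. It is also unnecessary — the monotonicity formula directly gives that every tangent cone at a point of density $Q/2$ has density $Q/2$, so the classification applies without appealing to a quantitative gap.
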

The previous statement in dimension $2$ was proved in \cite{delellis2021uniqueness}. We consider the convex barrier assumption considered in \cite{AllPhD,AllB} and obtain the following structural consequence of the uniqueness of the tangent cone:
\begin{resultado}
Let $T$ be as in either Assumption \ref{convexbarrier} (convex barrier) or Assumption \ref{a:convexbarrierlocal} (Local convex barrier), then every tangent cone to $T$ at a boundary point $q$ is an open book. Moreover, we have:
\begin{enumerate}[a)]
    \item if $q \in \Gamma_i$ in the first case, then $\Theta(T,q)=Q_i/2$,

    \item if $q \in \Gamma$ in the second case, then $\Theta(T,q)=Q/2$.
\end{enumerate}
Thus there is a unique tangent cone for $T$ in both settings, convex barrier \ref{convexbarrier} and local convex barrier \ref{convexbarrier2}, at every boundary point. Moreover, the current $T$ has a power rate of convergence to the unique tangent cone.
\end{resultado}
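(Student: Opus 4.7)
The plan is to reduce the theorem to Resultado C by verifying that, under either convex barrier hypothesis, every boundary point $q$ has density equal to $Q_i/2$ (resp.\ $Q/2$). Once this is established, Resultado C applied at each such $q$ yields both the uniqueness of the tangent cone and the power rate of convergence, while the fact that every tangent cone is an open book follows from the equality case of the classification Resultado B.

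The heart of the argument is therefore the identity $\Theta(T,q) = Q_i/2$. First I would invoke the convex hull property of area minimizing currents, which in the global setting of Assumption \ref{convexbarrier} yields $\mathrm{spt}(T) \subset \bar{\Omega}$, and in the local setting of Assumption \ref{a:convexbarrierlocal} gives the analogous inclusion in a one-sided neighborhood of $q$. Consequently any tangent cone $C$ to $T$ at $q \in \Gamma_i$ is supported in the tangent half-space $H$ to $\bar{\Omega}$ at $q$, whose bounding hyperplane $\partial H$ contains $V = T_q \Gamma_i$, and satisfies $\partial C = Q_i\a{V}$. The lower bound $\Theta(C,0) \geq Q_i/2$ is then immediate from Resultado B. For the upper bound I note that $V$ splits $\partial H$ into two $m$-dimensional half-planes; either one, taken with multiplicity $Q_i$, is an admissible competitor with boundary $Q_i\a{V}$ and mass exactly $Q_i \omega_m/2$ on $\bB_1$. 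Minimality of $C$ together with the cone identity $\Theta(C,0) = \mathbf{M}(C\res\bB_1)/\omega_m$ forces $\Theta(C,0) \leq Q_i/2$. Combining the two bounds, $\Theta(T,q) = \Theta(C,0) = Q_i/2$, and the rigidity part of Resultado B forces $C$ to be an open book with spine $V$ and total multiplicity $Q_i$.

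The main technical obstacle is to justify the comparison above and the convex-hull inclusion in the ambient $C^2$ Riemannian setting of Assumption \ref{a:convexbarrierlocal}, where neither $\Sigma$ nor $\partial \Omega$ is flat. I would handle this by the standard rescaling at $q$: after blow-up, the second fundamental forms of $\Sigma$ and of $\partial\Omega$ become negligible, so the limit tangent cone genuinely lives in a flat Euclidean half-space and the comparison with a flat half-plane becomes exact, while at finite scale the comparison competitor produces an error of order $o(r^m)$ that is swept up by the monotonicity formula. Once the minimum density is verified at every boundary point, the reduction to Assumption \ref{A:general} in a neighborhood of $q$ is automatic because the $\Gamma_i$ are disjoint closed submanifolds, so near $q$ the boundary is $Q_i\a{\Gamma_i}$ alone, and Resultado C then delivers the uniqueness of the tangent cone and the power-rate convergence stated in the theorem.
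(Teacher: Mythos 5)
Your high-level architecture is sound: use the convex hull/barrier inclusion to constrain the tangent cone, then reduce to the uniqueness theorem (Resultado C/Theorem \ref{t:sexcessuniqueness}) near $q$ after noting that the $\Gamma_i$ are disjoint so the boundary near $q$ is $Q_i\a{\Gamma_i}$. The lower bound $\Theta(C,0)\geq Q_i/2$ via Resultado B is correct, and the rescaling argument to handle the curved ambient in Assumption \ref{a:convexbarrierlocal} is the right instinct.

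However, the upper bound argument is where there is a genuine gap, and it is not a small one. First, there is a dimensional issue: $V=T_q\Gamma_i$ is $(m-1)$-dimensional while the bounding hyperplane $\partial H$ of the tangent half-space is $(m+n-1)$-dimensional, so $V$ does not ``split $\partial H$ into two $m$-dimensional half-planes'' unless $n=1$. More seriously, even if you pick a single $m$-dimensional half-plane $H'$ with $\partial\a{H'}=\a{V}$, the current $Q_i\a{H'}$ is not a valid competitor for $C$ in $\bB_1$: the slice $\langle C, |\cdot|,1\rangle$ on $\partial\bB_1$ does not agree with $Q_i\langle \a{H'},|\cdot|,1\rangle$ unless $C=Q_i\a{H'}$ already. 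Area minimality only compares currents with the same boundary (including the spherical trace), or equivalently currents differing by a compactly supported cycle, and the cones $C$ and $Q_i\a{H'}$ do not satisfy this. So no mass inequality $\mathbf{M}(C\res\bB_1)\leq Q_i\omega_m/2$ follows, and the rigidity case of Resultado B cannot be triggered.

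The paper's proof does not go through a density comparison at all; it gets the open book structure of the tangent cone directly, and the density identity $\Theta(T,q)=Q_i/2$ is a \emph{consequence} of that structure, not a precursor. Concretely: the convex barrier (Lemmas \ref{l:convexbarriernonnegativsectional} and \ref{l:convexbarrierlocal}) gives $\textup{spt}(C)\subseteq W$ for a wedge $W$. Theorem \ref{t:allardconvexvarifoldslemma} and Corollary \ref{c:allardconvexvarifoldslemma}, which are the key variational ingredients you are missing, test the stationarity of $C$ (viewed as a varifold away from $\pi_0^\perp$) against winding vector fields of the form $\alpha(|z|)\beta(|x|)\psi(x)x^\perp$ in the $2$-planes $\RR e_m\oplus\RR e_{m+k}$; the half-space constraint on $\mathbf{p}_{\pi_0}(\textup{spt}(C))$ forces the associated measure $\mu$ on $\mathbb{S}^1$ to vanish, yielding $\mathbf{p}_\pi(x^\perp)=0$ a.e.\ and hence $D_C\sigma\equiv 0$. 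This, together with interior regularity and a sign analysis of the multiplicities, yields Theorem \ref{t:ClassificationTgtConesConvexBarrier}: every tangent cone is an open book. Only then does one get $\Theta(T,q)=Q_i/2$ and apply the excess decay Theorem \ref{t:sexcessuniqueness} at a small enough scale. You would need to replace your competitor step with this (or an equivalent) constrained-cone classification to close the argument.
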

In order to reduce the above theorem to Theorem \ref{t:uniquenessintroduction}, we need to classify tangent cones with Allard's convex barrier assumption. In \ref{t:ClassificationTgtConesConvexBarrier}, we classify those tangent cones relying on Lemma 5.1 \cite{AllB}.

An essential consequence of the uniqueness of tangent cones, which will enable us to run the regularity theory in \cite{ianreinaldo2024regularity}, is a decomposition theorem. This decomposition theorem allows us to reduce the study of boundary singularities to the study of \textbf{flat} boundary singularities. In dimension $2$, the decomposition was proved by \cite{delellis2021uniqueness,delellis2021allardtype}.
\begin{resultado}[Decomposition theorem]\label{t:decomposition} Let $T, \Gamma, \Sigma$  be as in Assumption \ref{A:general}. Further assume that $\Theta(T,0)=Q/2$. Let $C=\sum_{i=1}^N Q_i \a{H_i}$ be the unique tangent cone to $T$ at $0$, where the half-planes $H_i$ in the representation are assumed to be distinct. Then there exists $\rho>0$ and area minimizing currents $T_1,T_2,...,T_N$ in $\bB_{\rho}$ such that
\begin{equation*}
T \res \bB_{\rho}=\sum_{i=1}^N T_i 
\end{equation*}
where the supports of $T_i$ only intersect at $\Gamma$, $\partial T_i \res \bB_{\rho}=Q_i \a{\Gamma}$, and the (unique) tangent cone at $0$ of $T_i$ is $Q_i \a{H_i}$.
\end{resultado}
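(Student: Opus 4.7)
The plan is to use the power-rate convergence to the unique tangent cone $C = \sum_{i=1}^N Q_i \a{H_i}$ to localize $T$ into pieces concentrated near each half-plane. Since the $H_i$ are distinct and share only the spine $V$, there is a positive angle $\theta_0 > 0$ between any two of them. For a parameter $\eta < \theta_0/3$, define open wedges $W_i$ consisting of points at angle less than $\eta$ from $H_i$ (measured in the $2$-plane orthogonal to $V$, and suitably extended along $\Gamma$ via a $C^2$ chart straightening $\Gamma$ to its tangent $V$). These wedges are pairwise disjoint outside any neighborhood of $\Gamma$, and each $H_j$ with $j \neq i$ stays at positive angular distance from $\partial W_i$.

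By the main uniqueness theorem, there exist $\alpha > 0$ and $\rho > 0$ such that the rescalings of $T$ around $0$ converge to $C$ in flat distance with rate $r^{\alpha}$ for $r \leq \rho$. Combined with the boundary monotonicity formula and upper density bounds, this forces $\mathrm{spt}(T) \cap \bB_\rho \subset \bigcup_i \overline{W_i}$. Since $T$ is integer rectifiable, I can set $T_i := T \res W_i$, so that $\sum_i T_i = T \res \bB_\rho$ as integer currents.

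The delicate step is identifying $\partial T_i$. A priori, $\partial(T \res W_i)$ could carry a slicing contribution along $\partial W_i$. To eliminate this, I would run an averaging argument: vary $\eta$ in a small interval, use the coarea formula to control the slice masses, and select, on each dyadic annulus around $0$, a value of $\eta$ for which the slice of $T$ along $\partial W_i$ vanishes. The power-rate estimate ensures the slicing contributions are summable across scales, yielding by a diagonal selection a single value of $\eta$ for which $\mathrm{spt}(T) \cap \partial W_i \subset \Gamma$. Then $\mathrm{spt}(\partial T_i) \cap \bB_\rho \subset \Gamma$. Applying the constancy theorem on the smooth manifold $\Gamma$ together with the tangent cone identification established below (the tangent cone of $T_i$ at $0$ is $Q_i \a{H_i}$, so the local multiplicity along $\Gamma$ at $0$ is $Q_i$) gives $\partial T_i \res \bB_\rho = Q_i \a{\Gamma}$.

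Once the decomposition is established with supports intersecting only along $\Gamma$, area minimality of each $T_i$ is automatic: any admissible competitor $T'_i$ with $\partial T'_i = \partial T_i$ combines with $T - T_i$ to produce a competitor for $T$, and minimality of $T$ forces $\mathbf{M}(T'_i) \geq \mathbf{M}(T_i)$. Uniqueness of the tangent cone of $T_i$ with value $Q_i \a{H_i}$ then follows from the power-rate convergence of $T$ to $C$ restricted to $W_i$, since $C \res W_i = Q_i \a{H_i}$. The main obstacle is the averaging step in the third paragraph: cutting $T$ along the boundary of a wedge without introducing spurious boundary. It is precisely the power-rate, rather than mere qualitative, convergence that makes the selection of a valid parameter $\eta$ possible through dyadic summability.
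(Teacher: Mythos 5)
Your outline shares the core idea with the paper (use the power rate of decay to the unique tangent cone to pin down where $\mathrm{spt}(T)$ lives near $0$, then cut $T$ into pieces living over the individual half-planes), but the ``delicate step'' in your third paragraph is both unnecessary and not actually carried through correctly, and it papers over the genuine issue.

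The paper does not need any averaging over wedge angles, because the separation it establishes is \emph{strict}. Concretely, the paper uses the $L^2$--$L^\infty$ height bound of De Lellis--Minter--Skorobogatova together with the Hölder estimate on the normal map to get a pointwise estimate of the form
\begin{equation*}
\frac{\dist\bigl(q, C_{\mathbf{p}_\Gamma(q)}\bigr)}{|q-\mathbf{p}_\Gamma(q)|} \;\leq\; C\,|q-\mathbf{p}_\Gamma(q)|^{\alpha/2}\,\hat{\mathbb{E}}(T,C,\bB_1)^{1/2}
\;\ll\; \alpha(C_{\mathbf{p}_\Gamma(q)})
\end{equation*}
for every $q\in\mathrm{spt}(T)$ near $0$, where $C_p$ is the tangent cone at $p\in\Gamma$. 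Since the right-hand side is much smaller than the opening angle of the cone, $\mathrm{spt}(T)\setminus\Gamma$ is already contained in $N$ \emph{open}, pairwise disjoint tubular regions around the half-planes. Once you have this, $T_i:=T\res W_i$ requires no slicing at all: $\mathrm{spt}(T)$ stays a positive distance from $\partial W_i$ away from $\Gamma$, so $\partial(T\res W_i)$ has no contribution on $\partial W_i\setminus\Gamma$. Your version only produces $\mathrm{spt}(T)\subset\bigcup_i\overline{W_i}$ (closures), which is why you are forced into the averaging argument; but that argument, as written, doesn't close. Selecting $\eta$ so that the slice of $T$ along $\partial W_i$ has zero mass (even if you could sum the dyadic contributions) does not imply $\mathrm{spt}(T)\cap\partial W_i\subset\Gamma$; a slice can vanish while the support still meets the slicing hypersurface on a set of lower-dimensional $\mathcal{H}^{m-1}$-measure. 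You would still need a density/clean-support argument at those points, and that is precisely the content of the $L^\infty$ height bound you are trying to avoid.

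Two smaller points. First, your identification of $\partial T_i$ via the constancy theorem leans on ``the tangent cone identification established below,'' but the tangent cone of $T_i$ at $0$ is not independent input---it is derived \emph{from} the decomposition and the convergence of $T$ to $C$; state it in that order to avoid circularity. Second, the paper actually proves a \emph{quantitative} decomposition lemma at a fixed scale (under a smallness assumption on the amended excess relative to $\alpha(C)^2$) and then obtains the qualitative statement by applying the uniqueness theorem to pick a scale where that smallness holds; formulating the quantitative version first is what makes the estimate $d(q)\ll\alpha(C_p)$ available. Your argument for area minimality of each piece is correct.
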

We will work with multivalued functions, understood as in \cite{Alm} and \cite{DS1}.
Another consequence of the uniqueness of tangent cones that plays a key role in constructing the example is the Hölder continuity of the multivalued normal map.
\begin{resultado} Let $T, \Gamma,\Sigma$ be as in Assumption \ref{A:general}. There is a well-defined multivalued normal map induced by the tangent cone, which is defined in \ref{d:multivaluednormalmap}. The normal map is a locally Hölder map in $U:=\left\{x \in \Gamma: \Theta(T,x)=Q/2\right\}$, which forms a relative open set of $\Gamma$. The set $U$ is relatively open by upper semicontinuity of the density, since there exists $\theta(Q,m,n)>0$ such that $\Theta(T,x)<Q/2+\theta$ implies $\Theta(T,x)=Q/2$ .
\end{resultado}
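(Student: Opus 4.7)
The plan is to combine the power rate of convergence to the unique tangent cone from the uniqueness theorem with a two-scale interpolation between nearby minimum-density points, exploiting the $C^2$ regularity of $\Gamma$ to pass from open books to their normals. Before starting the estimate, note that the relative openness of $U$ in $\Gamma$ is essentially built into the statement: the density gap in the classification of tangent cones together with the upper semicontinuity of $\Theta(T,\cdot)$ from the boundary monotonicity formula forces $\{\Theta(T,\cdot)=Q/2\}$ to be relatively open.

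The crucial input is that the power rate of convergence can be made locally uniform: for every compact $K\subset U$ there exist $\alpha,C_0,r_0>0$ with
\[
\mathbf{d}(T_{x,r},C(x))\leq C_0 r^\alpha\qquad\text{for every }x\in K,\ r\in(0,r_0),
\]
where $T_{x,r}$ is the translated-rescaled current at $x$, $C(x)$ is its unique open-book tangent cone, and $\mathbf{d}$ is any reasonable distance on currents in $\bB_1$ (e.g., flat norm or one-sided $L^2$ excess adapted to cones). Local uniformity should be inherited from the quantitative compactness used in the proof of the uniqueness theorem, since the initial nondegeneracy parameters vary continuously on $\Gamma$.

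Given $x,y\in K$ with $d:=|x-y|$ small, I would apply the above at both centers at a common scale $r$ and compare. By translation invariance of the rescaled currents (up to a shift of size $d/r$),
\[
\mathbf{d}(T_{x,r},T_{y,r})\lesssim \frac{d}{r},
\]
so that
\[
\mathbf{d}(C(x),C(y))\lesssim r^\alpha+\frac{d}{r}.
\]
Optimizing in $r$ via $r=d^{1/(\alpha+1)}$ yields $\mathbf{d}(C(x),C(y))\lesssim d^{\alpha/(\alpha+1)}$, a Hölder estimate for the tangent-cone map.

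To pass from cones to the normal map, I would use that the spine of $C(x)$ is $T_x\Gamma$, which depends in a $C^1$ manner on $x$ since $\Gamma$ is $C^2$. After rotating to align the two spines at a cost controlled by $|x-y|$, the bound on $\mathbf{d}(C(x),C(y))$ translates directly into a bound for the $\mathcal{A}_Q$-distance between the normal tuples $\sum_iQ_i(x)\a{\nu_i(x)}$ and $\sum_jQ_j(y)\a{\nu_j(y)}$, the $\nu_i$ being the unit normals to $H_i$ in $(T_x\Gamma)^\perp$ repeated with multiplicity $Q_i$. The decisive ingredient is thus the locally uniform power rate; everything afterwards is soft. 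The only subtlety worth flagging is that the number $N(x)$ of distinct half-planes may jump as $x$ varies in $U$—two sheets can coalesce while $Q=\sum Q_i$ is preserved—so it is essential to view the normal map as $\mathcal{A}_Q$-valued, since coalescence and splitting are continuous in the $\mathcal{A}_Q$-metric but not in any per-sheet parametrization.
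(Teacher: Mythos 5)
Your plan is a valid alternative route, and the final exponent optimization ($r=d^{1/(\alpha+1)}$) is clean, but the load-bearing step is exactly the one you relegate to a footnote, and it is not something that "should be inherited from the quantitative compactness" for free.

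The paper's Corollary~\ref{c:normal} does not interpolate at two scales and optimize. Instead, it fixes a single scale $r_1 = 2|q_1-q_2|$, transports the tangent cone $\mathbf{C}_{q_1}$ at $q_1$ to an admissible cone $C^{q_2}_{q_1}$ at $q_2$ (translating along $\Gamma$ and tilting by $O(|q_1-q_2|)$, which costs only $O(r_1^2)$ in the $\hat{\mathbb{E}}$-excess), verifies $\hat{\mathbb{E}}(T,C^{q_2}_{q_1},\bB_{r_1}) \lesssim r_1^\alpha\,\hat{\mathbb{E}}(T,C,\bB_1)$, and then quotes the quantitative output $\mathcal{G}(C_p,C)^2\lesssim \hat{\mathbb{E}}(T,C,\bB_1(p))$ of Theorem~\ref{t:sexcessuniqueness} directly at $q_2$. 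This is sharper than optimizing a triangle inequality and avoids passing through a generic "distance on currents" $\mathbf{d}$, which would then need to be shown comparable to $\mathcal{G}$ on open books and to $\hat{\mathbb{E}}$ (the strong excess is not symmetric, and the decay theorem is phrased in terms of $\hat{\mathbb{E}}$, not the flat metric).

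The genuine gap in your proposal is the assertion that the power-rate constants $(C_0,r_0)$ can be taken locally uniform on $K$. Theorem~\ref{t:sexcessuniqueness} gives power decay at a point $p$ only after one knows $\hat{\mathbb{E}}(T,C,\bB_1(p))<\varepsilon_{decay}$ for some admissible $C$. To run your two-scale argument at a nearby $y$, you must know the analogous smallness at $y$, with uniform constants. This is precisely what the change-of-center Lemma~\ref{p:excessboundscales} supplies, in combination with the pruning Proposition~\ref{prop:anglecases}: from $\hat{\mathbb{E}}(T,C,\bB_1(p))<\varepsilon$ one deduces $\hat{\mathbb{E}}(T,C_{q_1},\bB_{1/2}(q_1))\lesssim\hat{\mathbb{E}}(T,C,\bB_1(p))$ for all $q_1\in\bB_{1/4}(p)$. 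That lemma is a reverse-excess estimate for the strong excess under a recentering of the bump function and relies on the Lipschitz approximation apparatus of Section~6 — it is not automatic. Once you acknowledge that this is the actual input your scheme needs, and that it is proved rather than assumed in the paper, the rest of your outline (including the $\mathcal{A}_Q$-valued viewpoint to handle coalescing sheets, which the paper also uses implicitly through $\mathcal{G}$) goes through.
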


We also comment that there are analogous theorems for the linear problem. The general linear problem is defined as: \begin{assumption}\label{a:linearproblemintro}
Given a Dir-minimizing function $u \in W^{1,2}(\Omega,\mathcal{A}_Q(\RR^n))$. The boundary linear problem consists of understanding functions $u$ such that $u \res \partial \Omega \cap \bB_1=Q\a{0}$.
\end{assumption}
We denote the frequency function by $I$ in the theorem below using a suitably smooth\-ened distance function. For the precise definition of the frequency function we refer to Section 10.
\begin{resultado}
Assume that $\Omega$ is a $C^2$ domain and $u$ as in Assumption \ref{a:linearproblemintro}. 
\begin{itemize}
    \item For all $q \in \partial \Omega$ the frequency can be defined in a neighbourhood of $q$ and is almost monotone. This means, \begin{equation*}
        e^{C\mathbf{A}_{\partial \Omega}r}I(r)
    \end{equation*} is monotone in $r$ for appropriate constant $C$, where $\mathbf{A}_{\partial \Omega}$ denotes the $C^0$ norm of the second fundamental form of $\Omega$.
    \item At the limit, the frequency is independent of the choice of a distance function and $\forall q \in \partial \Omega$, $I(q) \geq 1$. 
    \item Either $I(q)=1$ or $I(q) \geq 1+\alpha(Q,m,n)$ for a positive constant $\alpha(Q,m,n)$. If $I(q)=1$ then the blowups at $q$ are unique and linear, in the sense they are of the form
    \begin{equation*}
    f(x)=\sum_{i=1}^N Q_i\a{v_i(x\cdot \boldsymbol{\eta}_{\partial \Omega,q})}
    \end{equation*}
    with $\boldsymbol{\eta}_{\partial \Omega,q}$ the normal to $\partial \Omega$ at $q$,
    and unique. The uniqueness is quantitative and thus there is a well-defined multivalued normal derivative at the boundary which is Hölder continuous.
\end{itemize}
\end{resultado}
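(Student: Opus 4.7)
The plan is to mirror, in the linear setting, the structure of the proof of the main currents theorem above, with the simplifications available because $Q$-valued Dir-minimizers admit direct first-variation formulas. First I would fix $q \in \partial \Omega$, rotate so the inward normal to $\partial \Omega$ at $q$ is $e_m$, and pick a nonnegative $C^2$ function $d_q$ agreeing with $|x-q|$ to second order at $q$ and with level sets meeting $\partial \Omega$ transversally. Setting
\begin{equation*}
D(r) = \int_{\{d_q < r\} \cap \Omega} |Du|^2, \qquad H(r) = \int_{\{d_q = r\} \cap \Omega} |u|^2\, |\nabla d_q|,
\end{equation*}
define $I(r) = r D(r)/H(r)$. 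Applying the inner and outer variation formulas for $Q$-valued Dir-minimizers to vector fields built from $d_q$, the boundary terms on $\partial \Omega$ drop because $u \res \partial \Omega = Q\a{0}$, while the failure of $d_q$ to be the exact distance and the non-flatness of $\partial \Omega$ contribute only $O(r)$ errors; this yields $(\log(e^{Cr} I(r)))' \geq 0$, so $I(q) := \lim_{r \to 0^+} I(r)$ is well defined. Two admissible choices of $d_q$ agree to second order at $q$ and therefore produce frequencies differing by $O(r)$, so the limit is independent of the smoothening.

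For the lower bound $I(q) \geq 1$ I would rescale $u_r(x) = u(q + rx)/(H(r)/r^{m-1})^{1/2}$. Almost monotonicity bounds the Dirichlet energy uniformly, so a subsequence converges in $W^{1,2}_{\mathrm{loc}}$ to a Dir-minimizer $u_0$ on $\RR^m_+ = \{x_m > 0\}$ which is $I(q)$-homogeneous and satisfies $u_0 \res \{x_m = 0\} = Q\a{0}$. Since the boundary value is a single point of $\RR^n$, each sheet of $u_0$ can be reflected oddly through $\{x_m = 0\}$, producing a $Q$-valued Dir-minimizer on all of $\RR^m$ whose interior Almgren frequency at the origin equals $I(q)$. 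Interior unique continuation forces any nontrivial sheet of this extension to grow at least linearly away from the hyperplane, so $I(q) \geq 1$.

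If $I(q) = 1$ the blowup $u_0$ is $1$-homogeneous; the odd reflection is then a $1$-homogeneous $Q$-valued Dir-minimizer on $\RR^m$, hence an affine $Q$-valued map, which combined with the trace condition forces
\begin{equation*}
u_0(x) = \sum_{i=1}^N Q_i\a{v_i\, x_m} \qquad \text{on } \{x_m > 0\}.
\end{equation*}
To get the spectral gap $I(q) \in \{1\} \cup [1+\alpha, \infty)$ I would argue by compactness and contradiction: given $I(q_k) \to 1$ with $I(q_k) > 1$, I would extract a subsequence of rescalings converging to a $1$-homogeneous limit of the above form and appeal to the discreteness of the spectrum of $1$-homogeneous $Q$-valued Dir-minimizing profiles on the upper hemisphere with zero trace to contradict the accumulation of frequencies at $1$ from above. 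This spectral step will be the main obstacle, as it requires a full classification of $1$-homogeneous boundary profiles, paralleling the classification of area-minimizing cones given by Theorem A.

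Once the gap is in place, the uniqueness of the $I(q) = 1$ blowup and the H\"older continuity of the multivalued normal derivative follow from the Simon-type energy decay scheme developed in the body of the paper for the currents problem. The boundary condition $u \res \partial\Omega = Q\a{0}$ automatically provides the ``no holes'' property at every boundary point, so one obtains a quantitative $L^2$ decay $\|u_r - u_0\|_{L^2} \lesssim r^\alpha$ to a unique linear profile $u_0$; summing over dyadic scales yields existence and H\"older dependence of the multivalued normal derivative on $q$ along the stratum $\{I = 1\} \subset \partial\Omega$.
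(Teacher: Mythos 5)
The proposal follows the right high-level outline (smoothed frequency, almost monotonicity with $O(r)$ errors, blowup, classification of $1$-homogeneous profiles, excess decay), but the step you use to get $I(q)\geq 1$ and to classify the $I(q)=1$ blowups, namely odd reflection across $\{x_m=0\}$, has a genuine gap and is not what the paper does. For $Q>1$ there is no PDE behind Dir-minimizers, only the variational characterization, and the odd reflection of a zero-trace Dir-minimizer on $\bB_1^+$ is not known to be Dir-minimizing on $\bB_1$: a competitor $v$ on the full ball need not have trace $Q\a{0}$ on $\{x_m=0\}$, so you cannot split it into two admissible competitors on $\bB_1^+$, and there is no linear "odd/even" decomposition in $\mathcal{A}_Q(\RR^n)$. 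Even granting the reflection, the claim that interior unique continuation forces at least linear growth is false for $Q>1$: the $2$-valued map $\sqrt{z}$ in $\RR^2$ is interior Dir-minimizing, nontrivial, and has frequency $1/2$. So neither $I(q)\geq 1$ nor the linearity of the $I(q)=1$ profile follows from your reflection argument.

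The paper avoids reflection entirely. For $I(q)\geq 1$ it uses upper semicontinuity of the boundary frequency together with cylindrical blowups along the spine (Lemma \ref{Lem:Invariance}), a dimension-reduction argument that terminates in a $1$-dimensional zero-trace Dir-minimizer, which is necessarily linear; this forces the frequency to be $\geq 1$. For the $I(q)=1$ case it shows $I(z)=1$ for every $z$ in the spine, deduces translation invariance along the spine via Lemma \ref{Lem:Invariance} (whose proof builds a cylinder competitor via the interpolation Proposition \ref{Interpolation}), and then reads off linearity from the $1$-dimensional reduction (Lemma \ref{l:classificationof1homogeneous}). Replacing your reflection step with this dimension-reduction scheme is essential.

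One more correction: the frequency gap $I(q)\in\{1\}\cup[1+\alpha,\infty)$ is not obtained from any a priori "discreteness of the spectrum" of $1$-homogeneous boundary profiles, and such discreteness is not available here. In the paper it is a corollary of the quantitative excess decay and uniqueness theorem for the linear problem (the exact analogue of how the density jump in Theorem~A follows from Theorem \ref{t:sexcessuniqueness}): if $I(q_k)\downarrow 1$ with $I(q_k)>1$, a rescaled subsequence is eventually $\varepsilon$-close in $L^2$ to a linear map, so the decay theorem applies and forces $I(q_k)=1$. Your final paragraph (Simon-type decay, dyadic summation, H\"older normal derivative) is in line with the paper's strategy once the above repairs are made.
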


\subsection{Basic definitions}\label{S:preliminaries}
Throughout the paper, we use the notation '$\lesssim$' to denote inequality up to a multiplicative constant, which may depend only on $m$, $n$, $\overline{n}$, and $Q$. We also use $C(\alpha_1,...,\alpha_k)$ to denote a constant which depends on $\alpha_1,...,\alpha_k$.

We define $\bB_{r}(p) := \{x \in \RR^{m+n} : |x - p| < r\}$ and typically use $V := \RR^{m-1} \times \{0\} \subset \RR^{m+n}$ as the spine of our cones. For a plane $\pi$, let $\mathbf{p}_{\pi}$ denote the projection onto $\pi$. For any $\varepsilon > 0$, we use $B_{\varepsilon}(V)$ to represent the $\varepsilon$-neighborhood of $V$, that is, $B_{\varepsilon}(V) = \{x \in \RR^{m+n} : \dist(x, V) < \varepsilon\}$. 

We assume each $k$-dimensional linear subspace \(\pi\) of $\RR^{m+n}$ is oriented by a \(k\)-vector \(\vec{\pi} = v_1 \wedge \cdots \wedge v_k\), where \((v_i)_{i=1}^k\) forms an orthonormal basis for $\pi$. Additionally, we adopt the notation \(\left|\pi_2 - \pi_1\right|\) for \(\left|\vec{\pi}_2 - \vec{\pi}_1\right|\), where \(|\cdot|\) represents the norm tied to the standard inner product of $k$-vectors. The $k$-dimensional Hausdorff measure on $\RR^{m+n}$ is written as $\HH^k$. We denote $\mathcal{A}_Q(\RR^{m+n})$  the $Q$-tuples in $\RR^{m+n}$ for some integer $Q \geq 1$. We denote by $\mathcal{G}(p_1,p_2)$, the Wasserstein distance between a pair of points $p_1, p_2$ on the space $\mathcal{A}_Q(\RR^{m+n})$. For more on multi-valued functions, see \cite{DS1}. We denote by $\mathbf{A}_M$ the $C^0$-norm of the second fundamental form of a smooth submanifold $M$ of $\RR^{m+n}$.

For further basic definitions and standard notations, we refer to \cite{Fed} and the survey \cite{delellis2015size}.

We denote $\textbf{G}_f$ the current induced by the graph of a function $f$ over it's domain.

We will use $\mathbf{E}$ to denote the $L^2$ height excess, and local versions which we introduce later. The $L^2$ excess relative to an open book $C$ is defined as
\begin{equation*}
    \mathbf{E}(T, C, \bB_r(p)) := \frac{1}{r^{m+2}} \int_{\bB_r(p)} \dist(x, C)^2 d|T|(x).
\end{equation*}
We will use $\mathbb{E}$ for a suitable measure-theoretic excess, which will be defined later in equation \eqref{e:strongexcess}. Finally, we introduce the circular projection $\pi_{\circ} : \RR^{m+n} \to \RR^{m}$, defined as    $\pi_{\circ}(x, y) = (x, |y|)$ for $(x, y) \in \RR^{m-1} \times \RR^{n+1}$.
\begin{definition}\label{D:AreaMin}
An $m$-dimensional integral current $T$ in $U$ with $\textup{spt}(T)\subset\Sigma$ is \emph{area minimizing in $\Sigma\cap U$}, if it holds that $\mathbf{M}(T) \leq \mathbf{M}(T+\partial \tilde{T})$, for all $\tilde{T}$ $m+1$-dimensional integral current in $U$ with $\textup{spt}(\tilde{T})\subset\Sigma\cap U$. 
\end{definition}
\subsubsection{Cones}
We remind the definition of an open book and introduce their angle:
\begin{definition}
A cone $C$ is an \textit{open book} if it can be written as
 $C=\sum_{i=1}^N Q_i\a{H_i}$ where $H_i$ are distinct half-planes with  $\partial\a{H_i}=\a{V}$
where $V$ is an $m-1$ dimensional plane, referred to as the spine of the cone. We define the minimum angle of an open book as
\begin{equation*}
\alpha(C):=\min_{i \neq j}
\langle H_i,H_j \rangle
\end{equation*}
where $\alpha(C):=1$ if $C=Q\a{H}$ for $H$ a half-plane.
\end{definition}

\begin{definition}
A cone $C$ is a \textit{two-sided flat cone} if 
\begin{equation*}
 C=(Q/2+Q^*)\a{H}-(Q^*)\a{-H} 
\end{equation*} for $Q$ and $Q^*$ integers and $H$ a half plane.
\end{definition}
We introduce a distance between open books with the same spine. 
\begin{definition}[Distance Open Books]
Given a pair of open books $C^j$ for $j=0,1$ which are of the form
\begin{equation*}
C^j:=\sum_i Q_i^j\a{H_i^j}
\end{equation*}
 where $\partial \a{H_i^j}=\a{V}$. We take $\eta_i^j$ to be the unit normal to $V$ that determines $H_i$.
We define $\eta^j \in \mathcal{A}_Q(V^{\perp})$ as $\eta^j= \sum Q_i\a{\eta_i^j}$.
We define 
\begin{equation}\label{eq:distopenbooks}
\mathcal{G}(C^1,C^2):=\mathcal{G}(\eta^1,\eta^2).
\end{equation}
\end{definition}
\subsubsection{Tangent cones.} For $x\in\RR^{m+n}$ and $r>0$, define $\iota_{x, r}(y):= \frac{y-x}{r}$. For any integral current $T$, we define the rescaled current $T$ at $x$ at scale $r$ as ${\iota_{x, r}}_{\sharp} T=:T_{x,r}$ and $T_r =: {\iota_{0,r}}_{\sharp} T$. The current $T_{x}$ is called a blow-up of $T$ at $x$, if there exists a sequence of radii $r_j\to0$ such that $T_{x,r_j}\to T_x$ in the weak topology. By  \cite{Theodora}, it follows from the monotonicity formula that if $T$ is area minimizing in $\Sigma$ and $\partial T = Q\a{\Gamma}$ for $\Gamma\in C^{1,\kappa}$, every convergent subsequence of $T_{p,r_j}$ converges to an oriented cone about $p$ for any $p \in \Gamma$ and any $r_j\downarrow 0$. An oriented cone about $p$ is understood as a current $C$ such that $C_{p,r} = C$ for any $r>0$. See Theorem  \ref{t:monotonicityformula} for the precise statements of the monotonicity formula and first variation formula for an area minimizing current $T$ in $\Sigma$ with $\partial T=Q\a{\Gamma}$ where $\Sigma$ and $\Gamma$ are $C^2$ submanifolds. 

\subsection{Structure of the paper}
\begin{itemize} 
\item In Section 2, we define a measure-theoretic/stronger excess, which allows us to track not only the closeness to an open book but also the multiplicity to which it is close. We also prove the monotonicity formula with the corresponding error terms suitable for our use in the later estimates. 
\item In Section 3, we show that cones of density $Q/2$ with a flat boundary with multiplicity $Q$ are open books. We provide two different arguments.

\item In Section 4, we examine the convex barrier property and the local convex barrier property. We show that they are boundary points of the type we consider, i.e., of density $Q/2$, whose tangent cones are open books.  \item In Section 5, we perform the main technical reductions of the theorems in this paper, reducing them to an excess decay lemma, which the remainder of the paper focuses on proving. 
\item In Section 6, we construct a Lipschitz approximation and show a reverse excess estimate, which allows us to control the measure-theoretic excess introduced in Section 2 by the $L^2$ excess. 
\item In Section 7, we prove Simon’s non-concentration estimates and their corollaries in our setting.
\item In Section 8, we study the linear problem. We show that homogeneous frequency 1 Dir-minimizers are linear, and that local Dir-minimizers away from the boundary are global Dir-minimizers. We provide a decay lemma to a linear average that is suitable to run the excess decay lemma.
\item In Section 9, we prove the excess decay lemma stated in Section 5, thus concluding the uniqueness of the tangent cone. 
\item In Section 10, we discuss important consequences of the theory developed in this paper for the linear problem, which mirror the properties we have discussed throughout the paper for the nonlinear problem. 
\end{itemize}
\subsection{Acknowledgments} 
The author acknowledges the support of the National Science Foundation through the grant FRG-1854147.

The author is extremely grateful to his advisor, Professor Camillo de Lellis, for introducing him to the subject by suggesting this problem and for his outstanding mentorship throughout. The author is also grateful to Paul Minter and Anna Skorobogatova for answering many relevant questions related to this project, and wishes to express gratitude to Reinaldo Resende for providing detailed feedback on the manuscript. The author wishes to thank Antoine Song for a suggestion that led to a simplification of the alternative proof of the classification of tangent cones.
\end{section}
\begin{section}{Preliminaries}
\begin{subsection}{A measure theoretic excess}
We introduce a stronger version of the $L^2$-excess using a variant of the Wasserstein $L^2$ distance, which we recall below.

Given two measures $\mu^1$ and $\mu^2$, an admissible transport plan $\sigma$ is a measure on $\mathbb{R}^{m+n} \times \mathbb{R}^{m+n}$ satisfying $(\pi_1)_{\#} \sigma = \mu^1$ and $(\pi_2)_{\#} \sigma = \mu^2$ where $\pi_1$ and $\pi_2$ are the projections in the first and second component, respectively.

The Wasserstein $L^2$ distance between two measures $\mu^1$ and $\mu^2$ is defined as
\begin{equation}
    W_2(\mu^1,\mu^2)^2 := \inf \left\{ \int d(x,y)^2 \, d\sigma(x,y) : (\pi_1)_{\#}\sigma = \mu^1 \: \text{and} \: (\pi_2)_{\#}\sigma = \mu^2\right\}
\end{equation}
where we set $W_2(\mu^1,\mu^2)=\infty$ if $\mu^1(\RR^{m+n})\neq \mu^2(\RR^{m+n}).$

Given a fixed $m-1$ dimensional plane $V$, we define the distance $d_V$ between two measures $\mu^1$ and $\mu^2$ as follows:

\begin{equation}
d_V(\mu^1, \mu^2) := \inf \left\{ 
W_2(\mu_1^1, \mu_1^2)^2 
+ \sum_{i=1}^2 \int \textup{dist}(x,V)^2 \, d|\mu_2^i|  
\ \middle|\ \mu_1^i + \mu_2^i = \mu^i, i=1,2.
\right\}.
\end{equation}

In this definition, $\mu^1$, $\mu^2$, $\mu_1^1$, and $\mu_1^2$ are Radon measures, while $\mu_2^1$ and $\mu_2^2$ are signed measures. This allows us to add or remove a part of the measure $\mu^1$ to transport it to $\mu^2$, with a cost proportional to the weighted total variation of the leftover measures.

In the rest of the paper we will drop $V$ from the notation and just use $d$, since it is more convenient and the plane $V$ will be clear from the context (and fixed in most arguments).

If $d(\mu^1,\mu^2)=0$ then $\textup{spt}(|\mu^1-\mu^2|) \subset V$. We remark that in the cases of interest the measures $\pi_1$ and $\pi_2$ considered are always absolutely continuous with respect to $\HH^{m}$ and thus $d(\mu^1,\mu^2)=0$ implies $\mu^1=\mu^2$ in that case.

Let $\phi$ be a smooth, canonical bump function supported in $\bB_1(0)$, constantly equal to 1 in $\bB_{1/2}(0)$, and satisfying $\phi(x) \lesssim \operatorname{dist}(x, \partial \bB_1)^2$. We define $\phi_r := \phi(\cdot/r)$.

The stronger excess relative to an open book $C=\sum_{i=1}^N Q_i\a{H_i}$ with $\partial H_i=V$ as
\begin{equation}\label{e:strongexcess}
  \mathbb{E}(T,C,\bB_r) := r^{-(m+2)} d(\phi_r d|T|, \phi_r d|C|).
\end{equation}

The bump function is introduced for technical reasons: even if the current were a classical smooth graph over the cone, the map given by the graphical structure might extend beyond the ball $\bB_r$ near the boundary.

\begin{remark}[Strong excess versus classical $L^2$ and reverse $L^2$]
The classical $L^2$ excess and the reverse $L^2$ excess are both controlled by this stronger excess. This follows immediately from the admissibility of the transport plan and Fubini's theorem.
Precisely we have:
\begin{equation}
\frac{1}{r^{m+2}}\int\phi_r(x)\dist(x,C)^2 d|\!|T|\!|(x) + 
\frac{1}{r^{m+2}}\int\phi_r(x)\dist(x,\textup{spt}(T))^2 d|\!|C|\!|(x)\leq \mathbb{E}(T,C,\bB_r).
\end{equation}
\end{remark}

To motivate the definition of the excess, we show that it is bounded in terms of standard quantities that we can control in practice.
More precisely we have the following remark which will give the desired motivation. 
\begin{remark}[Lipschitz Approximation and Strong Excess]\label{rmk:strongexcessgraphicalbound}
Let $C=\sum_{i=1}^N Q_i\a{H_i}$ and let $u_i \in \mathcal{A}_{Q_i}(H_i \cap \bB_2)$ be multi-valued Lipschitz graphs over $C$ of small enough Lipschitz constant. Let $K_i$ be a subset of $H_i$ and $T_i=\mathbf{G}_{u_i \res K_i}$  . Assume that $\textup{spt}(T_i) \cap \bB_1 \subseteq \textup{spt}(T) \cap \bB_2$ and that $\forall x \in \textup{spt}(T_i) \cap \bB_1$ $\Theta(T_i,x) \leq \Theta(T,x)$.

Then
\begin{align}
\mathbb{E}(T,C,\bB_1) \lesssim & \max_{i} |u_i|_{L^\infty(\bB_1)}^3 + \sum_{i=1}^N\int_{\bB_1 \cap \left(H_i \setminus K_i \right)}\dist(x,V)^2d|\!|H_i|\!|(x) \\+&\int_{\bB_1}\dist(x,V)^2 d|\!|T - \sum_{i=1}^N T_i|\!|(x)  +  \sum_{i=1}^N\int_{\bB_1} \dist(x,C)^2 d|\!|T_i|\!|(x)\\ +& \sum_{i=1}^N\int_{K_i}\dist(x,V)^2 |Du_i|^2.
\end{align}
\end{remark}
\begin{proof}
Let $A_i$ be the subset of $K_i$ such that $|x+f_i^j(x)|>1$ for some $1 \leq j \leq Q_i$ (i.e. the set where the Lipschitz graph escapes the ball).

We must have that 
\begin{align}
\mathbb{E}(T,C,\bB_1) \lesssim &\sum_{i=1}^N \int_{\bB_1 \cap \left((H_i \setminus K_i) \cup A_i\right)}\dist(x,V)^2d|\!|H_i|\!|(x) +  \int_{\bB_1}\dist(x,V)^2 d|\!|T-\sum_{i=1}^k T_i|\!|(x) \\+&
W_2(|\!|T_i|\!|, ({\pi_{H_i}})_{\#}|\!|T_i|\!|)^2 + \int_{K_i} \dist(x,V)^2 \left(d ({\pi_{H_i}})_{\#}(|T_i|)-d|\!|H_i|\!|(x)\right)
 \end{align}
This inequality comes from the pairing of $|\!|T_i|\!|$ with $(\pi_{H_i})_{\#}|\!|T_i|\!|$ and getting as error terms the measures $|\!|T-\sum T_i|\!|$, $\sum_{i=1}^N d|\!|\bB_1 \setminus \left(A_i \cup (H_i \setminus K_i) \right)|\!|$ and $(\pi_{H_i})_{\#}|\!|T_i|\!|-d|\!|K_i \setminus A_i|\!|$.

We must have that $A_i$ is contained in an annulus whose outer boundary is $\partial \bB_1$ and its width is bounded by $\left\|u_i\right\|_{L^\infty}$, which gives us the bound $\left\|u_i\right\|_{L^\infty}^3$. The linear contribution comes from the area of the region and the quadratic contribution comes from the bump function.

We observe that

\begin{equation}
W_2(|T_i|, ({\pi_{H_i}})_{\#}|T_i|)^2 \leq \int_{K_i} \dist(x,C)^2 d|T_i|.
\end{equation}
Finally we notice that 
\begin{equation}
\int_{K_i} \dist(x,V)^2 d(\pi_{H_i})_{\#}(T_i)-d|H_i| \lesssim \int_{K_i} \dist(x,V)^2 |Du_i|^2 .
\end{equation}
\end{proof}
\begin{remark}
    The question whether we can control excess from one scale to a smaller one is technically non trivial and requires the construction of the Lipschitz approximation and the smallness of the excess. We will also need to be able to switch center. 
    
    These issues are subtle because the transport plan for a given scale might not be suitable for a smaller scale. See Lemma \ref{l:excessboundscales}.
\end{remark}
\end{subsection}
\begin{subsection}{Monotonicity formula}

Given $T, \Gamma, \Sigma$ as in Assumption \ref{A:general}.
We define, for some $C_0(m,n,\overline{n})$ a suitable large constant
\begin{equation}
\Theta_{i}(T,p,r):=\textup{exp}(C_0\mathbf{A}_{\Sigma}^2r^2)\frac{|\!|T|\!|(\bB_r(p))}{\omega_mr^m}
\end{equation}
and 
\begin{equation}
\Theta_{b}(T,p,r):=\textup{exp}(C_0\left(\mathbf{A}_{\Sigma}+\mathbf{A}_{\Gamma}\right)r)\frac{|\!|T|\!|(\bB_r(p))}{\omega_mr^m}.
\end{equation}

\begin{theorem}[Monotonicity formula]\label{t:monotonicityformula}
Let $T, \Gamma, \Sigma$ be as in Assumption \ref{A:general}. 
\begin{itemize}
\item If $p \in \bB_1 \setminus \Gamma$ then $r \rightarrow \Theta_{i}(T,p,r)$ is monotone on the interval 
$(0,\min\left\{\dist(p,\Gamma),1-|p|\right\})$.
\item If $p \in \bB_1 \cap \Gamma$, then $r \rightarrow \Theta_{b}(T,p,r)$ is monotone on $(0,1-|p|)$.
\end{itemize}
Thus the density exists at every point. Moreover, the restrictions of the map $p \rightarrow \Theta(T,p)$ to $\bB_1 \cap \Gamma$ and $\bB_1 \setminus \Gamma$ are upper semicontinous. 

Additionally, this implies that blowup limits are cones, both at the interior and at the boundary.

If $X \in C^1_{c}(\bB_1,\RR^n)$, then
\begin{equation}
\delta T(X)=- \int_{\bB_1} X \cdot \overrightarrow{H}_{T}(x)d|\!|T|\!|(x)+Q \int_{\Gamma \cap \bB_1} X \cdot \overrightarrow {\eta}(x) d\HH^{m-1}(x)
\end{equation}
where $\overrightarrow{H}_{T}=\sum_{i=1}^m A_{\Sigma}(e_i,e_i)$ where $\overrightarrow{T}=\wedge_{i=1}^m e_i$ is an orthonormal basis of $\overrightarrow{T}$. The vector $\overrightarrow{\eta}$ is defined $\HH^{m-1}$ almost everywhere as:
\begin{itemize}
\item For every tangent cone $C$ at $p \in \Gamma$ which is an open book $C=\sum_{i=1}^N Q_i\a{H_i}$, 
\begin{equation}
\overrightarrow{\eta}=\sum_{i=1}^N \frac{Q_i \overrightarrow{\eta_i}}{Q}
\end{equation}
where $\overrightarrow{\eta_i}$ is the normal to $T_p(\Gamma)$ which determines the half plane $H_i$. The vector $\overrightarrow{\eta}$ is independent of  the open book $C$ at $p$, but the open book might depend on the blowup sequence.
\item For every tangent cone $C=(Q/2+Q^*)\a{H}-(Q^*)\a{-H}$ at $p \in \Gamma$ which is a two-sided flat cone \begin{equation*}
    \overrightarrow{\eta}=\eta_{H}
\end{equation*}
where $\eta_H$ is the normal to $T_p(\Gamma)$ which determines the half plane $H$.
\end{itemize}
\end{theorem}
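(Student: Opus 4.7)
The plan is to derive both monotonicity formulas from the first variation formula applied to suitably chosen radial test vector fields, and then to extract the remaining conclusions---existence of density, upper semicontinuity, the cone structure of blowups, and the explicit form of $\vec\eta$---as standard consequences.

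The starting point is the first variation identity $\delta T(X)=\int \textup{div}_T X\,d\|T\|=-\int X\cdot\vec H_T\,d\|T\|+\int X\cdot\vec\eta\,d\|\partial T\|$, valid for $X\in C^1_c(\bB_1,\RR^{m+n})$ tangent to $\Sigma$, where $\vec H_T=\sum_i A_\Sigma(e_i,e_i)$ encodes the fact that $T$ is stationary in $\Sigma$ but not in $\RR^{m+n}$. I would test with $X(x)=\chi(|x-p|/r)\,Y(x)$, where $Y$ is the $\Sigma$-intrinsic radial field based at $p$ (essentially the gradient of one-half of the squared $\Sigma$-geodesic distance) and $\chi$ is a smooth nonincreasing cutoff. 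This $\Sigma$-intrinsic choice, as opposed to the naive Euclidean radial field $x-p$, makes the component of $X$ normal to $\vec T$ small and yields $|X\cdot\vec H_T|\lesssim\mathbf A_\Sigma^2 |x-p|^2$ rather than the linear $\mathbf A_\Sigma|x-p|$.

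For the interior statement, $p\in\bB_1\setminus\Gamma$ and $r<\textup{dist}(p,\Gamma)$ make the boundary term vanish; the standard coarea manipulation of $\frac{d}{dr}\|T\|(\bB_r(p))$ combined with the expansion of $\textup{div}_T X$ produces the classical monotonicity identity, and controlling the mean curvature remainder by $C\mathbf A_\Sigma^2 r$ times the density gives a differential inequality whose integrating factor is exactly $\exp(C_0\mathbf A_\Sigma^2 r^2)$. For the boundary statement with $p\in\Gamma$, the same $X$ is tangent to $T_p\Gamma$ at $p$; since $\vec\eta\perp T_p\Gamma$, a linear $\Gamma$ would annihilate the boundary pairing $X\cdot\vec\eta$ pointwise, and the $C^2$ regularity of $\Gamma$ only allows an error of size $O(\mathbf A_\Gamma|x-p|^2)$ on $\Gamma\cap\bB_r(p)$. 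Together with the mean curvature contribution (which on a ball touching the boundary degrades to the linear bound $\mathbf A_\Sigma r$), this perturbs the differential inequality by a term of size $(\mathbf A_\Sigma+\mathbf A_\Gamma)r$, producing the integrating factor $\exp(C_0(\mathbf A_\Sigma+\mathbf A_\Gamma)r)$.

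From the monotonicity, the density $\Theta(T,p)=\lim_{r\to 0^+}\Theta_\cdot(T,p,r)$ exists everywhere, and upper semicontinuity on the appropriate domain follows because for each fixed $r$ the map $\Theta_\cdot(T,\cdot,r)$ is continuous on the relevant open set and the density is its infimum. The cone structure of blowups is the standard rigidity statement: if monotonicity is sharp for $T_\infty$ on every annulus, then the remainder term (essentially $\int|x^\perp|^2|x|^{-m-2}$) must vanish, forcing $T_\infty$ to be dilation invariant. The displayed first variation formula then follows from Stokes applied to the integer rectifiable cycle $Q\a{\Gamma}$. The main obstacle is the explicit identification of $\vec\eta$: at a boundary point $p$ where the tangent cone is an open book $C=\sum Q_i\a{H_i}$ with spine $V=T_p\Gamma$, blowing up the first variation formula forces the matching relation $Q\vec\eta=\sum_i Q_i\vec\eta_i$ in $V^\perp$; in the two-sided flat case the opposite conormal contributions cancel in pairs and leave a single copy oriented by $\vec\eta_H$. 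That these two cone types cover $\HH^{m-1}$-a.e.\ of $\Gamma$ I would obtain from the classification in Theorem A combined with a stratification argument on the higher-density boundary set.
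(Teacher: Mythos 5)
Your proposal tracks the same route the paper takes (which itself leans on Allard's boundary monotonicity and the argument of \cite{DDHM}): derive the monotonicity by testing the first variation with a cut-off radial field, control the mean-curvature term quadratically and the boundary term linearly via the $C^2$ regularity of $\Gamma$, and then identify $\vec\eta$ by comparing the blow-up of $\delta T_s$ with the first variation of the tangent cone, using stratification to reduce to $m-1$ spine. Two points are worth flagging.

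First, you cite Theorem A to justify that $\HH^{m-1}$-a.e.\ tangent cone along $\Gamma$ is either an open book or a two-sided flat cone. Theorem A only characterizes the cones of the \emph{minimum} density $Q/2$, and its density-gap clause is explicitly deferred to the end of the program, so it cannot be used here; moreover, a.e.\ point on $\Gamma$ can have density strictly larger than $Q/2$ and still fall into the $m-1$ stratum. What the paper actually uses, after the stratification reduces to cones of the form $\RR^{m-1}\times C'$ with $C'$ a 1d mass-minimizing cone having $\partial C' = Q\a{0}$, is the elementary classification of such 1d cones: a union of positively-weighted half-lines (the open book case) or a full line with compensating multiplicities (the two-sided flat case). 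That 1d fact, not Theorem A, is the input that closes the a.e.\ classification.

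Second, a minor inaccuracy: you claim the mean-curvature contribution ``degrades to the linear bound $\mathbf A_\Sigma r$'' at the boundary. In fact the pointwise estimate $|X\cdot\vec H_T|\lesssim \mathbf A_\Sigma^2|x-p|^2$ (valid because $\mathbf{p}_{T\Sigma^\perp}(x-p)$ is quadratic in $|x-p|$, whether or not one uses the $\Sigma$-intrinsic distance) is just as available at boundary centers as at interior ones. The linear factor $\exp(C_0(\mathbf A_\Sigma+\mathbf A_\Gamma)r)$ in the paper is simply the crudest convenient bound absorbing the dominant linear error, which comes from the boundary term $\int_{\Gamma\cap\bB_r}(q-p)\cdot\vec\eta$ after using $\vec\eta\perp T_q\Gamma$ and the $C^2$ flatness of $\Gamma$; the mean-curvature part is $O(\mathbf A_\Sigma^2 r^2)$ either way. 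This does not affect the argument, but the mechanism should be attributed correctly. Relatedly, you should make explicit that the structural facts $\vec\eta\perp T_q\Gamma$ and $\|\delta T_s\|\leq Q\,\HH^{m-1}\res\Gamma$ are inputs to, not outputs of, the radial-field computation; they come from tangential variations along $\Gamma$ and from Allard's boundary first-variation bound, which is where the ``standard'' monotonicity derivation actually starts.
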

\begin{corollary}[Monotonicity formula with error terms]\label{c:monotonerror}
Under the hypothesis of Theorem \ref{t:monotonicityformula} for every $p \in \bB_1$ and $0<s<r<1-|p|$,
\begin{align}
    r^{-m}|\!|T|\!|(\bB_r(p))-s^{-m}|\!|T|\!|(\bB_s(p))= \int_{\bB_r(p) \setminus \bB_s(p)}\frac{|\left(x-p\right)^{\perp}|^2}{|x-p|^{m+2}}d|\!|T|\!|(x) \notag \\
    +\int_{s}^{r} \rho ^{-m-1} \left[\int_{\bB_{\rho}(p)}(x-p)^{\perp} \cdot \overrightarrow{H}_{T}(x)d|\!|T|\!|(x)+ Q \int_{\Gamma \cap \bB_{\rho}(p)}(x-p) \cdot \overrightarrow{\eta}d\HH^{m-1}\right]d\rho
\end{align}
\end{corollary}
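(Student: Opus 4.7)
The approach is the classical derivation of the monotonicity formula via a radial cutoff inserted into the first variation. Fix $p$ and $\rho<1-|p|$, and for small $\epsilon>0$ choose $\phi_\epsilon\in C^1_c([0,\infty))$ approximating $\mathbf{1}_{[0,\rho]}$ (identically one on $[0,\rho-\epsilon]$, vanishing outside $[0,\rho]$, with $\phi_\epsilon'$ a non-positive bump concentrating to $-\delta_\rho$ as $\epsilon\to 0$). Set $X_\epsilon(x):=\phi_\epsilon(|x-p|)(x-p)\in C^1_c(\bB_1,\RR^{m+n})$. Writing $y:=x-p$, $r:=|y|$, and denoting by $y^\perp$ the projection of $y$ onto the orthogonal complement of the approximate tangent space to $T$ at $x$, a direct computation in an orthonormal basis $(e_i)_{i=1}^{m}$ of that tangent space gives
\[
\mathrm{div}_T X_\epsilon(x) \;=\; m\phi_\epsilon(r)+\phi_\epsilon'(r)\,r-\phi_\epsilon'(r)\frac{|y^\perp|^2}{r}.
\]

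Next, I would plug $X_\epsilon$ into the first variation identity of Theorem \ref{t:monotonicityformula} and send $\epsilon\to 0$. Set $I(\rho):=\|T\|(\bB_\rho(p))$ and $G(\rho):=\int_{\bB_\rho(p)}|y^\perp|^2/|y|\,d\|T\|$. Both $I$ and $G$ are monotone, hence differentiable a.e.; moreover only countably many spheres $\partial\bB_\rho(p)$ carry $\|T\|$ or $\HH^{m-1}\res\Gamma$ mass. On the complement of this countable set, the $\phi_\epsilon$-term converges to $mI(\rho)$, and the two $\phi_\epsilon'$-terms converge to $-\rho I'(\rho)$ and $G'(\rho)$, respectively; the $\overrightarrow{H}_T$ and $\Gamma$ integrals pass to their analogues over $\bB_\rho(p)$ by dominated convergence. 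Combining everything (with the sign convention for $\overrightarrow{\eta}$ used in Theorem \ref{t:monotonicityformula}) yields, for a.e. $\rho$,
\[
\rho I'(\rho)-mI(\rho)\;=\;G'(\rho)+\int_{\bB_\rho(p)}y\cdot\overrightarrow{H}_T\,d\|T\|+Q\int_{\Gamma\cap\bB_\rho(p)}y\cdot\overrightarrow{\eta}\,d\HH^{m-1},
\]
so that after dividing by $\rho^{m+1}$ the left-hand side equals $\frac{d}{d\rho}(\rho^{-m}I(\rho))$.

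Finally I would integrate the previous identity over $\rho\in[s,r]$. The $\overrightarrow{H}_T$ and $\Gamma$ terms translate verbatim into the two nested integrals on the right-hand side of the corollary (using that $\overrightarrow{H}_T$ is normal to $T$, hence $y\cdot\overrightarrow{H}_T=y^\perp\cdot\overrightarrow{H}_T$). For the $G'$-contribution I would apply a coarea/Fubini argument: expressing $G(\rho)$ as a one-dimensional integral in the radial variable and exchanging the order of integration gives
\[
\int_s^r \rho^{-m-1}G'(\rho)\,d\rho\;=\;\int_{\bB_r(p)\setminus\bB_s(p)}\frac{|y^\perp|^2}{|y|^{m+2}}\,d\|T\|,
\]
which produces the first term on the right-hand side of the statement. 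The principal technical point is the passage to the limit $\epsilon\to 0$: once one knows that $I$ and $G$ are a.e. differentiable and that $\partial\bB_\rho(p)$ is $\|T\|$- and $\HH^{m-1}\res\Gamma$-null outside a countable set of radii, everything reduces to dominated convergence. Both facts are automatic since $\|T\|$ and $\HH^{m-1}\res\Gamma$ are Radon with finite mass on compacts of $\bB_1$.
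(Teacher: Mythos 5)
Your derivation is the classical radial-cutoff argument applied to the first variation formula, which is precisely what the paper intends by citing the analogue in DDHM: the tangential divergence identity $\mathrm{div}_T X_\epsilon = m\phi_\epsilon(r)+\phi_\epsilon'(r)r-\phi_\epsilon'(r)|y^\perp|^2/r$, the passage to the limit $\epsilon\to 0$ via a.e.\ differentiability of $I,G$ and the fact that only countably many spheres carry $\|T\|$- or $\HH^{m-1}\res\Gamma$-mass, and the Fubini exchange identifying $\int_s^r\rho^{-m-1}G'(\rho)\,d\rho$ with the first error term are all correct and are essentially the paper's (uncited) proof. One bookkeeping caveat is worth making explicit: if you substitute $X_\epsilon$ into the first variation formula \emph{exactly} as printed in Theorem~\ref{t:monotonicityformula}, namely with $+Q\int_\Gamma X\cdot\overrightarrow{\eta}\,d\HH^{m-1}$, then the a.e.\ identity you obtain has $-Q\int_{\Gamma\cap\bB_\rho(p)}(x-p)\cdot\overrightarrow{\eta}\,d\HH^{m-1}$ on the right, not the $+Q$ you (and the corollary, and its use in Section~7) state. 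The mismatch originates in a sign slip in the paper's displayed first variation formula: since $\overrightarrow{\eta}$ is the inward normal determining the half-planes, the outward conormal occurring in the first variation of a current with boundary is $-\overrightarrow{\eta}$, so the consistent identity is $\delta T(X)=-\int X\cdot\overrightarrow{H}_T\,d\|T\|-Q\int_\Gamma X\cdot\overrightarrow{\eta}\,d\HH^{m-1}$, which then produces your a.e.\ identity and the corollary with the correct $+Q$. You alluded to this by writing ``with the sign convention for $\overrightarrow{\eta}$ used in Theorem~\ref{t:monotonicityformula}''; it would be cleaner to carry the correction through explicitly rather than absorbing it into a remark.
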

\begin{proof}[Proof of Theorem \ref{t:monotonicityformula} and Corollary \ref{c:monotonerror}]
The proof follows the same steps as Theorem 3.2 in \cite{DDHM}. However, the key difference is that we allow for arbitrary boundary multiplicity, whereas \cite{DDHM} considers only boundaries with multiplicity $1$. The first two statements are a consequence of Allard interior and boundary monotonicity formulas for stationary varifolds. 

The first variation formula is established in the same way as in \cite{DDHM}, which we briefly sketch. We first obtain the expression  $\delta T= \overrightarrow{H}_{T} |\!|T|\!| + \delta T_s$ where $\delta T_s$ is a singular vector valued measure supported in $\Gamma$. This leads to the identity
\begin{equation}\label{eq:firstvariationformulaproof}
\delta T(x)=-\int_{\bB_1} X \cdot \overrightarrow{H}_{T}(x)d|\!|T|\!|(x)+Q \int_{\Gamma \cap \bB_1} X \cdot \overrightarrow{\eta}(x) d||
\delta T_s|\!|.
\end{equation}

As a consequence of the monotonicity formula we get that $|\!|\delta T_s|\!|$ is an absolutely continuous measure with respect to $\HH^{m-1}$. The density of the singular measure and vector $\overrightarrow{\eta}$ are determined by the $\HH^{m-1}$ normal and density at the blowups.

In this setting, there exists a stratification of the tangent cones at the boundary analogous to all other stratification theorems from \cite{White97}. See \cite[Section 3.2]{DDHM} for more details. In this setting, we obtain $\HH^{m-1}$-almost everywhere there exists a tangent cone with an $m-1$ dimensional spine. By the classification of $1$d area minimizing cones, we conclude that $\HH^{m-1}$ almost everywhere, there exists a tangent cone which is either an open book or a flat two sided cone.

We consider the first variation formula in the case that $C$ is an open book or $C$ is two-sided flat.

If $C=\sum_{i=1}^N Q_i\a{H_i}$ where $V$ is the common spine and $\overrightarrow{\eta_i}$ is the normal vector to the spine that determines $H_i$, then
\begin{equation}
\delta C(X)= \sum_{i=1}^N Q_i\int_{V} \left(X \cdot \overrightarrow{\eta_i}\right) d\HH^{m-1}.
\end{equation}
If $C=(Q+Q^*)\a{H}-Q^*\a{H^-}$ then 
\begin{equation}
\delta C(X)= Q\int_{V} \left(X \cdot \eta_{H}\right) d\HH^{m-1}.
\end{equation}
Where $\eta_{H}$ is the normal to $V$ which determines $H$.

In the first case, we define $\overrightarrow{\eta}=\sum_{i=1}^N\frac{Q_i\eta}{Q}$. In the second case, $\overrightarrow{\eta}=\eta_{H}$.

By combining \eqref{eq:firstvariationformulaproof} with the absolute continuity of $|\!|\delta T_s |\!|$ with respect to $\HH^{m-1}$ we obtain $\HH^{m-1}$ almost everywhere by Lebesgue differentiation theorem that $\overrightarrow{\eta}(p)$ agrees with the vector $\overrightarrow{\eta}$ for the tangent cones $C$ in the $m-1$- strata.

From the above discussion, we conclude Theorem \ref{t:monotonicityformula}. The proof of Corollary \ref{c:monotonerror} as a consequence of Theorem \ref{t:monotonicityformula} is analogous to \cite{DDHM}.
\end{proof}
\end{subsection}
\end{section}

\begin{section}{Classification of area minimizing cones of density $Q/2$ with boundary}
In this section, we prove the following classification of area minimizing cones of density $Q/2$ with boundary:

\begin{theorem}[Classification of area minimizing cones with density $Q/2$]\label{t:ClassificationTgtConesNConvex}

Let $C$ be an $m$-dimensional area minimizing cone in $\RR^{m+n}$. Assume that $\partial C=Q\a{\RR^{m-1} \times \{0\}}$ for some positive integer $Q$.
Then $\Theta(C,0) \geq Q/2$. Moreover, equality holds if and only if $C$ is an open book.
\end{theorem}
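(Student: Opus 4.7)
My plan is to exploit the circular projection $\pi_\circ\colon \RR^{m+n}\to\RR^m$, $\pi_\circ(x,y)=(x,|y|)$, recalled from Section 1.3. Four facts will be used repeatedly: $\pi_\circ$ is $1$-Lipschitz, it preserves distance from the origin (so $\pi_\circ^{-1}(\bB_r\cap\RR^m_+)=\bB_r$ where $\RR^m_+:=\RR^{m-1}\times[0,\infty)$), it fixes $V=\RR^{m-1}\times\{0\}$ pointwise, and it is smooth off $V$ with differential at $(x,y)$ equal to the orthogonal projection onto the $m$-plane $V\oplus\operatorname{span}(y)$.

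The first step is to identify $\pi_{\circ\#}C$ as an integral $m$-current supported in $\RR^m_+$. Commuting pushforward with boundary (using $\pi_\circ|_V=\operatorname{id}$) gives $\partial\pi_{\circ\#}C=\pi_{\circ\#}\partial C=Q\a{V}$. On the connected open set $\RR^m_+\setminus V$ there is no boundary, so the constancy theorem forces $\pi_{\circ\#}C=f_0\a{\RR^m_+}$ there for some $f_0\in\mathbb{Z}$, and matching the boundary on $V$ pins $f_0=Q$ with the natural orientation. Since an $m$-dimensional integral current cannot carry mass on the $(m-1)$-dimensional set $V$, one has $\pi_{\circ\#}C=Q\a{\RR^m_+}$ globally.

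Second, the $1$-Lipschitz pushforward mass inequality together with $\pi_\circ^{-1}(\bB_r\cap\RR^m_+)=\bB_r$ yields
\[
\|C\|(\bB_r)\geq\mathbf{M}\bigl(\pi_{\circ\#}C\res(\bB_r\cap\RR^m_+)\bigr)=Q\cdot\HH^m(\bB_r\cap\RR^m_+)=\tfrac{Q}{2}\omega_m r^m,
\]
which gives $\Theta(C,0)\geq Q/2$. For the equality case, suppose $\Theta(C,0)=Q/2$. Since $C$ is a cone, $\|C\|(\bB_r)=\tfrac{Q}{2}\omega_m r^m$ for every $r>0$, so the previous mass inequality is a strict equality on every scale. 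Hence the Jacobian $J_{T_pC}\pi_\circ$ equals $1$ at $\|C\|$-a.e.\ regular $p=(x,y)$ with $y\neq 0$. Because $D\pi_\circ|_p$ is the orthogonal projection onto the $m$-plane $V\oplus\operatorname{span}(y)$, preserving area on the $m$-dimensional plane $T_pC$ forces $T_pC=V\oplus\operatorname{span}(y)$. Two consequences follow: every regular tangent of $C$ contains $V$, so $\operatorname{spt}(C)$ is $V$-invariant and $C=V\times D$ for a $1$-dimensional integer rectifiable cone $D$ in $V^\perp$; and the tangent to $D$ at every regular point is radial, forcing $D=\sum_i Q_i\a{L_i}$ for finitely many oriented half-lines $L_i$ from $0$ with $Q_i\in\mathbb{Z}$. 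Thus $C=\sum_i Q_i\a{H_i}$ with $H_i=V\oplus L_i$ distinct half-planes through $V$. The boundary condition gives $\sum Q_i=Q$, while comparing the total masses forces $\sum|Q_i|=Q$, so every $Q_i\geq 0$ and $C$ is an open book.

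The main technical obstacle I anticipate is the rigidity step: converting the pointwise a.e.\ identification $T_pC=V\oplus\operatorname{span}(y)$ into the global splitting $C=V\times D$ with $D$ radial. The idea is that an integer rectifiable current whose approximate tangent at every regular point contains a fixed subspace $V$ has support which is a union of $V$-cosets and is $V$-invariant as a current, after which the $1$-dimensional radiality of $D$ is immediate from constancy along each radial ray. Apart from this structural step, the argument uses only the constancy theorem and the $1$-Lipschitz pushforward mass inequality; notably, area minimality of $C$ enters only through the assumption that $C$ is a cone and is integer rectifiable with approximate tangents a.e.
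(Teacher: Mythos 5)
Your argument follows the paper's \emph{alternative} proof of Theorem~\ref{t:ClassificationTgtConesNConvex} (the paper also gives a first proof by dimension reduction, which you do not replicate). The common core is the circular projection $\pi_\circ$, the constancy theorem giving $\pi_{\circ\#}C=Q\a{\RR^m_+}$, and the $1$-Lipschitz mass comparison for $\Theta(C,0)\geq Q/2$. For the equality case the paper packages the rigidity via the calibration form $\omega=\pi_\circ^{\#}\Omega=dr\wedge de_1\wedge\cdots\wedge de_{m-1}$: the chain of equalities forces $\langle\omega,\vec{C}\rangle=1$ a.e., which pins down $\vec{C}_p$ \emph{including orientation} in one stroke. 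You instead extract $J_{T_pC}\pi_\circ=1$ a.e., which identifies the unoriented plane $T_pC=V\oplus\operatorname{span}(y)$, and then recover positivity of the $Q_i$ a posteriori from $\sum Q_i=Q=\sum|Q_i|$; this counting argument is valid and is a nice alternative to the orientation coming from the calibration. The one place where the paper does real work that your sketch leaves open is precisely what you flag as the ``rigidity step'': passing from the pointwise a.e.\ identification of $T_pC$ to the global half-plane decomposition. The paper's device is the smooth, $0$-homogeneous map $\sigma(x,y)=y/|y|$: since $T_pC=V\oplus\operatorname{span}(y)$ a.e., the tangential derivative $D_C\sigma$ vanishes, so $\sigma$ is locally constant on the regular part of $\operatorname{spt}(C)$ away from $V$, and unique continuation for area-minimizers (invoked in the proof of Theorem~\ref{t:classificationcones}) upgrades this to the statement that each connected component is an entire half-plane with constant multiplicity. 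Your proposed route (``$V\subset T_pC$ a.e.\ $\Rightarrow$ $C$ is $V$-invariant $\Rightarrow$ $C=V\times D$ with $D$ radial'') reaches the same conclusion and is morally the same argument, but it is stated at the level of a claim; the $\sigma$-map is a convenient concrete way to carry it out.
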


The first proof follows an argument inspired by Federer's dimension reduction, while the second proof uses structural information gained from the projection  $\pi_{\circ}: \RR^{m+n-1} \rightarrow \RR^{m-1} \times [0,\infty)$ defined as \begin{equation*}
\pi_{\circ}(x,y)=(x,|y|) \; \; \textup{where} \; (x,y) \in \left( \RR^{m-1} \times  \RR^{n+1} \right).
\end{equation*}
This projection is well adapted to the this boundary setting, ${\pi_{\circ}}_{\#}C=Q\a{\RR^{m-1}\times \RR^{+}}$ for any area minimizing cone $C$ with $\partial C=Q\a{\RR^{m-1}\times \left\{0\right\}}$.

\begin{proof}[Proof of Theorem \ref{t:ClassificationTgtConesNConvex}]
Let $C$ be an area minimizing cone with $\partial C=Q \a{\RR^{m-1} \times \left\{ 0\right\}}$. We denote $V=\RR^{m-1} \times \left\{ 0\right\}$. 

By Theorem \ref{t:monotonicityformula}, the classical monotonicity formula holds without error terms for any area minimizing current $T$ on $\RR^{m+n}$ with $\partial T=Q\a{V}$. Indeed, in the formulas of Theorem \ref{t:monotonicityformula} we have $\mathbf{A}_{V}=\mathbf{A}_{\Sigma}=0.$ Alternatively, if we consider $T$ as a varifold and reflect through $V$ we obtain a stationary varifold, which has twice the mass at every ball centered at $V$, for which the classical monotonicity formula for stationary varifolds holds. Moreover we also have, as usual, that if $p \in V$ and 
\begin{equation}
|\!|T|\!|(\bB_r(p))=\Theta_{b}(T,p)
\end{equation}
 for all $r \leq r_0$, then $T$ is a cone at $p$.
 
To establish $\Theta(C,0) \geq Q/2$, assume by contradiction that $\Theta(C,0)=\Theta<Q/2$. Since $C$ is a cone then $\Theta(C,p)=\Theta(C,rp)$ for every $p \in V$ such that $p \neq 0$ and $r>0$. By upper semi-continuity of the density for every $p \in V$ we must have that $\Theta(C,p) \leq \Theta(C,0).$ Consequently, any blowup at $p$ results in another area minimizing cone $C_1$ with density strictly lower than $\Theta$. However $C_1$ must be a product of a cone $C_1' \times \a{L}$ 
where up to rotation $\partial C_1'= \a{\RR^{m-2} \times \left \{ 0\right\}}$.

By repeating the argument, we are lowering the density but increasing the number of symmetries. We obtain an area minimizing cone $C_{m-1}$ with 
\begin{equation}
    C_{m-1}=\a{\RR^{m-1}}  \times C_{m-1}'
\end{equation} where $C_{m-1}'$ is a $1$d area minimizing cone with boundary $Q\a{0}$. 

The only options for such cones are a union of half-lines whose multiplicities sum up to $Q$. The cone $C_{m-1}$ must have density equal to $Q/2$, which is a contradiction since the density is at most $\Theta$ which is strictly lower than $Q/2$.

We note that if $C$ is an open book then trivially $\Theta(C,0)=Q/2$. Thus it remains to show that if $\Theta(C,0)=Q/2$ then $C$ is an open book.

We know that for every $p \in V$ 
\begin{equation}
\lim_{r \rightarrow \infty} \Theta(C,p,r)= \lim_{r \rightarrow \infty} \frac{ |C|(\bB_r(p))}{\omega_m r^m} \leq \lim_{r \rightarrow \infty}\frac{(r+|p|)^m}{r^m}\frac{|C|(\bB_{r+|p|})}{\omega_m (r+|p|)^m}=Q/2.
\end{equation}

This implies that we must have the equality in the monotonicity formula centered at every $p \in V$, which implies that $C$ is independent of the directions of $V$ and thus $C$ comes from a $1$d area minimizing cone. As we have seen, this implies $C$ is an open book.
\end{proof}
\begin{proof}[Alternative Proof of Theorem \ref{t:ClassificationTgtConesNConvex}]

Define $\pi_{\circ}: \RR^{m+n-1} \rightarrow \RR^{m-1} \times [0,\infty)$ as 
\begin{equation*}
\pi_{\circ}(x,y)=(x,|y|) \; \; \textup{where} \; (x,y) \in \left( \RR^{m-1} \times  \RR^{n+1} \right).
\end{equation*}

We observe that $ \pi_{\circ}$ is $1$-Lipschitz:
\begin{align*}
| \pi_{\circ}(x_1,y_1)- \pi_{\circ}(x_2,y_2)|^2=&\left| (x_1-x_2,|y_1|-|y_2|)\right|^2= |x_1-x_2|^2+\left(|y_1|-|y_2| \right)^2 \\ \leq&|x_1-x_2|^2+\left(|y_1-y_2| \right)^2 = \left|(x_1-x_2,y_1-y_2)\right|^2.
\end{align*}

By the constancy theorem, $ {\pi_{\circ}}_{\#}C=Q_0\a{\RR^{m-1} \times \RR^{+}}$ where $Q_0$ is a positive integer. Since $\partial {\pi_{\circ}}_{\#}T=Q\a{\RR^{m-1} \times \{0\}}$, it follows that $Q=Q_0$.

Since $\pi_{\circ}$ is $1$-Lipschitz
\begin{equation}
\Theta(C,0)=\frac{|C|(\bB_1)}{\omega_m} \geq \frac{|{\pi_{\circ}}_{\#}C|(\pi(\bB_1))}{\omega_m}=\frac{Q}{2}.
\end{equation}
This proves the desired inequality. The rest of this proof will be spent studying the equality case. We note that if $C$ is an open book then trivially $\Theta(C,0)=Q/2$. Thus it remains to show that if $\Theta(C,0)=Q/2$ then $C$ is an open book.

We consider the differential form 
\begin{equation}
\omega:=dr \wedge de_1 \wedge ... \wedge de_{m-1}
\end{equation}
where given $(x,y) \in \RR^{m-1} \times \RR^{n+1}$ we take $r=|y|.$
By definition $d\omega=0$ outside of $V$ and $\left \|\omega \right\|\leq 1$. Let $\Omega$ be the volume form in $\RR^{m-1} \times [0,\infty)$. We notice that $\omega = \pi_{\circ}^{\#}\Omega$ and $\|\omega\|=1$ outside of $V$.

We have that 
\begin{equation}
|C|(\bB_1)=\frac{Q\omega_{m}}{2}= \int_{\pi_{\circ}(\bB_1)}\langle \overrightarrow{\Omega},\overrightarrow{{\pi_{\circ}}_{\#}(C)}\rangle d|{\pi_{\circ}}_{\#}(C)|=\int_{\bB_1} \langle \omega,\overrightarrow{C} \rangle d|C|.
\end{equation}
Given that $\left\|\omega\right\|=1$ and $\left\|\overrightarrow{C}\right\|=1$ we must have that
$\langle \omega,\overrightarrow{C} \rangle \leq 1$. Since the equality holds this implies that $\langle\omega(p), \overrightarrow{C}_p \rangle=1$ for $|C|$ almost every $p$ and thus $\omega$ is a calibration for $\overrightarrow{C}$.

We see that this implies that \begin{equation*}
    \overrightarrow{C}_p= \left(\frac{\sum_{k=0}^n x_{m+k}e_{m+k}}{r}\right) \wedge e_1 \wedge ... \wedge e_{m-1}
\end{equation*} for almost every $p$. This is because  both of them have norm $1$ and their inner product is equal to $1$, by Cauchy Schwartz they must be a multiple of each other (which implies they are equal up to a sign, but once we know this we conclude that they must have the same orientation).

In order to conclude we consider a new map $
\sigma: \RR^{m-1} \times \left(\RR^{n+1} \setminus \left\{0 \right\}\right) \rightarrow \RR^{n+1}$ defined as
\begin{equation*}
\sigma(x,y)=\frac{y}{|y|}  \; \; \textup{where} \; (x,y) \in \left( \RR^{m-1} \times  \left(\RR^{n+1} \setminus \left\{0 \right\}\right)\right).
\end{equation*}

We note that since $T_p(C)=V \oplus \RR^{+}p$ for almost every $p \in C$ we must have that the tangential gradient of $\sigma$ along $C$ vanishes (i.e., $D_{C}\sigma \equiv 0$). This implies that $\sigma \res C$ is locally constant and thus every connected component of $C$ outside of $V$ agrees with a half-plane passing through $V$. We easily conclude from this that $C$ is an open book. See the proof of  \ref{t:classificationcones}.
\end{proof}
\end{section}
\begin{section}{Convex barrier assumption}
In this section, we study the convex barrier assumption and we classify the tangent cones we obtain in that setting.

We recall the definition of the convex barrier in Euclidean space.
\begin{assumption}[Convex Barrier] \label{convexbarrier2} Let $\Omega \subset \RR^{m+n}$ be a domain such that $\partial \Omega$ is a $C^2$ uniformly convex submanifold of $\RR^{m+n}$. We say that $\sum_{i=1}^N Q_i \a{\Gamma_i}$ has a convex barrier if $Q_i$ are positive integers and $\Gamma_i \subset \partial \Omega$ are disjoint $C^2$ closed oriented submanifolds of $\partial \Omega$. In this setting we consider $T$ an area minimizing current with $\partial T=\sum_i Q_i \a{\Gamma_i}.$ 
\end{assumption}
The main theorem of this section will be \begin{theorem}\label{t:convexbarrierdensity}
Let $T$ be as in either Assumption \ref{convexbarrier} (convex barrier) or Assumption \ref{a:convexbarrierlocal} (local convex barrier), then every tangent cone to $T$ at a boundary point $q$ is an open book. Moreover, we have:
\begin{enumerate}[a)]
    \item if $q \in \Gamma_i$ in the first case, then $\Theta(T,q)=Q_i/2$,

    \item if $q \in \Gamma$ in the second case, then $\Theta(T,q)=Q/2$.
\end{enumerate}
\end{theorem}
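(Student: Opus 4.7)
The plan is to combine the lower density bound from the classification Theorem \ref{t:ClassificationTgtConesNConvex} with an upper bound derived from the convex barrier, and then to invoke the equality case of the classification. Fix a boundary point $q\in\Gamma_i$ (in the global convex barrier case) or $q\in\Gamma$ (in the local case). By the monotonicity formula Theorem \ref{t:monotonicityformula}, the density $\Theta(T,q)$ exists and tangent cones at $q$ are well-defined. Let $C$ be any such tangent cone. A standard blow-up argument shows that $C$ is an $m$-dimensional area minimizing cone in $\RR^{m+n}$ with $\partial C=Q_i\a{V}$, where $V=T_q\Gamma_i$ is the tangent $(m-1)$-plane to the boundary. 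The $C^2$ uniform convexity of $\partial\Omega$ implies that the rescalings $r^{-1}(\bar\Omega-q)$ converge in local Hausdorff distance to the closed half-space $\bar H=\{x:x\cdot\nu\leq 0\}$, where $\nu$ is the outward normal to $\partial\Omega$ at $q$; since $\textup{spt}(T)\subset\bar\Omega$, one obtains $\textup{spt}(C)\subset\bar H$, with $V\subset\partial H$.

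Applying Theorem \ref{t:ClassificationTgtConesNConvex} to $C$ gives the lower bound $\Theta(T,q)=\Theta(C,0)\geq Q_i/2$, with equality if and only if $C$ is an open book. For the matching upper bound the plan is to construct an explicit competitor for $T$ on small balls $\bB_r(q)$. Choose an $m$-plane $P_m\subset T_q\partial\Omega$ containing $V$ and let $W\subset P_m$ be the half-plane with $\partial W=V$. By $C^2$-convexity, the portion of $\partial\Omega$ graphical over $W\cap\bB_r(q)$ is a disk $D_r\subset\partial\Omega\cap\bar\Omega$ of mass $|D_r|=(\omega_m/2)r^m(1+O(r))$ whose inner boundary $\tilde\Gamma_r$ is a curve with $T_q\tilde\Gamma_r=V$. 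Bridging $\tilde\Gamma_r$ to $\Gamma_i\cap\bB_r(q)$ by a thin $m$-current of mass $O(r^{m+1})$ (using the $C^2$ displacement between the two curves near $q$) and correcting the outer boundary of $Q_i\a{D_r}$ against the slice $\sigma_r:=\langle T,|\cdot-q|,r\rangle$ via an isoperimetric filling, one obtains a valid competitor $T_r'$ with $\partial T_r'=\partial(T\res\bB_r(q))$ and mass $|T_r'|\leq (Q_i\omega_m/2)r^m(1+o(1))$. Area minimality then gives $\Theta(T,q)\leq Q_i/2$.

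Combining the two inequalities yields $\Theta(T,q)=\Theta(C,0)=Q_i/2$, so by the equality case of Theorem \ref{t:ClassificationTgtConesNConvex}, every tangent cone $C$ at $q$ is an open book. The local convex barrier case follows by first straightening $\partial\Omega$ via a $C^2$ change of coordinates near $q$ and running the same argument, absorbing the resulting metric perturbations into lower order error terms. The main obstacle is the isoperimetric correction on $\partial\bB_r(q)$ used in the upper bound: a naive cone-from-origin filling yields only the tautological estimate $\omega_m r^m(Q_i+\Theta(T,q))$, so one must either adapt the choice of the half-plane $W$ to the radial first moment of $\sigma_r$ so that cancellation forces the correction to have mass $o(r^m)$, or equivalently iterate the estimate across a sequence of scales and use the approximate cone structure guaranteed by monotonicity.
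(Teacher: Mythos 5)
Your proposal takes a genuinely different route from the paper, and the route has a gap that you flag but do not close. The paper never compares masses: it observes that under either convex barrier hypothesis every tangent cone $C$ at a boundary point lies in a wedge (Lemma \ref{l:convexbarrierlocal}, Assumption \ref{a:convexbarrierlocal}), and Theorem \ref{t:ClassificationTgtConesConvexBarrier} then classifies such cones as open books directly. That classification is purely structural, resting on the winding-number argument in Theorem \ref{t:allardconvexvarifoldslemma} and Corollary \ref{c:allardconvexvarifoldslemma}: any stationary cone confined to a half-space through the spine satisfies $D_C\sigma\equiv 0$ for the angular map $\sigma(x,y)=y/|y|$, forcing it to be a union of half-planes, with the multiplicities seen to be positive by a decomposition argument. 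Once $C=\sum_j Q_j\a{H_j}$ with $\sum_j Q_j=Q_i$, the density $Q_i/2$ is automatic, so the lower/upper density sandwich is never needed.

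Your attempted upper bound $\Theta(T,q)\le Q_i/2$ via a competitor cannot close without already knowing what you are trying to prove. The obstruction is precisely the one you name: $\partial(T\res\bB_r(q))$ contains the slice $\sigma_r$ of mass $\approx m\,\Theta(T,q)\,\omega_m r^{m-1}$, while the outer boundary $Q_i\a{\partial D_r\cap\partial\bB_r(q)}$ of the half-disk has mass $\approx\tfrac{m}{2}Q_i\omega_m r^{m-1}$. If $\Theta(T,q)>Q_i/2$, the slice carries more sheets than $\partial D_r$, there is no cancellation of multiplicities, and any filling of the difference costs at least a constant times $r^m$, giving only the tautology you display. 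Neither repair you suggest resolves this: recentering $W$ on the radial first moment of $\sigma_r$ controls only an average and cannot undo multi-sheeting, and the scale-iteration alternative requires precisely the excess-decay machinery the paper develops in Sections 5--9, at which point the competitor becomes superfluous. You should adopt the paper's structural classification of wedge-constrained cones (Theorem \ref{t:ClassificationTgtConesConvexBarrier}, via Theorem \ref{t:classificationcones}) rather than a mass comparison.
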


\subsection{The convex hull property}

In Euclidean ambient space under \ref{convexbarrier2} we must have 
\begin{equation*}
\textup{spt}(T)\subset \textup{ConvexHull}(\cup_{i=1}^N \Gamma).
\end{equation*}
However, this property does not necessarily hold in arbitrary Riemannian manifolds. Indeed some global conditions of the geometry are required to reach a conclusion of that nature.

The following lemma \ref{l:convexbarriernonnegativsectional} illustrates a wider context, that of manifolds non-positive sectional curvature,  where the mentioned property holds. The argument does not rely on the differentiable structure and thus it is also translatable to the context of metric spaces with non-positive sectional curvature. For another relevant reference on the convex barrier property see  Chapter 4 \cite{simon2014introduction} Lemma 6.1 and Theorem 6.2. 

\begin{lemma}[Convex barrier with curvature conditions]\label{l:convexbarriernonnegativsectional} Let $\Sigma$ be a manifold with non-positive sectional curvature. Let $\Gamma$ be an integral current in $\Sigma$ with $\partial \Gamma=0$. Let $T$ an area minimizing current in $\Sigma$ with $\partial T=\Gamma$. Then
\begin{equation*}
\textup{spt}(T) \subset \textup{ConvexHull}\left(\textup{spt}(\Gamma)\right).
\end{equation*}
\end{lemma}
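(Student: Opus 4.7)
The plan is to exploit the abundance of convex functions in non-positively curved geometry, converting the statement into a vanishing result via the first variation formula. Set $K := \textup{ConvexHull}(\textup{spt}(\Gamma))$; after lifting to the universal cover (which is Cartan--Hadamard) if necessary, the function $f(x) := d(x, K)$ is convex on $\Sigma$, smooth on $\Sigma \setminus K$, satisfies $|\nabla^\Sigma f| \equiv 1$ off $K$, and vanishes on $\textup{spt}(\Gamma) \subset K$. The goal is to use these properties to force $\textup{spt}(T) \subset K$.

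Pick a smooth convex non-decreasing $g \colon [0,\infty) \to [0,\infty)$ with $g \equiv 0$ on $[0, \delta]$ and compact support, and apply the first variation of $T$ with the vector field $X := \nabla^\Sigma(g \circ f)$. Since $X$ vanishes in a neighborhood of $\Gamma$ and $T$ has vanishing mean curvature in $\Sigma$, both the boundary term and the mean-curvature term drop, leaving
\begin{equation*}
0 \;=\; \int \textup{div}_T X \, d||T|| \;=\; \int \Bigl[\, g'(f)\, \textup{tr}_T \textup{Hess}^\Sigma(f) \;+\; g''(f)\, |\nabla^{\top} f|^2 \,\Bigr]\, d||T||,
\end{equation*}
where $\nabla^{\top} f$ denotes the component of $\nabla^\Sigma f$ tangent to $\vec{T}$. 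Convexity of $f$, a consequence of non-positive sectional curvature via Toponogov/Alexandrov comparison, gives $\textup{tr}_T \textup{Hess}^\Sigma(f) \geq 0$, so both integrands are non-negative and must vanish identically. Varying $g$ so that $g''$ concentrates near each value $R > 0$, I conclude $|\nabla^{\top} f|^2 = 0$ for $||T||$-a.e.\ point in $\{f > 0\}$, i.e., the approximate tangent plane of $T$ is orthogonal to $\nabla^\Sigma f$ wherever $f > 0$.

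Geometrically this orthogonality means $f$ is locally constant along $\textup{spt}(T) \setminus K$, so for each $R > 0$ the set $\textup{spt}(T) \cap \{f = R\}$ is clopen in $\textup{spt}(T) \setminus K$. Consequently $\textup{spt}(T)$ partitions into a connected component $\mathcal{C}_0$ containing $\textup{spt}(\Gamma)$ (which by continuity of $f$ and local constancy lies in $\{f = 0\} = K$) together with components $\mathcal{C}_i \subset \{f = R_i\}$ with $R_i > 0$. Writing $T = T_0 + \sum_i T_i$ via restriction to these clopen pieces, each $T_i$ is an integral current with $\partial T_i = 0$ (since $\partial T = \Gamma \subset \mathcal{C}_0$), and minimality of $T$ forces each $T_i$ to be area minimizing in its own right; comparing with the trivial competitor yields $M(T_i) \leq M(0) = 0$, so $T_i = 0$. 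Hence $\textup{spt}(T) = \mathcal{C}_0 \subset K$.

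The principal obstacle is justifying the first variation across $K$, where $f$ is only Lipschitz; this is circumvented precisely by the cutoff $g \equiv 0$ near $0$, which makes $X$ smooth and compactly supported in $\{f \geq \delta\}$. A secondary technicality is that the decomposition of $T$ along connected components of $\textup{spt}(T)$ produces legitimate integral sub-currents with the expected boundaries, which follows from the rectifiable structure and the fact that each component is clopen in $\textup{spt}(T)$ (so one may thicken each component to disjoint open neighborhoods in $\Sigma$ whose boundaries avoid $\textup{spt}(T)$).
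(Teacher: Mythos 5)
Your proof takes a genuinely different route from the paper's. The paper argues by mass comparison: it shows the nearest-point projection $f_r$ onto a geodesic ball $B_r\supset\textup{spt}(\Gamma)$ is $1$-Lipschitz and strictly decreases $m$-dimensional area outside $B_r$, so $(f_r)_{\#}T$ is an admissible competitor whose mass is $\leq \mathbf{M}(T)$ with equality only when $\textup{spt}(T)\subset B_r$; intersecting over all such balls gives the convex hull. You instead run a barrier/stationarity argument, using the distance $f$ to the convex hull as a convex function and testing the first variation with $\nabla^\Sigma(g\circ f)$. This is the classical varifold maximum principle strategy and it is perfectly legitimate here; it is also arguably the more robust one (it does not need the precise Jacobian contraction computation), while the paper's projection argument is more elementary and does not require convexity of $f$ or any discussion of the singular set.

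There are, however, two concrete problems with the way you set it up. First, the cutoff you request cannot exist: a smooth non-decreasing \emph{convex} $g$ on $[0,\infty)$ that vanishes on $[0,\delta]$ and is not identically zero has $g'$ non-decreasing and eventually positive, so $g$ is eventually strictly increasing and cannot have compact support. The fix is standard but needs to be said: take $g$ convex, $g\equiv 0$ on $[0,\delta]$, with $g''$ compactly supported (so $g$ is eventually \emph{linear}, not constant), so that $X=g'(f)\nabla^\Sigma f$ is globally bounded with bounded $\textup{div}_T X$; then plug in $\chi_R X$ for a standard cutoff $\chi_R$ and let $R\to\infty$, using finiteness of $\|T\|$ to kill the cutoff error. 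Second, the passage from $\nabla^{\top}f=0$ $\|T\|$-a.e.\ to a clopen decomposition of $\textup{spt}(T)$ along level sets of $f$ is not rigorous as written: $\textup{spt}(T)$ has a singular set, and local constancy on the regular part alone does not immediately give local constancy on $\textup{spt}(T)$. A cleaner conclusion avoids connectivity entirely: by the coarea formula, $\int_0^\infty\mathbf{M}\langle T,f,t\rangle\,dt=\int_{\{f>0\}}|\nabla^{\top}f|\,d\|T\|=0$, so $\langle T,f,t\rangle=0$ for a.e.\ $t>0$; hence for a.e.\ $t>0$ the restriction $T\res\{f>t\}$ is an area-minimizing cycle (the minimality is inherited by the usual cut-and-replace competitor argument), forcing $T\res\{f>t\}=0$, and letting $t\downarrow 0$ gives $\textup{spt}(T)\subset\{f=0\}=K$. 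With these two repairs your argument is a correct, and quite natural, alternative to the paper's.
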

\begin{proof}
Given a geodesic ball $\bB_r^{\Sigma}=\left\{x: d(x,p)<r\right\}$ we take the projection $f_{r}$ from $\Sigma$ to $\bB_r^{\Sigma}$. This is well-defined since $\Sigma$ is uniquely geodesic. Since the convex hull of a set is the intersection of balls which contain it, it is enough to show for every geodesic ball $\bB_r^{\Sigma}$, if $\textup{spt}(\Gamma) \subset \bB_r^{\Sigma}$, then $\textup{spt}(T) \subset \bB_r^{\Sigma}$.

We wish to show that for every ball $B_r$ we have 
\begin{equation*}
\mathbf{M}((f_r)_{\#}(T)) \le \mathbf{M}(T)
\end{equation*} 
with equality iff $\textup{spt}(T) \subset \bB_r^{\Sigma}$. 

We will show that $f_r$ is $1$-Lipschitz and that if $A \subset \partial \bB_s^{\Sigma}$ then
\begin{equation*}
\HH^{m}(f_r(A)) \leq \left(\frac{r}{s}\right)^m \HH^{m}(A).
\end{equation*}

Assume $d(p,x) \geq d(p,y)$. Let $x'$ be such that $d(p,x')=\max \left\{d(p,y),r\right\}$. We take a comparison triangle $p_0$, $x_0$, $y_0$ in Euclidean space and $x'_0$ is such that $|x'_0-p_0|=d(x',p_0)$. By basic Euclidean geometry, $|x'_0-y_0| \leq |x_0-y_0|$. By comparison $d(x',y)\leq |x'_0-y_0|$ and thus $d(x',y) \leq d(x,y)$. 

We reduce then to show the $1$-Lipschitz property of $f_r$ to the case where $d(p,x)=d(p,y)$.

Given two points $x,y$, we take the two geodesics parameterized by arclength $\alpha$ and $\beta$ connecting $p$ to $x$ and $p$ to $y$. We must have $f_r(x)=\alpha(r)$ and $f_r(y)=\beta(r)$.

The non-positive sectional curvature condition implies that $d(\alpha(t),\beta(t))$ is a convex function and thus using $d(\alpha(0),\beta(0))=0$ we get
\begin{align*}
 d(f_r(x),f_r(y))= d(\alpha(r),\beta(r)) \leq& \left(\frac{s-r}{s} \right)d(\alpha(0),\beta(0))+ \left(\frac{r}{s}\right)d(\alpha(s),\beta(s))\\=&\left(\frac{r}{s}\right)d(x,y).
\end{align*}

This implies the desired mass bound.

Assume that
\begin{equation*}
\mathbf{M}((f_r)_{\#}(T))= \mathbf{M}(T)
\end{equation*} 
then we must have that for every
$s$ with $r \leq s$ 
\begin{equation*}\mathbf{M}((f_r)_{\#}(T))=\mathbf{M}((f_s)_{\#}(T))
\end{equation*}
this can only happen if $\textup{spt}(T) \subset \bB_r^{\Sigma}$.
\end{proof}
\subsection{A local version of the convex barrier}
The condition in Assumption \ref{convexbarrier2} is global and we will instead consider local versions of this condition. We introduce some notation on wedges.
\begin{definition}
Given an $(m-1)$-dimensional plane $V \subset \mathbb{R}^{m+n}$ we denote by $\mathbf{p}_V$ the orthogonal projection onto $V$. Given additionally a unit vector $\nu$ normal to $V$ and an angle $\vartheta \in\left(0, \frac{\pi}{2}\right)$ we then define the wedge with spine $V$, axis $\nu$ and opening angle $\vartheta$ as the set
$$
W(V, \nu, \vartheta):=\left\{y:\left|y-\mathbf{p}_V(y)-(y \cdot \nu) \nu\right| \leq(\tan \vartheta) y \cdot \nu\right\} .
$$
\end{definition}

Lemma 2.3 of \cite{delellis2021allardtype}, which we cite below, gives us the desired local geometrical information.
\begin{lemma}\label{l:convexbarrierlocal}Let $T$, $\sum_{i=1}^N Q_i\a{\Gamma_i}$, $\Omega$ be as in \ref{convexbarrier2}. Then there is a $0<\vartheta<\frac{\pi}{2}$ (which is independent of the point in $\cup_{i=1}^N\Gamma_i$) such that the convex hull of $\cup_{i=1}^N\Gamma$ satisfies
$$
\textup{ConvexHull}(\cup_{i=1}^N\Gamma_i) \subset \bigcap_{q \in \Gamma}\left(q+W\left(T_q \Gamma, \nu(q), \vartheta\right)\right)
$$
\end{lemma}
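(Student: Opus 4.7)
The plan is to take $\nu(q)$ to be the inward unit normal to $\partial\Omega$ at $q$. Since $\Gamma_i\subset\partial\Omega$ we have $T_q\Gamma_i\subset T_q\partial\Omega$, so $\nu(q)\perp T_q\Gamma_i$ and $\nu(q)$ is a legitimate axis for a wedge with spine $T_q\Gamma_i$. The wedge $W(T_q\Gamma_i,\nu(q),\vartheta)$ is a convex cone with apex at the origin, so $q+W$ is convex; to conclude that $\mathrm{ConvexHull}(\bigcup_i\Gamma_i)\subset q+W(T_q\Gamma_i,\nu(q),\vartheta)$ it is therefore enough to verify that every $p\in\bigcup_j\Gamma_j$ lies in this wedge, and to do so with an opening angle $\vartheta<\pi/2$ independent of $q\in\Gamma$. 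Concretely, the task becomes the uniform bound
\begin{equation*}
R(q,p):=\frac{\bigl|(p-q)-\mathbf{p}_{T_q\Gamma_i}(p-q)-((p-q)\cdot\nu(q))\,\nu(q)\bigr|}{(p-q)\cdot\nu(q)}\le \tan\vartheta,
\end{equation*}
the denominator being non-negative because $\overline\Omega$ lies on the inward side of the supporting hyperplane $T_q\partial\Omega$.

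I would then split the analysis into two regimes. In the \emph{far regime} $|p-q|\ge\delta$, strict convexity of $\partial\Omega$ combined with compactness of $\Gamma$ yields $(p-q)\cdot\nu(q)\ge c(\delta)>0$, while the numerator is bounded by $\mathrm{diam}(\Omega)$, so $R\le C(\delta)$. This covers in particular every pair $(p,q)$ with $p\in\Gamma_j$, $q\in\Gamma_i$, $i\neq j$, since the finitely many disjoint closed submanifolds are pairwise at positive distance. In the \emph{near regime}, with $p,q\in\Gamma_i$ and $|p-q|<\delta$ small, I parametrise $\Gamma_i$ locally as $p=q+x+h(x)$, where $x\in T_q\Gamma_i$ and $h\colon T_q\Gamma_i\to T_q\Gamma_i^\perp$ is a $C^2$ map with $h(0)=0$, $Dh(0)=0$, so $|h(x)|\le C|x|^2$. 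Decomposing $h(x)=\alpha(x)\nu(q)+\beta(x)$ with $\beta(x)\perp\nu(q)$, the numerator of $R$ equals $|\beta(x)|\le C|x|^2$, while uniform convexity of $\partial\Omega$ at $q$ applied to $\Gamma_i\subset\partial\Omega$ gives $(p-q)\cdot\nu(q)=\alpha(x)\ge c_0|p-q|^2\ge c_1|x|^2$. These matching quadratic orders yield $R\le C/c_1$. Compactness of $\Gamma$ together with continuity of the second fundamental forms of $\partial\Omega$ and each $\Gamma_i$ makes both constants uniform in $q$, and patching both regimes furnishes the desired $\vartheta<\pi/2$.

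The pivotal point is the near regime: as $p\to q$ on $\Gamma_i$, both numerator and denominator of $R$ vanish, and the argument succeeds only because they vanish at the same quadratic rate. The numerator is $O(|p-q|^2)$ because $p$ is confined to the $(m-1)$-dimensional submanifold $\Gamma_i$, not to the full hypersurface $\partial\Omega$, so that $p-q$ is tangent to $T_q\Gamma_i$ up to second order; the matching lower bound on the denominator requires genuine \emph{uniform} convexity of $\partial\Omega$ rather than mere strict convexity. These are exactly the two geometric hypotheses of the lemma, and matching their orders rigorously is where I expect the main effort to lie; the far regime is by contrast a soft compactness-plus-strict-convexity argument.
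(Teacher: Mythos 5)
Your proof is correct and self-contained. Note that the paper does not prove this lemma at all: it imports it verbatim as Lemma~2.3 of \cite{delellis2021allardtype}, so there is no internal argument to compare against. Your strategy — reduce to showing $\cup_j\Gamma_j$ itself sits in the (convex) wedge, split into a far regime handled by compactness and strict convexity, and a near regime where the numerator vanishes quadratically because $p$ stays on the $C^2$ submanifold $\Gamma_i$ while the denominator is bounded below quadratically by uniform convexity of $\partial\Omega$ — is exactly the argument one finds in the cited reference (and, earlier, in Allard's boundary paper for multiplicity one). Two small remarks that would tighten the write-up: first, for uniformly convex $\partial\Omega$ the bound $(p-q)\cdot\nu(q)\ge c_0|p-q|^2$ actually holds \emph{globally} for all $p,q\in\partial\Omega$ (enclose $\Omega$ in a ball of radius $1/(2c_0)$ tangent at $q$), so the near-regime denominator estimate does not need a separate smallness restriction on $\delta$; second, you should make explicit that $\delta$ is chosen smaller than the minimal pairwise distance between the disjoint compact $\Gamma_i$, so that the near regime genuinely only involves pairs on a single component.
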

The former condition forces an additional property for area minimizing cones. 
\begin{definition}[Convex barrier cone]\label{d:convexbarriercone}We say that an $m$-dimensional area minimizing cone $C$ in $\RR^{m+n}$ satisfies the convex barrier condition if, up to rotation, $\partial C= Q\a{\RR^{m-1} \times \{0\}}$ and $\exists \theta>0$:
 \begin{equation*}
 \textup{spt}(C) \subseteq \{p=(x,y)\in \RR^m \times \RR^n:|y|\leq \left(\tan \theta\right) x_m \}.
 \end{equation*}
\end{definition}
Clearly if $T$ satisfies assumption \ref{convexbarrier2} then, by the conclusions of \ref{l:convexbarrierlocal} we must have that for every $i$, a tangent cone to $T$ at $\Gamma_i$ must satisfy \ref{d:convexbarriercone} for $Q=Q_i$.

We introduce an analogue of Assumption 2.4 in \cite{delellis2021allardtype} for this setting:

\begin{assumption}\label{a:convexbarrierlocal}
Let $T, \Gamma, \Sigma$ be as in  \ref{A:general}. Assume that there exists $\nu: \Gamma \rightarrow \mathbb{S}^{n+1} \cap T\Sigma$ a map such that $\nu(q) \perp T_q \Gamma$ and $\nu(q) \in T_q \Sigma$.
We say $T$ satisfies the local convex barrier assumption if 
\begin{equation*}
 \operatorname{spt}(T) \subset \bigcap_{q \in \Gamma}\left(q+W\left(T_q \Gamma, \nu(q), \vartheta\right)\right) \cap \Sigma.
\end{equation*}
\end{assumption}
As seen by Lemma \ref{l:convexbarrierlocal}, convex barrier implies local convex barrier. Moreover, local convex barrier still implies that every tangent cone is a convex barrier cone.

\begin{subsection}{Convex barrier cones}  
We wish to classify the convex barrier cones defined in \ref{d:convexbarriercone}. We show this in the following theorem:
\begin{theorem}[Classification of area minimizing cones with convex barrier]\label{t:ClassificationTgtConesConvexBarrier}
Assume that $C$ is an area minimizing cone in $\RR^{m+n}$ 
with 
\begin{equation*}
\partial C=Q\a{\RR^{m-1} \times \left\{0\right\}}
\end{equation*}
and for some $\theta>0$
\begin{equation*}
\textup{spt}(C) \subseteq \{p=(x,y)\in \RR^m \times \RR^n:|y|\leq \left(\tan \theta\right) x_m \}=:W.
\end{equation*}
Then $C$ is an open book with each half-plane $H_i$ satisfying $H_i \subset W$.
\end{theorem}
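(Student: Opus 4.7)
By Theorem~\ref{t:ClassificationTgtConesNConvex} we already have $\Theta(C,0)\geq Q/2$, with equality if and only if $C$ is an open book, and in that case the inclusion $H_i\subset W$ for each half-plane is automatic from $\textup{spt}(C)=\bigcup_i H_i\subset W$. The theorem therefore reduces to proving the upper bound $\Theta(C,0)\leq Q/2$ under the convex barrier hypothesis, which I would approach by induction on $m$.

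The base case $m=1$ is immediate: here $V=\{0\}$, and the classification of $1$-dimensional area minimizing cones in $\RR^{n+1}$ with boundary $Q\a{0}$ gives $C=\sum Q_i\a{R_i}$ for rays $R_i$ with $\sum Q_i=Q$, so $\Theta(C,0)=Q/2$, and the convex barrier places each $R_i$ in $W$. For the inductive step ($m\geq 2$), I would analyze tangent cones on $V\setminus\{0\}$. At such a $p$, the cone property of $C$ at $0$ gives that $p$ lies on a ray of $C$, so every tangent cone $C_p$ is invariant under translations along $\RR p$ and splits as $C_p=D_p\times\RR(p/|p|)$, where $D_p$ is an $(m-1)$-dimensional area minimizing cone in $(\RR p)^\perp$ with $(m-2)$-dimensional spine $V\cap(\RR p)^\perp$ and boundary $Q\a{V\cap(\RR p)^\perp}$. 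Since $W$ is invariant under $V$-translations, the cross-section $W\cap(\RR p)^\perp$ is again a wedge of the same opening angle $\vartheta$, so $D_p$ satisfies the convex barrier in lower dimension. By the inductive hypothesis, $D_p$ is an open book of density $Q/2$; consequently $C_p$ is an open book of density $Q/2$, and $\Theta(C,p)=Q/2$ at every $p\in V\setminus\{0\}$.

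To promote this to $\Theta(C,0)=Q/2$, I would combine the scaling identity $\Theta(C,\lambda p,\lambda r)=\Theta(C,p,r)$ (which follows from $C$ being a cone at $0$) with the boundary monotonicity formula in its error form (Corollary~\ref{c:monotonerror}) centered at $p\in V\setminus\{0\}$. The boundary contribution in that formula vanishes for such $p$ because $(x-p)\perp\vec\eta$ whenever $x,p\in V$, so $r\mapsto\Theta(C,p,r)$ is monotone non-decreasing with $\lim_{r\to 0^+}=Q/2$ and $\lim_{r\to\infty}=\Theta(C,0)$. A strict increase between these two limits would force the integrand $|(x-p)^\perp|^2/|x-p|^{m+2}$ to be non-vanishing on a set of positive $|C|$-measure, which must be reconciled with the open-book structure of $C_p$ to contradict $\Theta(C,0)>Q/2$.

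The main obstacle is carrying out this last reconciliation rigorously: a priori, the open-book structure of $C_p$ does not by itself imply that $C$ is a cone at $p$. To close the gap I would supplement the above with a direct competitor construction using the half-plane $Q\a{H_0}$, with $H_0:=\RR^{m-1}\times[0,\infty)e_m\times\{0\}^n\subset W$, via the linear homotopy $\Phi(t,x,y):=(x,(1-t)y)$, which keeps $W\cap\bB_r$ invariant because $|(x,(1-t)y)|\leq|(x,y)|$; this produces a deformation chain $Z_r:=\Phi_\#([0,1]\times(C\res\bB_r))$ and a valid competitor $C+\partial Z_r$ whose mass in $\bB_r$ is bounded by $Q\omega_m r^m/2$ plus a bridge term controlled by $\tan\vartheta$ and the slice mass $\mathbf{M}(\langle C,|x|,r\rangle)$. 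A naive estimate of the bridge only yields $\Theta(C,0)(1-m\tan\vartheta)\leq Q/2$ and becomes vacuous as $\vartheta\to\pi/2$, so the inductive information on tangent cones at $V\setminus\{0\}$ (which forces $C$ to resemble an open book at every scale near the spine) is essential to absorb the error and conclude $\Theta(C,0)\leq Q/2$; Theorem~\ref{t:ClassificationTgtConesNConvex} then finishes the proof.
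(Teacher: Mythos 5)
Your route is genuinely different from the paper's, and it contains a gap at exactly the place you flag as ``the main obstacle,'' which I do not think can be closed without importing the paper's key tool. The paper deduces Theorem~\ref{t:ClassificationTgtConesConvexBarrier} from the more general Theorem~\ref{t:classificationcones} (which only assumes $\textup{spt}(C)\subset\{x_m\geq 0\}$), and the latter is proved by Allard's winding argument: the half-space condition forces the measure $\mu$ of Lemma~\ref{l:allardwinding} to vanish for each $2$-plane $\pi_{0,k}=\RR e_m\oplus\RR e_{m+k}$, which by Corollary~\ref{c:allardconvexvarifoldslemma} gives $D_C\sigma\equiv 0$ on $\{x_m>0\}$ for $\sigma(x,y)=y/|y|$; combined with interior regularity and unique continuation, $C\res\{x_m>0\}$ is then a sum of half-planes with integer multiplicities, and a short argument rules out negative multiplicities. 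The open-book structure is obtained directly; the density identity $\Theta(C,0)=Q/2$ is a consequence, not an intermediate step, and Theorem~\ref{t:ClassificationTgtConesNConvex} is never invoked.

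Your approach reduces to proving $\Theta(C,0)\leq Q/2$ by dimension reduction. The inductive step correctly gives $\Theta(C,p)=Q/2$ for every $p\in V\setminus\{0\}$, but the passage to $\Theta(C,0)$ does not follow. Monotonicity at $p$ gives $Q/2=\Theta(C,p)\leq\Theta(C,p,r)\leq\lim_{r\to\infty}\Theta(C,p,r)=\Theta(C,0)$, which is perfectly consistent with $\Theta(C,0)>Q/2$: a strict increase merely records that $C$ is not a cone about $p$, and there is nothing to contradict. Knowing that a tangent cone $C_p$ at $p$ is an open book gives no control on the deviation of $C$ from a cone at $p$ across scales; turning that pointwise tangent-cone information into a multi-scale statement about $C$ near $V$ is precisely the uniqueness-of-tangent-cone program developed in the rest of the paper, so invoking it here would be circular. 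Your fallback competitor via $\Phi_t(x,y)=(x,(1-t)y)$ is a valid construction, but, as you acknowledge, the bridge term scales with the slice mass times the wedge opening, producing only $\Theta(C,0)(1-Cm\tan\vartheta)\leq Q/2$, which is vacuous for $\vartheta$ not small while the theorem makes no smallness assumption on $\vartheta$. The claim that the inductive information ``absorbs the error'' is not substantiated; what it would take to absorb it is, in effect, the rigidity supplied by the winding lemma, which is the paper's actual argument.
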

Notice that the set $W$ plays the role of the convex barrier hypothesis at the level of the tangent cone.

We notice that $\textup{spt}(C)  \cap \left\{x_m=0\right\} \cap W \subset \RR^{m-1}$ and thus $|C|(\left\{x_m=0\right\})=0.$ Thus the above theorem will be an immediate consequence of the following slightly more general theorem:
\begin{theorem}\label{t:classificationcones}
Assume that $C$ is an area minimizing cone in $\RR^{m+n}$ 
with 
\begin{equation*}
\partial C=Q\a{\RR^{m-1} \times \left\{0\right\}}
\end{equation*}
and
\begin{equation}\label{e:convexbarrier}
\textup{spt}(C) \subseteq \left\{x_m \geq 0 \right\}
\end{equation}
Then $C \res \left\{x_m>0\right\}$ is an open book. This means, there exist half-planes $H_1, ..., H_N$ and positive integer multiplicities $Q_i$ such that $\partial \a{H_i}=\a{V}$ and $H_i \subset \left\{x_m>0\right\}$, such that:
\begin{equation}
C \res \left\{x_m>0\right\}=\sum_{i=1}^N Q_i \a{H_i}.
\end{equation}
In particular, if $|C|(\left\{x_m=0 \right\})=0$, then $C$ is an open book and $\sum_{i=1}^N Q_i=Q$.
\end{theorem}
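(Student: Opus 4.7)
I would adapt the strategy of the second (alternative) proof of Theorem~\ref{t:ClassificationTgtConesNConvex}, now working on the open half-space $\Omega := \{x_m > 0\}$ where the differential form $\omega = dr \wedge dx_1 \wedge \cdots \wedge dx_{m-1}$ (with $r = |y|$) is smooth and of unit comass. The target is to show that at $|C|$-a.e.\ regular point $p \in \textup{spt}(C) \cap \Omega$, the tangent plane satisfies $T_pC = V \oplus \RR(y/|y|)$. Granting this, the smooth map $\sigma(x,y) := y/|y|$ has $\ker(d\sigma_p) = V \oplus \RR y$ pointwise, so the tangent-plane identity is equivalent to $D_C\sigma \equiv 0$ on $\textup{spt}(C) \cap \Omega$. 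Consequently $\sigma$ is locally constant there, each connected component lies in a single half-plane $V + \RR^+\nu$, and the rectifiable and area-minimizing structure delivers the decomposition $C \res \Omega = \sum_i Q_i \a{H_i}$; the final clause ($\sum_i Q_i = Q$ when $|C|(\{x_m=0\})=0$) follows by matching boundary multiplicities.

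The tangent-plane structure I establish by induction on $m$, strengthening the inductive hypothesis to also include the density equality $\Theta(C,0) = Q/2$. The base case $m = 1$ is the classification of $1$-dimensional area-minimizing cones with boundary $Q\a{0}$ in a half-space: such a cone is a sum of rays from the origin in $\{x_1 \geq 0\}$ with multiplicities summing to $Q$, and has density exactly $Q/2$. For the inductive step, at each $p \in V \setminus \{0\}$ the tangent cone decomposes as $T_pC = \RR_p \times C''$ with $C''$ an $(m-1)$-dimensional area-minimizing cone in $(\RR_p)^\perp$ with $\partial C'' = Q\a{V \ominus \RR_p}$ and support in $\{x_m \geq 0\}$; the inductive hypothesis yields both the open-book structure of $C'' \res \{x_m > 0\}$ and $\Theta(C'',0) = Q/2$, whence the product formula gives $\Theta(C,p) = \Theta(T_pC,0) = Q/2$ for every $p \in V \setminus \{0\}$.

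To close the induction I need the density $\Theta(C,0) = Q/2$. The lower bound is immediate from the $1$-Lipschitz projection $\pi_\circ(x,y) = (x,|y|)$, which by constancy satisfies ${\pi_\circ}_\# C = Q\a{\RR^{m-1} \times [0,\infty)}$. For the upper bound I use the cone-over-slice identity $C \res \bB_R = 0 \# \langle C,|\cdot|,R\rangle$ with mass $(R/m)\,M(\langle C,|\cdot|,R\rangle)$; minimality of $C$ together with the isoperimetric fact that the minimum $(m-1)$-mass in $\partial\bB_R \cap \{x_m \geq 0\}$ of a current with boundary $-Q\a{V \cap \partial\bB_R}$ equals $Q(m/2)\omega_m R^{m-1}$ (attained by open-book hemispherical arcs) then forces $\Theta(C,0) \leq Q/2$. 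Once the density equality holds, the calibration tightness argument from Theorem~\ref{t:ClassificationTgtConesNConvex} applies verbatim on $C \res \Omega$ and produces the open-book decomposition. The main obstacle is precisely this density upper bound: a naive cone-over-slice competitor with a different spherical slice does not match the boundary of $C \res \bB_R$, so the comparison must be implemented carefully, either via an annular connector between the slices or via a spherical calibration/isoperimetric argument. This is where the half-space hypothesis enters substantively, replacing the direct density assumption of Theorem~\ref{t:ClassificationTgtConesNConvex}.
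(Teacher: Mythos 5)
Your proposal tries to recover the key structural identity $D_C\sigma \equiv 0$ on $\{x_m>0\}$ by first establishing $\Theta(C,0)=Q/2$ and then running the calibration-tightness argument from the alternative proof of Theorem~\ref{t:ClassificationTgtConesNConvex}. This route has a fundamental gap: the half-space hypothesis does \emph{not} force $\Theta(C,0)=Q/2$. The theorem explicitly allows $|C|(\{x_m=0\})>0$, in which case $\Theta(C,0)>Q/2$ and the equality
\begin{equation*}
|C|(\bB_1)=\frac{Q\omega_m}{2}=\int_{\bB_1}\langle\omega,\vec C\rangle\,d|C|
\end{equation*}
on which the calibration-tightness argument rests simply fails. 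So your induction, which bakes the density equality into the inductive hypothesis, cannot get off the ground in the general case the theorem is claiming. Even in the special case $|C|(\{x_m=0\})=0$, you yourself flag the density upper bound as ``the main obstacle,'' and the cone-over-slice comparison you sketch is not a proof: the slice $\langle C,|\cdot|,R\rangle$ is not a priori a mass-minimizing $(m-1)$-current with the prescribed boundary, and a competitor built from a different spherical slice needs an annular connector whose mass must be shown to be negligible. That step is not carried out, and I do not see an easy way to carry it out without essentially reproving the theorem by other means.

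The paper's proof avoids all of this. It does not go through any density bound: instead it uses Allard's winding argument (Lemma~\ref{l:allardwinding}, packaged as Corollary~\ref{c:allardconvexvarifoldslemma}). One tests stationarity of the varifold restricted to the complement of a $2$-plane $\pi_0^\perp$ against the rotational vector field $\psi(x)\,x^\perp$ (suitably cut off); this produces, for each test function $\varphi$ on $S^1$ with zero average, the identity
\begin{equation*}
\int_{\bB_1\setminus\pi_0^\perp}\varphi\Bigl(\tfrac{x}{|x|}\Bigr)\Bigl|\mathbf{p}_\pi\bigl(\tfrac{x^\perp}{|x|}\bigr)\Bigr|^2 d\mathcal{V}(z,\pi)=0 .
\end{equation*}
The half-space hypothesis then forces this measure on $S^1$ (which must be rotation-invariant) to vanish identically, yielding $\mathbf{p}_\pi(x^\perp)=0$ a.e., i.e.\ $D_C\sigma\equiv 0$, directly and for free. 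It also handles the case $|C|(\{x_m=0\})>0$ without further ado since the computation takes place entirely in $\{x_m>0\}$. Finally, once $D_C\sigma\equiv 0$ is known, you still need to show the integer multiplicities of the constituent half-planes are all positive; the paper does this by a separate decomposition argument and by examining $1$-dimensional tangent cones at points of $V$, a step your proposal glosses over as ``the rectifiable and area-minimizing structure delivers the decomposition.'' I would recommend abandoning the density-$Q/2$ route here and instead invoking the Allard winding lemma directly, as the paper does.
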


Allard shows a version of the classification of tangent cones with a convex barrier for the multiplicity $Q=1$ case in his boundary regularity paper \cite{AllB}. We will show that his ideas extend to higher multiplicities $Q>1$.

We remind the notation used in the alternative proof of the classification of tangent cones
$\sigma: \RR^{m-1} \times \left(\RR^{n+1} \setminus \left\{0 \right\}\right) \rightarrow \RR^{n+1}$ defined as
\begin{equation}\label{e:mapsigma}
\sigma(x,y)=\frac{y}{|y|}  \; \; \textup{where} \; (x,y) \in \left( \RR^{m-1} \times  \left(\RR^{n+1} \setminus \left\{0 \right\}\right)\right).
\end{equation}

The following is a modification of a general result about varifolds from Allard in his boundary regularity paper Section 5 \cite{AllB}.

\begin{theorem}\label{t:allardconvexvarifoldslemma} Fix a $2$-dimensional plane $\pi_0 \subset \RR^{m+n}$ and assume that:
\begin{enumerate}[i)]
    \item $\mathcal{V}$ is an $m$-dimensional varifold which is stationary on $\RR^{m+n} \setminus \pi_0^{\perp}$.
    \item $\Theta(\mathcal{V},x) \geq c_0 >0$ for some positive number $c_0$ and for $|\!|\mathcal{V}|\!|$-almost every $x \in \textup{spt}(\mathcal{V}) \setminus \pi_0^{\perp}$.
    \item $\mathcal{V}$ is a cone (i.e. $\left(\lambda_{0,r}\right)_{\#}\mathcal{V}=\mathcal{V}$).
    \item The orthogonal projection of $\textup{spt}(\mathcal{V})$ onto $\pi_0$ is contained on a half-space. This means there is a vector $e \in \pi_0 \cap \mathbb{S}^{m+n-1}$ such that 
    \begin{equation}\label{e:convexbarriervarifold}
        \mathbf{p}_{\pi_0}(\textup{spt}(\mathcal{V}))\subset \{x:0 \leq \langle x,e \rangle \}.
    \end{equation}
\end{enumerate}
The following conclusions holds:
\begin{enumerate}[a)]
\item There exist finitely many $v_1,..., v_{k(\rho)} \in \pi_0 \cap \mathbb{S}^{m+n-1}$ such that 
\begin{equation}
 \left( \bB_1 \cap \textup{spt}(\mathcal{V}) \right) \setminus B_{\rho}(\pi_0^{\perp})  \subseteq \bigcup_{i=1}^{k(\rho)} \pi_0^ {\perp} + \RR^+ v_i
\end{equation}
for every $\rho>0$.
\item  We denote $(x,y)=z$, $x^{\perp}=(-x_2,x_1)$ if $x=\mathbf{p}_{\pi_0}(z)$.  For $\mathcal{V}$ a.e. $(z,\pi)$ where $z \in \RR^{m+n} \setminus{\pi_0^{\perp}}$ we have $\mathbf{p}_{\pi}(x^{\perp})=0.$
\end{enumerate}
\end{theorem}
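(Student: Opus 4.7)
The plan is to follow the scheme of Allard's Section 5 of his boundary regularity paper, adapted to general density lower bound $c_0$. First I set up coordinates: WLOG $\pi_0=\textup{span}(e_1,e_2)$ and $e=e_1$, so the half-space condition reads $\mathbf{p}_{\pi_0}(\textup{spt}(\mathcal{V}))\subset\{x_1\geq 0\}$. With $x=\mathbf{p}_{\pi_0}(z)$ and $x^{\perp}=-x_2 e_1+x_1 e_2$, the rotational Killing field $R(z):=x^{\perp}$ satisfies $\textup{div}_{\pi}R=0$ for every $m$-plane $\pi$, because $DR$ is antisymmetric on $\pi_0$ and zero on $\pi_0^{\perp}$. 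Since the support projects into a single half-plane, the angle $\theta(x):=\textup{atan2}(x_2,x_1)\in[-\pi/2,\pi/2]$ is single-valued and smooth on $\textup{spt}(\mathcal{V})\setminus\pi_0^{\perp}$, with $\nabla\theta=R/|x|^2$.

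To prove the second bullet, I would test the first variation against $X(z)=\widetilde\zeta(z)\widetilde\theta(z)R(z)$, where $\widetilde\theta$ is a smooth extension of $\theta$ to $\RR^{m+n}\setminus\pi_0^{\perp}$ and $\widetilde\zeta=w(|x|)\chi(|z|)$ is a non-negative cutoff supported in $\RR^{m+n}\setminus\pi_0^{\perp}$. Using $\textup{div}_{\pi}R=0$ and the identity $\langle R,\mathbf{p}_{\pi}(R)\rangle=|\mathbf{p}_{\pi}(R)|^2$, stationarity yields
\[
\int \widetilde\zeta\,\frac{|\mathbf{p}_{\pi}(R)|^2}{|x|^2}\,d\mathcal{V}=-\int \widetilde\theta\,\langle\nabla\widetilde\zeta,\mathbf{p}_{\pi}(R)\rangle\,d\mathcal{V}.
\]
Since $|\widetilde\theta|\leq\pi/2$ on $\textup{spt}(\mathcal{V})$, weighted Cauchy--Schwarz followed by absorption gives
\[
\int \widetilde\zeta\,\frac{|\mathbf{p}_{\pi}(R)|^2}{|x|^2}\,d\mathcal{V}\leq\frac{\pi^2}{4}\int|\nabla\widetilde\zeta|^2\,\frac{|x|^2}{\widetilde\zeta}\,d\mathcal{V}.
\]
Choosing a Moser logarithmic cutoff $w$ in the variable $|x|$ (valid because $\pi_0^{\perp}$ is codimension $2$ and the weight $|x|^2$ tames the singularity of $\nabla\theta$ at the axis), combined with cone homogeneity of $\mathcal{V}$ to handle the outer cutoff $\chi$ uniformly, the right-hand side can be made arbitrarily small. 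This forces $\mathbf{p}_{\pi}(R)=0$ for $\mathcal{V}$-a.e.\ $(z,\pi)$ with $z\notin\pi_0^{\perp}$.

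For the first bullet, note that $\mathbf{p}_{\pi}(R(z))=0$ is exactly the statement that the tangential gradient of $\sigma(z):=\mathbf{p}_{\pi_0}(z)/|\mathbf{p}_{\pi_0}(z)|$ vanishes, since a direct calculation shows that $\nabla\sigma_i$ is a non-zero scalar multiple of $R$ for each component $i\in\{1,2\}$. Hence $\sigma$ is locally constant on the regular part of $\textup{spt}(\mathcal{V})\setminus\pi_0^{\perp}$, which by Allard's regularity theorem (applicable away from $\pi_0^{\perp}$ using the density lower bound $\Theta\geq c_0$) is open and dense. The cone structure then propagates local constancy to the whole of $\textup{spt}(\mathcal{V})\setminus\pi_0^{\perp}$. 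Finally, a covering argument on the compact set $\bB_1\setminus B_{\rho}(\pi_0^{\perp})$ using $\Theta\geq c_0$ forces $\sigma$ to take only finitely many values $v_1,\dots,v_{k(\rho)}\in\pi_0\cap\mathbb{S}^{m+n-1}$, and cone homogeneity yields the inclusion $(\bB_1\cap\textup{spt}(\mathcal{V}))\setminus B_{\rho}(\pi_0^{\perp})\subseteq\bigcup_{i=1}^{k(\rho)}(\pi_0^{\perp}+\RR^+v_i)$.

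The main obstacle is verifying the Moser cutoff step: one must ensure $\int|\nabla w|^2\,|x|^2/w\,d\mathcal{V}\to 0$ along the chosen sequence of cutoffs. This is where both the half-space hypothesis (which bounds $\widetilde\theta$ and is the quantitative input from assumption (4)) and the codimension-$2$ geometry of $\pi_0^{\perp}$ are essential; cone homogeneity reduces what would otherwise be a global estimate to a single scale, and the density lower bound from assumption (2) controls the mass of $\mathcal{V}$ in the thin tubes around $\pi_0^{\perp}$ that are shrunk by the cutoff.
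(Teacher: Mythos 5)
Your argument for the second conclusion ($\mathbf{p}_{\pi}(x^{\perp})=0$ a.e.) is correct and closely parallels the paper's. The paper tests stationarity against $X=\alpha(|z|)\beta(|x|)\psi(x)x^{\perp}$ for an arbitrary zero-average $\varphi\in C(\mathbb{S}^1)$ with $0$-homogeneous primitive $\psi$, proves the induced measure $\mu$ on $\mathbb{S}^1$ is a constant multiple of $\HH^1$, and only then uses the half-space hypothesis to force $\mu=0$; you feed that hypothesis in earlier by taking $\psi=\widetilde\theta$ and exploiting $|\widetilde\theta|\leq \pi/2$ on the support. A side remark: the Moser absorption step you flag as the ``main obstacle'' is avoidable. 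Since $|\mathbf{p}_{\pi}(R)|\leq|R|=|x|$, the first-variation identity yields directly
\begin{equation*}
\int\widetilde\zeta\,\frac{|\mathbf{p}_{\pi}(R)|^2}{|x|^2}\,d\mathcal{V}\leq \frac{\pi}{2}\int|\nabla\widetilde\zeta|\,|x|\,d\mathcal{V},
\end{equation*}
and a plain linear cutoff $w(|x|)$ with $|w'|\lesssim\delta^{-1}$ supported in $\{\delta/2<|x|<\delta\}$ sends the right-hand side to $0$; the $\chi'$-term vanishes identically by the cone property exactly as you note, which matches the paper's term $(\mathrm{I})$, and the $\textup{div}_{\pi}R=0$ computation matches its term $(\mathrm{IV})$.

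The first conclusion (finiteness of the projected directions), however, has a genuine gap. You invoke Allard's regularity theorem to claim the regular part of $\textup{spt}(\mathcal{V})\setminus\pi_0^{\perp}$ is open and dense, and then propagate local constancy of $\sigma$ by unique continuation. But the theorem is stated for a \emph{general} stationary varifold: $\mathcal{V}$ is not assumed integral or even rectifiable, and the sole hypothesis is the density lower bound $\Theta\geq c_0$ with no excess smallness at any scale. Allard's theorem gives nothing in this generality, and the regular set of a general stationary varifold need not be dense. The paper's argument is purely measure-theoretic and never touches regularity: given $k$ distinct directions $\xi_1,\dots,\xi_k\in\mathbf{p}_{\pi_0}(\textup{spt}\mathcal{V}\setminus B_\rho(\pi_0^{\perp}))\cap\mathbb{S}^1$, one picks bump functions $\chi_i\in C^{\infty}(\mathbb{S}^1)$ with $\sum_i\chi_i=1$ and slices $\mathcal{V}$ into $\mathcal{V}_i$ with $d\mathcal{V}_i:=\chi_i(x/|x|)\,d\mathcal{V}$. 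Testing stationarity of $\mathcal{V}$ against $\chi_i(x/|x|)X$ produces an extra term proportional to $\langle\mathbf{p}_{\pi}(x^{\perp}),X\rangle$, which vanishes by what you already proved; hence each $\mathcal{V}_i$ is stationary away from $\pi_0^{\perp}$. Picking $\zeta_i\in\textup{spt}\mathcal{V}\cap\bB_1\setminus B_\rho(\pi_0^{\perp})$ projecting to $\xi_i$, the density bound $\Theta(\mathcal{V}_i,\zeta_i)=\Theta(\mathcal{V},\zeta_i)\geq c_0$ and the monotonicity formula give $\|\mathcal{V}_i\|(B_\rho(\zeta_i))\geq c_0\omega_m\rho^m$, and summing over $i$ and comparing to $\|\mathcal{V}\|(\bB_2)$ bounds $k$. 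The idea you are missing is that the orthogonality $\mathbf{p}_{\pi}(x^{\perp})=0$ allows one to cut the varifold with angular bump functions \emph{without} losing stationarity; the density lower bound is then used to get a mass lower bound per slice, not a regularity statement.
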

We remind the notation used in the alternative proof of the classification of tangent cones
$\sigma: \RR^{m-1} \times \left(\RR^{n+1} \setminus \left\{0 \right\}\right) \rightarrow \RR^{n+1}$ is defined as
\begin{equation*}
\sigma(x,y)=\frac{y}{|y|}  \; \; \textup{where} \; (x,y) \in \left( \RR^{m-1} \times  \left(\RR^{n+1} \setminus \left\{0 \right\}\right)\right).
\end{equation*}
\begin{corollary}\label{c:allardconvexvarifoldslemma}
Assume that
\begin{enumerate}[i)]
    \item $\mathcal{V}$ is an $m$-dimensional varifold which is stationary on $\RR^{m+n} \setminus \left( \RR^{m-1} \times \left\{0\right\}\right)$.
    \item $\Theta(\mathcal{V},x) \geq c_0 >0$ for some positive number $c_0$ and for $|\!|\mathcal{V}|\!|$-almost every $x \in \textup{spt}(\mathcal{V}) \setminus \left(\RR^{m-1} \times \left\{0\right\}\right)$.
    \item $\mathcal{V}$ is a cone (i.e. $\left(\lambda_{0,r}\right)_{\#}\mathcal{V}=\mathcal{V}$).
    \item The varifold $\mathcal{V}$ is contained in a hemispace
    \begin{equation*}
      \textup{spt}(\mathcal{V})\subset \{x: x_m \geq 0\}.
    \end{equation*}
\end{enumerate}
Then for $\mathcal{V}$ a.e. $(x,\pi)$ where $x \in \RR^{m+n} \setminus \left\{x_m=0\right\}$ 
\begin{equation*}
D_{\pi}\sigma(x)=0.
\end{equation*}
\end{corollary}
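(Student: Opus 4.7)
The plan is to apply Theorem \ref{t:allardconvexvarifoldslemma} once for each $k = 1, \ldots, n$ with the $2$-plane $\pi_0^{(k)} := \textup{span}(e_m, e_{m+k})$, and then combine the $n$ resulting orthogonality relations to pin down the tangent plane $\pi$ completely. The geometric intuition is that hypothesis 4 of that theorem converts the single halfspace constraint $\{x_m \geq 0\}$ into a statement about $\pi$ being orthogonal to the ``angular'' vector $-z_{m+k} e_m + z_m e_{m+k}$; running this for all $k$ provides $n$ independent such constraints, which is exactly enough to force $\pi$ into the fiber direction of $\sigma$.

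First I check the four hypotheses of Theorem \ref{t:allardconvexvarifoldslemma} for each $\pi_0^{(k)}$. Since $V = \RR^{m-1} \times \{0\} \subset (\pi_0^{(k)})^\perp$, both the stationarity on $\RR^{m+n} \setminus (\pi_0^{(k)})^\perp$ and the density lower bound on $\textup{spt}(\mathcal{V}) \setminus (\pi_0^{(k)})^\perp$ are inherited from the corresponding assumptions for $V$. The cone property is the same. For the halfspace hypothesis, note that for every $z \in \textup{spt}(\mathcal{V})$ we have $\mathbf{p}_{\pi_0^{(k)}}(z) = z_m e_m + z_{m+k} e_{m+k}$, and by the halfspace hypothesis $z_m \geq 0$, so the projection lies in $\{v \in \pi_0^{(k)} : \langle v, e_m\rangle \geq 0\}$ (with $e = e_m$ in the notation of that theorem).

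Theorem \ref{t:allardconvexvarifoldslemma} then applies. Writing $x = \mathbf{p}_{\pi_0^{(k)}}(z) = (z_m, z_{m+k})$ in $\pi_0^{(k)}$ coordinates, one has $x^\perp = (-z_{m+k}, z_m)$, which in the ambient space is the vector $v_k(z) := -z_{m+k} e_m + z_m e_{m+k}$. The conclusion $\mathbf{p}_\pi(x^\perp) = 0$ therefore says $v_k(z) \in \pi^\perp$. Taking the union of the $n$ exceptional null sets and noting that $z_m \neq 0$ implies $z \notin (\pi_0^{(k)})^\perp$ for every $k$, we obtain: for $\mathcal{V}$-a.e.\ $(z, \pi)$ with $z_m \neq 0$, the vectors $v_1(z), \ldots, v_n(z)$ all lie in $\pi^\perp$.

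Finally, when $z_m \neq 0$ the $e_{m+k}$-components of the $v_k(z)$ form a nonzero multiple of the identity block, so $\{v_k(z)\}_{k=1}^n$ is linearly independent and spans an $n$-dimensional subspace of $\pi^\perp$. Since $\dim \pi^\perp = n$, equality holds, and taking orthogonal complements gives $\pi = \textup{span}(e_1, \ldots, e_{m-1}, y)$ where $y := (z_m, z_{m+1}, \ldots, z_{m+n})$ is the $\RR^{n+1}$ component of $z$. This is precisely $\ker D\sigma(z)$: $\sigma$ is independent of the first $m-1$ coordinates, and $y$ is the radial direction in $\RR^{n+1} \setminus \{0\}$ along which $\sigma(x,y) = y/|y|$ is constant. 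Therefore $D_\pi \sigma(z) = 0$. The main step is the judicious choice of the $n$ different $2$-planes; everything else is bookkeeping and a dimension count.
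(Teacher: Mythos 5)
Your proof is correct and follows essentially the same approach as the paper: apply Theorem \ref{t:allardconvexvarifoldslemma} to each of the $n$ coordinate $2$-planes $\pi_{0,k}=\RR e_m\oplus\RR e_{m+k}$ and combine the resulting orthogonality conditions $\mathbf{p}_\pi(-z_{m+k}e_m+z_me_{m+k})=0$ to conclude $D_\pi\sigma=0$. You simply spell out the final linear-algebra step (identifying $\pi$ with $\ker D\sigma(z)$ via the span of the $v_k(z)$ and a dimension count) that the paper leaves implicit.
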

\begin{proof}[Proof of  Corollary \ref{c:allardconvexvarifoldslemma}]
The proof of this Corollary immediate by applying the Theorem \ref{t:allardconvexvarifoldslemma} for the planes $\pi_{0,k}=\RR e_{m} \oplus \RR e_{m+k}$ where $1 \leq k \leq n$. Since $\pi_{0,k}^{\perp} \subset \left\{x_m=0\right\}$ we have that for $\mathcal{V}$ a.e. $(x,\pi)$ with $x_m>0$, $\mathbf{p}_{\pi}(-x_{m+k}e_m+x_me_{m+k})=0$ which implies $D_{\pi}\sigma(x)=0$.

\end{proof}

\begin{proof}[Proof of Theorem \ref{t:ClassificationTgtConesConvexBarrier}]

\textbf{We prove the decomposition of $C \res \left\{x_m>0 \right\}$ first with, not necessarily positive, integer multiplicities:}

We apply Corollary \ref{c:allardconvexvarifoldslemma}. This implies that $|C|$ almost everywhere in  $C \res \left\{x_m>0 \right\}$ we have
\begin{equation*}
D_{C}\sigma \equiv 0.
\end{equation*}
By Almgren \cite{Alm}, De Lellis - Spadaro \cite{DS3,DS4,DS5}, the area minimizing cone $C$ is regular up to a set of codimension $2$. For every regular point $p \in \textup{spt}(C) \cap \left\{x_m >0\right\}$ there is a unique half-plane $H_p$ which passes through $p$ and $\RR^{m-1} \times \left\{0\right\}$. Since the tangential derivative of $\sigma$ along $C$ is zero, by unique continuation $C \res H_p= Q_p \a{H_p}$ where $Q_p \neq 0$. We choose the orientations of $H_p$ so that $\partial \a{H_p}=\a{V}.$

We choose distinct half-planes $H_i$ which satisfy that property. Suppose there are at least $N$ of them. Then
\begin{equation*}
|C|(\bB_1) \geq |C|(\left\{x_{m}=0\right\} \cap \bB_1) + \sum_{i=1}^N |C|(H_i \cap \bB_1) \geq \frac{N \omega_{m}}{2}.
\end{equation*}
Since $N$ needs to be bounded, there are only finitely many possible such planes and thus we get
\begin{equation*}
C \res \left\{ x_m >0\right\}=\sum_{i=1}^N Q_i\a{H_i}.
\end{equation*}

\textbf{We show that the multiplicities must be all positive and conclude}

It remains to show all $Q_i$ have the same sign. If this was not the case assume without loss of generality that $Q_1>0$ and $Q_2<0$. 

 We define $S:= \a{H_1}-\a{H_2}$ and $T:= (Q_1-1)\a{H_1}+(Q_2+1)\a{H_2}+\sum_{i=3}^k Q_i\a{H_i} + C \res \left\{x_{m}=0 \right\}$. Since $S+T=C$ and $|\!|S|\!|+|\!|T|\!|=|\!|C|\!|$ it must be that both $S$ and $T$ must be area minimizing. 

By definition $\partial S=0$, since we oriented $H_1$ and $H_2$ so that $\partial \a{H_1}=\partial \a{H_2}=\a{V}$. We know that $S=\a{\RR^{m-1}}\times\left(\a{L_1} -\a{L_2}\right)$ where $L_1$, $L_2$ are half-lines in $\RR^{n+1}$. The only way that $S$ is an area minimizing cone is that $\a{L_1}-\a{L_2}$ is an area minimizing cone with no boundary $\RR^{n+1}$. This can only happen when $L_1=-L_2$ (i.e. they form a full line), which is impossible in the setting we are considering because both lines have positive $x_m$ coordinate with opposite orientations.

We can also see that $Q_i$ must be positive. If $Q_i$ are all negative, we study the tangent cones at points in $V$. By the Almgren stratification, we can take a point in $V$ such that the tangent cone splits $V$. The tangent cone $C'$ at a point $p \in V$ must be 
\begin{equation*}
  C'=  \RR^{m-1} \times \a{S}
\end{equation*}
where $S$ is a $1$d area minimizing cone, which either has flat support or is a sum of half-lines all with the same orientation. By assumption, since $Q_i$ are negative the orientation of the half-lines is opposite that of $V$ and thus the support would need to be flat. The flat case is impossible since the support of $C'$ must be on $x_m \geq 0$ and include a half-plane with positive $x_m$ coordinate.

Thus, we obtain the desired decomposition. When $|C|(\left\{x_m=0\right\})=0$, we must have $\sum_{i=1}^N Q_i=Q$ so that it takes the right boundary data.
\end{proof}

We will use the theory of general varifolds and the monotonicity formula for stationary (not necessarily integral) varifolds. See Simon's lecture notes \cite{simon2014introduction} for a reference on the theory of general varifolds.

Assume $\pi_0= \{(x_1,x_2,0,...,0) \in \RR^{m+n} \}$ and we denote $z=(x,y) \in \pi_0 \times \pi_0^{\perp}$. The following lemma implies \ref{t:allardconvexvarifoldslemma}:
\begin{lemma}\label{l:allardwinding}Assume $\mathcal{V}$ is a varifold which is stationary on $\RR^{m+n} \setminus \pi_0^{\perp}$ and is a cone (conditions (1) and (3) of Theorem \ref{t:allardconvexvarifoldslemma}). Consider the following Radon measure $\mu$ on $\mathbb{S}^1:=\{x \in \pi_0: |x|=1\}$
\begin{equation} 
\int \varphi d\mu =\int_{\bB_1 \setminus \pi_0^{\perp}} \varphi \left( \frac{x}{|x|}\right) |x|^{-2} |\mathbf{p}_{\pi}(x^{\perp})|^2d\mathcal{V}(z,\pi)
\end{equation}
    where $(x,y)=z$, $x^{\perp}=(-x_2,x_1)$. Then $\mu=c \HH^1$ for some $c \in \RR$. 
    
    In addition, if $\mu=0$ and we have conditions (2) and (4) in Theorem \ref{t:allardconvexvarifoldslemma} then:
    \vspace*{-0.2cm}
\begin{enumerate}
\item $\mathbf{p}_{\pi}(x^{\perp})=0$ for $\mathcal{V}$ a.e. $(z,\pi)$ where $z \in \RR^{m+n} \setminus{\pi_0^{\perp}}$.
\item The set $\mathbf{p}_{\pi_0}(\textup{spt}(\mathcal{V}) \setminus B_{\rho}(\pi_0^{\perp})) \cap S^{1}$ is finite and it has at most $\frac{2^m \Theta(\mathcal{V},0)}{\rho^m c_0}$ points.
\end{enumerate}  
\end{lemma}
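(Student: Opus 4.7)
The plan is to test stationarity of $\mathcal V$ on $\RR^{m+n}\setminus\pi_0^\perp$ against the rotation vector field in $\pi_0$, modulated by a test function on $S^1$, and then use the cone structure to reduce the resulting identity to $\int h'\,d\mu=0$. Concretely, I would take
\[
X(z)=\psi(|z|)\,\eta(|x|)\,h(x/|x|)\,(x^\perp,0),
\]
where $\psi\in C^1_c((0,\infty))$, $\eta\in C^1([0,\infty))$ vanishes in a neighborhood of $|x|=0$, and $h\in C^1(S^1)$. The matrix $D(x^\perp,0)$ is antisymmetric in $\RR^{m+n}$, so $\mathrm{div}_\pi(x^\perp,0)=0$ for every plane $\pi$, and only the gradient of the scalar $\psi\eta h$ contributes. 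Using $\nabla h(x/|x|)=|x|^{-2}h'(\theta)(x^\perp,0)$ with $\theta$ the angle of $x$ in $\pi_0$, the tangential divergence splits as
\[
\mathrm{div}_\pi X=\psi'\eta h\,\frac{\langle(x^\perp,0),\mathbf{p}_\pi z\rangle}{|z|}+\psi\eta' h\,\frac{\langle(x^\perp,0),\mathbf{p}_\pi(x,0)\rangle}{|x|}+\psi\eta\,\frac{h'(\theta)}{|x|^2}|\mathbf{p}_\pi(x^\perp,0)|^2,
\]
and stationarity forces $\int\mathrm{div}_\pi X\,d\mathcal V=0$.

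Next I would eliminate the $\psi'$ and $\eta'$ terms using the cone property. Because $\mathcal V$ is a stationary cone on $\RR^{m+n}\setminus\pi_0^\perp$, the standard dilation-field argument yields $\mathbf{p}_\pi z=z$ at $\mathcal V$-a.e.\ $(z,\pi)$ with $z\notin\pi_0^\perp$; consequently $\langle(x^\perp,0),\mathbf{p}_\pi z\rangle=\langle x^\perp,x\rangle_{\pi_0}=0$ a.e., so the $\psi'$ integrand vanishes identically for every admissible $\psi,\eta,h$. For the $\eta'$ term, the integrand is pointwise bounded by $\psi|\eta'||x|=O(1)$ on $\textup{spt}(\eta')\subset\{|x|\approx\delta\}$, while the cone disintegration of $\|\mathcal V\|$ combined with finiteness of $\|\mathcal V\|(\bB_1)$ gives $\|\mathcal V\|(\{0<|x|\leq\delta\}\cap\textup{spt}(\psi))\to 0$ as $\delta\to 0$, so this term vanishes in the limit. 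Passing to $\eta\to\mathbf{1}_{\{|x|>0\}}$ and $\psi\to\mathbf{1}_{\{|z|<1\}}$ leaves
\[
\int_{S^1}h'(\theta)\,d\mu=0\qquad\forall h\in C^1(S^1),
\]
which means $\mu=c\,\HH^1$ for some constant $c\geq0$.

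For the consequences under $\mu=0$: claim (1) is immediate from the nonnegativity of the integrand defining $\mu$, yielding $\mathbf{p}_\pi(x^\perp)=0$ at $\mathcal V$-a.e.\ $(z,\pi)$ with $z\notin\pi_0^\perp$. Combined with $z\in\pi$ and assumption (4), this pins $\textup{spt}(\mathcal V)\setminus\pi_0^\perp$ into a union of half-$m$-planes of the form $\pi_0^\perp\oplus\RR^+ v_i$ for certain $v_i\in S^1$. For claim (2), for each such $v_i$ I pick $q_i\in\textup{spt}(\mathcal V)$ on the corresponding half-plane with $\textup{dist}(q_i,\pi_0^\perp)\geq\rho$; monotonicity on $\bB_{\rho/2}(q_i)\subset\RR^{m+n}\setminus\pi_0^\perp$ together with $\Theta(\mathcal V,q_i)\geq c_0$ yields $\|\mathcal V\|(\bB_{\rho/2}(q_i))\geq c_0\omega_m(\rho/2)^m$. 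Choosing the $q_i$ by sliding them along the rays so that the balls are pairwise disjoint inside $\bB_2$ and comparing to $\|\mathcal V\|(\bB_2)\leq 2^m\omega_m\Theta(\mathcal V,0)$ gives the cardinality estimate $\frac{2^m\Theta(\mathcal V,0)}{\rho^m c_0}$.

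The main obstacle is rigorously establishing the relation $z\in\pi$ $\mathcal V$-a.e.\ off $\pi_0^\perp$: the classical proof for stationary cones uses globally compactly supported dilation fields, whereas here every admissible test field must vanish near $\pi_0^\perp$. The remedy is a localized version of the argument, combining cone scaling at different radii with the finiteness of the angular mass of $\mathcal V$ on $S^{m+n-1}\setminus\pi_0^\perp$ (itself a consequence of local monotonicity at interior points of $\RR^{m+n}\setminus\pi_0^\perp$ and the cone property), together with care that the limits in the cutoffs $\psi,\eta$ do not pick up contributions from the codimension-$2$ set $\pi_0^\perp$.
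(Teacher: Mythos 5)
Your treatment of the first claim ($\mu=c\HH^1$) is essentially the proof in the paper, just with different bookkeeping: the paper takes $\varphi$ of zero average and works with the $0$-homogeneous extension of its primitive $\psi$, whereas you test with an arbitrary $h\in C^1(S^1)$ and land on $\int h'\,d\mu=0$; since $\{h':h\in C^1(S^1)\}$ is exactly the zero-average continuous functions, the two formulations are the same. The splitting of $\mathrm{div}_\pi X$ into a $\psi'$ (here $\alpha'$) term killed by $z\in\pi$ and $z\cdot x^\perp=0$, an $\eta'$ (here $\beta'$) term killed in the limit $\delta\to 0$ by finiteness of $\|\mathcal V\|(\bB_1\cap(B_\delta(\pi_0^\perp)\setminus\pi_0^\perp))$, the main $h'$ term, and the vanishing of $\mathrm{div}_\pi(x^\perp)$ by antisymmetry, all mirror the paper's $(\mathrm I)$--$(\mathrm{IV})$. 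Your closing worry about establishing $z\in\pi$ a.e.\ off $\pi_0^\perp$ is fair to flag, but it is not the source of a gap in practice: in all applications the varifold is integral, so $\pi(z)=T_z\|\mathcal V\|$ and the cone property of the measure directly gives $z\in T_z\|\mathcal V\|$.

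The genuine gap is in your proof of conclusion (2). You propose to select $q_i\in\textup{spt}(\mathcal V)$ on each half-plane with $\textup{dist}(q_i,\pi_0^\perp)\geq\rho$ and to make the balls $\bB_{\rho/2}(q_i)$ pairwise disjoint by ``sliding $q_i$ along the rays,'' i.e.\ by radial dilation, which is the only freedom the cone structure gives you. This cannot work when two directions $v_i,v_j\in S^1$ are at angle much smaller than $\rho$: the $\pi_0$-components of admissible $q_i,q_j$ then satisfy $|\mathbf p_{\pi_0}(q_i)-\mathbf p_{\pi_0}(q_j)|\lesssim|t_i-t_j|+|v_i-v_j|$, with $t_i,t_j$ confined to a bounded interval, and you have no control at all over the $\pi_0^\perp$-components because the support need not be a full union of half-$m$-planes and is certainly not assumed translation-invariant along $\pi_0^\perp$. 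So there is no reason the balls can be made disjoint, and a naive disjoint-ball packing cannot produce the cardinality bound $\tfrac{2^m\Theta(\mathcal V,0)}{\rho^m c_0}$. The paper sidesteps disjointness entirely: it introduces angular bump functions $\chi_i$ with $\sum_i\chi_i=1$, defines truncated varifolds $\mathcal V_i$ by $d\mathcal V_i=\chi_i(x/|x|)\,d\mathcal V$, uses the already-established identity $\mathbf p_\pi(x^\perp)=0$ a.e.\ to show each $\mathcal V_i$ is still stationary off $\pi_0^\perp$, and then applies the monotonicity formula to each $\mathcal V_i$ separately, summing $\|\mathcal V_i\|(\bB_\rho(\zeta_i))\leq\|\mathcal V_i\|(\bB_2)$ and using $\sum_i\|\mathcal V_i\|=\|\mathcal V\|$. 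That decomposition is exactly the step your argument is missing.
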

\begin{proof}[Proof of Theorem \ref{t:allardconvexvarifoldslemma}]
The condition \eqref{e:convexbarriervarifold} implies that the support of $\mu$ must be contained in the arc of $\mathbb{S}^1$ with positive $e$ component. Thus $\mu \equiv 0$ and we conclude Theorem \ref{t:allardconvexvarifoldslemma}.
\end{proof}
In order to conclude we prove the remaining lemma for varifolds 
 \begin{proof}[Proof of Lemma \ref{l:allardwinding}]
 To prove that $\mu=c\HH^1$ we must show that if $\varphi \in C(S^1)$ and has zero average, i.e. $\int_{\mathbb{S}^1} \varphi d\HH^1=0$, then 
 \begin{equation*}
     \int \varphi d\mu=0.
 \end{equation*}
 We consider $\varphi$ as a $2\pi$ periodic function on $\RR$ and we take $\psi(t):=\int_0^t \varphi(\tau)d\tau$ to be  a primitive of $\varphi$ on $\RR$.
 Since $\varphi$ has zero average, $\psi$ is also $2\pi$ periodic and thus it induces a map $\psi: \mathbb{S}^1 \rightarrow \RR$.
 We extend $\psi$ to a $0$-homogeneous function on $\RR^2$. We get then that 

\begin{equation}
    \nabla \psi(x)=\varphi\left(\frac{x}{|x|}\right)\frac{x^{\perp}}{|x|^2}.
\end{equation}
We need to show that
\begin{equation}\label{eq:allardmeasure}
\int_{\bB_1 \setminus \pi_0^{\perp}} \varphi \left( \frac{x}{|x|}\right) \left|\mathbf{p}_{\pi}\left( \frac{x^{\perp}}{|x|}\right)\right|^2d\mathcal{V}(z,\pi)=0.
\end{equation}
We will deduce this from the fact that $\mathcal{V}$ is stationary (i.e. $\int \textup{div}_{\pi}X(z)d\mathcal{V}(z,\pi)=0$ for every vector field $X \in C^1(\RR^n)$).  We will want \eqref{eq:allardmeasure} to get the identity by using the first variation formula over a specific vector field. The fact that the integral is computed on $\bB_1(0) \setminus \pi_0^{\perp}$ is a technical issue that we will resolve by introducing a cutoff. In order to get the gradient of $\psi$, it is reasonable to consider the vector field $\psi(x)x^{\perp}$ (in fact this is what Allard does).

We consider a vector field of the form 
\begin{equation*}
X(z)=\alpha(|z|)\beta(|x|)\psi(x)x^{\perp}.
\end{equation*}
Where $\alpha, \beta \in C^{\infty}(\RR)$ are defined with the following properties:
$\alpha \equiv 1$ for $t \leq 1-\delta$, $\alpha \equiv 0$ for $t \geq 1 - \delta/2$, and it is decreasing.
$\beta \equiv 1$ for $t \geq \delta$, $\beta \equiv 0$ for $t<\delta/2$, and it is increasing with $\|{\beta}'\|_{C^0} \lesssim \delta^{-1}$. Of course, for each $\delta$ we will have a different vector field, but we will abuse notation and not write the $\delta$ dependence. 

\textbf{We will compute $\textup{div}_{\pi}(X)$.} We fix an orthonormal basis $v_1,...v_m$ for $\pi$ and compute in this coordinates the following:

\begin{align*}
\textup{div}_{\pi}(X)=(\mathrm{I})+(\mathrm{II})+(\mathrm{III})+(\mathrm{IV}),
\end{align*}
where 
\begin{equation*}
(\mathrm{I}):=\sum_{i=1}^m \alpha'(|z|)\left(\frac{z}{|z|} \cdot v_i\right) \beta(|x|)\psi(x)\left(x^\perp \cdot v_i\right),
\end{equation*}
\begin{equation*}
(\mathrm{II}):=\sum_{i=1}^m \alpha(|z|)\beta'(|x|)\left(\frac{x}{|x|} \cdot v_i\right) \psi(x)\left(x^\perp \cdot v_i\right),
\end{equation*}
\begin{equation*}
(\mathrm{III}):=\sum_{i=1}^m \alpha(|z|)\beta(|x|) \left(\nabla \psi(x) \cdot v_i \right) \left(x^\perp \cdot v_i \right),
\end{equation*}
\begin{equation*}
(\mathrm{IV}):=\alpha(|z|)\beta(|x|)\psi(x)\sum_{i=1}^{m}\left(D_{v_i}x^\perp \cdot v_i\right).
\end{equation*}
We deal with every term separately.

\textbf{As for the first term:} Since $\mathcal{V}$ is a cone, $z \in \pi$ for $\mathcal{V}$ a.e. $(z,\pi)$.
This means $\sum_{i=1}^{m}(z\cdot v_i)v_i=z$ and we can rewrite 
\begin{equation*}
(\mathrm{I})=\frac{\alpha'(|z|)}{|z|}\beta(|x|)\psi(x) (z\cdot x^{\perp}).
\end{equation*}
But $z \cdot x^{\perp}=0$, so $(\mathrm{I})=0$.

\textbf{As for the second term:}
Here we just use $|\beta'|\lesssim \delta^{-1}$, $\textup{spt}(\beta') \subseteq \{|x|<\delta\}$ and $|x^{\perp}\cdot v_i| \leq \delta$ to conclude:

\begin{equation*}
    \int |(\mathrm{II})|d\mathcal{V} \lesssim |\!|\mathcal{V}|\!|\left(B_1 \cap \left(B_{\delta} (\pi_0^{\perp}) \setminus \pi_0^{\perp} \right)\right).
\end{equation*}
In particular, since 
\begin{equation*}
\bigcap_{\delta>0} B_{\delta}(\pi_0^{\perp})=\varnothing.
\end{equation*}
We get $\lim_{\delta \rightarrow 0} \int |(\mathrm{II})|d\mathcal{V} =0$.

\textbf{As for the third term:} We recall
\begin{equation*}
    \nabla \psi(x)=\varphi\left(\frac{x}{|x|}\right)\frac{x^{\perp}}{|x|^2}.
\end{equation*}
Thus
\begin{align*}
(\mathrm{III})=& \frac{1}{|x|^2}\,\alpha(|z|)\beta(|x|)\,\varphi\left(\frac{x}{|x|}\right)\sum_{i=1}^{m}\left(x^{\perp}\cdot v_i\right)^2= \frac{1}{|x|^2}\,\alpha(|z|)\beta(|x|)\,\varphi\left(\frac{x}{|x|}\right)|\mathbf{p}_{\pi}(x^{\perp})|^2 \\
=& \alpha(|z|)\beta(|x|)\,\varphi\left(\frac{x}{|x|}\right)\left|\mathbf{p}_{\pi}\left(\frac{x^{\perp}}{|x|}\right)\right|^2.
\end{align*}
This implies that 
\begin{equation*}
\lim_{\delta \rightarrow 0} \int (\mathrm{III})d\mathcal{V}=\int_{\bB_1(0) \setminus \pi_0^{\perp}} \varphi \left( \frac{x}{|x|}\right) \left|\mathbf{p}_{\pi}\left( \frac{x^{\perp}}{|x|}\right)\right|^2d\mathcal{V}(z,\pi).
\end{equation*}

\textbf{As for the last term:} We have

\vspace*{0.2cm}

\begin{equation}Dx^{\perp}= -e_1\otimes e_2 +e_2 \otimes e_1=
\begin{pmatrix}
0 & -1 & 0_{(m+n-2)\times 1}\\
1 & 0 & 0_{(m+n-2)\times 1}\\
0_{1\times (m+n-2)} & 0_{1\times (m+n-2)} & 0_{(m+n-2) \times (m+n-2)}
\end{pmatrix}
\end{equation}
The former matrix is antisymmetric matrix.
Notice \begin{equation*}
    \sum_{i=1}^m D_{v_i}X^{\perp} \cdot v_i= \left < Dx^{\perp}: \left( \sum_{i=1}^m v_i \otimes v_i \right)\right>.
\end{equation*} Given that the matrix $\sum_{i=1}^{m} v_i \otimes v_i$ is symmetric and the fact that the Hilbert-Schmidt product of a symmetric matrix and an antisymmetric matrix vanishes, $(\mathrm{IV})=0.$

Since $\mathcal{V}$ is a stationary varifold,
\begin{equation*}
\int \textup{div}_{\pi}X(z)d\mathcal{V}(z,\pi)=0.
\end{equation*} We conclude taking $\delta \rightarrow 0$ that \eqref{eq:allardmeasure} holds.

\textbf{We conclude the theorem studying the case $\mu \equiv 0$.}
Notice that if $\mu=0$ then $\mathbf{p}_{\pi}(x^{\perp})=0$ for $\mathcal{V}$ a.e. $(z,\pi)$ where $z \in \bB_1 \setminus {\pi_0^{\perp}}$ and since $\mathcal{V}$ is a cone, $\mathbf{p}_{\pi}(x^{\perp})=0$ for $\mathcal{V}$ a.e. $(z,\pi)$ where $z \in \RR^{m+n} \setminus {\pi_0^{\perp}}$ .

We need to bound the cardinality of the set
\begin{equation*}
    \mathbf{p}_{\pi_0}(\textup{spt}(\mathcal{V}) \setminus B_{\rho}(\pi_0^{\perp})) \cap \mathbb{S}^{1}:=F.
\end{equation*}
Pick $k$ distinct points $\xi_1,...,  \xi_k \in F \subseteq \mathbb{S}^1$. We consider $k$ bump functions $\chi_1,... ,\chi_k \in C^\infty(\mathbb{S}^1)$, identically $1$ in a neighborhood of $\xi_i$ and such that $\sum_i \chi_i=1$.

We define the truncated varifolds $\mathcal{V}_i$ by
\begin{equation*}
\int \varphi(z,\pi)d\mathcal{V}_i(z,\pi)= \int \varphi(z,\pi)\chi_i\left(\frac{x}{|x|} \right)d\mathcal{V}(z,\pi).
\end{equation*}
We claim that each $\mathcal{V}_i$ is stationary on $\RR^{m+n} \setminus \pi_0^{\perp}$:

Given a vector field $X \in C^{\infty}_{c}(\RR^{m+n}\setminus \pi_0^{\perp})$ then
\begin{align*}
\int \textup{div}_{\pi}X(z)d\mathcal{V}_i(z,\pi)&= \int \textup{div}_{\pi}\left(X(z)\chi_i\left(\frac{x}{|x|} \right)\right)d\mathcal{V}(z,\pi)\\&- \int \sum_j \left(X(z) \cdot v_j \right) D_{v_j} \left(\chi_i \left(\frac{x}{|x|}\right)\right)d\mathcal{V}(z,\pi).
\end{align*}
The first integral on the right hand side vanishes because $\mathcal{V}$ is stationary. For the second integral on the right hand side we observe that
\begin{equation*}
 D_{v_j} \left(\chi_i \left(\frac{x}{|x|}\right)\right)=\varphi_i\left(\frac{x}{|x|}\right)\frac{x^\perp}{|x|^2}
\end{equation*}
for an appropiate function $\varphi_i \in C^{\infty}_c(\mathbb{S}^1)$.
In particular, we have
\begin{align*}
\sum_j \left(X(z) \cdot v_j \right) D_{v_j} \left(\chi_i \left(\frac{x}{|x|}\right)\right)= \,&\varphi_i\left(\frac{x}{|x|}\right)\frac{1}{|x|^2} \sum_j \left(X(z)\cdot v_j\right) \left(x^{\perp} \cdot v_j \right)\\= \,&\varphi_i \left( \frac{x}{|x|} \right) \frac{1}{|x|^2}\left( \mathbf{p}_{\pi}(x^{\perp}) \cdot X(z) \right).
\end{align*}
This term must be zero since we already establish that $\mathbf{p}_{\pi}(x^{\perp})=0$
for $\mathcal{V}$ a.e. $(z,\pi)$ where $z \in \RR^{m+n} \setminus \pi_0^{\perp}$. This shows that $\mathcal{V}_i$ is stationary.

Now we consider $\zeta_i \in \textup{spt}(\mathcal{V}) \cap \bB_1 \setminus B_{\rho}(\pi_0^\perp)$ such that
\begin{equation*}
\frac{\mathbf{p}_{\pi}(\zeta_i)}{|\mathbf{p}_{\pi}(\zeta_i)|}=\xi_i
\end{equation*}
Then $\Theta(\mathcal{V}_i, \zeta_i)=\Theta(\mathcal{V},\zeta_i) \geq c_0$ and since $B_{\rho}(\zeta_i) \subseteq \RR^{m+n} \setminus \pi_0^{\perp}$, by the monotonicity formula 
\begin{equation*}
    |\!|\mathcal{V}_i|\!|(B_\rho(\zeta_i)) \geq c_0\omega_m \rho^{m}.
\end{equation*}
Now 
\begin{equation*}
|\!|\mathcal{V}|\!|(B_2(0)) \geq \sum_i |\!|\mathcal{V}_i|\!|(B_2(0)) \geq \sum_i|\!|\mathcal{V}_i|\!|(B_{\rho}(\zeta_i)) \geq k \omega_m c_0 \rho^m.
\end{equation*}

Since $|\!|\mathcal{V}|\!|(B_2(0))=\omega_m 2^{m}\Theta(\mathcal{V},0)$, the last inequality gives us the desired bound.
 \end{proof}
\end{subsection}
\end{section}

\begin{section}{Main reductions}
In this section we will reduce the proofs of the main theorems in this paper to a proof of an excess decay type lemma. We show the uniqueness of the tangent cone (Theorem \ref{t:uniquetangentcone}), the Hölder continuity of the multivalued normal (Corollary \ref{c:normal}), and the decomposition Theorem \ref{T:decomposition-1sided}.
\begin{subsection}{Proof of small excess uniqueness of the tangent cone}
\begin{definition}We say an open book $C$ is admissible for $Q\a{\Gamma}$ at a point $p \in \Gamma$ if
$C=\sum Q_i \a{H_i}$ where $\sum Q_i=Q$ and $\partial H_i= T_p(\Gamma).$  Additionally, we require that for every $i$  $H_i \subset T_p(\Sigma).$
\end{definition}
\begin{assumption}[Main Reductions Assumption] \label{a:currentandcone} Let $T$ and $\Gamma$ be as in \ref{A:general} with $|\!|T|\!|(\bB_2) \leq \left(2^{m-1}Q+1\right)\omega_m$, $\mathbf{A}_{\Gamma}+\mathbf{A}_{\Sigma} \leq 1$. We assume furthermore that $C$ is an admissible open book.
\end{assumption}

\begin{remark}[Choices of parameters]\label{rmk:parameters}
There will be $Q$ versions of excess decay depending the number of sheets of the cone. The four relevant parameters will be $\theta_N \in \RR^+$, $\eta_N \in (0,1/2)$, $\varepsilon_N \in \RR^+$, $\gamma(Q,m,n,\overline{n})$. The parameter $\gamma(Q,m,n,\overline{n})$ is purely dimensional and depends on the comparison between excess and Dirichlet energy and not on $N$.
The constant $\theta_N$ is chosen so  $\theta_N \ll \min_{N+1 \leq i \leq Q}\varepsilon_i$ and that $\theta_N \ll \gamma(Q,m,n,\overline{n})$.  The parameters $\eta_{N}$ and $\varepsilon_{N}$ are chosen to ensure excess decay on a cone with $N$ sheets with parameter $\theta_N.$ Additionally, we require that $\varepsilon_{N} \ll \gamma(Q,m,n,\overline{n}).$
\end{remark}
\begin{lemma}[$N$th excess decay with small angle and small second fundamental forms] \label{lem:saexcessdecay} Let $T$ and $C$ be chosen as in Assumption \ref{a:currentandcone}. For every $\theta_N \in \RR^+$ there exist $\varepsilon_N \in \RR+$,  $\eta_N \in (0,1/2)$ such that if 
\begin{equation*}
\mathbb{E}(T,C,\mathbf{B}_1) \leq \varepsilon_N \alpha(C)^2 \: \: \textup{and} \: \: \mathbf{A}_{\Gamma}+\mathbf{A}_{\Sigma}^2 \leq \varepsilon_{N} \mathbb{E}(T,C,\mathbf{B}_1) 
\end{equation*}
(when $N=1$ the first inequality becomes $\mathbb{E}(T,C,\bB_1) \leq \varepsilon_1$)
there exist a radius $r_0 \in [\eta_{N},1/2]$ and an admissible open book $C'$ such 
that
\begin{equation}\label{eq:angletilt}
\mathcal{G}(C',C)^2 \leq \gamma(Q,m,n,\overline{n})\mathbb{E}(T,C,\bB_1)
\end{equation}
and
\begin{equation*}
    \mathbb{E}(T,C',\bB_{r_0}) \leq \theta_N \min\left\{\alpha(C')^2,\mathbb{E}(T,C,\bB_1) \right\}.
\end{equation*}
 \end{lemma}

 \begin{remark}
 Under the hypothesis above, \eqref{eq:angletilt} remains valid up to modifying the constant $\gamma(Q,m,n,\overline{n})$, with the following
\begin{equation*}
    C'=\sum_{i,j} Q_{i,j}\a{H_{i,j}}
\end{equation*}
 has $\sum_j Q_{i,j}=Q_i$ and the angle condition 
 
\begin{equation}\label{eq:anglecondition}
   \angle \left(H_i, H_{i,j}\right) \leq \gamma(Q,m,n,\overline{n}) \mathbb{E}(T,C,\bB_1)^{1/2}
    \end{equation}
\end{remark}

In particular, we know that one of the following two statements holds:
\begin{enumerate}
    \item $C'$ has the same number of sheets than $C$ and \begin{equation}
        \alpha(C') \geq \alpha(C)-\gamma(Q,m,n,\overline{n})^{-1/2}\mathbb{E}(T,C,\bB_1)^{1/2}.
    \end{equation}
    \item $C'$ has more sheets than $C$ (this case will never happen when $C$ consists of $Q$ multiplicity $1$ sheets).
    \end{enumerate}

 Our excess decay needs a control on the angle of the cone, which we require for our graphical approximations to be valid later.  We require a combinatorial argument that controls the angle. 
 If excess is large relative to the angle we will prune the cone by collapsing the two closest sheets into one and adjusting the multiplicities appropriately. This is collected in the following proposition:
\begin{proposition}\label{prop:anglecases} Let $T,\Gamma,C$ be as in \ref{a:currentandcone}.
Given positive parameters $\left \{ \varepsilon_i \right\}_{1 \leq i \leq Q}$, there exists $\varepsilon>0$ such that if $\mathbb{E}(T,C,\bB_1)<\varepsilon$ then we have the following:
\begin{itemize}
    \item An integer $1 \leq N' \leq N$ and an admissible cone $C'$ with $N'$ sheets.
    \item A partition function $h:\left\{ 1,..., N\right\} \rightarrow \left\{A \subseteq \left\{ 1,..., N\right\} \right\}$ (i.e. $h(i)$ are disjoint, possibly empty sets and their union is $\left\{ 1,..., N\right\}$).
    \item  $C'=\sum \sum_{j \in h(i)} Q_j \a{H_i}$. In particular, if $N=N'$ then $C'=C.$
    \item \begin{equation*}
        \mathbb{E}(T,C',\bB_1) \leq C(m)\left(\min_{1 \leq k \leq Q} \varepsilon_k\right)^{-Q}  \mathbb{E}(T,C,\bB_1).
    \end{equation*}
    \item We have
    \begin{equation*}
    \mathbb{E}(T,C',\bB_1) \leq \varepsilon_{N'}\alpha(C')^2
    \end{equation*}
    where we remind that if $C'$ is a single half-plane then $\alpha(C')=1.$
    \item
    \begin{equation*}
    \mathcal{G}(C',C)^2 \leq C(m)\left(\min_{1 \leq k \leq Q} \varepsilon_k\right)^{-Q} \mathbb{E}(T,C,\bB_1).
    \end{equation*}
    \end{itemize}
\end{proposition}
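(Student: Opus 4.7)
The plan is an iterative pruning of the cone. I initialize $C^{(0)} := C$ with the trivial (identity) partition and, at each step, check whether the stopping condition $\mathbb{E}(T, C^{(k)}, \bB_1) \leq \varepsilon_{N_k} \alpha(C^{(k)})^2$ holds, where $N_k$ is the number of sheets of $C^{(k)}$. If it does, set $C' := C^{(k)}$ and stop; otherwise let $H_i, H_j$ be the pair of sheets realizing the minimal angle $\alpha_k := \alpha(C^{(k)})$, delete $H_j$ from the cone, add its multiplicity to $H_i$, and update the partition function $h$ accordingly. Since this decreases the number of sheets by one at each step, the procedure terminates after at most $Q - 1$ steps at some cone $C'$ of exactly the form prescribed in the statement (with $N' \leq N$ and the prescribed partition), and by construction $C'$ satisfies the fifth bullet. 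In the degenerate case where the iteration reduces to a single half-plane, the convention $\alpha(C') = 1$ reduces the stopping inequality to $\mathbb{E}(T, C', \bB_1) \leq \varepsilon_1$, which will follow provided the initial smallness constant $\varepsilon$ in the hypothesis is chosen sufficiently small depending on the $\varepsilon_i$.

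The analytic heart is a one-step estimate bounding $\mathbb{E}(T, C^{(k+1)}, \bB_1)$ in terms of $\mathbb{E}(T, C^{(k)}, \bB_1)$ and $\alpha_k$. Let $\rho : H_j \to H_i$ be the rotation about the common spine $V$ through the angle $\alpha_k$; since $\phi_r$ is radial and $\rho$ fixes $V$ and acts as an isometry on $V^\perp$, one has $\phi_r \circ \rho = \phi_r$. Starting from a nearly optimal admissible splitting $\mu_1^i + \mu_2^i$ of $\phi_r d|T|$ and $\phi_r d|C^{(k)}|$ together with a transport plan $\sigma$ for the strong distance, I post-compose with $\rho$ on the portion of $\sigma$ landing in $H_j$. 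Because $\rho$ is an isometry fixing $V$, the signed-measure cost is unchanged and the second marginal of the modified plan is exactly $\phi_r d|C^{(k+1)}|$, with no additional defect measure needed on the cone side. Expanding $|x - \rho(y)|^2 = |x - y|^2 + 2(x-y)\cdot(y - \rho(y)) + |y - \rho(y)|^2$, using $|y - \rho(y)| \leq \alpha_k$ on $H_j \cap \bB_1$, and absorbing the cross term via Cauchy--Schwarz and Young's inequality produces
\[
\mathbb{E}(T, C^{(k+1)}, \bB_1) \leq 2 \mathbb{E}(T, C^{(k)}, \bB_1) + C(m, Q) \alpha_k^2.
\]

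Combining this with the fact that if the iteration does not terminate at step $k$ then $\alpha_k^2 < \varepsilon_{N_k}^{-1} \mathbb{E}(T, C^{(k)}, \bB_1)$, the one-step estimate improves to $\mathbb{E}(T, C^{(k+1)}, \bB_1) \leq C(m, Q)(\min_i \varepsilon_i)^{-1} \mathbb{E}(T, C^{(k)}, \bB_1)$. Iterating at most $Q - 1$ times yields the advertised bound $\mathbb{E}(T, C', \bB_1) \leq C(m)(\min_i \varepsilon_i)^{-Q} \mathbb{E}(T, C, \bB_1)$. For the $\mathcal{G}$-distance estimate I use that a single merging moves one sheet through angle $\alpha_k$, hence contributes at most $C(Q) \alpha_k$ to $\mathcal{G}$; telescoping and plugging in $\alpha_k^2 \leq \varepsilon_{N_k}^{-1} \mathbb{E}(T, C^{(k)}, \bB_1)$ together with the iterated excess bound yields $\mathcal{G}(C', C)^2 \leq C(m)(\min_i \varepsilon_i)^{-Q} \mathbb{E}(T, C, \bB_1)$.

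The main obstacle is the one-step excess estimate itself: one has to verify carefully that the modified transport plan, together with the untouched signed-measure pieces, forms an admissible competitor for the strong distance $d(\phi_r d|T|, \phi_r d|C^{(k+1)}|)$, and that the cross term in the expansion of $|x - \rho(y)|^2$ cannot degrade the bound beyond the harmless multiplicative factor of $2$ (this is why the iteration costs a power $(\min \varepsilon_i)^{-Q}$ rather than something sharper). The radial invariance of $\phi_r$ under rotations fixing $V$ is the structural point that makes the bookkeeping work: without it, the weight $\phi_r$ would need to be corrected after rotating $H_j$ onto $H_i$, creating an $O(\alpha_k)$ linear error on the cone side that would not absorb into the desired $O(\alpha_k^2)$ budget.
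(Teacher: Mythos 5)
Your proposal follows the same iterative pruning strategy as the paper's own proof: initialize with $C$, stop as soon as $\mathbb{E}(T,C^{(k)},\bB_1)\le \varepsilon_{N_k}\alpha(C^{(k)})^2$, and otherwise merge the pair of half-planes realizing the minimal angle $\alpha(C^{(k)})$ and absorb the multiplicity. The paper simply asserts the one-step bound $\mathbb{E}(T,C_{k+1},\bB_1)\le c(m)\varepsilon_{N-k}^{-1}\mathbb{E}(T,C_k,\bB_1)$ without elaboration; your transport-plan argument — post-composing the optimal splitting/plan with the spine-preserving rotation $\rho:H_j\to H_i$, noting $\phi_r\circ\rho=\phi_r$ so the second marginal becomes exactly $\phi_r\,d|C^{(k+1)}|$ and the leftover-measure cost is unchanged, then expanding $|x-\rho(y)|^2$ and using $|y-\rho(y)|\lesssim\alpha_k\dist(y,V)$ — is a clean justification of that step and is consistent with how the measure-theoretic excess $\mathbb{E}$ is defined. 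The treatment of the degenerate single-sheet endpoint and the telescoping $\mathcal{G}$ estimate also match the paper. So this is correct and essentially the same proof, with the one-step estimate made explicit rather than asserted.
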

\begin{proof}
We outline an iterative procedure. Let $C_0=C$. In step $k$ if we have 
\begin{equation*}
\mathbb{E}(T,C_k,\bB_1) \leq \varepsilon_{N-k} \alpha(C_k)^2
\end{equation*}
we take $C'=C_k$, $N'=N-k$ and we stop.

Otherwise assume that
\begin{equation*}
\mathbb{E}(T,C_k,\bB_1) > \varepsilon_{N-k} \alpha(C_k)^2.
\end{equation*}

Then we select two half-planes of $C_k$, $H_i$ and $H_j$, that form an angle of $\alpha(C_k)$. We remove $H_j$ and change the multiplicity of $H_i$ to $Q_i+Q_j$. This cone is defined to be $C_{k+1}$.
We must have
 \begin{equation*}
     \mathbb{E}(T,C_{k+1},\bB_1) \leq c(m) \frac{1}{\delta_{N-k}} \mathbb{E}(T,C_{k},\bB_1).
 \end{equation*}

If the process never stops, we define $C'=C_N$ and $N'=1$. We must have that
\begin{equation*}
        \mathbb{E}(T,C',\bB_1) \leq c(m) \prod_{N'+1 \leq  k \leq N} \frac{1}{\varepsilon_k} \mathbb{E}(T,C,\bB_1).
    \end{equation*}
This implies that if $\varepsilon$ is chosen small enough then in the case we didn't stop we also have 
\begin{equation*}
    \mathbb{E}(T,C',\bB_1) \leq \varepsilon_1 \alpha(C')^2.
\end{equation*}
The last inequality of the proposition is a straightforward consequence of the procedure.
\end{proof}

We are now ready to prove the uniqueness of the tangent cone. 

 \begin{definition}
 We define the modified excess as
 \begin{equation*}
 \hat{\mathbb{E}}(T,C,\bB_r(p)):=\max\left\{\mathbb{E}(T,C,\bB_r(p)), \kappa^{-1} \mathbf{A}_{\Gamma}r ,\kappa^{-1}\mathbf{A}_{\Sigma}^2r^2 \right\}
 \end{equation*}
 The dependence on $\kappa$ is implicit in the notation; in Theorem \ref{t:sexcessuniqueness} we will choose $\kappa$ as $\kappa_{\textup{decay}}$. \end{definition}
  \begin{theorem}[Small Excess Uniqueness of the Tangent Cone]\label{t:sexcessuniqueness}
Let $T$ and $C$ be as in \ref{a:currentandcone}.
There exist $\varepsilon_{\textup{decay}}>0$, $\kappa_{\textup{decay}}>0$ such that if
\begin{equation*}
\hat{\mathbb{E}}(T,C,\bB_1(p))<\varepsilon_{\textup{decay}},
\end{equation*}
then the tangent cone, $C_p$, at $p$ is unique and is an admissible open book.
Here, $\hat{\mathbb{E}}$  is defined using $\kappa=\kappa_{\textup{decay}}$.
Moreover, there exists $\alpha(Q,m,n,\overline{n})>0$ such that for every $\rho<1/2$,
\begin{equation*}
\hat{\mathbb{E}}(T,C_p,\bB_{\rho}(p)) \leq   \rho^{\alpha}\hat{\mathbb{E}}(T,C,\bB_1(p)).
\end{equation*}
We also have 
\begin{equation*}
\mathcal{G}(C_p,C)^2+ \dist_{H}(C_p,C)^2 \leq c(Q,m,n,\overline{n})\hat{\mathbb{E}}(T,C,\bB_1(p)).
\end{equation*}
 \end{theorem}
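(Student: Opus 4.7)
The plan is to prove Theorem \ref{t:sexcessuniqueness} by iterating the excess-decay Lemma \ref{lem:saexcessdecay} along a geometric sequence of scales, using Proposition \ref{prop:anglecases} at each step as a ``pre-processing'' that restores the excess-to-angle comparability needed to apply the decay, and then extracting uniqueness and power-rate convergence from a telescoping bound on $\mathcal{G}(C_k,C_{k+1})$.

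First, I would set up the iteration. Set $C_0 := C$, $\rho_0 := 1$, and inductively, given $(C_k, \rho_k)$ with $\hat{\mathbb{E}}_\kappa(T, C_k, \bB_{\rho_k}(p))$ small, apply Proposition \ref{prop:anglecases} with the sequence $\varepsilon_1,\dots,\varepsilon_Q$ of Remark \ref{rmk:parameters} to obtain a (possibly pruned) admissible cone $\tilde C_k$ with some number $N_k'$ of sheets satisfying $\mathbb{E}(T,\tilde C_k,\bB_{\rho_k}(p)) \leq \varepsilon_{N_k'}\,\alpha(\tilde C_k)^2$ and $\mathcal{G}(\tilde C_k, C_k)^2 \leq c\,\mathbb{E}(T,C_k,\bB_{\rho_k}(p))$. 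Then apply Lemma \ref{lem:saexcessdecay} at scale $\rho_k$ (which is legitimate since $\mathbf{A}_\Gamma \rho_k + \mathbf{A}_\Sigma^2\rho_k^2 \leq \varepsilon_{N_k'}\mathbb{E}(T,\tilde C_k,\bB_{\rho_k}(p))$ holds after choosing $\kappa$ small enough) to obtain $C_{k+1}$ and $\rho_{k+1} \in [\eta_{N_k'}\rho_k,\rho_k/2]$ with
\begin{equation*}
\mathbb{E}(T,C_{k+1},\bB_{\rho_{k+1}}(p)) \leq \theta_{N_k'}\,\mathbb{E}(T,\tilde C_k,\bB_{\rho_k}(p)),\qquad \mathcal{G}(C_{k+1},\tilde C_k)^2 \leq \gamma\,\mathbb{E}(T,\tilde C_k,\bB_{\rho_k}(p)).
\end{equation*}

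Second, I would verify geometric decay of the amended excess. The scaling of second fundamental forms, $\mathbf{A}_\Gamma \rho_{k+1} \leq \frac{1}{2}\mathbf{A}_\Gamma\rho_k$ and $\mathbf{A}_\Sigma^2\rho_{k+1}^2 \leq \frac{1}{4}\mathbf{A}_\Sigma^2\rho_k^2$, automatically contracts the $\mathbf{A}$-terms in $\hat{\mathbb{E}}_\kappa$. The $\mathbb{E}$ term contracts by $\theta_{N_k'}$ except possibly at the finitely many steps where the sheet count strictly increases (refinement): such steps can occur at most $Q-1$ times in total, and at each such step Proposition \ref{prop:anglecases} only multiplies the excess by a bounded dimensional constant, which can be absorbed by choosing $\varepsilon_{decay}$ so small that, after these at most $Q-1$ blow-ups, the accumulated excess still fits in the smallness threshold for the next application of Lemma \ref{lem:saexcessdecay}. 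Between refinements, $N_k'$ is constant and one obtains a strict geometric decay $\hat{\mathbb{E}}_\kappa(T,C_{k+1},\bB_{\rho_{k+1}}(p)) \leq \theta'\,\hat{\mathbb{E}}_\kappa(T,C_k,\bB_{\rho_k}(p))$ for some $\theta' \in (0,1)$.

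Third, from $\mathcal{G}(C_{k+1},C_k)^2 \lesssim \mathbb{E}(T,C_k,\bB_{\rho_k}(p)) \lesssim (\theta')^k\,\hat{\mathbb{E}}_\kappa(T,C,\bB_1(p))$, triangle inequality and summation of a geometric series show that $(C_k)$ is $\mathcal{G}$-Cauchy and hence converges to an admissible open book $C_p$ with $\mathcal{G}(C_p,C)^2 \lesssim \hat{\mathbb{E}}_\kappa(T,C,\bB_1(p))$. Since $\rho_{k+1}/\rho_k \geq \eta := \min_N \eta_N > 0$, the scales $\rho_k$ are comparable to $\eta^k$; for an arbitrary $\rho \in (0,1/2)$ one picks $k$ with $\rho_{k+1} \leq \rho \leq \rho_k$ and, via the scale-change lemma (Lemma \ref{p:excessboundscales}) controlling excess passage between comparable balls, concludes
\begin{equation*}
\hat{\mathbb{E}}_\kappa(T,C_p,\bB_\rho(p)) \leq c\,\hat{\mathbb{E}}_\kappa(T,C_k,\bB_{\rho_k}(p)) + c\,\mathcal{G}(C_k,C_p)^2 \leq c\,\rho^{\alpha}\,\hat{\mathbb{E}}_\kappa(T,C,\bB_1(p)),
\end{equation*}
with $\alpha = \log(1/\theta')/\log(1/\eta)$. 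Uniqueness of the tangent cone then follows because every blow-up is obtained as a limit of rescalings at radii $\rho \to 0$, and the above decay forces all such limits to equal $C_p$; the Hausdorff-distance bound in the statement is a consequence of the $\mathcal{G}$-bound restricted to $\bB_1$.

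The main obstacle is the interaction between the pruning step of Proposition \ref{prop:anglecases} and the geometric decay: each refinement of the sheet structure can inflate the excess by a uniform constant, so one must carefully arrange the smallness hierarchy $\theta_N \ll \varepsilon_{N+1},\dots,\varepsilon_Q$ of Remark \ref{rmk:parameters} to guarantee that the $\leq Q$ refinement events do not destroy the decay, and that after each refinement the new cone still satisfies the hypotheses of Lemma \ref{lem:saexcessdecay} at the next scale. A secondary technical point is handling the center and scale changes needed to pass from $\rho_k$ to arbitrary $\rho$, which is where Lemma \ref{p:excessboundscales} is used.
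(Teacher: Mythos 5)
Your overall strategy — iterate the excess–decay Lemma \ref{lem:saexcessdecay} along a geometric sequence of radii, use the telescoping of $\mathcal{G}(C_k,C_{k+1})$ to extract a limit cone, and then pass to arbitrary $\rho$ via Lemma \ref{p:excessboundscales} — is the same as the paper's, but the argument as written has two genuine gaps at the level of the iteration.

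\textbf{Gap 1: the case where the second fundamental form terms dominate.} You assert that Lemma \ref{lem:saexcessdecay} can always be applied at scale $\rho_k$ "since $\mathbf{A}_\Gamma \rho_k + \mathbf{A}_\Sigma^2\rho_k^2 \leq \varepsilon_{N_k'}\mathbb{E}(T,\tilde C_k,\bB_{\rho_k}(p))$ holds after choosing $\kappa$ small enough." This is not true. The amended excess $\hat{\mathbb{E}}_\kappa$ is a maximum of three terms, and if $\kappa^{-1}\mathbf{A}_\Gamma\rho_k$ or $\kappa^{-1}\mathbf{A}_\Sigma^2\rho_k^2$ is the largest, then $\mathbb{E}(T,C_k,\bB_{\rho_k})$ can be \emph{much smaller} than $\mathbf{A}_\Gamma\rho_k$, and no choice of $\kappa$ (which is fixed once and for all before the iteration) can rescue the hypothesis $\mathbf{A}_\Gamma \rho_k + \mathbf{A}_\Sigma^2\rho_k^2 \leq \varepsilon_{N}\mathbb{E}$. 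The paper handles this by splitting the iteration into two cases: if $\hat{\mathbb{E}}(T,\mathbf{C}_l,\bB_{r_l}) > \mathbb{E}(T,\mathbf{C}_l,\bB_{r_l})$ (i.e.\ the $\mathbf{A}$-terms dominate), one simply halves the radius without changing the cone, and $\hat{\mathbb{E}}$ contracts automatically because the $\mathbf{A}$-terms scale linearly (resp.\ quadratically) with $r$. Without this case distinction, the iteration stalls exactly when the excess has decayed faster than the $\mathbf{A}$-terms.

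\textbf{Gap 2: re-pruning at every step is not compatible with the parameter order, and the refinement count is misattributed.} You apply Proposition \ref{prop:anglecases} at each step and estimate that it "only multiplies the excess by a bounded dimensional constant." That constant is $C(m)(\min_{1\leq k\leq Q}\varepsilon_k)^{-Q}$. Given the parameter order in Remark \ref{rmk:parameters} ($\theta_Q$ chosen first, then $\varepsilon_Q$, then $\theta_{Q-1}$, etc., with $\varepsilon_1$ last), the quantity $\theta_N\cdot C(m)(\min_k\varepsilon_k)^{-Q}$ is \emph{not} controllable to be below $1$: the $\theta_N$ are fixed before the smallest $\varepsilon_k$. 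You also attribute the potential excess inflation to the refinement steps (where the sheet count increases), but that is backwards: refinement happens inside Lemma \ref{lem:saexcessdecay} and always \emph{contracts} the excess by $\theta_N$; it is \emph{pruning} in Proposition \ref{prop:anglecases} (which can only \emph{decrease} the sheet count) that inflates the excess. If pruning were genuinely applied each step, the sheet count would no longer be monotone non-decreasing, so the "at most $Q-1$ refinements in total" bound you invoke also fails.

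The missing observation — which is the crux of the paper's proof — is that after the one initial pruning, the excess–angle condition $\mathbb{E}(T,\mathbf{C}_{l},\bB_{r_l}) \leq \varepsilon_{N_l}\alpha(\mathbf{C}_l)^2$ is \emph{automatically propagated} by Lemma \ref{lem:saexcessdecay} thanks to the parameter hierarchy: if the sheet count increases, $\theta_N\alpha(\mathbf{C}_{l+1})^2 \leq \varepsilon_{N_{l+1}}\alpha(\mathbf{C}_{l+1})^2$ because $\theta_N \ll \min_{N+1\leq i\leq Q}\varepsilon_i$; if the sheet count is unchanged, the angle changes only by $O(\gamma^{1/2}\mathbb{E}^{1/2}) \ll \alpha(\mathbf{C}_l)$ while the excess halves. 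Once you have this, re-pruning is a no-op at every step after the first, the sheet count is non-decreasing, and the geometric decay and telescoping go through exactly as you sketch. You should remove the per-step pruning, add the small/large second fundamental form dichotomy, and verify the propagation of the excess–angle condition explicitly.
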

 \begin{proof}

Before proceeding we apply, we apply the pruning proposition \ref{prop:anglecases} to the cone $C$, obtaining a cone $\hat{C}$ which satisfies 
\begin{equation*}
\mathbb{E}(T,\hat{C},\bB_1) \leq C(m)\left(\min_{1 \leq k \leq Q} \varepsilon_k\right)^{-Q} \mathbb{E}(T,C,\bB_1).
\end{equation*}
 Since $\mathcal{G}(\hat{C},C) \lesssim \mathbb{E}(T,C,\bB_1)^{1/2}$, if we prove the theorem to hold for $\hat{C}$ then it will hold for $C.$ By choosing $\varepsilon$ small enough, we may assume that the starting cone $C$ is pruned (without relabeling) and thus it satisfies the conclusions of Proposition \ref{prop:anglecases}.

\textbf{We will start by proving the following iterative excess decay starting with a pruned cone $C$}.
There exists a sequence of admissible open books $\mathbf{C}^l$ and radii $r_{l+1}r_{l}^{-1} \in [\eta_{N_l},1/2]$, where $\mathbf{C}_0:=C$ and $r_0:=1$, such that 

\begin{itemize}
\item The number of distinct sheets $N_l$ of cone $\mathbf{C}^l$ is non-decreasing. 
      \item      For every $l$ we have $\hat{\mathbb{E}}(T,\mathbf{C}_l,\bB_{r_l})\leq \varepsilon_{N_l}\alpha(\mathbf{C}_l)^2.$

    \item 
    If $\hat{\mathbb{E}}(T,\mathbf{C}_l,\bB_{r_l})> \mathbb{E}(T,\mathbf{C}_l,\bB_{r_l})$
    then 
    $\mathbf{C}^{l+1}:=\mathbf{C}^l$ and $r_{l+1}=\frac{1}{2}r_l$.
   
    \item 
    If $\hat{\mathbb{E}}(T,\mathbf{C}_l,\bB_{r_l})=\mathbb{E}(T,\mathbf{C}_l,\bB_{r_l})$ we apply excess decay \ref{lem:saexcessdecay}. Here $r_{l+1}r_l^{-1} \in [\eta_{N_l},1/2]$ and 
    \begin{equation*}
        \mathcal{G}(\mathbf{C}^{l+1},\mathbf{C}^l) \leq  \gamma(Q,m,n,\overline{n})^{1/2}\mathbb{E}(T,C,\bB_{r_l})^{1/2}.
    \end{equation*}
    \item For every non-negative integer $l$
    \begin{equation*}
        \hat{\mathbb{E}}(T,\mathbf{C}_{l+1},\bB_{r_{l+1}}) \leq \frac{1}{2}\hat{\mathbb{E}}(T,\mathbf{C}_{l},\bB_{r_l}).
    \end{equation*}
\end{itemize}

We prove we can do the above.
We chose $\kappa \ll \min_{1 \leq k \leq Q} \varepsilon_k$.

 \textbf{The case of large second fundamental forms:}
 If
  \begin{equation*}
\hat{\mathbb{E}}(T,\mathbf{C}_l,\bB_{r_l})>C(Q,m,n,\overline{n}) \mathbb{E}(T,\mathbf{C}_l,\bB_{r_l}) 
\end{equation*} we chose $r_{l+1}:=\frac{1}{2}r_l$, 
$\mathbf{C}_{l+1}:=\mathbf{C}_{l}$,  then 
\begin{equation*}
\hat{\mathbb{E}}(T,\mathbf{C}_{l+1},\bB_{r_{l+1}}) \leq \frac{1}{2}\hat{\mathbb{E}}(T,\mathbf{C}_l,\bB_{r_l}). 
\end{equation*}
In the last inequality we use Lemma \ref{l:excessboundscales}.

\textbf{The case of small second fundamental forms:}
We must have 

\begin{equation*}
\mathbb{E}(T,\mathbf{C}_l,\bB_{r_l}) \leq \varepsilon_N \alpha(\mathbf{C}_l)^2 \: \: \textup{and} \: \: \mathbf{A}_{\Gamma}r_l+\mathbf{A}_{\Sigma}^2r_l^2 \leq \varepsilon_{N} \mathbb{E}(T,C,\bB_{r_l}).
\end{equation*}

We are in the hypothesis to apply  Lemma \ref{lem:saexcessdecay}. This gives us the cone $\mathbf{C}_{l+1}$, which satisfies
\begin{equation*}
\mathbb{E}(T,\mathbf{C}_{l+1},\bB_{r_{l+1}}) \leq \theta_{N} \mathbb{E}(T,\mathbf{C}_l,\bB_{r_l}). 
\end{equation*}

If $\mathbf{C}_{l+1}$ has more sheets than $\mathbf{C}_l$ then
\begin{equation*}
\mathbb{E}(T,\mathbf{C}_{l+1},\bB_{r_{l+1}}) \leq \theta_{N}\alpha(\mathbf{C}_{l+1})^2 \leq \varepsilon_{N_{l+1}}\alpha(\mathbf{C}_{l+1})^2
\end{equation*}
the last step by our choice of parameters \ref{rmk:parameters}.

In the case that $\mathbf{C}_{l+1}$ and $\mathbf{C}_l$ have the same number of sheets we have that
\begin{align*}
\alpha(\mathbf{C}_{l+1}) &\geq \alpha(\mathbf{C}_l)-C(Q,m)\mathcal{G}(\mathbf{C}_{l+1},\mathbf{C}_l) \\ &\geq \alpha(\mathbf{C}_l)- C(Q,m)\gamma(Q,m,n,\overline{n})\mathbb{E}(T,\mathbf{C}_{l},\bB_{r_l})^{1/2} \\&\geq (1-\varepsilon_{N} \gamma(Q,m,n,\overline{n})) \alpha(\mathbf{C}_l)  \geq \frac{1}{\sqrt{2}}\alpha(\mathbf{C}_l).
\end{align*}

This implies that
\begin{equation*}
\mathbb{E}(T,\mathbf{C}_{l+1},\bB_{r_{l+1}}) \leq \frac{1}{2}\mathbb{E}(T,\mathbf{C}_{l},\bB_{r_{l}}) \leq \frac{1}{2}\varepsilon_{N_l} \alpha(\mathbf{C}_l)^2 \leq \varepsilon_{N_{l+1}}\alpha(\mathbf{C}_{l+1})^2
\end{equation*}
since $N_l=N_{l+1}$.
Additionally since $r_{l+1}r_l^{-1}< \frac{1}{2}$ the second fundamental form terms must also shrink by a factor of $1/2$ and thus we must have that
\begin{equation*}
\hat{\mathbb{E}}(T,\mathbf{C}_{l+1},\bB_{r_{l+1}}) \leq \frac{1}{2}\hat{\mathbb{E}}(T,\mathbf{C}_{l},\bB_{r_l}).
\end{equation*}

\textbf{Convergence of the cones $\mathbf{C}_l$ and uniqueness of the tangent cone}:

We must have 
\begin{equation*}
\mathcal{G}(\mathbf{C}_{l+1},\mathbf{C}_l) \leq A \hat{\mathbb{E}}(T,\mathbf{C}_l,\bB_{r_l})^{1/2} \leq A\frac{1}{2^l} \mathbb{E}(T,C,\bB_r)^{1/2}
\end{equation*}
for a for a universal parameter $A$ depending on all the other parameters.
This implies that there exists a limit $C_p$ in $\mathcal{G}$ of the sequence $\mathbf{C}_l$ and 
\begin{equation*}
\mathcal{G}(C_p,\mathbf{C}_l) \leq 2A \frac{1}{2^l}\mathbb{E}(T,C,\bB_r)^{1/2}.
\end{equation*}

This implies 
\begin{equation*}
\mathbb{E}(T,C_p,\bB_{r_l}) \leq C(Q,m,n,\overline{n},\min_{1 \leq k \leq Q} \varepsilon_k)\frac{1}{2^l} \mathbb{E}(T,C,\bB_r)^{1/2}.
\end{equation*}

Given $\rho$ we can choose $l$ such that $\rho \in (r_{l+1},r_l]$. Let $\eta:=\min_{1 \leq i \leq Q} \eta_i$. Then

\begin{equation*}
\hat{\mathbb{E}}(T,C_p,\bB_{\rho})\leq C(m) \frac{1}{\eta^{m+2}} \hat{\mathbb{E}}(T,C_p,\bB_{r_l}) \leq C(\eta,\varepsilon_1) \frac{1}{2^l} \hat{\mathbb{E}}(T,C,\bB_r) 
\end{equation*}
We notice that if $\rho<1/2r$ then using that $\eta^{l+1}r \leq r_{l+1} < \rho$ and defining $\alpha:=\textup{log}(2)/\textup{log}(\eta)$ we get
\begin{equation*}
\frac{1}{2^{l+1}} \leq  \left(\frac{\rho}{r}\right)^{\alpha}.
\end{equation*}

This implies as desired
\begin{equation*}
\hat{\mathbb{E}}(T,C_p,\bB_{\rho}) \leq C(Q,m,n,\overline{n},\min_{1 \leq k \leq Q} \varepsilon_k, \eta) \left(\frac{\rho}{r}\right)^{\alpha}\hat{\mathbb{E}}(T,C,\bB_r(p)). 
\end{equation*}
In order to get the estimate on the intermediate scales see Proposition
\ref{l:excessboundscales}.

The last inequality implies the uniqueness of the tangent cone. Indeed, let $r_j \rightarrow 0$ a sequence of scales such that $(\lambda_{r_j})_{\#}T \rightharpoonup C'$. Then 
\begin{equation*}
\mathbb{E}(C',C_p,\bB_1)= \lim_{j \rightarrow \infty} \mathbb{E}((\lambda_{r_j})_{\#}T,C_p,\bB_1)=0.
\end{equation*}
This implies $C'=C_p.$
\end{proof}
\end{subsection}
\subsection{Consequences of the excess decay lemma}

As an immediate consequence we get an upgrade on the classification of the tangent cones and thus on the minimum density.
\begin{theorem}[Density jump] Let $C$ be an area minimizing cone with $\partial C= Q\a{\{0\} \times \RR^{n+1}}$. There exists $\theta(Q,m,n)>0$ such that if $\Theta(C,0)<Q/2+\theta$ then $\Theta(C,0)=Q/2$ (and thus $C$ is an open book).
\end{theorem}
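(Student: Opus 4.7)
The plan is to argue by contradiction, using the baseline classification (Theorem \ref{t:ClassificationTgtConesNConvex}), upper semicontinuity of density, and the uniqueness-of-tangent-cone theorem (Theorem \ref{t:sexcessuniqueness}). If no such $\theta$ existed, one could find a sequence of $m$-dimensional area minimizing cones $C_k$ in $\RR^{m+n}$ with $\partial C_k = Q\a{\RR^{m-1}\times\{0\}}$ and
\begin{equation*}
Q/2 < \Theta(C_k,0) < Q/2 + 1/k.
\end{equation*}
Since $|C_k|(\bB_R) = R^m\omega_m\Theta(C_k,0) \leq R^m\omega_m(Q/2+1)$ for every $R$, the standard compactness of area minimizing currents with prescribed integer boundary produces a subsequence converging to an area minimizing cone $C_\infty$ with $\partial C_\infty = Q\a{\RR^{m-1}\times\{0\}}$. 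Upper semicontinuity gives $\Theta(C_\infty,0)\leq Q/2$, while Theorem \ref{t:ClassificationTgtConesNConvex} gives $\Theta(C_\infty,0)\geq Q/2$; hence $\Theta(C_\infty,0)=Q/2$ and $C_\infty$ is an admissible open book.

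Next I verify $\mathbb{E}(C_k, C_\infty, \bB_1)\to 0$. The weak convergence $C_k\rightharpoonup C_\infty$ together with the mass identity $|C_k|(\bB_R) = R^m\omega_m\Theta(C_k,0) \to R^m\omega_m\Theta(C_\infty,0) = |C_\infty|(\bB_R)$ yields weak convergence of the Radon measures $\phi_1\,d|C_k| \rightharpoonup \phi_1\,d|C_\infty|$ with convergent total masses, all supported in the compact set $\overline{\bB_1}$. Standard stability of the $L^2$ Wasserstein distance in this regime gives $W_2(\phi_1\,d|C_k|,\phi_1\,d|C_\infty|)\to 0$, and selecting this as an admissible transport (with vanishing residuals $\mu_2^i$) in the definition of $d(\cdot,\cdot)$ shows $\mathbb{E}(C_k,C_\infty,\bB_1)\to 0$. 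Because $\Sigma = \RR^{m+n}$ and $\Gamma = \RR^{m-1}\times\{0\}$ are affine, $\mathbf{A}_\Sigma = \mathbf{A}_\Gamma = 0$, so the amended excess $\hat{\mathbb{E}}_{\kappa_{decay}}(C_k,C_\infty,\bB_1(0))$ coincides with $\mathbb{E}(C_k,C_\infty,\bB_1(0))$.

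For $k$ large one therefore has $\hat{\mathbb{E}}_{\kappa_{decay}}(C_k, C_\infty, \bB_1(0))<\varepsilon_{decay}$, so Theorem \ref{t:sexcessuniqueness} applies to $T = C_k$ with base open book $C_\infty$ and concludes that the tangent cone to $C_k$ at $0$ is unique and is an admissible open book. But $C_k$ is itself a cone, so its unique tangent cone at $0$ equals $C_k$; thus $C_k$ is an open book, forcing $\Theta(C_k,0) = Q/2$, which contradicts $\Theta(C_k,0) > Q/2$. The main technical point I would be careful about is the continuity step $\mathbb{E}(C_k,C_\infty,\bB_1)\to 0$: since $\mathbb{E}$ is a Wasserstein-type transport distance rather than a plain $L^2$ integral, one must invoke stability of $W_2$ under weak-plus-mass convergence on a fixed compact set, and verify that the added flexibility built into $d(\cdot,\cdot)$ by allowing residual signed measures $\mu_2^i$ causes no obstruction — it does not, because choosing those residuals to vanish is always admissible and already delivers the estimate.
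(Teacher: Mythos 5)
Your proposal is correct and follows essentially the same approach as the paper: argue by compactness, extract a limit cone which must be an open book by the baseline classification, observe that the strong excess of $C_k$ relative to the limit becomes small, and then invoke Theorem \ref{t:sexcessuniqueness} to force $C_k$ to be an open book. One small caveat on your Wasserstein step: since $\int\phi_1\,d|C_k|$ and $\int\phi_1\,d|C_\infty|$ need not have equal total mass for finite $k$, you cannot literally take the residuals $\mu_2^i$ to vanish (the transport plan in $W_2$ requires matching masses); rather you must retain a small residual piece of vanishing total variation to balance the masses, which the definition of $d(\cdot,\cdot)$ permits at negligible cost. This does not affect the conclusion.
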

\begin{proof}
Suppose by contradiction there exists a sequence of area minimizing cones $C_j$ with $\Theta(C_j,0) \rightarrow Q/2$. Up to a subsequence we can assume $C_j \rightarrow C$ for $C$ an area minimizing cone. We must have then that $\Theta(C,0)=Q/2$
 and thus $C$ is an open book, by Theorem \ref{t:ClassificationTgtConesNConvex}. 
 
 We must have that for $j$ large enough $\mathbb{E}(C_j,C,\bB_1)<\varepsilon$ for $\varepsilon$ in \ref{t:sexcessuniqueness}. This implies that the tangent cone to $C_j$ at $0$ for some $j$ large enough must be unique and an open book. Thus $C_j$ must be an open book for $j$ large enough.
 \end{proof}

\begin{theorem}[Uniqueness of the tangent cone]\label{t:uniquetangentcone}
Let $T$, $\Gamma$, $\Sigma$ be as in  Assumption \ref{A:general} or the local convex barrier \ref{a:convexbarrierlocal}. Assume in the first case that $\Theta(T,0) \leq Q/2+\theta$. Then $\Theta(T,0)=Q/2$ and the tangent cone at is a unique open book.
\end{theorem}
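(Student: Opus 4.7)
The plan is to first reduce to the case $\Theta(T,0) = Q/2$, then locate a scale at which the hypotheses of the main decay Theorem \ref{t:sexcessuniqueness} are satisfied.

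\textbf{Step 1: reduce to minimum density.} In the local convex barrier case \ref{a:convexbarrierlocal}, Theorem \ref{t:convexbarrierdensity} directly yields that every tangent cone at $0$ is an open book, and therefore $\Theta(T,0)=Q/2$ by the monotonicity formula \ref{t:monotonicityformula} (which gives $\Theta(T,0)=\Theta(C,0)$ for any tangent cone $C$). In the case of Assumption \ref{A:general} with $\Theta(T,0)\le Q/2+\theta$, pick any tangent cone $C$ at $0$; by monotonicity it is an area minimizing cone with $\partial C=Q\a{\mathbb{R}^{m-1}\times\{0\}}$ and $\Theta(C,0)=\Theta(T,0)\le Q/2+\theta$. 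For $\theta=\theta(Q,m,n)$ from the density jump theorem just proved, this forces $\Theta(C,0)=Q/2$, hence by Theorem \ref{t:ClassificationTgtConesNConvex} the tangent cone $C$ is an open book and $\Theta(T,0)=Q/2$.

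\textbf{Step 2: find an admissible scale.} Fix a tangent cone $C_\star$ at $0$, which by Step 1 is an admissible open book. By the definition of tangent cone, there exists a sequence $r_j\downarrow 0$ such that the rescaled currents $T_j:=(\lambda_{0,r_j})_\#T$ converge to $C_\star$ as currents in $\bB_2$, and the rescaled ambient/boundary manifolds $\Sigma_j,\Gamma_j$ become flatter: $r_j\mathbf{A}_{\Gamma_j}+r_j^2\mathbf{A}_{\Sigma_j}^2\to 0$. I claim that after passing to a further subsequence, $\mathbb{E}(T,C_\star,\bB_{r_j})=\mathbb{E}(T_j,C_\star,\bB_1)\to 0$. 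This is the main technical point; the argument is to use the graphical control afforded by Remark \ref{rmk:strongexcessgraphicalbound} (or, more precisely, the Lipschitz approximation constructed in Section 6 together with the reverse excess estimate there). Given $C_\star$ as a limit, by Allard-type boundary regularity applied to each half-plane $H_i$ of $C_\star$ (using multiplicity $Q_i$ sheets and the monotonicity formula to produce a Lipschitz multivalued graph), each component of $T_j$ near $H_i$ can be written as a Lipschitz multi-valued graph $u_i^{(j)}$ over $H_i$ of vanishing $L^\infty$ norm on $\bB_{3/2}$. Plugging these into Remark \ref{rmk:strongexcessgraphicalbound} gives $\mathbb{E}(T_j,C_\star,\bB_1)\to 0$.

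\textbf{Step 3: apply the decay theorem.} Choose $j$ large enough that
\[
\hat{\mathbb{E}}_{\kappa_{decay}}(T_j,C_\star,\bB_1)=\max\bigl\{\mathbb{E}(T_j,C_\star,\bB_1),\,\kappa_{decay}^{-1}\mathbf{A}_{\Gamma_j},\,\kappa_{decay}^{-1}\mathbf{A}_{\Sigma_j}^2\bigr\}<\varepsilon_{decay},
\]
where $\varepsilon_{decay},\kappa_{decay}$ are the constants from Theorem \ref{t:sexcessuniqueness}. That theorem then yields a unique tangent cone $C_0$ of $T_j$ at $0$ with a power-rate of convergence. But the tangent cones of $T_j$ at $0$ coincide with the tangent cones of $T$ at $0$, so the tangent cone of $T$ at $0$ is unique and equals $C_\star$ (in particular an admissible open book).

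\textbf{Main obstacle.} The delicate point is Step 2: upgrading varifold/current convergence $T_j\to C_\star$ to convergence in the stronger Wasserstein-type excess $\mathbb{E}$. Plain current convergence controls only the classical $L^2$-excess, whereas $\mathbb{E}$ also forces a transport plan to cover the mass. The cleanest route is to borrow the Lipschitz approximation from Section 6: once $T_j$ is graphical over $C_\star$ outside a small set, Remark \ref{rmk:strongexcessgraphicalbound} bounds $\mathbb{E}(T_j,C_\star,\bB_1)$ by the $L^\infty$-norm of the approximation plus the classical $L^2$-excess plus an error controlled by the Dirichlet energy of the graph, all of which tend to zero.
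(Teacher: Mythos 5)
Your overall structure matches the paper's proof: reduce to $\Theta(T,0)=Q/2$ via the density jump theorem or Theorem \ref{t:convexbarrierdensity}, find a scale at which the hypotheses of Theorem \ref{t:sexcessuniqueness} hold, and apply that theorem. Steps 1 and 3 are fine. You also correctly flag the one genuinely nontrivial point: why does current convergence $(\lambda_{0,r_j})_\#T\rightharpoonup C_\star$ upgrade to smallness of the Wasserstein-type excess $\hat{\mathbb{E}}$?

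Your proposed resolution of that point, however, is circular. You invoke ``Allard-type boundary regularity applied to each half-plane $H_i$ of $C_\star$ (using multiplicity $Q_i$ sheets$\ldots$)'' to write $T_j$ as a graph. Allard's boundary regularity theorem is a multiplicity-one statement; there is no version of it for boundary multiplicity $Q_i>1$. Indeed, a boundary regularity theorem at higher multiplicity is precisely the payoff of this paper and its companion, so you cannot assume it here. Even if each $Q_i$ were $1$, $T_j$ itself still has boundary $Q\a{\Gamma_j}$ with $Q>1$, and there is no a priori decomposition of $T_j$ into pieces with single-sheeted boundary (that decomposition is Theorem \ref{T:decomposition-1sided}, which is downstream of this result). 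The correct route, which you hedge toward, is \emph{not} a regularity theorem but the reverse excess estimate: Lemma \ref{l:reverseexcessbound} bounds $\mathbb{E}(T_j,C_\star,\bB_{1/4})$ by the $L^2$-excess $\mathbf{E}(T_j,C_\star,\bB_1)$ plus second-fundamental-form errors, all of which tend to zero along the blowup sequence; its hypothesis on outer cubes with the correct multiplicities is checked by compactness, using that $C_\star$ is a tangent cone so $T_j\to C_\star$ with mass convergence. Equivalently one can use the later unnamed compactness theorem in Section 5 (the one preceding Theorem \ref{T:excessdecayforflat}), whose case (2) is tailored to exactly this situation. The paper's own proof simply asserts $\hat{\mathbb{E}}(T,C,\bB_{r_j})\to 0$ without comment; your instinct to supply an argument is right, but the argument must be compactness plus the reverse excess bound, not Allard's theorem.
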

\begin{proof}
In the first case, since $\Theta(T,0)<Q/2+\theta$ then $\Theta(T,0)=Q/2$. Thus in this case, every tangent cone must be an open book. Analogously, in the second case, by Theorem \ref{t:convexbarrierdensity} every tangent cone at $0$ must be an open book. We take a tangent cone $C$. We must then have that 
\begin{equation*}
\hat{\mathbb{E}}(T,C,\bB_{r_j}) \rightarrow 0
\end{equation*}
which means that there exists $j$ for which $\hat{\mathbb{E}}(T,C,\bB_{r_j})<\varepsilon$ with $\varepsilon$ as in \ref{t:sexcessuniqueness}. This implies that $T$ has a unique tangent cone at $0$ by \ref{t:sexcessuniqueness}. 
\end{proof}
Let $N\Gamma$ be the normal bundle of $\Gamma.$
\begin{definition}\label{d:multivaluednormalmap}
Given $p \in \Gamma$ let $C_p=\sum_{i=1}^N Q_i\a{H_i}$ be the tangent cone at $p$. We define the normal map $\eta:\Gamma \rightarrow \mathcal{A}_Q(N\Gamma)$ given by 
\begin{equation*}
    \eta(p)=\sum_{i=1}^N Q_i \a{\mathbf{p}_{T_p(\Gamma)^{\perp}}(H_i)}.
\end{equation*}
\end{definition}
\begin{corollary}[Multivalued Normal Map Hölder Estimate]
\label{c:normal}
There exists $\varepsilon$ such that if \begin{equation*}
\hat{\mathbb{E}}(T,C,\bB_1(p))<\varepsilon
\end{equation*}
then the following quantitative estimate holds $\forall q_1, q_2 \in B(p,1/16)$ 
\begin{equation*}
\mathcal{G}(\eta(q_1),\eta(q_2))^2 \leq c(Q,m,n,\overline{n}) \hat{\mathbb{E}}(T,C,\bB_1)|q_1-q_2|^{\alpha}.
\end{equation*}
Here, $\hat{\mathbb{E}}$  is defined using $\kappa=\kappa_{\textup{decay}}$.

Moreover, let $U:=\left\{x \in \Gamma: \Theta(x)<Q/2+\theta \right\}$, which forms a relatively open set of $\Gamma$, and let $K\subset U$ a compact subset of $U$. Then 
$\eta \in C^{\alpha}(K).$
\end{corollary}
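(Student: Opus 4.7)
The plan is to apply the quantitative excess decay of Theorem \ref{t:sexcessuniqueness} at every $q \in B(p,1/16)\cap\Gamma$ and then compare the resulting unique tangent cones at overlapping scales. First, if $\hat{\mathbb{E}}(T,C,\bB_1(p))<\varepsilon$ is sufficiently small, then for each such $q$ one produces an admissible open book $C_q^0$ at $q$ by translating $C$ to $q$ and tilting its spine to $T_q\Gamma$. The inclusion $\bB_{1/2}(q)\subset\bB_1(p)$, together with the $C^2$ regularity of $\Gamma$ and $\Sigma$ (which yields $|T_q\Gamma-T_p\Gamma|\lesssim|q-p|$), gives $\hat{\mathbb{E}}(T,C_q^0,\bB_{1/2}(q))<\varepsilon_{\textup{decay}}$. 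Hence Theorem \ref{t:sexcessuniqueness} applies at $q$ and provides a unique tangent cone $C_q$ with
\begin{equation*}
\hat{\mathbb{E}}(T,C_q,\bB_\rho(q)) \leq c\,\rho^{\alpha}\,\hat{\mathbb{E}}(T,C,\bB_1(p)), \qquad 0<\rho\leq 1/4,
\end{equation*}
and a bound $\mathcal{G}(C_q,C_q^0)^2\lesssim\hat{\mathbb{E}}(T,C,\bB_1(p))$ coming from the last estimate of that theorem.

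Next, fix $q_1,q_2\in B(p,1/16)\cap\Gamma$ and set $r:=|q_1-q_2|$. Since $\bB_r(q_i)\subset\bB_{2r}(q_j)$, the above decay at scale $2r$ yields $\mathbb{E}(T,C_{q_i},\bB_{2r}(q_i))\lesssim r^{\alpha}\hat{\mathbb{E}}(T,C,\bB_1(p))$ for $i=1,2$. On the common lens region $\bB_r(q_1)\cap\bB_r(q_2)$, which contains a ball of radius $\sim r$, the current $T$ is therefore simultaneously $L^2$-close to both $C_{q_1}$ and $C_{q_2}$; using the reverse-$L^2$ bound built into $\mathbb{E}$ and a triangle inequality in $L^2$, one concludes that the two cones are themselves $L^2$-close on that region. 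Since open books are rigid, determined by a spine plus finitely many unit normals, this converts into
\begin{equation*}
\mathcal{G}(C_{q_1},\widetilde{C}_{q_2})^2 \lesssim r^{\alpha}\,\hat{\mathbb{E}}(T,C,\bB_1(p)),
\end{equation*}
where $\widetilde{C}_{q_2}$ is the cone $C_{q_2}$ rigidly rotated so that its spine coincides with $T_{q_1}\Gamma$; the rotation angle is $\lesssim r$ by $C^2$ regularity of $\Gamma$ and contributes only quadratically. The pointwise Hölder bound on $\eta$ then follows by projecting each half-plane onto the normal bundle via the $1$-Lipschitz map $H_j\mapsto\mathbf{p}_{T_{q_i}\Gamma^{\perp}}(H_j)$ and combining with the Lipschitz variation of $q\mapsto T_q\Gamma$.

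For the global statement on a compact $K\subset U$, one covers $K$ by finitely many balls on which the pointwise estimate applies. By Theorem \ref{t:uniquetangentcone}, every $q\in U$ has an open-book tangent cone of density $Q/2$, so some scale $r_q>0$ satisfies $\hat{\mathbb{E}}(T,C_q^0,\bB_{r_q}(q))<\varepsilon$; a compactness and upper-semicontinuity argument upgrades this to a uniform scale on $K$, and the local Hölder estimates then assemble into $\eta\in C^{\alpha}(K)$. The decisive technical point is the cone-comparison step: since $\mathcal{G}$ on open books is defined only when they share a common spine, one must (i) verify that $C_{q_1}$ and $C_{q_2}$ share the same combinatorial structure (sheet count and multiplicities) — which is forced by both being $\mathcal{G}$-close to the fixed reference $C$ and by the discreteness of admissible types when $\varepsilon$ is small — and (ii) handle the spine mismatch $T_{q_1}\Gamma\neq T_{q_2}\Gamma$ by the explicit rotation above. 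Once these are in place, the conversion from $L^2$ proximity on a ball of radius $r$ to closeness of unit normals is a direct rescaling.
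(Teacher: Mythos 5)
Your high-level strategy matches the paper's: recentre at an arbitrary $q$, invoke the quantitative decay of Theorem~\ref{t:sexcessuniqueness} to get the tangent cone $\mathbf{C}_q$, compare $\mathbf{C}_{q_1}$ and $\mathbf{C}_{q_2}$ at the scale $r=|q_1-q_2|$, and finish the global $C^\alpha$ statement by covering and compactness. However, two of the steps you treat as routine are exactly the places where the paper has to do real work, and as written your argument has genuine gaps.

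First, the recentring step. You claim that the inclusion $\bB_{1/2}(q)\subset\bB_1(p)$, together with $|T_q\Gamma-T_p\Gamma|\lesssim|q-p|$, already gives $\hat{\mathbb{E}}(T,C_q^0,\bB_{1/2}(q))<\varepsilon_{\mathrm{decay}}$. This is not valid for the measure-theoretic excess $\mathbb{E}$. By definition $\mathbb{E}(T,C,\bB_r(p))$ is a transport cost between $\phi_{r,p}\,d|T|$ and $\phi_{r,p}\,d|C|$, and when the centre of the bump function changes an admissible transport plan for one ball need not restrict to one for the other. The paper explicitly warns about this (``These issues are not immediate because the transport plan for a given scale might not be suitable for a smaller scale'') and resolves it via Lemma~\ref{p:excessboundscales}, used together with the pruning procedure of Proposition~\ref{prop:anglecases} to ensure that $C$ (and hence its translate/tilt $C_q^0$) satisfies the hypotheses of the Lipschitz approximation needed in that lemma. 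Your proposal never invokes either.

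Second, the cone-comparison step. You argue that since $T$ is $L^2$-close to both $C_{q_1}$ and $C_{q_2}$ on the lens $\bB_r(q_1)\cap\bB_r(q_2)$, ``a triangle inequality in $L^2$'' gives that the cones are $L^2$-close to each other. The obstruction is that the two sides of the reverse-excess bound are integrals against different measures: $\dist(\cdot,\operatorname{spt} T)^2$ against $|C_{q_1}|$ and $\dist(\cdot,C_{q_2})^2$ against $|T|$. Knowing the first controls the distance from $C_{q_1}$-points to \emph{some} point of $T$ on average, and the second controls the distance of \emph{$|T|$-generic} points to $C_{q_2}$ on average; there is no triangle inequality that chains these without a pointwise (e.g.\ $L^2$--$L^\infty$) input and a careful matching of sheets. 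The paper sidesteps this entirely by translating and tilting the tangent cone $\mathbf{C}_{q_1}$ to $q_2$ to obtain a cone $C^{q_2}_{q_1}$, showing via the decay already obtained that $\hat{\mathbb{E}}(T,C^{q_2}_{q_1},\bB_{r_1}(q_2))\lesssim |q_1-q_2|^\alpha\,\hat{\mathbb{E}}(T,C,\bB_1)$, and then re-applying Theorem~\ref{t:sexcessuniqueness} at $q_2$ with this base cone: the quantitative estimate $\mathcal{G}(C_{q_2},C^{q_2}_{q_1})^2\lesssim\hat{\mathbb{E}}(T,C^{q_2}_{q_1},\bB_{r_1}(q_2))$ in that theorem then \emph{directly} yields the Hölder bound on $\eta$, with no lens-region comparison needed. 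You should replace your comparison step with this second application of Theorem~\ref{t:sexcessuniqueness}, and replace the ``inclusion'' argument with an appeal to Lemma~\ref{p:excessboundscales} and Proposition~\ref{prop:anglecases}.
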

\begin{proof}
By 
\ref{l:excessboundscales}, in combination with \ref{prop:anglecases} if $\varepsilon$ and $\kappa$ are small enough then for every $q_1 \in \bB_{1/4}(p)$
\begin{equation*}
\hat{\mathbb{E}}(T,C_{q_1},\bB_{1/2}(q_1)) \leq C(Q,m,n,\overline{n}) \hat{\mathbb{E}}(T,C,\bB_1).
\end{equation*}
The open book $C_{q_1}$ is defined by translating $C$ so that it passes through $p$ and tilting it so its spine is $T_{q_1}(\Gamma)$ and it is still contained on $T_{q_1}(\Sigma)$.

Up to taking $\varepsilon$ smaller we must have that for $\mathbf{C}_{q_1}$ the tangent cone at $q_1$, for every $0<r_1<1/4$
\begin{equation*}
    \hat{\mathbb{E}}(T,\mathbf{C}_{q_1},\bB_{r_1}) \leq C(Q,m,n,\overline{n}) r_1^{\alpha}\hat{\mathbb{E}}(T,C,\bB_1).
\end{equation*}

We define $C_{q_1}^{q_2}$ as the cone obtained by translating and tilting $\mathbf{C}_{q_1}$ so that it is centered at $q_2$ and has a spine tangent to $\Gamma$ and it is still contained on $T_{q_2}(\Sigma)$. If $q_1$ and $q_2$ are in $\bB_{1/16}(p)$ and $r_1=2|q_1-q_2|$ then
\begin{equation*}
\hat{\mathbb{E}}(T,C^{q_2}_{q_1},\bB_{r_1}(q_1)) \leq C(Q,m,n,\overline{n})r_1^{\alpha}\hat{\mathbb{E}}(T,C,\bB_1). 
\end{equation*}
Thus if $\varepsilon$ is small enough then 
$\hat{\mathbb{E}}(T,C^{q_2}_{q_1},\bB_{r_1}(q_1))<\varepsilon_{\textup{decay}}$ where $\varepsilon_{\textup{decay}}$ is the one needed to run theorem \ref{t:sexcessuniqueness}. This allows us to apply the theorem and obtain the desired estimate 
\begin{equation*}
\mathcal{G}(\eta(q_1),\eta(q_2))^2 \lesssim \mathcal{G}(C^{q_2}_{q_1},C_{q_1})^2\lesssim |q_1-q_2|^{\alpha}\hat{\mathbb{E}}(T,C,\bB_1)
\end{equation*}

The qualitative conclusion follows by compactness and continuity. By the quantitative statement we must have that the normal map is a continuous map of $U$. We must show that 
\begin{equation*}
    \frac{\mathcal{G}(\eta(p),\eta(q))}{|p-q|^{\alpha}}
\end{equation*} is bounded in $K \times K \subset \Gamma \times \Gamma$. It is clearly bounded away from the diagonal by continuity. We can cover the diagonal by balls on which the quantitative result applies and conclude by compactness.
\end{proof}
 \begin{theorem}[Decomposition lemma] \label{T:decomposition-1sided}
Let $T$ and $\Gamma$ be under Assumption \ref{A:general} and $q \in \Gamma$. Assume that $C$ is a non- flat open book. There exists $\varepsilon_{decomp}>0$ such that the following holds. If $C=\sum_{i=1}^N Q_i\a{H_i}$
\begin{equation}
\hat{\mathbb{E}}(T,C,\bB_1(p))<\varepsilon_{decomp}\alpha(C)^2.
\end{equation}
There exist $T_1,T_2,..,T_N$ area minimizing currents on $\bB_{1/32} \cap \Sigma$ with
\begin{itemize}
\item $\partial T_i=Q_i \a{\Gamma}$ 
\item \begin{equation}
T\res\bB_{1/32}(p)=\sum_{i=1}^N T_i
\end{equation}
\item The supports of $T_i$ only intersect at $\Gamma$.
\end{itemize}
\end{theorem}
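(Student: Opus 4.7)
The plan is to construct pairwise disjoint (away from $\Gamma$) wedge-shaped regions $W_1,\ldots,W_N$, one around each half-plane $H_i$, establish that $\textup{spt}(T)\cap\bB_{1/32}(p)\subset\bigcup_i W_i\cup\Gamma$, and then define $T_i:=T\res W_i$. The key inputs are the uniqueness of the tangent cone (Theorem \ref{t:sexcessuniqueness}) and the Hölder continuity of the multivalued normal map (Corollary \ref{c:normal}), both applicable at every nearby boundary point once $\varepsilon_{decomp}$ is taken small relative to $\alpha(C)^2$.

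First, I would choose $\varepsilon_{decomp}$ small enough that, propagating the excess bound to balls centered at other boundary points via Proposition \ref{p:excessboundscales}, the hypotheses of Theorem \ref{t:sexcessuniqueness} hold at every $q\in\Gamma\cap\bB_{1/16}(p)$. The condition $\hat{\mathbb{E}}(T,C,\bB_1(p))\ll\alpha(C)^2$ guarantees, via Proposition \ref{prop:anglecases} and its pruning procedure, that no sheet collapses, so the unique tangent cone at each such $q$ is a non-degenerate $N$-sheeted open book $C_q=\sum_i Q_i\a{H_i(q)}$ with $\mathcal{G}(C_q,C)\ll\alpha(C)$, hence $\alpha(C_q)\geq\alpha(C)/2$, and the half-planes $H_i(q)$ are canonically labeled by the Hölder continuous normal map. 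One also obtains the power decay $\hat{\mathbb{E}}(T,C_q,\bB_r(q))\lesssim r^\alpha\,\hat{\mathbb{E}}(T,C,\bB_1(p))$ for $r\in(0,1/4]$. For each $q$ and $i$ let $\nu_i(q)$ be the unit normal to $T_q\Gamma$ selecting $H_i(q)$, set $\vartheta:=\alpha(C)/8$, define $W_i(q):=q+W(T_q\Gamma,\nu_i(q),\vartheta)$, and put
\begin{equation*}
W_i\;:=\;\Big(\bigcup_{q\in\Gamma\cap\bB_{1/16}(p)} W_i(q)\Big)\cap\bB_{1/32}(p).
\end{equation*}
Since the angular separation between $H_i(q)$ and $H_j(q)$ exceeds $2\vartheta$ at each $q$, and since $q\mapsto\nu_i(q)$ is Hölder continuous with small oscillation, the regions $W_i$ remain pairwise disjoint outside $\Gamma$.

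The crucial inclusion is $\textup{spt}(T)\cap\bB_{1/32}(p)\setminus\Gamma\subset\bigcup_i W_i$. For $x$ in the left hand side, let $q(x)\in\Gamma$ minimize $|x-q|$ and set $d:=|x-q(x)|$. Interior monotonicity gives $\mathbf{M}(T\res\bB_{d/4}(x))\gtrsim d^m$, and combined with the $L^2$ excess bound $\hat{\mathbb{E}}(T,C_{q(x)},\bB_d(q(x)))\lesssim d^\alpha$ the average squared distance from $\textup{spt}(C_{q(x)})$ over $\bB_{d/4}(x)$ is $o(d^2\alpha(C)^2)$; hence $\dist(x,H_i(q(x)))<\vartheta d$ for some $i$, i.e.\ $x\in W_i(q(x))$. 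Thus the closed sets $\textup{spt}(T)\cap W_i$ partition $\textup{spt}(T)\setminus\Gamma$, and $T_i:=T\res W_i$ defines integer rectifiable currents with $T\res\bB_{1/32}(p)=\sum_i T_i$ and supports meeting only on $\Gamma$. The tangent cone of $T_i$ at $q\in\Gamma$ is the restriction of $C_q$ to $W_i$, namely $Q_i\a{H_i(q)}$; in particular $\Theta(T_i,q)=Q_i/2$, and the boundary monotonicity formula together with the characterization of the singular first variation in Theorem \ref{t:monotonicityformula} yield $\partial T_i\res\bB_{1/32}(p)=Q_i\a{\Gamma}$.

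Finally, to prove each $T_i$ is area minimizing, let $S_i$ be a competitor in $\bB_{1/32}(p)$ with $\partial S_i=Q_i\a{\Gamma}$ and $S_i-T_i$ compactly supported. I would construct a $1$-Lipschitz retraction $\Phi$ of a neighborhood of $\textup{spt}(S_i)\cup\textup{spt}(T_i)$ onto the closure of a slightly enlarged wedge region $W_i^+$ (opening angle $\alpha(C)/6$) that is the identity on $W_i$. Then $\Phi_\#S_i+\sum_{j\neq i}T_j$ is admissible as a competitor for $T$, its boundary being $Q\a{\Gamma}$ in $\bB_{1/32}$, and it agrees with $T$ near $\partial\bB_{1/32}$. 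Area minimality of $T$ and mass additivity on the essentially disjoint regions $\overline{W_j^+}$ then yield $\mathbf{M}(T_i)\leq\mathbf{M}(\Phi_\#S_i)\leq\mathbf{M}(S_i)$. The main obstacle is the construction of $\Phi$: because the $W_i^+$ follow the curved $C^2$ boundary $\Gamma$ rather than being straight Euclidean wedges, one has to use the Hölder regularity of $q\mapsto\nu_i(q)$ and the $C^2$ regularity of $\Gamma$ to foliate a tubular neighborhood of $\Gamma$ by normal slices and to verify that the nearest-point projection onto $\overline{W_i^+}$ inside this foliation is well-defined, single-valued, and $1$-Lipschitz on a neighborhood of $\textup{spt}(T)$.
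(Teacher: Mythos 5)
Your overall strategy — covering $\textup{spt}(T)$ near $\Gamma$ by disjoint wedge regions built from the H\"older-continuous normal map, restricting $T$ to them, and showing each piece inherits minimality — matches the paper's. But there are two genuine problems in how you execute the key steps.

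\textbf{The pointwise containment step is not justified.} You argue that interior monotonicity at $x$ plus the scale-$d$ excess bound give an average squared distance over $\bB_{d/4}(x)$ of order $d^{2}\cdot d^{\alpha}\,\hat{\mathbb{E}}(T,C,\bB_{1}(p))\le d^{2+\alpha}\varepsilon_{decomp}\alpha(C)^{2}$, and "hence" $\dist(x,H_{i}(q(x)))<\vartheta d$ with $\vartheta=\alpha(C)/8$. Running the monotonicity argument to upgrade the $L^2$ bound to a pointwise one (lower-bounding $\int_{\bB_{D/2}(x)}\dist(\cdot,C_{q(x)})^{2}d\|T\|$ when $D:=\dist(x,C_{q(x)})$ is large) yields only
\begin{equation*}
D\ \lesssim\ d\,\bigl(d^{\alpha}\varepsilon_{decomp}\,\alpha(C)^{2}\bigr)^{1/(m+2)},
\end{equation*}
so that to guarantee $D<\alpha(C)d/8$ you would need $\varepsilon_{decomp}\lesssim\alpha(C)^{m}$, i.e.\ a threshold that degenerates as the opening angle shrinks. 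The theorem asserts $\varepsilon_{decomp}$ depending only on $(Q,m,n,\overline{n})$ inside the condition $\hat{\mathbb{E}}<\varepsilon_{decomp}\alpha(C)^{2}$, and the qualitative decomposition downstream relies on this uniformity. The paper closes this gap by invoking the $L^{2}$--$L^{\infty}$ height bound of De Lellis--Minter--Skorobogatova (Theorem 3.2 of their paper), which gives the sharp \emph{linear} estimate
\begin{equation*}
\frac{\dist\bigl(q,C_{\mathbf{p}_{\Gamma}(q)}\bigr)}{|q-\pi(q)|}\ \le\ C\,|q-\pi(q)|^{\alpha/2}\,\hat{\mathbb{E}}(T,C,\bB_{1}(p))^{1/2}\ \le\ C\,|q-\pi(q)|^{\alpha/2}\,\varepsilon_{decomp}^{1/2}\,\alpha(C),
\end{equation*}
so that $\varepsilon_{decomp}$ can be chosen independently of $\alpha(C)$. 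That height bound is the ingredient your proposal is missing.

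\textbf{The retraction $\Phi$ is unnecessary.} Once you have $T\res\bB_{1/32}=\sum_{i}T_{i}$ with supports pairwise intersecting only on $\Gamma$ (which is $\HH^{m}$-null), area minimality of each $T_{i}$ follows directly: if $S_{i}$ is any competitor for $T_{i}$ in $\bB_{1/32}\cap\Sigma$, then $S:=S_{i}+\sum_{j\neq i}T_{j}$ is a competitor for $T$, and
\begin{equation*}
\sum_{j}\mathbf{M}(T_{j})=\mathbf{M}(T\res\bB_{1/32})\le\mathbf{M}(S)\le\mathbf{M}(S_{i})+\sum_{j\neq i}\mathbf{M}(T_{j}),
\end{equation*}
using mass additivity on essentially disjoint supports on the left and plain subadditivity on the right. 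There is no need to push $S_{i}$ into an enlarged wedge, and the difficulty you flag as "the main obstacle" — building a $1$-Lipschitz retraction onto a curved wedge following $\Gamma$ — is one you have introduced, not one the proof requires.
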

\begin{theorem}[Qualitative decomposition theorem]\label{thm:qualitativedecomp}
Let $T$ be an area minimizing current as in Assumption \ref{A:general}. Further assume that $\Theta(T,0)=Q/2$. Let $C=\sum_{i=1}^N Q_i \a{H_i}$ be the unique tangent cone to $T$ at $0$ (where the representation of $C$ is such that the half-planes are distinct). Then there exists $\rho>0$ and area minimizing currents $T_1,T_2,...,T_N$ in $\bB_{\rho}$ such that
\begin{equation*}
T \res \bB_{\rho}=\sum_{i=1}^N T_i 
\end{equation*}
where the supports of $T_i$ only intersect at $\Gamma$, $\partial T_i \res \bB_{\rho}=Q_i \a{\Gamma}$, and the (unique) tangent cone at $0$ of $T_i$ is $Q_i \a{H_i}$.
\end{theorem}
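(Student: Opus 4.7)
The plan is to reduce the qualitative statement to the quantitative decomposition in Theorem \ref{T:decomposition-1sided}, using as input the uniqueness of the tangent cone at $0$ together with the power rate of convergence established in Theorem \ref{t:sexcessuniqueness}.

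The case $N=1$ is trivial: $C=Q\a{H_1}$ is a flat cone and $T_1:=T\res\bB_\rho$ works for any small $\rho$. Assume therefore $N\geq 2$, so that $\alpha(C)>0$. By Theorem \ref{t:uniquetangentcone}, the unique tangent cone at $0$ is the open book $C$, and by Theorem \ref{t:sexcessuniqueness} the convergence is quantitative:
\[
\hat{\mathbb{E}}(T,C,\bB_r(0)) \longrightarrow 0 \quad \text{as } r\to 0.
\]
Since $\alpha(C)$ is a fixed positive number, we may choose $r_0>0$ small enough that $\hat{\mathbb{E}}(T,C,\bB_{r_0}(0))<\varepsilon_{decomp}\alpha(C)^2$. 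After rescaling by $r_0$ (which only shrinks $\mathbf{A}_\Sigma$ and $\mathbf{A}_\Gamma$, so Assumption \ref{A:general} is preserved), Theorem \ref{T:decomposition-1sided} applies and yields area minimizing currents $T_1,\dots,T_N$ in $\bB_{r_0/32}$ that sum to $T\res\bB_{r_0/32}$, have disjoint supports away from $\Gamma$, and satisfy $\partial T_i=Q_i\a{\Gamma}$. Set $\rho:=r_0/32$.

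It remains to identify the tangent cone of each $T_i$ at $0$. Since $T_i$ itself is an area minimizing current satisfying Assumption \ref{A:general} with boundary multiplicity $Q_i$, the monotonicity formula of Theorem \ref{t:monotonicityformula} ensures that every blow-up sequence $(\lambda_{r_j})_\#T_i$ converges (up to subsequence) to an area minimizing cone $C_i$ with $\partial C_i=Q_i\a{V}$. The key observation is that as $r\to 0$ the support of $T\res\bB_r$ localizes near $\textup{spt}(C)=\bigcup_i H_i$ because $\hat{\mathbb{E}}(T,C,\bB_r)\to 0$; combined with the support separation coming from the decomposition theorem at every sufficiently small scale (applied to rescalings of $T$, whose excess towards $C$ is small by the power decay), the support of $T_i$ localizes near $H_i$. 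Consequently $\textup{spt}(C_i)\subseteq H_i$, and together with $\partial C_i=Q_i\a{V}$ and the classification Theorem A, this forces $C_i=Q_i\a{H_i}$. Finally, since $Q_i\a{H_i}$ is a flat cone with density $Q_i/2$, we have $\Theta(T_i,0)=Q_i/2$; hence Theorem \ref{t:uniquetangentcone} applies to $T_i$ at $0$, giving a unique tangent cone, which must be $Q_i\a{H_i}$.

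The main obstacle is justifying the support separation step, since a priori the quantitative decomposition is available only at the single scale $\rho$. The remedy is to apply it to every rescaling $(\lambda_r)_\# T$ for $r\leq r_0$: because $\hat{\mathbb{E}}((\lambda_r)_\# T,C,\bB_1)\to 0$ with a power rate, the quantitative decomposition produces at each scale a decomposition of $T$ whose pieces have support in ever-thinner neighborhoods of the $H_i$. A compatibility/uniqueness argument (the decomposition in Theorem \ref{T:decomposition-1sided} is determined by the requirement that supports meet only at $\Gamma$, so at overlapping scales the pieces agree) then shows that the decomposition obtained at scale $\rho$ has the property that $\textup{spt}(T_i)\cap \bB_r$ is contained in an $o(r)$-neighborhood of $H_i$ for all $r\leq\rho$, which is exactly what is needed for the blow-up argument above.
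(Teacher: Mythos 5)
Your proof is correct and takes the same basic approach as the paper's: apply Theorem~\ref{t:sexcessuniqueness} to find a scale where $\hat{\mathbb{E}}(T,C,\bB_{r_0})<\varepsilon_{decomp}\alpha(C)^2$, then invoke Theorem~\ref{T:decomposition-1sided}. The paper leaves the identification of the tangent cone of each $T_i$ as an ``immediate'' consequence, so your effort to make this explicit is welcome.

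However, the ``remedy'' you construct for the alleged obstacle — applying the decomposition at every rescaling $(\lambda_r)_\# T$, then invoking a compatibility/uniqueness argument to match the pieces across overlapping scales — is considerably more involved than what is actually needed, and the compatibility step is left unproven. A single application of Theorem~\ref{T:decomposition-1sided} already does the job: the proof of that theorem establishes (via equation~\eqref{e:heightdecomposition}, the $L^2$--$L^\infty$ height bound) that for every $q\in\textup{spt}(T)\cap\bB_{1/16}$ one has $\dist(q, C_{\mathbf{p}_\Gamma(q)}) \leq C\,|q-\pi(q)|^{1+\alpha/2}\,\hat{\mathbb{E}}(T,C,\bB_{r_0})^{1/2}$, i.e.\ the conical distance of $\textup{spt}(T_i)$ to $H_i$ vanishes superlinearly as $q\to\Gamma$. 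Rescaling $T_i$ at $0$ and letting the scale go to zero, this superlinear decay directly yields $\textup{spt}(C_i)\subseteq H_i$ for any tangent cone $C_i$ of $T_i$ at $0$, without any multi-scale re-decomposition. From there, the constancy theorem on $H_i$ (you do not even need Theorem~A) together with $\partial C_i=Q_i\a{V}$ forces $C_i=Q_i\a{H_i}$, hence $\Theta(T_i,0)=Q_i/2$ and uniqueness follows from Theorem~\ref{t:uniquetangentcone} as you say. A still cleaner variant is to first deduce $\Theta(T_i,0)=Q_i/2$ by summing densities, $\sum_i\Theta(T_i,0)=\Theta(T,0)=Q/2$ with each term $\geq Q_i/2$, then use Theorem~\ref{t:uniquetangentcone} directly; this avoids invoking the classification theorem at all.
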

The proof of Theorem \ref{thm:qualitativedecomp} follows immediately from Theorem \ref{T:decomposition-1sided} at some scale small enough from Theorem \ref{t:sexcessuniqueness}.
\begin{proof}[Proof of Theorem \ref{T:decomposition-1sided}]
Let $C_q$ be the tangent at $q$. If $\mathbf{A}_{\Gamma}$ is small enough then the projection $\mathbf{p}_{\Gamma}$ is well-defined.

By assuming that $\hat{\mathbb{E}}(T,C,\bB_1)<\varepsilon_{Holder}$. We start by coupling the estimates from the proof of \ref{c:normal} with the $L^2$-$L^{\infty}$ height bound of De Lellis-Minter-Skorobogatova (Theorem 3.2 \cite{de2023fineIII}). 

If $q \in \textup{spt}(T) \cap \bB_{1/16}$ we have
\begin{equation}\label{e:heightdecomposition}
\frac{\dist(q,C_{\mathbf{p}_{\Gamma}}(q))}{|q-\pi(q)|} \leq C(Q,m,n,\overline{n})|q-\pi(q)|^{\alpha/2}\hat{\mathbb{E}}(T,C,\bB_r(p))^{1/2}.
\end{equation}

We will also have that $\forall q_1,q_2 \in B(p,1/16)$
\begin{equation*}
    \mathcal{G}(\eta(q_2),\eta(q_1))\leq C(Q,m,n,\overline{n})\varepsilon^{1/2} \alpha(C).
\end{equation*}
We also have 
\begin{equation*}
    \mathcal{G}(C_p,C) \leq C(Q,m,n,\overline{n}) \hat{\mathbb{E}}(T,C,\bB_1)^{1/2} \leq \varepsilon_{decomp}\alpha(C).
\end{equation*}
This implies that the tangent cones $C_q$ have at least as many sheets as $C$.

We  parameterize $\bB_1$ as a subset of the normal bundle of $\Gamma$. We consider the function $d:\bB_1 \setminus \Gamma \rightarrow \RR$ 
\begin{equation*}
d(q):=\frac{\dist(q,C_{\pi(q)})}{|q-\pi(q)|}.
\end{equation*}

As a consequence of \eqref{e:heightdecomposition} we get $d(q) \ll \alpha(C_p)$ on $\textup{spt}(T) \cap \bB_{1/16}.$

This means that $\textup{spt}(T)\setminus \Gamma$ is split into $N$ disjoint open sets, each of which is non-empty. If one of the sets was empty, one of the half-planes in the support of $C$ could not be in the support of the tangent cone to $T$ at $0$.

This allows us to decompose $T$ into $N$ currents as desired.
\end{proof}

\begin{theorem}
Let $T$, $\Gamma$ and $\Sigma$ be as in \ref{a:currentandcone}. There exists $\varepsilon$ such that if either
\begin{enumerate}
\item 
\begin{equation*}
    \max \left\{\mathbf{E}(T,C,\bB_1),\kappa^{-1} \mathbf{A}_{\Gamma},\kappa^{-1}\mathbf{A}_{\Sigma}^2 \right\}<\varepsilon
\end{equation*}
\item \begin{equation*}
    \max \left\{\frac{|\!|T|\!|(\bB_1)}{\omega_{m}}-Q/2,\kappa^{-1} \mathbf{A}_{\Gamma},\kappa^{-1}\mathbf{A}_{\Sigma}^2 \right\}<\varepsilon
\end{equation*}
\end{enumerate}
then there exists $C'$ such that
\begin{equation*}
\hat{\mathbb{E}}(T,C',\bB_{1/2})<\varepsilon_{decomp}.
\end{equation*} 
In the first case, we have $\textup{spt}(C') \subset C$ and if 
\begin{equation*}
\int_{\bB_1 \setminus \bB_{1/4}}\dist(x,\textup{spt}(T))^2d|C|<\varepsilon
\end{equation*}
then we have $\textup{spt}(C')=C$.
\end{theorem}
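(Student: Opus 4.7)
The plan is to argue by compactness and contradiction. Suppose the conclusion fails; then we may extract sequences $T_j, \Gamma_j, \Sigma_j, C_j$ satisfying Assumption \ref{a:currentandcone} and the relevant smallness hypothesis with parameter $\varepsilon_j \to 0$, for which no valid cone $C_j'$ exists. The mass bound $|T_j|(\bB_2) \leq Q+C$ and $\mathbf{A}_{\Gamma_j}, \mathbf{A}_{\Sigma_j}^2 \to 0$ give standard compactness for area minimizing currents, so after extracting a subsequence we have $\Gamma_j \to V := \RR^{m-1} \times \{0\}$, $\Sigma_j \to \RR^{m+n}$, $T_j \rightharpoonup T_\infty$ and $C_j \to C_\infty$, with $T_\infty$ and $C_\infty$ area minimizing in $\RR^{m+n}$ having boundary $Q\a{V}$.

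The key step is to identify $T_\infty$ as an admissible open book. In case (2) the hypothesis forces $\|T_j\|(\bB_1)/\omega_m \to Q/2$, so lower semicontinuity of mass yields $\|T_\infty\|(\bB_1) \leq Q\omega_m/2$, while the classification theorem \ref{t:ClassificationTgtConesNConvex} together with boundary monotonicity gives $\Theta(T_\infty, 0) \geq Q/2$; combining these forces equality throughout monotonicity, so $T_\infty$ is a cone of density exactly $Q/2$ and Theorem \ref{t:ClassificationTgtConesNConvex} then shows $T_\infty$ is an admissible open book. In case (1) the hypothesis $\mathbf{E}(T_j, C_j, \bB_1) \to 0$ gives $\operatorname{spt}(T_\infty) \subseteq \operatorname{spt}(C_\infty)$, so by the constancy theorem $T_\infty = \sum_i m_i \a{K_i}$ for distinct half-planes $K_i$ in $\operatorname{spt}(C_\infty)$ with $\partial \a{K_i} = \a{V}$ (the orientation is inherited from each admissible $C_j$) and $\sum m_i = Q$. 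Because current convergence preserves orientation and $T_j$ is locally a graphical approximation of $C_j$ near each sheet (with positive multiplicities $Q_{l,j}$ from admissibility), each $m_i$ is a sum of non-negative contributions, so $m_i \geq 0$ and $T_\infty$ is an admissible open book.

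Writing $T_\infty = \sum_i Q_i' \a{H_i'}$, we construct $C_j'$ in both cases. In case (2) no support restriction is imposed, so we take $C_j'$ to be $T_\infty$ rigidly adapted to have spine $T_0 \Gamma_j$ inside $T_0 \Sigma_j$, which is admissible. In case (1) each $H_i'$ is the Hausdorff limit of a sheet $H_{i,j}$ of $C_j$; we set $C_j' = \sum_i Q_i' \a{H_{i,j}}$, which is admissible and satisfies $\operatorname{spt}(C_j') \subseteq \operatorname{spt}(C_j)$. Under the additional reverse-excess hypothesis $\int_{\bB_1 \setminus \bB_{1/4}} \dist(x, \operatorname{spt}(T_j))^2 d|C_j| < \varepsilon_j$, the inclusion $\operatorname{spt}(C_\infty) \subseteq \operatorname{spt}(T_\infty)$ also holds, so $m_i > 0$ for every sheet of $C_\infty$, and therefore $\operatorname{spt}(C_j') = \operatorname{spt}(C_j)$.

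To conclude we verify $\hat{\mathbb{E}}(T_j, C_j', \bB_{1/2}) \to 0$. Both $|T_j|$ and $|C_j'|$ converge to $|T_\infty|$ in mass on $\bB_{1/2}$ (whose boundary carries no mass in the limit since $T_\infty$ is a cone) and in Hausdorff support. The distance $d$ introduced in Section 2 explicitly allows discarding leftover mass into $V$ at weighted cost $\int \dist(x, V)^2$, which absorbs small mass discrepancies concentrated near the spine, so an explicit $W_2$-type coupling built by transporting the bulk via the nearest-sheet map and dumping the remainder into $V$ gives $\mathbb{E}(T_j, C_j', \bB_{1/2}) \to 0$. Together with $\mathbf{A}_{\Gamma_j}, \mathbf{A}_{\Sigma_j}^2 \to 0$ this yields $\hat{\mathbb{E}}(T_j, C_j', \bB_{1/2}) < \varepsilon_{decomp}$ for all $j$ large, the desired contradiction. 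The main technical obstacle is this final step: converting qualitative current/varifold convergence and Hausdorff support convergence into quantitative decay of the measure-theoretic $d$-distance, which relies on constructing explicit transport plans that exploit the freedom to discard surplus mass into $V$.
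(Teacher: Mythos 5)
Your proposal follows the same compactness--contradiction architecture as the paper's proof: extract a subsequence, pass to a limit $T_\infty$ area minimizing with boundary $Q\a{V}$, identify the limit as an open book, construct $C'$ from it, and deduce a contradiction because the strong excess of $T_j$ against the fixed $C'$ must become small. In case (2) your argument (mass lower semicontinuity forcing $\Theta(T_\infty,0)=Q/2$, hence $T_\infty$ a cone by equality in the monotonicity formula, hence an open book by Theorem~\ref{t:ClassificationTgtConesNConvex}) is exactly the paper's, and your handling of the additional reverse-excess hypothesis to force $\textup{spt}(C')=\textup{spt}(C)$ is correct and a useful elaboration.

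The one step that deserves more care is your justification that the constancy multiplicities $m_i$ in case (1) are all non-negative. You argue this from ``current convergence preserves orientation and $T_j$ is locally a graphical approximation of $C_j$ near each sheet,'' but hypothesis (1) only bounds the one-sided $L^2$ excess $\mathbf{E}(T_j,C_j,\bB_1)=\int\dist(\cdot,C_j)^2d|T_j|$; this controls how far $\textup{spt}(T_j)$ drifts from $C_j$ but gives no control on the mass of $T_j$ relative to $C_j$, so graphicality is not available at this stage. The additive-decomposition argument (splitting off $S=\a{H_j}-\a{H_i}$ when $m_i<0<m_j$ and noting $S$ inherits area minimality) does rule out mixed signs on half-planes $H_i,H_j$ that do \emph{not} form a full plane, but when an admissible $C$ contains opposite half-planes $H,-H$ as sheets, the current $S=\a{H}-\a{-H}=\pm\a{\pi}$ is a plane and the obstruction disappears: a cone such as $(Q+k)\a{H}-k\a{-H}$ is area minimizing, has boundary $Q\a{V}$, satisfies $\textup{spt}(T_\infty)\subseteq\textup{spt}(C)$, yet has $\sum|m_i|=Q+2k>Q$, so no admissible $C'$ supported in $C$ can match its mass and the strong excess $\mathbb{E}(T_\infty,C',\bB_{1/2})$ is bounded away from zero. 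This case is not excluded by the hypotheses of (1) as stated (only a one-sided excess is small, and the mass constant in Assumption~\ref{a:currentandcone} is allowed to be large), so either the theorem tacitly assumes a mass bound akin to case (2), or non-negativity needs a different justification. I note that the paper's own proof elides this point just as yours does (``thus there exists $C'$ such that $\mathbb{E}(T,C',\bB_1)=0$'' presumes the $m_i$ are non-negative); your proposal is to that extent a faithful reconstruction of the argument, but the positivity step should be flagged as needing an extra hypothesis or argument rather than derived from graphicality.
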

\begin{proof}
We argue in both cases by compactness, by taking a sequence of currents $T_j$ which satisfies the assumptions of the theorem with a vanishing sequence of parameters $\varepsilon=\varepsilon_j$. We assume after rescaling the radius is $1$ and up to rotation $T_0(\Gamma)=\RR^{m-1}\times \left\{0\right\}$. Up to subsequence, $T_j \rightarrow T$, $\Gamma_j \rightarrow \RR^{m-1}\times \left\{0\right\}$, $\Sigma_j \rightarrow \RR^{m+\overline{n}}$.

In the first case up to subsequence also $C_j \rightarrow C$. We must have that $\mathbf{E}(T,C,\bB_1)=0$. This implies $\textup{spt}(T) \subset \textup{spt}(C)$ and thus there exists $C'$ such that $\mathbb{E}(T,C',\bB_1)=0$ which is a contradiction since $\hat{\mathbb{E}}(T_j,C',\bB_1)<\varepsilon_{\textup{decay}}$ for $j$ large enough.

In the second case $T$ must have density constantly equal to $Q/2$ which means that it must be a cone by the monotonicity formula and it must be an open book by \ref{t:ClassificationTgtConesNConvex}. We must then have $\hat{\mathbb{E}}(T_j,T,\bB_1)<\varepsilon_{\textup{decay}}$ for $j$ large enough which is a contradiction.
\end{proof}
We consider for a plane $\pi$ the tilt excess
\begin{equation*}
    \overline{\mathbf{E}}(T, \bB_r(p), \pi) := \frac{1}{\omega_m r^m} \int_{\bB_r(p)} \frac{|\vec{T} (x) - \vec{\pi}|^2}{2} \, d\|T\| (x),
\end{equation*}
\begin{theorem}\label{T:excessdecayforflat}
Let $T,\Gamma,\Sigma$ be as in Assumption \ref{A:general} and assume $\Theta(T,p)=Q/2$. Assume that $T$ has a flat one-sided tangent cone $Q\a{\pi}$ at $p$. There exists $\varepsilon(Q,m,n,\overline{n})>0$ such that if
\begin{equation*}
\max\left\{\overline{\mathbf{E}}(T,\bB_1(p),\pi), \mathbf{A}_{\Gamma},\mathbf{A}_{\Sigma}^2\right\}<\varepsilon
\end{equation*}
then there exists $C(Q,m,n,\overline{n})$ and $\alpha(Q,m,n,\overline{n})>0$ such that
\begin{equation*}
\overline{\mathbf{E}}(T,\bB_r(p),\pi) \leq C(Q,m,n,\overline{n}) r^{\alpha}, \mbox{ for every }r\in (0,1).
\end{equation*}
\end{theorem}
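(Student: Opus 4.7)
The plan is to reduce \ref{T:excessdecayforflat} to the uniqueness theorem \ref{t:sexcessuniqueness} applied with admissible open book $C := Q\a{\pi}$, which is an open book consisting of a single sheet ($N=1$). In that case the angle parameter is $\alpha(C)=1$ by the convention recorded in Proposition \ref{prop:anglecases}, so the smallness hypothesis of Lemma \ref{lem:saexcessdecay} reduces to $\mathbb{E}(T,C,\bB_1)\le \varepsilon_1$, and there is nothing to say about the interaction between excess and angle.

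The first step is to pass from smallness of the tilt excess to smallness of the amended strong excess. By the boundary Lipschitz approximation and reverse excess estimate constructed in Section 6, at scale $1/2$ one obtains a $Q$-valued Lipschitz graph $u$ over $\pi\cap\bB_{1/2}(p)$ that coincides with $T$ outside a set of small measure, with Dirichlet energy of $u$ controlled by $\overline{\mathbf{E}}(T,\bB_1(p),\pi)$. Since $\Gamma$ is $\mathbf{A}_\Gamma$-close to $V=\partial\pi$, the boundary trace of $u$ on $\Gamma$ is of size $O(\mathbf{A}_\Gamma^{1/2})$, so a Poincar\'e inequality bounds $\int|u|^2$ by $\int|Du|^2+\mathbf{A}_\Gamma$. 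Feeding this into Remark \ref{rmk:strongexcessgraphicalbound} yields
\begin{equation*}
\mathbb{E}(T,Q\a{\pi},\bB_{1/2}(p)) \;\lesssim\; \overline{\mathbf{E}}(T,\bB_1(p),\pi) + \mathbf{A}_\Sigma^2 + \mathbf{A}_\Gamma \;\le\; c\,\varepsilon,
\end{equation*}
and, after possibly shrinking $\varepsilon$, $\hat{\mathbb{E}}(T,Q\a{\pi},\bB_{1/2}(p))<\varepsilon_{decay}$.

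The second step is to apply Theorem \ref{t:sexcessuniqueness} at scale $1/2$. It furnishes a unique tangent cone $C_p$ which is an admissible open book with $\mathcal{G}(C_p,Q\a{\pi})^2\lesssim \hat{\mathbb{E}}(T,Q\a{\pi},\bB_{1/2}(p))$. By hypothesis $Q\a{\pi}$ is itself a tangent cone at $p$, so uniqueness forces $C_p=Q\a{\pi}$, and the theorem delivers the power decay
\begin{equation*}
\hat{\mathbb{E}}(T,Q\a{\pi},\bB_r(p)) \;\le\; c\, r^{\alpha}\,\hat{\mathbb{E}}(T,Q\a{\pi},\bB_{1/2}(p)) \;\le\; C\,\varepsilon\, r^{\alpha} \qquad \text{for all } r\in(0,1/4).
\end{equation*}
The third step is to translate back to tilt excess. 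Since strong excess dominates classical $L^2$ excess by the remark in Section 2, and since a standard Caccioppoli-type estimate for area minimizing currents close to a single multiplicity-$Q$ half-plane (available in this boundary setting once boundary and ambient curvature are accounted for) gives
\begin{equation*}
\overline{\mathbf{E}}(T,\bB_r(p),\pi) \;\lesssim\; \mathbf{E}(T,Q\a{\pi},\bB_{2r}(p)) + \mathbf{A}_\Sigma^2 r^2 + \mathbf{A}_\Gamma r,
\end{equation*}
combining the two inequalities produces the claimed bound $\overline{\mathbf{E}}(T,\bB_r(p),\pi)\le C r^{\alpha}$ for $r\in(0,1/8)$, while the range $r\in[1/8,1)$ is absorbed into the constant using the hypothesis.

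The main obstacle is the first step: carrying out the boundary Lipschitz approximation at integer multiplicity $Q$ and establishing that the reverse excess estimate converts tilt excess into the measure-theoretic strong excess, with the right dependence on $\mathbf{A}_\Gamma$ and $\mathbf{A}_\Sigma$. This technical point is exactly what Section 6 is designed to provide, and once it is in place the remainder of the proof is a straightforward assembly of Theorem \ref{t:sexcessuniqueness} and classical Caccioppoli.
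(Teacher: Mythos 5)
Your overall reduction---pass from small tilt excess to small amended strong excess, apply the uniqueness/decay theorem \ref{t:sexcessuniqueness} to the single-sheet open book $Q\a{\pi}$, then translate back via the strong-to-$L^2$ comparison and a boundary Caccioppoli inequality---is structurally the same as the paper's proof, and your Steps 2 and 3 match the paper essentially verbatim. The divergence, and the gap, is in Step 1.

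The paper dispenses with Step 1 by a soft compactness argument: if no $\varepsilon$ worked, one takes a contradiction sequence, passes to a limit (using the bounded mass coming from $\Theta(T,p)=Q/2$ and the monotonicity formula), observes that zero tilt excess relative to the full plane $\pi$ forces the limit to be $Q\a{\pi}$ itself (opposite-orientation sheets contribute a fixed positive amount of tilt excess, and additional parallel full planes are excluded by the mass bound), and concludes $\hat{\mathbb{E}}<\varepsilon_{decay}$ for large indices, a contradiction. In particular the qualitative density and tangent-cone hypotheses enter precisely here.

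Your quantitative replacement does not go through as written. You invoke ``the boundary Lipschitz approximation and reverse excess estimate constructed in Section 6'' to go from $\overline{\mathbf{E}}$-smallness to $\mathbb{E}$-smallness, but Section 6 is built the other way around: Assumption \ref{A:Lipschitzapproximation} requires $\mathbb{E}(T,C,\bB_8)+\mathbf{A}_{\Gamma}+\mathbf{A}_{\Sigma}^2\le\varepsilon\,\alpha(C)^2$ as the \emph{input}, the local Lipschitz approximations of Propositions \ref{p:lipschitzapprox} and \ref{p:localapprox} are calibrated against the $L^2$ \emph{height} excess $\dist(\cdot,C)^2$ rather than the tilt excess, and Lemma \ref{l:reverseexcessbound} bounds strong excess by $L^2$ height excess under the a priori hypothesis that the Whitney cubes near the center are already of outer type with the correct multiplicities. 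None of these furnish a boundary Lipschitz graph from tilt-excess smallness alone. Your subsidiary Poincar\'e step (bounding $\int|u|^2$ by $\int|Du|^2+\mathbf{A}_\Gamma$ through a trace estimate on $\Gamma$) also presumes that the graphical structure persists all the way down to the spine, which the Section 6 construction deliberately avoids. Finally, your Step 1 never uses the hypothesis that $Q\a{\pi}$ is a tangent cone at $p$; as discussed, that information (together with the mass control it implies) is exactly what makes the paper's compactness argument close, so dropping it from the quantitative step is another indication that something is missing. To salvage your route you would need a genuinely new ingredient: a boundary strong Lipschitz approximation whose smallness hypothesis is tilt excess and whose output includes a trace estimate on $\Gamma$; the paper sidesteps this entirely by compactness, which here is the cleaner choice.
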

\begin{proof}
It is enough to show that there exists $\varepsilon$ such that if \begin{equation*}
\max\left\{\overline{\mathbf{E}}(T,\bB_1(p),\pi), \mathbf{A}_{\Gamma},\mathbf{A}_{\Sigma}^2\right\}<\varepsilon
\end{equation*}
then
\begin{equation*}
    \hat{\mathbb{E}}(T,\bB_{1/2}(p),\pi)<\varepsilon_{\textup{decay}}.
\end{equation*}
This follows by compactness. Since $Q\a{\pi}$ is a tangent cone, we obtain the decay towards $Q\a{\pi}$.

We thus establish for $r< 1/8$ that
\begin{equation*}
\overline{\mathbf{E}}(T,\bB_{r}(p),\pi) \lesssim \mathbf{E}(T,\bB_{2r}(p),\pi)+ \mathbf{A}_{\Gamma}+\mathbf{A}_{\Sigma}^2 \lesssim \hat{\mathbb{E}}(T,\bB_{4r}(p),Q\a{\pi})\lesssim r^{\alpha}.
\end{equation*}
The first inequality is the classical $L^2$- tilt excess bound. The second inequality is the strong excess to $L^2$ bound. The last inequality is the decay to the tangent cone from Theorem \ref{t:uniquetangentcone}.
\end{proof}

\end{section}
\begin{section}{Lipschitz approximation}
In this section, we construct the Lipschitz approximation and conclude with some technical estimates regarding measure theoretic excess and its relationship to $L^2$ excess.

We introduce an assumption that enables the construction of a suitable Lipschitz approximation defined on the ball $\bB_1$.

\begin{assumption}\label{A:Lipschitzapproximation}Let $T$, $\Gamma$ and $\Sigma$
 be as in Assumption \ref{A:general}, but instead considered within the ball $\bB_8$. Suppose $C$ is an admissible open book. We assume for a small constant $\varepsilon(Q,m,n,\overline{n})$ that
 \begin{equation*}
 \mathbb{E}(T,C,\bB_8)+ \mathbf{A}_{\Gamma}+\mathbf{A}_{\Sigma}^2 \leq \varepsilon \alpha(C)^2.
\end{equation*}
Moreover, we assume that for every $p \in \Gamma \cap \bB_2$ and all $0<r<1/8$
\begin{equation*}
|\!|T|\!|(\bB_r(p)) \leq \left(Q/2 +1/8\right) \omega_m r^m.\end{equation*}
 \end{assumption}
 The Lipschitz approximation will be defined on the ball $\bB_1.$
\begin{subsection}{Lipschitz approximation away from the spine}
We cite the relevant results on the Lipschitz approximation from \cite{de2023fineIII} and outline the necessary modifications. The following result follows from the $L^2-L^\infty$ height bound with respect to parallel planes (Theorem 3.2 of \cite{de2023fineIII}).

\begin{lemma}[Crude splitting]\label{l:crudesplitting}
Let $T$ and $\Sigma$ be as in Assumption \ref{A:general}. Let $C$ be an open book with $\partial C=Q\a{V}$ with $0 \in V$. Assume that  $\partial T \res \left(\bB_4 \setminus B_{1/32}(V)\right)=0$. There are constants $\delta(Q,m,n,\overline{n})>0$ and $\rho(Q,m,n,\overline{n})>0$ with the following property. Assume that
\begin{equation*}
\int_{\bB_4 \setminus B_{1/32}(V)}\dist(p,C)^2
+\mathbf{A}_{\Sigma}^2<\delta^2 \alpha(C)^2
\end{equation*}
Then
\begin{enumerate}
\item The sets $W_i:= \left(\bB_4 \setminus B_{1/32}(V) \right) \cap \left\{\dist(. ,H_i) <\rho\sigma\right\}$ are pairwise disjoint.
\item \begin{equation*}
    \textup{spt}(T) \cap \bB_{3} \setminus B_{1/16}(V)\subseteq \cup_i W_i.
\end{equation*}
\end{enumerate}
\end{lemma}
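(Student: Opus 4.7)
The plan is to establish the disjointness (1) by a direct geometric argument using the definition of $\alpha(C)$, and the inclusion (2) by invoking the interior $L^2$--$L^\infty$ height bound of De Lellis--Minter--Skorobogatova (Theorem 3.2 of \cite{de2023fineIII}) at scales comparable to $\dist(p,V)$. Throughout, I read $\sigma = \alpha(C)$.

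For (1), recall that $\alpha(C) = \min_{i\neq j} \angle(H_i, H_j)$ where $H_i$ are the half-planes of $C$ meeting along $V$. For any point $x$ with $\dist(x, V) \geq 1/32$, the half-planes $H_i$ and $H_j$ are separated at the height of $x$ by at least $c_0 \alpha(C)\, \dist(x, V) \geq c_0 \alpha(C)/32$. Choosing $\rho$ so that $2 \rho \sigma < c_0 \alpha(C)/64$ (i.e.\ $\rho$ a small universal constant) forces the sets $W_i$ to be pairwise disjoint in $\bB_4\setminus B_{1/32}(V)$.

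For (2), fix $p \in \textup{spt}(T) \cap \bB_3 \setminus B_{1/16}(V)$ and set $r = c_1\, \dist(p,V)$ with $c_1$ a small dimensional constant so that $\bB_r(p) \subset \bB_4 \setminus B_{1/64}(V)$. Then $\partial T \res \bB_r(p) = 0$. After rescaling $\bB_r(p)$ to unit size, $C$ restricts to finitely many essentially parallel planes mutually separated by at least $c_2 \alpha(C)$, and the excess hypothesis gives
\begin{equation*}
\frac{1}{r^{m+2}} \int_{\bB_r(p)} \dist(q, C)^2\, d\|T\|(q) \lesssim \delta^2 \alpha(C)^2.
\end{equation*}
By a pigeonhole argument combined with the separation between sheets, only one half-plane $H_i$ can carry the mass of $T$ locally; thus $T$ is close in $L^2$ in this rescaled ball to a single plane parallel to $H_i$. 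The $L^2$--$L^\infty$ height bound (applicable since $\partial T = 0$ and $\mathbf{A}_\Sigma$ is small at this scale) then upgrades this to $\dist(p, H_i) \lesssim \delta^{\beta}\, \alpha(C)\, \dist(p,V)$ for some $\beta>0$. Choosing $\delta$ small enough relative to $\rho$ guarantees $\dist(p, H_i) < \rho \sigma$, so $p \in W_i$.

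The main obstacle is verifying that the rescaled setup satisfies the hypotheses of the $L^2$--$L^\infty$ bound uniformly in $p$, and that exactly one sheet dominates in $\bB_r(p)$. The uniform control follows because $\dist(p,V) \geq 1/16$ is bounded below, so the excess and second fundamental form both remain controlled after rescaling; and the uniqueness of the dominant sheet follows from the separation $c_2 \alpha(C)$ between consecutive planes and the choice of $c_1$ so small that $r$ is much less than this separation on the rescaled scale.
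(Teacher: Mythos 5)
The overall strategy is the right one: the paper simply refers the reader to Lemma~8.5 of \cite{de2023fineIII} (with the observation that $\partial T$ is absent from $\bB_4\setminus B_{1/32}(V)$), and the core tool there is exactly the $L^2$--$L^\infty$ height bound you invoke. Part~(1) of your argument is essentially fine: two half-planes meeting along $V$ at angle $\geq\alpha(C)$ have their tubular $\rho\sigma$-neighborhoods disjoint at distance $\geq 1/32$ from $V$ once $\rho$ is a small dimensional constant.

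However, Part~(2) has a genuine gap in the ``pigeonhole to a single sheet'' step. To guarantee that only one half-plane intersects $\bB_r(p)$ with $r = c_1\dist(p,V)$, you need $r$ smaller than the sheet separation at that height, which is comparable to $\alpha(C)\dist(p,V)$; this forces $c_1\lesssim\alpha(C)$, so $c_1$ is not a dimensional constant. Once you rescale $\bB_r(p)$ to unit size, the normalized $L^2$ excess you have to work with becomes $\lesssim \delta^2\alpha(C)^2/r^{m+2} \sim \delta^2\alpha(C)^{-m}$, which is not small uniformly in $\alpha(C)$ unless $\delta$ is also allowed to depend on $\alpha(C)$ --- but $\delta=\delta(Q,m,n,\overline n)$ must be dimensional. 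So the single-plane height bound cannot be applied with the stated constants. The correct route (and the one taken in \cite{de2023fineIII}, Lemma~8.5) is to apply the multi-plane version of Theorem~3.2 directly at a \emph{fixed} dimensional scale (say $r\sim 1/32$): there $C$ is a finite union of nearly parallel planes mutually separated by $\gtrsim\alpha(C)$, the $L^2$ distance to the union is $\lesssim\delta\alpha(C)$, and the multi-plane $L^2$--$L^\infty$ estimate gives $\dist(p,C)\lesssim\delta^\beta\alpha(C)$ with all constants depending only on $Q,m,n,\overline n$; then $\delta$ small forces $p\in W_i$ for some $i$. No reduction to a single dominant sheet is needed or possible with dimensional constants. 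Finally, two small points: you should keep $\bB_r(p)\subset\bB_4\setminus B_{1/32}(V)$ (not $B_{1/64}(V)$) in order to apply the excess hypothesis, and you should note that the parameter $\sigma$ in the lemma is the sheet separation, comparable to $\alpha(C)$ but not literally equal to it.
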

The proof of this lemma is the same as in Lemma 8.5 in \cite{de2023fineIII} with the additional information that the boundary does not appear at the desired scales.

We will also denote
$W_i':=\bB_{5/2}\setminus B_{1/20}(V) \cap \left\{ \dist(.,H_i)<2\rho\sigma\right\}$, $\Omega_i'=\bB_{5/2} \setminus B_{1/20}(V)$ and we suppose that $\rho$ is such that $W_i'$ are also disjoint (by possibly taking $\delta$ smaller).
\begin{proposition}\label{p:lipschitzapprox} Let $T$, $C$, $W_i$ as in Lemma \ref{l:crudesplitting}. Consider $\Omega_i:= \bB_2 \cap H_i \setminus B_{1/16}(V)$  and $\mathbf{\Omega_i}=\bB_3\cap \mathbf{p}_{H_i}^{-1}(\Omega_i).$ Set $T_i:=T \res \left(W_i \cap \mathbf{\Omega_i} \right)$ 
and 
\begin{equation*}
E_i:= \int_{\bB_3 \setminus B_{1/32}(V)}\dist(p,H_i)^2 d|\!|T_i|\!|(p).
\end{equation*}

Assume that $\left(\pi_{\circ}\right)_{\#}(T \res \bB_4 \setminus B_{1/32}(V))=Q \a{\bB_4 \cap \left\{x_m>1/32\right\}}$ and $|\!|T|\!|(\bB_4) \leq \left(Q/2+1/4\right)4^{m}\omega_m$.

Then, there exist non-negative integers $Q_1',...,Q_k'$ such that $\sum Q_i'=Q$ and the following properties hold:
\begin{enumerate}
\item $\partial T_i \res \mathbf{\Omega}_i=0.$;
\item $\left(\mathbf{p}_{H_i}\right)_{\#}(T_i)=Q_i'\a{\Omega_i}$;
\item The following estimate holds 
\begin{equation*}
\dist^2(q,H_i)=|\mathbf{p}_{H_i}^{\perp}(q)|^2 \lesssim E_i + \mathbf{A}_{\Sigma}^2 \; \forall q \in \textup{spt}(T_i) \cap \mathbf{\Omega_i}:
\end{equation*}
\item For all $i$ with $Q_i' \geq 1$, there are Lipschitz multi-valued maps $u_i: \Omega_i \rightarrow A_{Q_i'}(H_i^{\perp})$ and closed sets $K_i \subseteq \Omega_i$ such that $\textup{gr}(u_i) \subseteq \Sigma$, $T_i \res \mathbf{p}_{H_i}^{-1}(K_i)=\mathbf{G}_{u_i}\res \mathbf{p}_{H_i}^{-1}(K_i)$ and the following estimates hold
\begin{equation*}
\left\|u_i\right\|_{\infty}^2+\left\|Du_i\right\|_{L^2}^2 \leq CE_i + C\mathbf{A}_{\Sigma}^2
\end{equation*}
\begin{equation*}
\textup{Lip}(u_i) \leq C\left(E_i+\mathbf{A}_{\Sigma}^2\right)^{\gamma}
\end{equation*}
\begin{equation*}
|\Omega_i \setminus K_i| + |\!|T|\!|(\mathbf{\Omega_i} \setminus \mathbf{p}_{H_i}^{-1}(K_i))\leq C\left(E_i+\mathbf{A}_{\Sigma}^2\right)^{1+\gamma}
\end{equation*}
\item $Q_i'=0$ if and only if $T_i=0$;
\item Finally, if in addition
\begin{equation*}
\mathbb{E}(T,C,\bB_8)< \delta^2 \alpha(C)^2
\end{equation*}
then $Q_i'=Q_i$ for every $i$.
\end{enumerate}
\end{proposition}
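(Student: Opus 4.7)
The plan is to run the Almgren-type Lipschitz approximation machinery in the form adapted to our setting in \cite{de2023fineIII}, applying it to each tube $W_i$ separately and gluing the outputs. Throughout, the fact that we are working inside $\bB_3 \setminus B_{1/16}(V)$, well inside $\bB_4 \setminus B_{1/32}(V)$, is what makes the construction local and insulates us from the boundary $\Gamma$.

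I would first treat item (1). By Lemma \ref{l:crudesplitting} the fattened tubes $W_i'$ are pairwise disjoint and still cover $\textup{spt}(T) \cap \bB_{5/2}\setminus B_{1/20}(V)$, so $\textup{spt}(T) \cap W_i$ sits at positive distance from $\partial W_i$ within this region. Since $\mathbf{A}_\Gamma$ is small, $\Gamma \cap \bB_4 \subset B_{1/32}(V)$, hence $\partial T \res \mathbf{\Omega}_i = 0$; any distributional boundary of $T_i$ in the interior of $\mathbf{\Omega}_i$ would have to sit on $\textup{spt}(T) \cap \partial W_i \cap \mathbf{\Omega}_i$, which is empty. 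Item (2) is then the constancy theorem: $\mathbf{p}_{H_i}: \mathbf{\Omega}_i \to \Omega_i$ is a smooth submersion, $\Omega_i$ is connected, and $\partial (\mathbf{p}_{H_i})_\#(T_i) \res \Omega_i = 0$, so $(\mathbf{p}_{H_i})_\#(T_i) = Q_i' \a{\Omega_i}$ for some integer $Q_i'$. The identity $\sum Q_i' = Q$ follows from comparing with the pushforward by $\pi_\circ$: on $W_i$ the projection $\pi_\circ$ factors through $\mathbf{p}_{H_i}$ up to an error controlled by the height bound in item (3), so summing gives $\sum Q_i' \a{\pi_\circ(\Omega_i)} = Q \a{\bB_4 \cap \{x_m > 1/32\}}$ after projection.

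For item (3), since $T_i = T \res (W_i \cap \mathbf{\Omega}_i)$ agrees with the area-minimizing current $T$ in a neighborhood of any interior point, the $L^2$--$L^\infty$ height bound of Theorem 3.2 in \cite{de2023fineIII} applies to $T_i$ with respect to the plane $H_i$, giving the pointwise height bound in terms of $E_i + \mathbf{A}_\Sigma^2$. Item (4) is then the standard Almgren Lipschitz approximation applied to the $Q_i'$-valued current $T_i$, which is close to $Q_i' \a{H_i}$ in $L^2$: the construction via the maximal function of the excess density produces the multi-valued map $u_i$, the set $K_i$ on which $T_i$ is a graph, and the three estimates (with $\gamma = \gamma(Q,m,n,\bar n) > 0$ universal). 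To ensure $\text{gr}(u_i) \subset \Sigma$ I would compose the unconstrained Almgren approximation with the nearest-point retraction onto $\Sigma$, which is well defined and Lipschitz thanks to smallness of $\mathbf{A}_\Sigma$, at the cost of an additive $\mathbf{A}_\Sigma^2$ term already present on the right-hand side.

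Item (5) then drops out of the density lower bound: if $Q_i' = 0$ the graphical part is empty, so $T_i$ is supported on $\mathbf{p}_{H_i}^{-1}(\Omega_i \setminus K_i)$, which has mass $\lesssim (E_i + \mathbf{A}_\Sigma^2)^{1+\gamma}$; by the monotonicity formula any point of $\textup{spt}(T_i)$ (well separated from $\Gamma$) would force a ball of definite mass, forcing $T_i = 0$ once $\varepsilon$ is small. Finally, for item (6) I would argue by contradiction and compactness: for a sequence $T_j$ with $\mathbb{E}(T_j, C, \bB_8) \to 0$, mass bounds give subsequential weak convergence $T_j \rightharpoonup C$, hence $(\mathbf{p}_{H_i})_\#(T_{j,i}) \to (\mathbf{p}_{H_i})_\#(C_i) = Q_i \a{\Omega_i}$, forcing $Q_i' = Q_i$ eventually. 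The main obstacle I expect is bookkeeping in item (4): verifying that the Almgren estimates for $u_i$ can genuinely be phrased in terms of the localized quantity $E_i$ (not the global excess $\mathbb{E}(T,C,\bB_8)$), which requires checking that the crude splitting already isolates $T_i$ inside its own tube sufficiently for the maximal-function stopping-time construction to proceed sheet by sheet.
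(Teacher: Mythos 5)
Your treatment of items (1)--(5) is in line with the paper's (which simply defers these to the corresponding result of \cite{de2023fineIII}), and your sketch for $\sum Q_i'=Q$ via the circular projection is in the right spirit. The genuine gap is in item (6).

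Your compactness argument for (6) fails because it implicitly treats the open book $C$ as fixed along the contradiction sequence. The hypothesis $\mathbb{E}(T,C,\bB_8)<\delta^2\alpha(C)^2$ is normalized by $\alpha(C)^2$, and $\alpha(C)$ is permitted to be arbitrarily small (this is exactly the ``small angle'' regime the whole excess decay scheme is built to handle). A contradiction sequence $T_j,C_j$ may therefore have $\alpha(C_j)\to 0$, in which case the cones degenerate, the half-planes $H_{i,j}$ coalesce in the limit, and the convergence $(\mathbf{p}_{H_i})_\#(T_{j,i})\to Q_i\a{\Omega_i}$ you invoke is meaningless: the limiting cone does not retain $N$ distinct sheets, and there is no well-defined $\Omega_i$ to converge onto. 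Nothing in your argument produces a contradiction uniformly in $\alpha(C)$.

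The paper's proof of (6) is instead quantitative and makes essential use of the transport-plan structure of the strong excess $\mathbb{E}$. The key point: if $Q_i'>Q_i$ for some $i$, then after accounting for the graphical estimates of (4) one has a mass defect $|T|(\mathbf{\Omega}_i)-|C|(\Omega_i')\geq c(m)$, i.e.\ a definite amount of $|T|$-mass sits in the $i$-th tube $W_i$ but is \emph{not} matched by the cone mass there. Because the fattened tubes $W_i'$ are disjoint and at mutual distance comparable to $\alpha(C)$, any admissible transport plan $\sigma$ in the definition of $\mathbb{E}$ must carry this surplus mass across a gap of length $\gtrsim\alpha(C)$ (or pay the ``remove/add'' penalty, which is also large since those points are far from $V$). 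This forces $\mathbb{E}(T,C,\bB_4)\gtrsim c(m)\alpha(C)^2$, contradicting $\mathbb{E}(T,C,\bB_8)<\delta^2\alpha(C)^2$ once $\delta$ is small, with constants independent of $\alpha(C)$. This is exactly the kind of statement the Wasserstein-flavored excess of Section 2 was designed to yield, and it cannot be replaced by a naive weak-convergence argument.

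To repair your proof, replace the compactness step for (6) with this direct transport estimate: take any near-optimal decomposition and plan for $\mathbb{E}(T,C,\bB_8)$, show the multiplicity mismatch produces a definite mass defect in some tube $W_i$, and bound the transport cost from below by the separation $\alpha(C)$ between the fattened tubes $W_i'$.
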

\begin{proof}
The proof of the common statements follows \cite{de2023fineIII}. The additional properties we establish are the bound $\sum Q_i' \geq Q$ and (6). 

The bound $\sum Q_i' \geq Q$ is an immediate consequence (coupled with the information coming from the same proposition in \cite{de2023fineIII}) of the assumption on the circular projection
\begin{equation*}
\left(\pi_{\circ}\right)_{\#}(T \res \bB_4 \setminus B_{1/32}(V))=Q \a{\bB_4 \cap \left\{x_m>1/32\right\}}.
\end{equation*}
It remains to show (6).

We know that
\begin{equation*}
|T_i|(\mathbf{\Omega}_i)-Q_i|\Omega_i| \lesssim \left(E_i +\mathbf{A}_{\Sigma}^2\right)^{1+\gamma}.
\end{equation*}
In particular 
\begin{equation*}
|T_i|(\mathbf{\Omega}_i)-Q_i|\Omega_i|<<1/2 |\Omega_i|.
\end{equation*}

Assume that $Q_i'>Q_i$ for some $i$, then $|T|(\Omega_i)-|C|(\Omega_i') \geq c(m)$ if excess is small enough.

Assume that $|T|=|T|^1+|T|^2$ and $|C|=|C|^1+|C|^2$ with a transport plan $\sigma$ between $|T|^1$ and $|T|^2$. We recall that $W_i'$ is a fattening of $W_i$ and $\Omega_i'$ is a fattening of $\Omega_i$ and thus exiting $\Omega_i'$ from $\Omega_i$ means traveling a constant distance.

Since \begin{equation*}
    \int \dist(x,V)^2d|T|^2+\int \dist(x,V)^2d|C|^2<<1
\end{equation*} then we must have $||T|^1-|T||(\Omega_i')<<1$ and $||C|^1-|C||(\Omega_i')<<1$. Thus 

\begin{equation*}
c(m) \leq |T|(\Omega_i)-|C|(\Omega_i') \lesssim |T|^1(\Omega_i)-|C|^1(\Omega_i')\leq
\sigma(\mathbf{\Omega_i} \cap W_i,4B \setminus \mathbf{\Omega_i}' \cap W_i') .
\end{equation*}
Since $|x-y| \geq c(m)\alpha(C)$ for $x \in \mathbf{\Omega_i} \cap W_i$ and $y \in 4B \setminus \mathbf{\Omega_i}' \cap W_i' $ we have
\begin{equation*}
c(m) \alpha(C) \leq \int |x-y|^2 d\sigma(x,y) \leq \mathbb{E}(T,C,\bB_4).
\end{equation*}
This is a contradiction to the assumption that $Q_i'>Q_i$ for some $i$ which implies $Q_i'=Q_i$ and thus we conclude (6).
\end{proof}

Notice that we don't separate between the case of a single and multiple planes (as in \cite{de2023fineIII}) since we define $\alpha(C):=1$ when $C$ is a single half-plane.
\end{subsection}

\begin{subsection}{Whitney regions and dyadic cubes}

To construct the Lipschitz Approximation, we define a suitable Whitney region. We assume $V=\RR^{m-1} \times \left\{0\right\}$. We let $L_0$ be a cube with side length $\frac{2}{\sqrt{m-1}}$ centered at $0$.

We define the regions as in \cite{de2023fineIII}. Let $R:=\left\{p: \mathbf{p}_{V}(p) \in L_0 \: \textup{and} \: 0<|\mathbf{p}_{V^{\perp}}(p)| \leq 1 \right\}$. The cubes $\mathcal{G}_l$ are cubes of sidelength $\frac{2^{1-l}}{\sqrt{m-1}}$ obtained by subdividing $L_0$ into $2^{l(m-1)}$ cubes. The integer $l$, the generation of the cube, is denoted by $l(L)$. If $L \subset L^{\prime}$ and $\ell\left(L^{\prime}\right)=\ell(L)+1$, we then call $L^{\prime}$ the parent of $L$, and $L$ a child of $L^{\prime}$. When $\ell\left(L^{\prime}\right)>\ell(L)$, we say that $L^{\prime}$ is an ancestor of $L$ and $L$ is a descendant of $L^{\prime}$.

For every cube $L$ we denote by $y_L$ it's center and $\mathbf{B}(L)$ the ball $\mathbf{B}_{2^{2-l(L)}}(y_L)$ (in $\RR^{m+n}$) and by $\mathbf{B}^h(L)$ the set $\mathbf{B}(L) \setminus B_{2^{-5-l(L)}}(V)$.

We define for $L \in \mathcal{G}_l$
\begin{equation*}
R(L):=\left\{p: \mathbf{p}_{V}(p) \in L \: \textup{and} \: 2^{-l-1} \leq |p_{V^{\perp}}(p)| \leq 2^{-l} \right\}.
\end{equation*}
We also define $\lambda L$ as the cube concentric with $L$ with side length $\lambda \frac{2^{1-l}}{\sqrt{m-1}}$ and
\begin{equation*}
\lambda R(L):=\left\{p: \mathbf{p}_{V}(p) \in \lambda L \: \textup{and} \: \lambda^{-1}2^{-l-1} \leq |p_{V^{\perp}}(p)| \leq \lambda 2^{-l} \right\}.
\end{equation*}
We also define $\lambda L_i:=\lambda R(L) \cap H_i$.
We define the circular projection $\pi_{\circ}: \RR^{m+n} \rightarrow \RR^{m-1} \times \RR^{+}$ as $\pi_{\circ}(q):=(\mathbf{p}_{V}(q),|\mathbf{p}_{V^{\perp}}(q)|)$.
We remind the basic properties of the regions.

\begin{lemma}
Consider the collection of cubes $\mathcal{G}$ introduced above and its elements L. Then the following properties hold:
\begin{enumerate}

\item Given any pair of distinct $L, L^{\prime} \in \mathcal{G}$ the interiors of $R(L)$ and $R\left(L^{\prime}\right)$ are pairwise disjoint and $R(L) \cap R\left(L^{\prime}\right) \neq \emptyset$ if and only if $L \cap L^{\prime} \neq \emptyset$ and $\left|\ell(L)-\ell\left(L^{\prime}\right)\right| \leq 1$, while the interiors of $L$ and $L^{\prime}$ are disjoint if $\ell(L) \leq \ell\left(L^{\prime}\right)$ and $L^{\prime}$ is not an ancestor of $L$.
\item The union of $R(L)$ ranging over all $L \in \mathcal{G}$ is the whole set $R$.
\item  The diameters of the sets $L, R(L), \lambda L, \lambda R(L), L_i, \lambda L_i$, and $\mathbf{B}^h(L)$ are all comparable to $2^{-\ell(L)}$ and, with the exception of $L, \lambda L$, all comparable to the distance between an arbitrarily element within them and $V$; more precisely, any such diameter and distance is bounded above by $C 2^{-\ell(L)}$ and bounded below by $C^{-1} 2^{-\ell(L)}$ for some constant $C$ which depends only on $m$ and $n$.
\item There is a constant $C=C(m, n)$ such that, if $\mathbf{B}^h(L) \cap \mathbf{B}^h\left(L^{\prime}\right) \neq \emptyset$, then $\left|\ell(L)-\ell\left(L^{\prime}\right)\right| \leq$ $C$ and $\operatorname{dist}\left(L, L^{\prime}\right) \leq C 2^{-\ell(L)}$. In particular, for every $L \in \mathcal{G}$, the subset of $L^{\prime} \in \mathcal{G}$ for which $\mathbf{B}^h(L)$ and $\mathbf{B}^h\left(L^{\prime}\right)$ have nonempty intersection is bounded by a constant.
\item $\sum_{L \in \mathcal{G}_{\ell}} \mathcal{H}^{m-1}(L)=C(m)$ for any $\ell$ and therefore, for any $\kappa>0$,
\begin{equation*}
\sum_{L \in \mathcal{G}} 2^{-(m-1+\kappa) \ell(L)} \leq C(\kappa, m) .
\end{equation*}

\end{enumerate}
\end{lemma}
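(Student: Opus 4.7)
The plan is to verify each of the five listed properties directly from the definitions; the lemma is a standard bookkeeping verification that the family $\{R(L)\}_{L\in\mathcal{G}}$ is a Whitney-type decomposition of $R$ adapted to the spine $V$. Throughout, I would use that each $\mathcal{G}_l$ partitions $L_0$ into $2^{l(m-1)}$ congruent cubes of side length $\tfrac{2^{1-l}}{\sqrt{m-1}}$ and diameter $2^{1-l}$, and that $R(L)$ for $L\in\mathcal{G}_l$ is, in the splitting $\RR^{m+n}=V\oplus V^\perp$, the product of $L$ with the radial shell $\{\xi\in V^\perp:\ 2^{-l-1}\leq |\xi|\leq 2^{-l}\}$.

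For (1) and (2), I would first observe that two sets $R(L),R(L')$ with distinct interiors can overlap only if the $V$-projections overlap (forcing $L\cap L'\neq\emptyset$) and if the radial shells overlap (forcing $|l(L)-l(L')|\leq 1$); moreover when both hold, the overlap lies on a measure-zero boundary, so the interiors are disjoint in all cases. The converse is immediate. Property (2) follows since any $p\in R$ has $|\mathbf{p}_{V^\perp}(p)|\in(0,1]$, which determines a unique generation $l\geq 0$ with $2^{-l-1}\leq|\mathbf{p}_{V^\perp}(p)|\leq 2^{-l}$; then $\mathbf{p}_V(p)\in L_0$ sits in some $L\in\mathcal{G}_l$, placing $p\in R(L)$.

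Properties (3) and (4) are purely dimensional. The diameters of $L$, $R(L)$, $\lambda L$, $\lambda R(L)$, $L_i$, $\lambda L_i$, $\bB(L)$, and $\bB^h(L)$ are all bounded above and below by constants (depending only on $m,n,\lambda$) times $2^{-l(L)}$; similarly, the distance from any point of $R(L)$, $\lambda R(L)$, or $\bB^h(L)$ to $V$ lies in an interval with endpoints of order $2^{-l(L)}$ (the lower bound coming from $2^{-l(L)-1}$ in the first two cases and from the excision radius $2^{-5-l(L)}$ in the third). For (4), if $q\in \bB^h(L)\cap \bB^h(L')$, then $q$ lies at distance $\geq 2^{-5-\min(l(L),l(L'))}$ from $V$ and within distance $2^{2-l(L)}$ of $y_L$, respectively $2^{2-l(L')}$ of $y_{L'}$; applying the triangle inequality to the $V^\perp$-component forces both radii $2^{2-l(L)}, 2^{2-l(L')}$ to exceed a quantity of order $2^{-5-\min(l(L),l(L'))}$, yielding $|l(L)-l(L')|\leq C(m,n)$. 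The distance bound $\dist(L,L')\leq C\cdot 2^{-l(L)}$ then follows from $\dist(L,L')\leq |y_L-y_{L'}|+\diam(L)+\diam(L')\leq \diam(\bB(L))+\diam(\bB(L'))+\diam(L)+\diam(L')$.

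Property (5) is a direct computation: since the cubes of $\mathcal{G}_l$ tile $L_0$ congruently, $\sum_{L\in\mathcal{G}_l}\HH^{m-1}(L)=\HH^{m-1}(L_0)=\bigl(\tfrac{2}{\sqrt{m-1}}\bigr)^{m-1}=:C(m)$, and the weighted sum telescopes as $\sum_L 2^{-(m-1+\kappa)l(L)}=\sum_l 2^{l(m-1)}\cdot 2^{-(m-1+\kappa)l}=\sum_l 2^{-\kappa l}<\infty$ whenever $\kappa>0$. No step is a genuine mathematical obstacle; the only point requiring care is the bookkeeping of the numerical constants $2^{2-l}$ in the radius of $\bB(L)$ and $2^{-5-l}$ in the excised ball $B_{2^{-5-l}}(V)$, which are chosen precisely so that $R(L)$ is comfortably contained in $\bB(L)$ while $\bB^h(L)$ maintains a positive definite distance from $V$ comparable to its diameter, making the uniform comparability in (3) and (4) work with absolute constants.
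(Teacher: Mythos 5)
The paper itself states this lemma without proof: the cubical decomposition and its properties are lifted essentially verbatim from \cite{de2023fineIII} and the verifications are treated as standard. Your direct check of each item from the definitions is correct and is precisely the expected argument.

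Two tiny points of bookkeeping, neither a genuine gap. In item (1) the final clause concerns the interiors of the dyadic cubes $L,L'$ themselves (the standard dichotomy: if $\ell(L)\leq\ell(L')$ then either the interiors are disjoint or $L'\subset L$), not the interiors of $R(L),R(L')$; you use this fact implicitly but do not state it separately. In item (4), the clean version of the pinching argument is: for $q\in\mathbf{B}^h(L)\cap\mathbf{B}^h(L')$ one has $\max\bigl(2^{-5-\ell(L)},2^{-5-\ell(L')}\bigr)\leq\dist(q,V)\leq\min\bigl(2^{2-\ell(L)},2^{2-\ell(L')}\bigr)$ (the upper bound because $y_L,y_{L'}\in V$), and comparing the dominant lower bound with the dominant upper bound forces $|\ell(L)-\ell(L')|\leq 7$; also $\dist(L,L')\leq|y_L-y_{L'}|$ directly since $y_L\in L$ and $y_{L'}\in L'$, so the extra diameter terms in your last inequality are unnecessary. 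Neither affects the conclusion.
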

We cite the Layer subdivision of \cite{de2023fineIII}, which will determine a sequence of subcones for a given open book $C$.

\begin{lemma}[Layer subdivision]\label{l:layersubdivision} For every integer $N \geq 2$ and every $0<\delta \leq 1$, there is $\eta=\eta(\delta, N)>0$ with the following properties. Let $C$ be an open book. Then, there is $\kappa \in \mathbb{N}$ and subcollections $I(0) \supsetneq I(1) \supsetneq \cdots \supsetneq I(\kappa)$ of the integers $\{1, \ldots, N\}$, each of cardinality at least 2 and with $I(0)=\{1, \ldots, N\}$, so that the numbers
$$
m(s):=\min _{i<j \in I(s)} \angle\left(H_i,H_j\right)
$$

$$
\begin{aligned}
d(s) & :=\max _{i \in I(0)} \min _{j \in I(s)} \angle\left(H_i,H_j\right) \\
M(s) & :=\max _{i<j \in I(s)} \angle\left(H_i,H_j\right)
\end{aligned}
$$
satisfy the following requirements
\begin{enumerate}
\item $M(\kappa)=M(0)$
\item $\eta M(\kappa) \leq m(\kappa)$
\item $d(s) \leq \delta m(s)$ and $\eta d(s) \leq m(s-1)$ for every $1 \leq s \leq \kappa$;
\item $m(s-1) \leq \delta m(s)$ for every $1 \leq s \leq \kappa$.
\end{enumerate}
If $M(\kappa)<\delta$, we extend the above by $\overline{\kappa}=\kappa+1$ and $I(\overline{\kappa}):=\min \left\{ I(\kappa)\right\}$. Otherwise $\overline{\kappa}:=\kappa.$
\end{lemma}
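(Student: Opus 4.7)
My plan is to construct the collection $I(s)$ by iterative subset-selection via a gap-finding argument on the pairwise angles of the current collection. I would initialize $I(0) = \{1, \ldots, N\}$, fix a pair $(i_*, j_*)$ achieving $\angle(H_{i_*}, H_{j_*}) = M(0)$, and insist this pair remain in every $I(s)$; this ensures $M(s) = M(0)$ throughout and gives property (1). If $m(0) \geq \eta M(0)$, I would stop with $\kappa = 0$, making property (2) trivial.

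At the inductive step, given $I(s)$ with $m(s) < \eta M(0)$, I would sort the pairwise angles of $I(s)$ as $m(s) = \alpha_1 \leq \cdots \leq \alpha_L = M(0)$, where $L \leq \binom{N}{2}$. A telescoping product shows that if every consecutive ratio $\alpha_{k+1}/\alpha_k$ were below $2/\delta$, then $\alpha_L/\alpha_1 < (2/\delta)^{L-1}$, contradicting $\alpha_L/\alpha_1 > 1/\eta$ provided $\eta \leq (\delta/2)^{\binom{N}{2}}$. With this choice of $\eta$ I would pick the smallest index $g$ with $\alpha_{g+1}/\alpha_g \geq 2/\delta$; the minimality of $g$ gives $\alpha_g \leq (2/\delta)^{g-1} m(s)$, which is bounded by $m(s)/\eta$ by the same estimate. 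I would then define $I(s+1) \subset I(s)$ as a maximal subset containing $\{i_*, j_*\}$ whose pairwise angles are all $\geq \alpha_{g+1}$. Since $\alpha_1 < \alpha_{g+1}$ this selection strictly reduces cardinality, so the procedure terminates in at most $N-2$ steps.

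It then remains to verify properties (2)--(4). Property (4) is immediate from $m(s+1) \geq \alpha_{g+1} \geq (2/\delta) m(s)$. For property (3), the maximality of $I(s+1)$ together with the gap ensures that each element of $I(s) \setminus I(s+1)$ lies within angle $\leq \alpha_g$ of some element of $I(s+1)$; a triangle inequality combined with the inductive hypothesis $d(s) \leq \delta m(s)$ then yields, for every $H_i \in I(0)$, distance $\leq d(s) + \alpha_g \leq 2\alpha_g \leq \delta m(s+1)$ to $I(s+1)$, while the bound $\eta d(s+1) \leq m(s)$ follows from $d(s+1) \leq 2\alpha_g$ and $\alpha_g \leq m(s)/\eta$, up to adjusting $\eta$ by a constant factor. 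Property (2) holds at termination by the stopping rule, and the extension to $\bar{\kappa}$ is a formal convention for the degenerate case $M(\kappa) < \delta$. The main obstacle is arranging simultaneously that the selected gap be large enough (at least $2/\delta$, which secures $d(s) \leq \delta m(s)$) and that $\alpha_g$ itself be comparable to $m(s)$ (which secures $\eta d(s) \leq m(s-1)$); this forces $\eta$ to depend super-exponentially on $N$, roughly $\eta \sim \delta^{N^2}$, and is handled automatically by picking the \emph{smallest}-index gap.
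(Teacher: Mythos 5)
The paper does not prove this lemma; it cites it verbatim from \cite{de2023fineIII}, so there is no in-paper argument to compare against. Your proof is correct and implements what is almost certainly the intended combinatorial scheme. The essential mechanism is in place: sort the pairwise angles of $I(s)$, locate a multiplicative gap of size $\geq 2/\delta$ by the telescoping count that forces $\eta \leq (\delta/2)^{\binom{N}{2}}$, and take the \emph{smallest}-index such gap $g$ so that $\alpha_g \leq (2/\delta)^{g-1}m(s)$ stays comparable to $m(s)$ --- this is precisely what lets both inequalities in property (3) close simultaneously, and you correctly identify that anchoring the extremal pair $\{i_*,j_*\}$ in every $I(s)$ handles property (1) and guarantees that the maximal subset construction is non-vacuous. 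Three small remarks. First, with the single choice $\eta=(\delta/2)^{\binom{N}{2}}$ all four properties already follow (the bound $\eta \leq \tfrac12(\delta/2)^{g-1}$ is then automatic for every relevant $g$ since $\delta\leq 1$), so the ``adjusting $\eta$ by a constant factor'' you invoke at the end is unnecessary. Second, the step $d(s+1)\leq d(s)+\alpha_g$ tacitly uses that $\angle(\cdot,\cdot)$ is a metric on the normals in $V^\perp\cap\mathbb{S}^n$; this is true but worth stating, and you also need the base case $d(0)=0$ to launch the induction $d(s)\leq\delta m(s)$. Third, one should observe that if $|I(s)|=2$ then necessarily $I(s)=\{i_*,j_*\}$ and hence $m(s)=M(0)\geq\eta M(0)$, so the procedure always halts before cardinality could drop below $2$; together with the strict cardinality drop at each step (witnessed by the pair realizing $\alpha_1<\alpha_{g+1}$) this gives $\kappa\leq N-2$.
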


 \begin{remark}\label{rmk:multiplicities} We can assign multiplicities to the sequence of cones $C_0 \supsetneq C_1 \supsetneq... \supsetneq C_{\overline{k}}$, given by the layering procedure Lemma \ref{l:layersubdivision}.
We write 
\begin{equation*}
C_{k}=\sum_{i \in I(k)}Q_i^{k}\a{H_i}.
\end{equation*}
We define inductively starting from assigning the multiplicities for $I(0)$. We define for $i \in I(k+1)$, the multiplicity $Q_i^{k+1}$ as the sum of $Q_j^k$ over $j \in I(k)$ such that
\begin{equation*}
\angle \left(H_j, H_i\right)= \min_{l \in I(k+1)} \angle \left(H_l, H_i\right).
\end{equation*}
By construction, this minimum will only be achieved by a single sheet of the cone which means that the multiplicity arrangement is well-defined. 
 \end{remark}
The multiplicities in remark \ref{rmk:multiplicities} are defined to be used for Lemma \ref{l:reverseexcessbound} but in the other statements the cones are considered without having an assigned multiplicity.

\begin{assumption}[Selection of parameters]
We fix $\tau(m,n,\overline{n},Q)$ small enough. The parameter $\overline{\delta}=\overline{\delta}(m,n,\overline{n},Q,\tau)$ must be chosen much smaller than $\tau$. Finally $\varepsilon=\varepsilon(Q,m,n,\overline{\delta},\tau)$ in Assumption \ref{A:Lipschitzapproximation} will be smaller $\overline{\delta}.$
\end{assumption}
The parameters are chosen to satisfy lemma 8.3 of \cite{de2023fineIII}.
We define for the open books $C_k$ given by 
\begin{equation*}
    \mathbf{E}(L,k):=2^{(m+2)l(L)} \int_{\bB^h(L)}\dist^2(q,C_{k})d|\!|T|\!|(q).
\end{equation*} Given an open book $C_k$ we define 
\begin{equation*}
\mathbf{s}(k):=\min_{i<j \in I(k)}\dist(H_i \cap B_1,H_j \cap B_1)
\end{equation*}
and $\mathbf{s}(k)=\overline{\delta}$ if $I(k)$ is a singleton.
We will have a division into outer, central and inner cube as in \cite{de2023fineIII} with the addition the respective balls don't intersect the boundary. 
\begin{definition}\label{d:cubestypes} Let $L \in \mathcal{G}$. We say that $L$
\begin{enumerate}
\item $L$ is an \textit{interior cube} if $\mathbf{B}^h(L') \cap \Gamma=\varnothing$
for every ancestor $L'$ of $L$  (including $L$) and 
\begin{equation}\label{eq:densitycondition}
|T|(\mathbf{B}(L)) \leq (Q/2+1/2)2^{m(2-l(L))}.
\end{equation}
\item $L$ is a \textit{boundary stopping cube} if $\mathbf{B}^h(L) \cap \Gamma \neq \varnothing$ or $|T|(\mathbf{B}(L)) >(Q/2+1/2)2^{m(2-l(L))}$ and it's parent is an interior cube.
\item A cube $L$ is a $k$-type cube if it is an interior cube and $\mathbf{E}(L',0)/\mathbf{s}(k)^2 \leq \tau^2$ for every ancestor $L'$ of $L$ (including $L$), but it is not a $(k-1)$-type cube (if $k=0$ this condition is not apply). 
\item A cube $L$ is an \textit{outer cube} if it is a $0$-type cube. We denote by $\mathcal{G}^o$ the outer cubes. A cube $L$ is a \textit{central} cube if it is $k$-type for some $1 \leq k \leq \overline{\kappa}.$ We denote by $\mathcal{G}^c$ the central cubes. 
\item A cube $L$ is a \textit{inner cube} if it is an interior cube, it is neither an outer nor a central cube, but its parent is an outer or a central cube.
\end{enumerate}
\end{definition}
We note that for every interior cube ${\pi_{\circ}}_{\#}\left(T\res_{\mathbf{B}^h(L')}\right)=Q\a{\pi_{\circ}(\mathbf{B}^h(L')))}$.
If the density condition \eqref{eq:densitycondition} fails then we must have  $\dist(\Gamma,y_{L}) \leq C(m) \ell(L)$ since otherwise
\eqref{eq:densitycondition} would be obtained as a consequence of the monotonicity formula at a boundary point.

We define the following regions that will be useful for the non-concentration estimates.

\begin{itemize}
    \item The \textit{boundary region} denoted $R^b$ which is the union of $R(L)$ over all cubes which are not interior cubes.
    \item The \textit{outer region} denoted $R^o$, is the union of $R(L)$ over all outer cubes.
    \item The \textit{central region} denoted $R^c$, is the union of $R(L)$ over all central cubes.

    \item The \textit{inner region} denoted $R^i$, is the union of $R(L)$ over all interior cubes which are not outer or central cubes.
    \end{itemize}
    \end{subsection}
   \begin{subsection}{Local lipschitz approximation and coherent outer approximation} 
We conclude this section with local Lipschitz approximations and a coherent outer approximation defined on the outer region. The proofs are the same as in \cite{de2023fineIII} since the definition of these regions is such that they don't encounter the boundary and satisfy the appropriate upper density estimate.

We obtain the local approximations by  \ref{p:lipschitzapprox} for the current $T_{y_L,2^{-\ell(L)}}$ and the cone $C_{k(L)}$. The domains of definition will be $\Omega(L):=\left(\bB_{2^{1-\ell(L)}}(y_L) \setminus \overline{B}_{2^{-4\ell(L)}}(V)\right)\cap C_{k(L)}.$  We also denote $\Omega_i(L):=\Omega(L) \cap H_i$, $u_{L,i}$ the corresponding multi-valued approximation (obtaining by translating and rescaling to use the appropriate lemma) $\mathbf{\Omega}_i(L)$ the sets $2^{\ell(L)}\mathbf{\Omega}_i+y_L$ and by $K_i(L)$ the sets $2^{-\ell(L)}K_i+y_L$, where $\mathbf{\Omega}_i$ and $K_i$ are given by proposition \ref{p:lipschitzapprox}. When $Q_{L,i}=0$, the map $u_{L,i}$ does not exist and we will not change the notation to take that into consideration.
\begin{proposition}\label{p:localapprox} 
Let $T, \Sigma, \Gamma, C$ as in Assumption \ref{A:Lipschitzapproximation}.
There exists $1<\lambda=\lambda(m) \leq \frac{3}{2}$ and $\overline{C}=\overline{C}(Q,m,n,\overline{n})$ such that the local approximations satisfy the following properties.
\begin{enumerate}
\item $Q_{L,i}=Q_i$ for every $L \in \mathcal{G}^o$.
\item $\sum_{i=1}^N Q_{L, i}=Q$ for every $L \in \mathcal{G}^o \cup \mathcal{G}^c$.
\item For every $L \in \mathcal{G}^o \cup \mathcal{G}^c$ we have $\operatorname{spt}(T) \cap \lambda R(L) \subset \bigcup_i \boldsymbol{\Omega}_i(L)$ and
$$
2^{2 \ell(L)}\left|q-\mathbf{p}_{H_i}(q)\right|^2 \leq \bar{C}\left(\mathbf{E}(L)+2^{-2 \ell(L)} \mathbf{A}_{\Sigma}^2\right) \quad \forall q \in \operatorname{spt}(T) \cap \boldsymbol{\Omega}_i(L) ;
$$
\item 
For $L \in \mathcal{G}^o \cup \mathcal{G}^c$, if we set
$$
T_{L, i}:=T\left\llcorner\boldsymbol{\Omega}_i(L) \cap\left\{\operatorname{dist}\left(\cdot, H_i\right)<\bar{C} 2^{-\ell(L)}\left(\mathbf{E}(L)+2^{-2 \ell(L)} \mathbf{A}_{\Sigma}^2\right)^{1 / 2}\right\}\right.
$$
(with $\bar{C}$ larger than the constant in the previous estimate, then, for $K_i(L)=2^{-\ell(L)} K_i+$ $y_L \subset \Omega_i(L)$ as above,
$$
T_{L, i}\left\llcorner\mathbf{p}_{H_i}^{-1}\left(K_i(L)\right)=\mathbf{G}_{u_{L, i}}\left\llcorner\mathbf{p}_{H_i}^{-1}\left(K_i(L)\right),\right.\right.
$$
gr $\left(u_{L, i}\right) \subset \Sigma$, and the following estimates hold:
$$
\begin{aligned}
2^{2 \ell(L)}\left\|u_{L, i}\right\|_{\infty}^2+2^{m \ell(L)}\left\|D u_{L, i}\right\|_{L^2}^2 & \leq \bar{C}\left(\mathbf{E}(L)+2^{-2 \ell(L)} \mathbf{A}_{\Sigma}^2\right) \\
\operatorname{Lip}\left(u_{L, i}\right) & \leq C\left(\mathbf{E}(L)+2^{-2 \ell(L)} \mathbf{A}_{\Sigma}^2\right)^\gamma \\
\left|\Omega_i(L) \backslash K_i(L)\right|+\left\|T_{L, i}\right\|\left(\boldsymbol{\Omega}_i(L) \backslash \mathbf{p}_{H_i}^{-1}\left(K_i(L)\right)\right) & \leq \bar{C} 2^{-m \ell(L)}\left(\mathbf{E}(L)+2^{-2 \ell(L)} \mathbf{A}_{\Sigma}^2\right)^{1+\gamma} .
\end{aligned}
$$
\end{enumerate}
\end{proposition}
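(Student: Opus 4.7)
The plan is to derive Proposition \ref{p:localapprox} by applying Proposition \ref{p:lipschitzapprox} locally on each cube $L$, after translating and rescaling to unit scale. For a cube $L \in \mathcal{G}^o \cup \mathcal{G}^c$ of type $k = k(L)$, I would work with the rescaled current $T_{y_L, 2^{-\ell(L)}}$ and the subcone $C_k$ produced by the layer subdivision Lemma \ref{l:layersubdivision}. The parameter hierarchy $\varepsilon \ll \bar\delta \ll \tau$, together with the definition of $k$-type cubes (namely $\mathbf{E}(L',0)/\mathbf{s}(k)^2 \leq \tau^2$ for every ancestor $L'$), is designed precisely so that the rescaled $L^2$ distance of $T_{y_L, 2^{-\ell(L)}}$ to $C_k$ verifies the smallness hypothesis of the crude splitting Lemma \ref{l:crudesplitting} and of Proposition \ref{p:lipschitzapprox}.

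First I would verify the absence of boundary on $\bB^h(L')$ for every ancestor $L'$ of $L$: by the definition of interior cube we have $\bB^h(L') \cap \Gamma = \varnothing$, hence $\partial T \res \bB^h(L') = 0$. The density bound $\|T\|(\mathbf{B}(L)) \leq (Q/2+1/2)\, 2^{m(2-\ell(L))}$ rescales to the mass bound $\|T_{y_L, 2^{-\ell(L)}}\|(\bB_4) \leq (Q/2+1/4)\,4^m \omega_m$ required by Proposition \ref{p:lipschitzapprox} (slightly shrinking $\bar\delta$ if needed to absorb the $1/8$ gap). The circular-projection identity $(\pi_\circ)_{\#}(T \res \bB^h(L')) = Q \a{\pi_\circ(\bB^h(L'))}$ follows from the constancy theorem applied to the push-forward, which has no boundary inside $\pi_\circ(\bB^h(L'))$; the multiplicity is identified as $Q$ by comparing with the corresponding identity on $\bB_8 \setminus B_{1/32}(V)$ granted by Assumption \ref{A:Lipschitzapproximation} and connectedness of $\pi_\circ(\bB^h(L'))$.

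With the hypotheses checked, applying Proposition \ref{p:lipschitzapprox} to $T_{y_L, 2^{-\ell(L)}}$ with cone $C_k$ yields nonnegative integers $Q_{L,i}$ for $i \in I(k)$, closed sets $K_i(L) \subset \Omega_i(L)$, and multi-valued Lipschitz approximations $u_{L,i}$ whose graphs lie in $\Sigma$. Summing the integers over $i$ and matching with the circular projection identity gives $\sum_i Q_{L,i} = Q$, establishing item (2). For item (1), when $L$ is outer we have $k(L)=0$ and $C_0 = C$; here the stronger hypothesis of part (6) of Proposition \ref{p:lipschitzapprox} holds since $\mathbf{E}(L,0) \leq \tau^2\alpha(C)^2 \ll \bar\delta^2\alpha(C)^2$, forcing $Q_{L,i} = Q_i$ for each $i$. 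Items (3) and (4) are obtained by rescaling items (3)–(4) of Proposition \ref{p:lipschitzapprox}, where the factors $2^{\ell(L)}$ in the displayed estimates come exactly from the scaling behaviour of the $L^\infty$, $L^2$, excess and area quantities.

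The main delicate point is the verification of the smallness hypothesis on central cubes ($k \geq 1$): the stopping criterion controls $\mathbf{E}(L,0)$, while Proposition \ref{p:lipschitzapprox} requires control of the excess relative to $C_k$. The passage from $\mathbf{E}(L,0)$ to $\mathbf{E}(L,k)$ is governed by $\mathcal{G}(C_0, C_k)^2 \lesssim M(0)^2$, while item (2) of Lemma \ref{l:layersubdivision} gives $\mathbf{s}(k) \sim \alpha(C_k) \gtrsim \eta\, M(0)$; combined with $\mathbf{E}(L,0) \leq \tau^2 \mathbf{s}(k)^2$ and $\tau \ll \bar\delta$, this yields $\mathbf{E}(L,k) \leq \bar\delta^2 \alpha(C_k)^2$. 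This bookkeeping of constants across the layer hierarchy is where care is needed, but it is purely combinatorial and introduces no new analytic content beyond Proposition \ref{p:lipschitzapprox} and Lemma \ref{l:layersubdivision}.
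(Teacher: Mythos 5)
Your proposal takes essentially the same route as the paper, which itself just says that the proof is the same as in \cite{de2023fineIII} with the boundary staying out of the way and the density bound in place. Rescaling each outer or central cube to unit scale, checking that $\mathbf{B}^h(L')$ is boundary-free for all ancestors, verifying the circular-projection degree and the mass bound, and then invoking Proposition \ref{p:lipschitzapprox} with the subcone $C_{k(L)}$ from the layer subdivision is precisely the intended argument, and your bookkeeping of the $L^2$ smallness across the layer hierarchy for central cubes is correct.

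There is, however, a genuine imprecision in how you establish item (1). You claim that for an outer cube $L$ the hypothesis of part (6) of Proposition \ref{p:lipschitzapprox} is satisfied ``since $\mathbf{E}(L,0)\le\tau^2\alpha(C)^2\ll\bar\delta^2\alpha(C)^2$.'' But part (6) is stated in terms of the \emph{strong} excess $\mathbb{E}(T,C,\cdot)$, not the $L^2$ excess $\mathbf{E}(L,0)$; its proof runs through a transport-plan comparison of $\|T\|$ and $\|C\|$ on the wedge $W_i$, and the $L^2$ excess being small does \emph{not} control the mass discrepancy. More importantly, the strong excess is not stable under passing to sub-balls (the paper makes exactly this point just before Lemma \ref{p:excessboundscales}), so the hypothesis of Assumption \ref{A:Lipschitzapproximation}, which bounds $\mathbb{E}(T,C,\bB_8)$ only at the top scale, does not directly yield $\mathbb{E}(T_{y_L,2^{-\ell(L)}},C,\bB_8)\le\delta^2\alpha(C)^2$ at the scale of a generic sub-cube. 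The way item (1) is actually obtained is by a propagation argument: part (6) of Proposition \ref{p:lipschitzapprox} is applied once at the top scale $L_0$ (where the global strong-excess hypothesis applies), and then, since the regions $R(L)$, $R(L')$ of adjacent outer cubes overlap and the local Lipschitz graphs over $\cup_i H_i$ agree with $T$ on the overlap, the constancy theorem forces $Q_{L,i}=Q_{L',i}$; connectedness of the adjacency graph of outer cubes propagates $Q_{L,i}=Q_i$ from $L_0$ to all of $\mathcal{G}^o$. A smaller quibble: ``slightly shrinking $\bar\delta$'' cannot bridge the discrepancy between the $(Q/2+1/2)$ threshold in the definition of an interior cube and the $(Q/2+1/4)$ mass bound demanded in Proposition \ref{p:lipschitzapprox}; that mismatch is a matter of matching constants in the two statements (or of using the boundary monotonicity formula to improve the bound for cubes one generation up), not of tuning $\bar\delta$.
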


\begin{definition}
Let $T, \Sigma, \Gamma, C$ be as in Proposition \ref{p:localapprox}. Fix $L \in \mathcal{G}^o$. We define
\begin{equation*}\overline{\mathbf{E}}(L):=\max \left\{\mathbf{E}\left(L^{\prime}\right): L^{\prime} \in \mathcal{G}^o \: \textup{with} \: R(L) \cap R\left(L^{\prime}\right) \neq \emptyset\right\}
\end{equation*}
\end{definition}

\begin{proposition}[Coherent outer approximation] Let $T, \Gamma, \Sigma, C$ be as in Assumption \ref{A:Lipschitzapproximation}. We define
\begin{equation*}
R_i^o:= \cup_{L \in \mathcal{G}^o}L_i \equiv H_i \cap \cup_{L \in \mathcal{G}^o} R(L).
\end{equation*}
Then, there exist Lipschitz multivalued maps $u_i: R_i^o \rightarrow A_{Q_i}(H_i^{\perp})$ and closed subsets $\overline{K_i}(L) \subset L_i$ satisfying the following properties:
\begin{enumerate}
\item $gr(u_i) \subset \Sigma$ and $T_{L,i} \res \mathbf{p}_{H_i}^{-1}(\overline{K}_i(L))=\mathbf{G}_{u_i} \res \mathbf{p}_{H_i}^{-1}(\overline{K}_i(L))$ for every $L \in \mathcal{G}^o$, where $T_{L,i}$ is defined as in the proposition above.
\item \begin{equation*}
2^{2l(L)}\left\|u_i\right\|_{L^\infty(L_i)}^2+2^{ml(L)}\left\|Du_i\right\|_{L^2(L_i)}^2 \lesssim \overline{\mathbf{E}}(L)+ 2^{-2l(L)}\mathbf{A}_{\Sigma}^2
\end{equation*}
\begin{equation*}
\left\|Du_i \right\|_{L^{\infty}(L_i)} \lesssim \left(\overline{\mathbf{E}}(L)+\mathbf{A}_{\Sigma}^2\right)^{\gamma}
\end{equation*}
\begin{equation*}
|L_i \setminus K_i(L)| + |\!|T_{L,i}|\!|(\mathbf{p}_{H_i}^{-1}(L_i \setminus K_i(L))\lesssim 2^{-ml(L)}\left(\overline{\mathbf{E}}(L)+2^{-2l(L)}\mathbf{A}_{\Sigma}^2\right)^{1+\gamma}
\end{equation*}
\end{enumerate}
\end{proposition}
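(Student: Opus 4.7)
The plan is to patch the local approximations $u_{L,i}$ produced by Proposition \ref{p:localapprox} into a single multivalued map on each half-plane $H_i$, exploiting the fact that on overlapping outer cubes both local maps parameterize the same restriction of $T$, and then apply a multivalued Lipschitz extension to cover the small bad set. This is the same scheme used in \cite{de2023fineIII}; here I only sketch the role of the ingredients specific to our setting.

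First I would verify compatibility on overlaps. Fix $i$ and two neighboring outer cubes $L,L'\in\mathcal{G}^o$ with $R(L)\cap R(L')\neq\varnothing$, so $|\ell(L)-\ell(L')|\le 1$ and $\mathbf{B}^h(L)\cup\mathbf{B}^h(L')$ stays at definite distance from $V$ and $\Gamma$. By property (4) of Proposition \ref{p:localapprox}, the currents $T_{L,i}$ and $T_{L',i}$ coincide on the intersection of their supports (both equal $T\res(\boldsymbol{\Omega}_i(L)\cap\boldsymbol{\Omega}_i(L')\cap\{\dist(\cdot,H_i)<\bar C\bar{\mathbf{E}}(L)^{1/2}2^{-\ell(L)}\})$), and both graphs represent this restriction over $\mathbf{p}_{H_i}^{-1}(K_i(L)\cap K_i(L'))$. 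Hence $u_{L,i}=u_{L',i}$ on $K_i(L)\cap K_i(L')$. This plays the role of a cocycle condition for our selection.

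Next, I would define the good set $\bar K_i(L):=L_i\cap K_i(L)$ for each $L\in\mathcal{G}^o$ and build a preliminary map $v_i$ by setting $v_i:=u_{L,i}$ on $\bar K_i(L)$; the compatibility just established guarantees $v_i$ is well defined on $\bigcup_L \bar K_i(L)$. Since each $u_{L,i}$ has Lipschitz constant $\lesssim(\bar{\mathbf{E}}(L)+2^{-2\ell(L)}\mathbf{A}_\Sigma^2)^\gamma$, the interior Lipschitz bounds transfer to $v_i$. For neighboring cubes at scales $2^{-\ell(L)}$ and $2^{-\ell(L')}$, two points $x\in \bar K_i(L)$, $y\in \bar K_i(L')$ with $|x-y|\lesssim 2^{-\ell(L)}$ can be compared via a third point in the (nonempty for small excess) common good set $K_i(L)\cap K_i(L')$, yielding a uniform Lipschitz bound on the union. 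This last step is the main technical point: one uses the rescaled $L^\infty$ bound $\|u_{L,i}\|_\infty\lesssim 2^{-\ell(L)}(\bar{\mathbf{E}}(L)+\ldots)^{1/2}$ on the two maps together with the measure estimate $|L_i\setminus K_i(L)|\lesssim 2^{-m\ell(L)}(\bar{\mathbf{E}}(L)+\ldots)^{1+\gamma}$ to guarantee that a genuine common good point is available, so that the $L^\infty$ closeness of $v_i|_{\bar K_i(L)}$ and $v_i|_{\bar K_i(L')}$ upgrades the local Lipschitz constants to a global one.

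Finally, I would apply the multivalued Lipschitz extension theorem of \cite{DS1} to extend $v_i$ from $\bigcup_L \bar K_i(L)$ to all of $R_i^o$, obtaining $u_i:R_i^o\to\mathcal{A}_{Q_i}(H_i^\perp)$ with the same Lipschitz constant up to a universal factor, $\mathrm{gr}(u_i)\subset\Sigma$ by a projection onto $\Sigma$ (allowed since the $L^\infty$ bound keeps us in the tubular neighborhood of $\Sigma$ where $\mathbf{A}_\Sigma$ controls the retraction error), and the identification $T_{L,i}\res\mathbf{p}_{H_i}^{-1}(\bar K_i(L))=\mathbf{G}_{u_i}\res\mathbf{p}_{H_i}^{-1}(\bar K_i(L))$ from the construction. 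The required estimates (ii) follow by combining: the pointwise $L^\infty$/Dirichlet estimate at scale $L$ from Proposition \ref{p:localapprox} (replacing $\mathbf{E}(L)$ with $\bar{\mathbf{E}}(L)$ because $v_i$ on $L_i$ is glued from local maps of all neighboring outer cubes), the Lipschitz extension preserving $L^\infty$ and Dirichlet norms up to constants, and summing $|L_i\setminus\bar K_i(L)|$ plus the mass estimate over the uniformly bounded number of neighboring cubes. The hardest quantitative step is the cross-cube Lipschitz bound described above; the remaining verifications are essentially book-keeping once the good-set compatibility is in hand.
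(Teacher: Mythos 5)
Your proposal correctly identifies the scheme the paper uses: patch the local Lipschitz approximations $u_{L,i}$ of Proposition~\ref{p:localapprox} along overlapping outer cubes, using the fact that on the common good set $K_i(L)\cap K_i(L')$ both maps parameterize the same restriction of $T$ (and the multiplicities agree since $Q_{L,i}=Q_i$ for all outer cubes), then extend to the bad set and project onto $\Sigma$. The paper itself defers to the identical construction in \cite{de2023fineIII}, noting only that the outer region avoids the boundary and carries the needed density bound; your sketch and the paper take essentially the same route.
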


\begin{proposition}[Blowup]\label{p:blowup}
Let $T, \Sigma, \Gamma, C$ satisfy Assumption \ref{A:Lipschitzapproximation}. Then, for every $\sigma, \varsigma>0$ there are constants $C=C(m,n,Q,\overline{\delta},\tau)>0$ and $\varepsilon=\varepsilon(m,n,Q,\overline{\delta},\tau,\sigma,\varsigma)>0$ such that the following holds:
\begin{enumerate}
\item $R \setminus B_{\sigma}(V) \subset R^o$ 
\item $u_i$, $R_i:=\left(R \setminus B_{\sigma}(V)\right) \cap H_i$ then 
\begin{equation*}
\int_{R_i}|Du_i|^2 \leq C \sigma^{-2}\mathbf{E}(T,C,\bB_4)+\mathbf{A}_{\Sigma}^2
\end{equation*}
 \item If additionally $\mathbf{A}_{\Sigma}^2 \leq \varepsilon^2 \mathbf{E}(T,C,\mathbf{B}_4)$, then there is a map $w_i: R_i \rightarrow A_{Q_i}(H_i^{\perp})$ which is Dir-minimizing and such that 
 \begin{equation*}
 d_{W^{1,2}}(v_i,w_i) \leq \varsigma
 \end{equation*}
where $d_{W^{1,2}}$ is the $W^{1,2}$ distance defined in \cite{DS1}.
\end{enumerate}
\end{proposition}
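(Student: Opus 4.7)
The plan is to handle the three parts in sequence: parts (1) and (2) reduce to the coherent outer Lipschitz approximation, while (3) is a standard blowup/compactness argument.

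\emph{For (1)}, I would first note that if $R(L)$ intersects $R\setminus B_\sigma(V)$ then $\operatorname{dist}(R(L),V)\lesssim 2^{-\ell(L)}$ forces $\ell(L)\le N(\sigma,m)$, so only finitely many cubes (and finitely many ancestors) are in play. For any such ancestor $L'$ the crude bound $\mathbf{E}(L',0)\lesssim 2^{(m+2)\ell(L')}\mathbf{E}(T,C,\bB_4)$ can be made smaller than $\tau^2\mathbf{s}(0)^2$ once $\varepsilon$ is sufficiently small (depending on $\sigma$). The hypothesis $\mathbf{A}_\Gamma\le \varepsilon$ forces $\Gamma$ to lie within an $\varepsilon$-neighborhood of $V$, and since each $\bB^h(L')$ is at distance $\gtrsim \sigma$ from $V$ it does not meet $\Gamma$; combined with the upper density hypothesis of Assumption \ref{A:Lipschitzapproximation} this shows every such $L'$ is interior. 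Hence $L$ is a $0$-type cube, i.e.\ $L\in\mathcal{G}^o$, giving $R\setminus B_\sigma(V)\subset R^o$.

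\emph{For (2)}, by (1) the region $R_i$ is covered by the sets $L_i$ for outer cubes $L$ with $2^{-\ell(L)}\gtrsim \sigma$. The coherent outer approximation gives $\|Du_i\|_{L^2(L_i)}^2\lesssim 2^{-m\ell(L)}(\overline{\mathbf{E}}(L)+2^{-2\ell(L)}\mathbf{A}_\Sigma^2)$. The balls $\bB^h(L)$ have bounded overlap, so $\sum_L 2^{-(m+2)\ell(L)}\overline{\mathbf{E}}(L)\lesssim \int_{\bB_4}\dist(\cdot,C)^2\,d\|T\|\lesssim \mathbf{E}(T,C,\bB_4)$. Factoring out $2^{2\ell(L)}\lesssim \sigma^{-2}$ and summing the mild $\mathbf{A}_\Sigma^2$ contribution yields the required $\int_{R_i}|Du_i|^2\le C\sigma^{-2}\mathbf{E}(T,C,\bB_4)+C\mathbf{A}_\Sigma^2$.

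\emph{For (3)}, I would argue by contradiction. Suppose the conclusion fails for some $\sigma,\varsigma>0$; pick a sequence $(T_k,C_k,\Sigma_k,\Gamma_k)$ satisfying Assumption \ref{A:Lipschitzapproximation} with $\mathbf{E}_k:=\mathbf{E}(T_k,C_k,\bB_4)\to 0$ and $\mathbf{A}_{\Sigma_k}^2/\mathbf{E}_k\to 0$, yet $d_{W^{1,2}}(v_i^k,w)>\varsigma$ for every $Q_i$-valued Dir-minimizer $w$ on $R_i$, where $v_i^k:=u_i^k/\sqrt{\mathbf{E}_k}$. Part (2) gives a uniform $W^{1,2}$ bound on $v_i^k$, so (up to subsequence) $C_k$ converges to a limit open book $C_\infty$ and $v_i^k\rightharpoonup w_i$ weakly in $W^{1,2}$ and strongly in $L^2$. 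The main obstacle is to show $w_i$ is Dir-minimizing, which I would prove by adapting the interior blowup argument of De Lellis--Spadaro: given a competitor $\tilde w_i$ agreeing with $w_i$ outside a compact subregion of $R_i$, construct a competitor current $\tilde T_k$ for $T_k$ by replacing $T_k$ on that subregion with the graph of $\sqrt{\mathbf{E}_k}\,\tilde w_i$ (projected onto $\Sigma_k$), patched to $T_k$ across a thin annular interpolation region whose cost is $o(\mathbf{E}_k)$. A second-order Taylor expansion of the area functional gives $\mathbf{M}(T_k)-Q|R_i|=\tfrac{1}{2}\mathbf{E}_k\int_{R_i}|Dv_i^k|^2+o(\mathbf{E}_k)$ and an analogous expansion for $\tilde T_k$; the Lipschitz approximation error bounds from Proposition \ref{p:localapprox} together with $\mathbf{A}_{\Sigma_k}^2/\mathbf{E}_k\to 0$ ensure the various remainders are $o(\mathbf{E}_k)$ uniformly in $k$, and the minimality of $T_k$ yields $\int|Dw_i|^2\le \int|D\tilde w_i|^2$ in the limit. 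Strong $W^{1,2}$ convergence of $v_i^k$ to $w_i$ then follows by testing the minimality against $w_i$ itself (giving equality of Dirichlet energies in the limit), which contradicts the standing assumption. The technical heart is the patching construction and the uniform control of the annular and metric-distortion errors.
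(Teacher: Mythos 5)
The paper does not reproduce a proof of this proposition: it cites \cite{de2023fineIII}, observing only that the outer regions avoid the boundary and satisfy the requisite density estimate. Your sketch follows the same route as that reference --- part (1) verifies the interior/outer cube conditions, part (2) packs the coherent-approximation energy bounds over Whitney cubes, part (3) runs the De Lellis--Spadaro harmonic approximation by a competitor-current comparison with an interpolation annulus --- and the main lines are correct. One point in (1) deserves more care: the density bound $\|T\|(\mathbf{B}(L))\leq (Q/2+1/2)\omega_m 2^{m(2-\ell(L))}$ required for $L$ to be an interior cube does not, for low-generation cubes (those with $2^{2-\ell(L)}\geq 1/8$, which are in play for any $\sigma<1/2$), follow directly from the upper-density hypothesis of Assumption \ref{A:Lipschitzapproximation}, since that hypothesis is restricted to radii $<1/8$ and centers on $\Gamma$; for the finitely many such cubes it must instead be derived from the smallness of $\mathbb{E}(T,C,\bB_8)$ (e.g.\ by a compactness argument), and your appeal to the density hypothesis alone papers over this step.
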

\end{subsection}\begin{subsection}{Technical estimates on measure theoretic excess}
In the following lemma, we use the definitions of the different type of cubes as \ref{d:cubestypes}, but do not require the global Assumption \ref{A:Lipschitzapproximation}. We still consider the definition of types of cubes as above with respect to the cone $C$ and their associated Lipschitz approximation. 
\begin{lemma}\label{l:reverseexcessbound}Let $T, \Gamma, \Sigma$ be as in Asumption \ref{A:general}, but instead within $\bB_4$. Assume that every cube that intersects $\bB_{2^{-10}}$ and their ancestors are outer cubes whose multiplicities of their local Lipschitz approximations agree with those of $C$. Then
\begin{equation}
\mathbb{E}(T,C,\bB_1) \leq  C(Q,m,n,\overline{n},\tau,\overline{\delta}) \left(\mathbf{E}(T,C,\bB_4) + \mathbf{A}_{\Sigma}^2+\mathbf{A}_{\Gamma}^2\right) 
\end{equation}
\end{lemma}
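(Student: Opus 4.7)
The plan is to construct an admissible decomposition and transport plan realizing the infimum in $d(\phi_1 d|T|, \phi_1 d|C|)$, using the coherent outer Lipschitz approximations that are available at every relevant dyadic scale by hypothesis. For every outer cube $L$ and every sheet $H_i$ of $C$ the coherent outer approximation proposition provides a Lipschitz multi-valued map $u_i: L_i \to \mathcal{A}_{Q_i}(H_i^\perp)$ whose graph coincides with $T$ over the good set $\overline{K}_i(L) \subset L_i$ and whose multiplicities already match those of $C$ (this is exactly the content of the matching-multiplicity hypothesis). Setting $G := \bigcup_{L, i} \mathbf{G}_{u_i}\res\mathbf{p}_{H_i}^{-1}(\overline{K}_i(L))$, I split $\phi_1 d|T| = \mu_1 + \mu_2$ and $\phi_1 d|C| = \nu_1 + \nu_2$, where $\mu_1$ (resp.\ $\nu_1$) is the restriction to $G$ (resp.\ to its base $\bigcup_{L,i} \overline{K}_i(L)$ in $C$), and $\mu_2, \nu_2$ are the remainders, consisting of a small bad set inside each cube plus a shrinking tube around $V$ (of vanishing measure in the limit).

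For the Wasserstein part, I transport each $Q_i$-valued graph point $(x, u_i(x))$ onto the cone base $Q_i\a{x}$ along the orthogonal direction. Up to a Jacobian correction of size $O(|Du_i|^2)$, the cost equals $\int_G \dist(y,C)^2 \, d|T|(y)$, so
\begin{equation*}
W_2^2(\mu_1, \nu_1) \leq C \int_{\bB_1} \dist(y, C)^2 \, d|T|(y) \leq C\, \mathbf{E}(T, C, \bB_4).
\end{equation*}
The deletion cost $\int \dist^2(x, V)\,(d|\mu_2| + d|\nu_2|)$ is estimated cube-by-cube. The coherent outer proposition bounds the $|T|$-mass (and the $|C|$-mass, since $|C|$ restricted to $H_i$ is just $Q_i \mathcal{H}^m\res H_i$) of the bad set in each cube by $C \cdot 2^{-m\ell(L)} \bigl(\overline{\mathbf{E}}(L) + 2^{-2\ell(L)} \mathbf{A}_\Sigma^2\bigr)^{1+\gamma}$, while $\dist(\cdot, V)^2 \lesssim 2^{-2\ell(L)}$ on $\mathbf{B}(L)$. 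Using the smallness $\overline{\mathbf{E}}(L) \leq \varepsilon$ to trade one factor of $(1+\gamma)$ for a spare $\varepsilon^\gamma$,
\begin{equation*}
\int \dist^2(x, V)\,(d|\mu_2| + d|\nu_2|) \lesssim \varepsilon^\gamma \sum_L 2^{-(m+2)\ell(L)} \bigl(\overline{\mathbf{E}}(L) + 2^{-2\ell(L)} \mathbf{A}_\Sigma^2\bigr) \lesssim \mathbf{E}(T, C, \bB_4) + \mathbf{A}_\Sigma^2,
\end{equation*}
where the first dyadic sum collapses to $\int_{\bB_4} \dist^2(\cdot, C)\, d|T|$ by the bounded overlap of the enlarged balls $\mathbf{B}(L)$, and the second is a convergent geometric series. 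The $\mathbf{A}_\Gamma^2$ term enters from the boundary-straightening error between $\Gamma$ and $V$ in a $\mathbf{A}_\Gamma \cdot 2^{-\ell(L)}$-tube around $\Gamma$ at each scale, which is handled analogously.

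Combining the two estimates yields $\mathbb{E}(T, C, \bB_1) \lesssim \mathbf{E}(T, C, \bB_4) + \mathbf{A}_\Sigma^2 + \mathbf{A}_\Gamma^2$, as required. The main obstacle is the dyadic bookkeeping: one needs the bad-set sums to close with a spare $\varepsilon^\gamma$ so that they are lower order relative to the bare $L^2$-excess, and one must ensure the $Q_i$-multiplicity is respected both in the graph transport (matching the $Q_i$ sheets of $u_i$ to $Q_i\a{x}$) and in the mass comparison between $|T|$ and $|C|$ on each good set --- which is precisely what the matching-multiplicity hypothesis on the outer cubes guarantees.
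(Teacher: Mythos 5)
Your proposal takes a genuinely different route from the paper's. The paper localizes via a partition of unity $\{\varphi_L\}$ adapted to the Whitney cubes (exploiting the subadditivity $d(\phi_1|T|,\phi_1|C|)\leq\sum_L d(\varphi_L|T|,\varphi_L|C|)$) and then splits the sum into three contributions: $(\mathrm{I})$ over inner cubes handled via the stopping condition ($\ell(L)^{m+2}\lesssim 2^{-(m+2)\ell(L)}\mathbf{E}(L)$), $(\mathrm{II})$ over $k$-type cubes compared to the layered cone $C_L$ of Remark~\ref{rmk:multiplicities}, and $(\mathrm{III})$ accounting for the mismatch $d(\varphi_L C_L,\varphi_L C)$ via the Layer Subdivision Lemma~\ref{l:layersubdivision}. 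The paper's argument is thus written to cover the general case of central and inner cubes. You instead construct one explicit global transport plan from the coherent outer approximation and estimate the Wasserstein and deletion costs directly. This is cleaner precisely because the hypothesis (all relevant cubes are outer with matching multiplicities) makes the layer subdivision and the $(\mathrm{III})$ cone-mismatch term vacuous, so your simplification is legitimate and buys brevity.

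Two points deserve to be made explicit, as they carry the real content. First, your claim $W_2^2(\mu_1,\nu_1)\lesssim\int\dist(y,C)^2 d|T|$ requires the identity $\dist(y,C)=\dist(y,H_i)$ on the support of each $T_i$; this is not automatic (a priori $\dist(y,C)\leq\dist(y,H_i)$), and holds precisely because the outer-cube hypothesis gives the $L^2$-$L^\infty$ separation $\operatorname{spt}(T)\cap\lambda R(L)\subset\cup_i\boldsymbol{\Omega}_i(L)$ from Proposition~\ref{p:localapprox}(3). You fold this silently into "a Jacobian correction of size $O(|Du_i|^2)$," but the Jacobian factor and the distance-equality are logically distinct and both are needed. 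Second, trading a factor $(\overline{\mathbf{E}}(L)+2^{-2\ell(L)}\mathbf{A}_\Sigma^2)^{\gamma}$ for a uniform $\varepsilon^\gamma$ requires the smallness $\overline{\mathbf{E}}(L)\lesssim\tau^2\mathbf{s}(0)^2$ coming from the outer-cube stopping condition; the lemma as stated only assumes \ref{A:general}, so this link must be invoked rather than assumed. With those two points spelled out, your argument closes.
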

\begin{proof}
Let $\varphi_{L}$ be a partition of unity such that $\varphi_{L}$ is supported in $\mathbf{B}^h(L)$.
We can split into cubes $L$
\begin{equation*}
    \mathbb{E}(T,C,\bB_1) \leq \sum_{L \textup{ interior}} d(\varphi_{L} |T|,\varphi_{L} C) +C\mathbf{A}_{\Gamma}^2.
\end{equation*}

In the boundary term we are also including the cubes such that the density 

Now we further split by estimating

\begin{equation}\label{eq:reverseexcessmain}
\sum_{L \; \textup{interior}}d(\varphi_{L} |T|,\varphi_{L} C) \leq (\mathrm{I})+(\mathrm{II}) +(\mathrm{III})
\end{equation}with
\begin{align*}
&(\mathrm{I})=\sum_{L \textup{ inner  cube}} C \ell(L)^{m+2} , \; (\mathrm{II})=\sum_{k}\sum_{L \; k-\textup{type}}d(\varphi_{L} |T|,\varphi_{L} C_L) \; \\&(\mathrm{III})=\sum_{k}\sum_{L \; k-\textup{type}} d(\varphi_{L}C_{L},\varphi_{L}C).
\end{align*}
In the above equations we define the cone $C_{L}$ as the multiplicity arrangement of $C$ given by constructing the local Lipschitz approximations over $C$.

\textbf{We consider the term $(\mathrm{I})$:}

The bound for $\mathrm(I)$ is an immediate consequence of the definition. Indeed for any interior cube $L$ we have 
\begin{equation*}
l(L)^{m+2} \leq C 2^{-(m+2)\ell(L)} \mathbf{E}(L),
\end{equation*}
which implies 
\begin{equation*}
(\mathrm{I}) \leq C \mathbf{E}(T,C,\bB_4).
\end{equation*}

    \textbf{We consider the term $(\mathrm{II}):$}
We obtain, in a way similar to Remark \ref{rmk:strongexcessgraphicalbound},
for $L$ of $k$-type the following inequality
\begin{equation}\label{eq:reverseexcess2}
    d(\varphi_{L} |T|,\varphi_{L}|C_{L}|) \leq C 2^{-(m+2)\ell(L)}\left(\mathbf{E}(L)+2^{-2\ell(L)}\mathbf{A}_{\Sigma}^2\right).
\end{equation}
We can conclude 
\begin{equation*}
(\mathrm{II}) \leq C\mathbf{E}(T,C,\bB_4) +C \mathbf{A}_{\Sigma}^2.
\end{equation*}
 \textbf{We consider the term $(\mathrm{III}):$}
By definition of $k$-type, every ancestor to $L$ must have a bound on the excess $\mathbf{E}(L',0)/s(k)^2 \leq c(k)\tau^2$ and thus the $L^2-L^{\infty}$ height bound is valid with respect to $C_k$ for $L$ and every ancestor of $L$. This coupled with the fact that the regions over which the Lipschitz approximations for $L$ and it's father overlap, implies that they can all be modelled over the cone $C_k$ with the same multiplicities given by
\begin{equation*}
Q_i^k=\sum_{j \in J(i)}Q_{L,j}
\end{equation*}
where 
\begin{equation*}
J(i):=\left\{j \in I(0):  \langle \left(H_j, H_i\right)= \min_{l \in I(k)} \langle \left(H_j, H_l\right) \right\}.
\end{equation*}

Thus, we can bound the distance of the cones by the angle between them
\begin{equation*}
d(\varphi_{L}C_{L},\varphi_{L}C) \leq C \mathbf{s}(k)^2\ell(L)^{m+2} 
\end{equation*}
for any cube of $k$-type. We wish to bound then the contribution of all the cubes of $k$-type.

 We call first $k$-type a central cube which doesn't have an ancestor of $k$-type.

It is easy to see that \begin{equation*}
\sum_{L\, k-\textup{type}}\mathbf{s}(k)^2 \ell(L)^{m+2} \leq \sum_{L\; \textup{ first } k-\textup{type}}\mathbf{s}(k)^2 \ell(L)^{m+2}.
\end{equation*}
This is because the sum of $\ell(L)^{m+2}$ of the descendants of a fixed cube $R$ is comparable to $\ell(R)^{m+2}$. 

Moreover for a first $k$-type cube $L$, by definition of the layer subdivision and being the first of $k$-type (and thus having a father which has large excess relative to $C_{k-1}$) implies
\begin{equation*}
\mathbf{s}(k)^2 \ell(L)^{m+2} \leq C \mathbf{s}(k-1)^2 \ell(L)^{m+2} \leq C 2^{-(m+2)\ell(L)}\mathbf{E}(L).
\end{equation*}

Now by packing the last estimate over all cubes, we get that 
\begin{equation*}
\sum_{L\; \textup{ first } k-\textup{type}}\mathbf{s}(k)^2 \ell(L)^{m+2} \leq C \mathbf{E}(T,C,\bB_4)
\end{equation*}
and thus $(\mathrm{II}) \leq C \mathbf{E}(T,C,\bB_4)$.
The proof of the remaining estimate of $(\mathrm{III})$ follows analogously to the one of $(\mathrm{II})$ by reducing the bound to summing over first $k$-type cubes and using the stopping condition over those. 
\end{proof}
\begin{lemma}\label{l:excessboundscales}
Let $T, \Gamma, \Sigma, C$ be as in Assumption \ref{A:Lipschitzapproximation} for the ball $\bB_1.$ Let $\eta \in (0,1/2)$ and assume that $\varepsilon$ is sufficiently small with respect to $\eta$. For every
$\rho \in [\eta,1/2]$:
 \begin{equation*}
\mathbb{E}(T,C,\bB_{\rho}) \leq C(Q,m,n,\overline{n},\eta) \left(\mathbb{E}(T,C,\bB_1) +\mathbf{A}_{\Sigma}^2+\mathbf{A}_{\Gamma}^2\right).
\end{equation*}

Moreover for every $p \in \bB_{1/4}(0)$ and $\rho \in [\eta,1/2]$:
 \begin{equation*}
\mathbb{E}(T,C_p,\bB_{\rho}(p)) \leq C(Q,m,n,\overline{n},\eta)\left(\mathbb{E}(T,C,\bB_1) +\mathbf{A}_{\Sigma}^2+\mathbf{A}_{\Gamma}^2\right).
\end{equation*}
The open book $C_p$ is obtained by a suitable translation and tilting so that the spine of $C_p$ is tangent to $\Gamma$ at $p$ and $C_p \subset T_p(\Sigma)$.
\end{lemma}

\begin{proof}
The first part of the statement is a consequence of the previous lemma when $\rho \in [\eta,1/4]$. In order to get it to go to $[\eta,1/2]$ we do need to take the constant of Assumption \ref{A:Lipschitzapproximation} smaller (and to construct the Lipschitz approximations in regions that get closer to the boundary of the ball).

The second part follows analogously as in Lemma \ref{l:reverseexcessbound} with some minor technical modifications. When bounding excess relative to a ball centered at $p \neq 0$, the bump function used in the definition of the excess is taken to be centered at $p$ so this requires us to take this into consideration when estimating the transport plan. 
\end{proof}
\end{subsection}
\end{section}
\begin{section}{Simon's non-concentration estimates}
In this section we prove Simon's non-concentration estimates - the key technical machinery we require in this work. 

Theorem 11.2 of \cite{de2023fineIII} goes back to Leon Simon's work on the uniqueness of cylindrical tangent cones. The bound on the error term of the monotonicity formula implies, with delicate technical work, the Simon non-concentration inequality. In our setting, we will also have an error term coming from the fact that we have a boundary. This technique has also been used in \cite{krummel2023analysisI} within the same setting. We will follow \cite{de2023fineIII} taking care of the additional error terms.
Throughout this section let $r=\frac{1}{3\sqrt{m-1}}$. 

\begin{theorem}[Simon's Estimates]\label{t:Simon'sEstimates}
Let $T, \Gamma, \Sigma$ and $C$ as in assumption \ref{A:Lipschitzapproximation} for $\varepsilon$ small enough. For any $r \leq \frac{1}{3\sqrt{m-1}}$ we have
\begin{equation}
\int_{\bB_r} \frac{|q^{\perp}|^2}{|q|^{m+2}} d|\!|T|\!|\leq C(Q,m,n,\overline{n}) \left(\mathbf{E}(T,C,\bB_4)  + \mathbf{A}_{\Gamma}+
\mathbf{A}_{\Sigma}^2\right)
\end{equation}
here $q^{\perp}:=q-\p_{\overrightarrow{T}}(q)$ at $\HH^m$ a.e. $q \in \textup{spt}(T)$ ($\p_{\overrightarrow{T}}$ is the orthogonal projection to the span of $\overrightarrow{T}(q)$).  We also have 
\begin{equation}
\int_{\bB_r}|\mathbf{p}_{V} \circ \mathbf{p}_{T}^{\perp}|^2 d|\!|T|\!| \leq C(Q,m,n,\overline{n}) \left(\mathbf{E}(T,C,\bB_4)  + \mathbf{A}_{\Gamma}+
\mathbf{A}_{\Sigma}^2\right).
\end{equation}
\end{theorem}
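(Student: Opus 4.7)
My plan is to derive both estimates from the monotonicity formula with error terms (Corollary~\ref{c:monotonerror}) applied at suitable centers, combined with the mass-deficit control coming from the Lipschitz approximation of Section~6.

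For the first inequality, I would apply Corollary~\ref{c:monotonerror} at $p=0\in\Gamma$ and send $s\to 0^+$ to obtain
\begin{equation*}
\int_{\bB_r}\frac{|q^\perp|^2}{|q|^{m+2}}\,d\|T\|(q) = r^{-m}\|T\|(\bB_r) - \omega_m\,\Theta(T,0) - R(r),
\end{equation*}
where $R(r)$ collects the two error integrals. Since $T\subset\Sigma$, one has the Taylor bound $|(x-p)\cdot \overrightarrow{H}_T|\lesssim \mathbf A_\Sigma^2|x-p|^2$, yielding an $\mathbf A_\Sigma^2\,r^2$ contribution; the boundary term satisfies $|(x-p)\cdot \overrightarrow{\eta}|\lesssim \mathbf A_\Gamma |x-p|^2$ for $x,p\in\Gamma$, producing an $\mathbf A_\Gamma\,r$ contribution. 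Hence $|R(r)|\lesssim \mathbf A_\Sigma^2+\mathbf A_\Gamma$ for $r\le 1$. By the classification Theorem~\ref{t:ClassificationTgtConesNConvex}, $\Theta(T,0)\geq Q/2$, while $\|C\|(\bB_r)=\tfrac{Q}{2}\omega_m r^m$ exactly for any open book $C$ with spine $V$, so the problem reduces to proving the mass-deficit estimate
\begin{equation*}
r^{-m}\bigl(\|T\|(\bB_r)-\|C\|(\bB_r)\bigr)\lesssim \mathbf E(T,C,\bB_4)+\mathbf A_\Sigma^2+\mathbf A_\Gamma.
\end{equation*}
I would establish this through the Lipschitz approximation of Proposition~\ref{p:localapprox} on $\bB_r\setminus B_\sigma(V)$, the standard area Taylor expansion $\mathbf M(\mathbf G_u)-Q|\Omega|\lesssim\int|Du|^2\lesssim\mathbf E(T,C,\bB_4)$, and a crude bound $\|T\|(B_\sigma(V)\cap\bB_r)\lesssim \sigma\,\omega_m r^{m-1}$ coming from the upper-density assumption in Assumption~\ref{A:Lipschitzapproximation}; optimizing in $\sigma$ delivers the first inequality.

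For the second inequality, I would repeat the monotonicity argument at arbitrary centers $y\in\Gamma\cap\bB_{r/8}$, for which exactly the same reasoning gives uniformly
\begin{equation*}
\int_{\bB_{r/2}(y)}\frac{|(q-y)^\perp|^2}{|q-y|^{m+2}}\,d\|T\|(q)\lesssim\mathbf E(T,C,\bB_4)+\mathbf A_\Sigma^2+\mathbf A_\Gamma,
\end{equation*}
since $\Theta(T,y)\geq Q/2$ and the mass deficit at $y$ is controlled as above up to an $O(\mathbf A_\Gamma|y|)$ defect. The key algebraic identity, obtained from $\mathbf{p}_{\overrightarrow{T}(q)}(q-y)=\mathbf{p}_{\overrightarrow{T}(q)}(q)-\mathbf{p}_{\overrightarrow{T}(q)}(y)$, is
\begin{equation*}
|(q-y)^\perp|^2=|q^\perp|^2-2\,q^\perp\!\cdot y+|\mathbf{p}_{\overrightarrow{T}(q)}^\perp(y)|^2.
\end{equation*}
I would integrate $y$ along $m-1$ independent arclength-parametrized curves $\gamma_i(t)\subset\Gamma$, $t\in[-\delta,\delta]$, whose tangents at $0$ furnish an orthonormal basis $v_1,\ldots,v_{m-1}$ of $V$. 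Averaging against the symmetric weight $t^2\,dt$ annihilates the linear cross term by odd symmetry and isolates the quadratic term $|\mathbf{p}_{\overrightarrow{T}(q)}^\perp(\gamma_i'(0))|^2$ with a positive multiplicative constant. Summing over $i$ and restricting $q$ to the annulus $\bB_r\setminus\bB_{r/2}$ (where $|q-y|\sim|q|$) extracts $\sum_i|\mathbf{p}_{\overrightarrow{T}(q)}^\perp(v_i)|^2 = |\mathbf{p}_V\circ\mathbf{p}_{\overrightarrow{T}}^\perp(q)|^2$ weighted by $|q|^{-(m+2)}$; a dyadic summation over annuli $\bB_{2^{-j}r}\setminus\bB_{2^{-j-1}r}$, together with the first inequality to absorb the remaining $|q^\perp|^2$ contribution, upgrades the annular bound to the full ball.

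The principal obstacle is tracking the linear $\mathbf A_\Gamma$ error, which is genuinely new relative to the purely interior Simon estimate of \cite{de2023fineIII}: both the boundary contribution to the monotonicity remainder and the second-order drift of $T_y\Gamma$ from $V$ (which appears when identifying $\gamma_i'(0)\in T_0\Gamma$ with a basis vector of $V$) enter linearly in $\mathbf A_\Gamma$, and must be isolated so that they appear additively alongside $\mathbf E(T,C,\bB_4)+\mathbf A_\Sigma^2$ rather than multiplying the $|q^\perp|^2$-type quantities we are trying to bound. Up to this bookkeeping, the scheme is a direct adaptation of the interior Simon/DMS technique of \cite{de2023fineIII,krummel2023analysisI} to the higher-multiplicity boundary setting.
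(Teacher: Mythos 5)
The scaffolding of your argument — reduce the left-hand side to a mass-deficit bound via the monotonicity formula with error terms, and obtain the second inequality by varying the center $y$ along $\Gamma$ and exploiting the cancellation of the cross term in $|(q-y)^\perp|^2=|q^\perp|^2-2q^\perp\cdot y+|\mathbf{p}_{\overrightarrow{T}(q)}^\perp(y)|^2$ — is genuinely the right shape, and the error estimates $|(x-p)\cdot\overrightarrow H_T|\lesssim\mathbf A_\Sigma^2|x-p|^2$ and $|(x-p)\cdot\overrightarrow\eta|\lesssim\mathbf A_\Gamma|x-p|^2$ are correct and match what the paper does. The gap is in the mass-deficit step, and it is not a bookkeeping issue: the estimate
\begin{equation*}
r^{-m}\bigl(\|T\|(\bB_r)-\tfrac{Q}{2}\omega_m r^m\bigr)\lesssim\mathbf E(T,C,\bB_4)+\mathbf A_\Sigma^2+\mathbf A_\Gamma
\end{equation*}
cannot be obtained by combining the Lipschitz approximation on $\bB_r\setminus B_\sigma(V)$ with a crude $O(\sigma)$ bound near the spine and then optimizing in $\sigma$. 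The two contributions you have are the near-spine mass, which is $O(\sigma)$ and has no reason to beat $\sigma$, and the graphical contribution $\int|Du|^2$, which Proposition~\ref{p:blowup} bounds by $\sigma^{-2}\mathbf E(T,C,\bB_4)+\mathbf A_\Sigma^2$ on the $\sigma$-truncated region; optimizing $\sigma^{-2}\mathbf E+\sigma$ yields $\mathbf E^{1/3}$, not $\mathbf E$. Even if you pack Dirichlet energy over the outer Whitney cubes (giving a $\sigma$-independent $O(\mathbf E)$ for the outer region), the inner and central cubes contribute to the mass deficit at a scale comparable to where the stopping occurs, which again is only $O(\mathbf E^{1/2})$, not $O(\mathbf E)$. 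A sharp \emph{linear}-in-excess control of the Dirichlet energy all the way to the spine is precisely the non-concentration phenomenon being proved — so a direct mass-deficit argument of this form is circular.

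The ingredient you are missing, which is the actual crux of Simon's technique and of the paper's proof, is the first-variation argument against the radial test vector field $X(q)=\chi(|q|)\,\mathbf{p}_V^{\perp}(q)$, where $\chi$ is a fixed smooth cutoff with $\mathbbm 1_{[0,r]}\le\chi\le\mathbbm 1_{[0,2r)}$ (the paper smooths the monotonicity identity with $\chi^2$ rather than taking the sharp ball exactly to enable this). Computing $\mathrm{div}_{\overrightarrow T}X$ and using $\delta T(X)=\mathbf O(\mathbf A_\Sigma^2+\mathbf A_\Gamma)$ expresses the smoothed mass deficit $\int\chi^2\,d\|T\|-\int\chi^2\,d\|C\|$ in terms of the \emph{unweighted} integral $\int_{B_{2r}}|x^\perp|^2\,d\|T\|$, the sign-definite term $\int\chi(|q|)\,x\cdot\nabla_{V^\perp}\chi(|q|)\,d\|C\|$, and $\mathbf O(\mathbf A_\Sigma^2+\mathbf A_\Gamma)$. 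This replaces the uncontrollable near-spine deficit with quantities that the Whitney region decomposition and the Lipschitz approximation \emph{do} control linearly, and the iteration from there is exactly as in \cite{de2023fineIII}; the only new terms in this boundary setting are the $\mathbf A_\Gamma$ boundary errors, which the paper bounds by $\int_{\Gamma\cap\bB_\rho}|q\cdot\overrightarrow\eta|\,d\HH^{m-1}\lesssim\rho^{m+1}\mathbf A_\Gamma$. In particular the paper does not derive the first inequality as a corollary of a standalone mass-deficit lemma; the mass-deficit bound is obtained simultaneously with (and only up to) the quantities $\int|x^\perp|^2$ that make the circle close. You should incorporate this first-variation step; with it, the rest of your outline (including the averaging over $\Gamma$ at varying centers for the second inequality) is sound.
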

\begin{corollary}\label{cor:nonconcentration}
Let $T, \Gamma, \Sigma$ and $C$ as in Assumption \ref{A:Lipschitzapproximation} for $\varepsilon$ small enough. Then for all $\kappa \in (0,m+2)$,
\begin{equation}
\int_{\bB_r}\frac{\dist(q,C)^2}{|q|^{m+2-\kappa}} \leq C_{\kappa} \left(\mathbf{E}(T,\bB_4,C)+\mathbf{A}_{\Gamma}+\mathbf{A}_{\Sigma}^2\right)
\end{equation}
\end{corollary}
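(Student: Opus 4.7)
Plan: I would derive Corollary \ref{cor:nonconcentration} directly from Theorem \ref{t:Simon'sEstimates} by applying the first variation formula to a suitable cutoff of the vector field $X(q) := f(q)\,q$, where $f(q) := \dist(q,C)^2/|q|^{m+2-\kappa}$. The crucial leverage is that $C$ is a cone, so $\dist(\cdot,C)$ is $1$-homogeneous, which allows Euler's identity to generate the target integral from a first-variation computation.

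By Euler, $\nabla\dist(q,C)\cdot q = \dist(q,C)$, whence $\nabla f\cdot q = (\kappa-m)f$ and therefore
\begin{equation*}
\mathrm{div}_T(f q) \,=\, mf + \nabla_T f\cdot q \,=\, \kappa f - \nabla f\cdot q^{\perp}.
\end{equation*}
Applying the first variation identity from Theorem \ref{t:monotonicityformula} (with a radial cutoff $\zeta_\varepsilon(|q|)$ vanishing near $|q|=0$ and near $|q|=r$, and sending $\varepsilon\to 0$) yields
\begin{equation*}
\kappa\!\int_{\bB_r}\! f\,d|T| = \int\!\nabla f\cdot q^{\perp}\,d|T| - \int\! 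X\cdot\vec H_T\,d|T| + Q\!\int_{\Gamma}\! X\cdot\vec\eta\,d\mathcal{H}^{m-1} + \mathrm{(cutoff\ terms)}.
\end{equation*}

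Denote the target integral by $J$ and set $E := \mathbf{E}(T,\bB_4,C)+\mathbf{A}_\Gamma+\mathbf{A}_\Sigma^2$. Cauchy--Schwarz applied to the first term, using $|\nabla f|\lesssim \dist(q,C)/|q|^{m+2-\kappa}$, gives
\begin{equation*}
\Bigl|\int \nabla f\cdot q^{\perp}\,d|T|\Bigr| \le \tfrac{\kappa}{2}J + C_\kappa\int_{\bB_r}\frac{|q^{\perp}|^2}{|q|^{m+2-\kappa}}\,d|T| \le \tfrac{\kappa}{2}J + C_\kappa E,
\end{equation*}
where in the last step we bounded $|q|^\kappa\le r^\kappa\lesssim 1$ and invoked Theorem \ref{t:Simon'sEstimates}. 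The mean curvature term is controlled by $C\mathbf{A}_\Sigma\, r\,J$, absorbable on the left under Assumption \ref{A:Lipschitzapproximation} since $\mathbf{A}_\Sigma$ can be made as small as needed. The boundary term uses that the tangency $T_0\Gamma = V \subset C$ together with $\mathbf{A}_\Gamma\le 1$ gives $\dist(q,C)\lesssim \mathbf{A}_\Gamma |q|^2$ for $q\in\Gamma$; combined with the $(m-1)$-dimensionality of $\Gamma$ this yields a contribution of at most $C\mathbf{A}_\Gamma^2\le C E$. Rearranging gives $J \le C_\kappa E$, the desired estimate.

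The main obstacle is the rigorous justification of the first variation with the singular weight $f$ (which is only Lipschitz and singular at the origin), and the control of the cutoff terms. The inner cutoff contribution is bounded by $C\varepsilon^\kappa\to 0$ as $\varepsilon\to 0$, using the mass bound $|T|(\bB_\varepsilon)\lesssim \varepsilon^m$ from Corollary \ref{c:monotonerror} and $|X|\lesssim |q|^{\kappa-m+1}$; the outer cutoff contribution is bounded by $C r^\kappa \mathbf{E}(T,C,\bB_r)\le C E$, of the same form as the right hand side. The lack of smoothness of $\dist(\cdot,C)$ on the spine and on the equidistant locus between sheets is handled by standard smoothing, exploiting that the singular set is $\HH^m$-null.
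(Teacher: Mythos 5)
Your proposal is correct and follows essentially the same path as the paper, which derives Corollary~\ref{cor:nonconcentration} by combining Theorem~\ref{t:Simon'sEstimates} with Lemma~\ref{l:firstvariationestimate}; your argument is in effect a self-contained re-derivation of that lemma (the paper uses the truncated weight $\dist^2(q,C)\,(\max(r,|q|)^{-m-2+\kappa}-1)_+\,q$ while you use $\dist^2(q,C)\,|q|^{-(m+2-\kappa)}\,q$ with an explicit cutoff, and the boundary term is bounded via $\dist(q,C)\lesssim\mathbf{A}_\Gamma|q|^2$ on $\Gamma$ rather than the paper's use of $\vec\eta\perp T_q\Gamma$, but both yield a contribution $\lesssim\mathbf{A}_\Gamma$), after which Young's inequality and Theorem~\ref{t:Simon'sEstimates} close the estimate exactly as you describe. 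The only minor remark is that your absorption of the mean-curvature term forces the threshold $\varepsilon$ in Assumption~\ref{A:Lipschitzapproximation} to depend on $\kappa$; one can instead apply Young's inequality to move a term $C_\kappa\mathbf{A}_\Sigma^2$ to the right-hand side, matching the form of Lemma~\ref{l:firstvariationestimate}, though this does not affect the final statement.
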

The proof of Corollary \ref{cor:nonconcentration} is an immediate consequence of Theorem \ref{t:Simon'sEstimates} and the following Lemma \ref{l:firstvariationestimate}.
\begin{lemma}\label{l:firstvariationestimate}
Let $T, \Gamma, \Sigma$ and $C$ as in assumption \ref{A:Lipschitzapproximation} for $\varepsilon$ small enough. Then for every $\kappa>0$ we have 
\begin{equation}
\int_{\bB_1} \frac{\dist(q,C)^2}{|q|^{m+2-\kappa}}d|\!|T|\!|(q) \leq C_{\kappa} \int_{\bB_1} \frac{|q^{\perp}|^2}{|q|^{m+2}}d|\!|T|\!|(q)+ C_{\kappa} \mathbf{E}(T,C,\bB_1)+\mathbf{A}_{\Gamma}+\mathbf{A}_{\Sigma}^2.
\end{equation}
\end{lemma}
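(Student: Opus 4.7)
The plan is to test the first variation formula of Theorem \ref{t:monotonicityformula} against a vector field of the form
\begin{equation*}
X(q) := \phi(|q|)\,\rho(q)\, q, \qquad \rho(q) := \dist(q,C)^2,
\end{equation*}
where $\phi(r) := \chi(r)\, r^{-(m+2-\kappa)}$ and $\chi$ is a smooth cutoff equal to $1$ on $[0,1/2]$ and vanishing near $r=1$. The key structural facts about $\rho$ are that, since $C$ is a cone through the origin, $\rho$ is Lipschitz and $2$-homogeneous, and away from the equidistant locus of the half-planes of $C$ one has $\nabla\rho(q)\cdot q = 2\rho(q)$ and $|\nabla\rho(q)| \le 2\sqrt{\rho(q)}$. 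Non-smoothness of $\rho$ on the equidistant set I plan to handle either by approximating $\rho$ by a smooth $2$-homogeneous $\rho_\varepsilon$ and passing to a limit, or by using that Theorem \ref{t:monotonicityformula} continues to apply with Lipschitz test fields.

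The main computation is the tangential divergence. Using $\operatorname{div}_T q = m$ and decomposing $q = q^T + q^\perp$, one obtains at $\|T\|$-a.e.\ $q$
\begin{equation*}
\operatorname{div}_T X \;=\; \bigl[(m+2)\phi(|q|)+|q|\phi'(|q|)\bigr]\,\rho(q) \;-\; 2\phi(|q|)\, q^\perp\!\cdot\nabla\rho(q) \;-\; \phi'(|q|)\,\rho(q)\,\tfrac{|q^\perp|^2}{|q|}.
\end{equation*}
With the choice $\phi(r) = r^{-(m+2-\kappa)}$ (on the set $|q|\le 1/2$) the bracket collapses to $\kappa\,\phi(|q|)$, producing the desired main term $\kappa\,\phi(|q|)\rho(q)$. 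The contribution of $\chi$ on the annulus $\{1/2 \le |q| \le 1\}$ is bounded by $C\,\mathbf{E}(T,C,\bB_1)$, which accounts for the second term on the right-hand side of the claimed inequality.

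Then I plug this into
\begin{equation*}
\int \operatorname{div}_T X \,d\|T\| \;=\; -\int X\cdot \vec H_T\,d\|T\| \;+\; Q\!\int_{\Gamma} X\cdot \vec\eta\,d\HH^{m-1},
\end{equation*}
and bound each error term:
\emph{(a)} The error $\phi|q^\perp||\nabla\rho|\le 2\phi|q^\perp|\sqrt{\rho}$ is split by Cauchy--Schwarz as $\tfrac{\kappa}{4}\phi\rho + C_\kappa\,\phi|q^\perp|^2$; the first term is absorbed in the left-hand side, and $\int \phi|q^\perp|^2 d\|T\|$ is dominated by $\int |q^\perp|^2/|q|^{m+2} d\|T\|$ (up to a cutoff error again controlled by $\mathbf{E}(T,C,\bB_1)$).
\emph{(b)} The term $\phi'\rho|q^\perp|^2/|q|$ is controlled analogously and is in fact lower order since $\rho$ is small on the region where the non-concentration estimate is non-trivial.
\emph{(c)} For the mean curvature term, $|X\cdot\vec H_T|\le \mathbf{A}_\Sigma\,\phi(|q|)\rho(q)|q|$; Cauchy--Schwarz and $|q|\le 1$ yield $\mathbf{A}_\Sigma\int \phi\rho\,d\|T\| \le \tfrac{\kappa}{4}\int\phi\rho\,d\|T\| + C_\kappa\,\mathbf{A}_\Sigma^2$, with the first absorbed.
\emph{(d)} For the boundary term, since $\Gamma$ is tangent to the spine $V$ of $C$ at the origin with $\mathbf{A}_\Gamma$ small, one has $\dist(q,C)\le \dist(q,V)\le C\,\mathbf{A}_\Gamma|q|^2$ for $q\in\Gamma\cap\bB_1$; hence
$|X|\le C\,\mathbf{A}_\Gamma^2|q|^{5-(m+2-\kappa)}$ on $\Gamma$, and integration in the $(m-1)$-dimensional surface measure gives a term bounded by $C\,\mathbf{A}_\Gamma^2 \le C\,\mathbf{A}_\Gamma$.

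The main obstacle I expect is \emph{(i)} the rigorous justification of the divergence identity at the non-smooth locus of $\rho$ --- which I will resolve by regularization, since the identity is algebraic and stable under smooth approximation of the distance function; and \emph{(ii)} the careful bookkeeping of the Cauchy--Schwarz absorption so that the final constant depends only on $\kappa$ (and becomes singular as $\kappa \to 0$, matching the factor $\kappa^{-1}$ coming from the divergence computation).
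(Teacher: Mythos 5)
Your proposal is correct and is essentially the same argument as the paper's, which simply cites the interior computation from de Lellis--Minter--Skorobogatova (Lemma 11.6) for the vector field $X(q)=\dist^2(q,C)\bigl(\max(r,|q|)^{-(m+2-\kappa)}-1\bigr)_+q$ and only explains how to absorb the extra boundary term $Q\int_\Gamma X\cdot\vec\eta\,d\HH^{m-1}$ using $\dist(q,C)\le\dist(q,V)\lesssim\mathbf{A}_\Gamma|q|^2$ on $\Gamma$, exactly as in your item \emph{(d)}. Two small notes: in your divergence identity the coefficient of $\phi(|q|)\,q^\perp\!\cdot\nabla\rho(q)$ should be $-1$, not $-2$ (since $(\nabla\rho)^T\!\cdot q^T=\nabla\rho\cdot q-\nabla\rho\cdot q^\perp=2\rho-\nabla\rho\cdot q^\perp$), which does not affect the estimate; and the paper's boundary bound also exploits the cancellation $q\cdot\vec\eta=\mathbf{p}_{T_q\Gamma^\perp}(q)\cdot\vec\eta$, though your cruder bound via the quadratic decay of $\dist(\cdot,C)$ on $\Gamma$ already yields a convergent integral $\int_0^1 r^{1+\kappa}\,dr$ and hence suffices.
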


\begin{proof}[Proof of Theorem \ref{t:Simon'sEstimates}]
By the monotonicity formula from Corollary \ref{c:monotonerror}
\begin{align}\label{e:monotonformula}
\int_{\bB_\rho} \frac{|q^{\perp}|^2}{|q|^{m+2}}d|\!|T|\!|(q)= 
\frac{|\!|T|\!|(B_\rho(0))}{\omega_m {\rho}^m}-Q/2-&\frac{1}{m}\int_{B_{\rho}}(q^{\perp}\cdot \overrightarrow{H}_{T}(q))\left(|q|^{-m} - \rho^{-m}\right) \nonumber\\& -Q \int_0 ^{\rho} t^{-m-1} \int_{\Gamma \cap B_t} \left(q \cdot \overrightarrow{\eta} \right)d\HH^{m-1}(q)dt
\end{align}
We let $H_i$ be the half planes $C$ and $Q_i$ their respective multiplicities. We have then that 
\begin{equation*}
\frac{Q\omega_mr^m}{2}= \sum Q_i |H_i \cap B_r|.
\end{equation*}

We later use the following formula, which holds for any Radon measure $\mu$ and any non-negative function $f$:
\begin{equation*}
\int f(|q|) d\mu= \int_0^{\infty}f(r)\frac{d}{dr}\left(\mu(B_r)\right)dr.
\end{equation*}

We start fixing a smooth $\chi$, monotone, non-increasing, such that $\mathbbm{1}_{[0,r]} \leq \chi \leq \mathbbm{1}_{[0,2r)}$.

Define $\Gamma(t):=-\int_s^{\infty} \frac{d}{ds}
(\chi(s)^2)s^mds$. We take a weighted version of the identity \eqref{e:monotonformula} by multiply both sides by $\rho^m$ take the derivative on $\rho$ and then integrate over $\chi(\rho)^2$. We can then get estimate (11.10) in \cite{de2023fineIII} with a boundary term. We explain carefully how to deal with the boundary terms.

We bound

\begin{align}
\label{eq:nonconcentration1}\int_{\bB_r} \frac{|q^{\perp}|^2}{|q|^{m+2}}d|\!|T|\!|(q) \leq C\left [ \int \chi^2(|q|)d|\!|T|\!|(q)- \sum_{i=1}^N Q_i\int_{H_i} \chi^2(|q|)d|H_i| \right ]\\\label{eq:nonconcentration2}+ C \int \frac{|\Gamma(|q|)q^{\perp}\cdot\overrightarrow{H}_T|}{|q|^m}d|\!|T|\!|(q)\\\label{eq:nonconcentration3}+CQ \int_0^{4} \chi^2(|\rho|) \rho^{-1} \int_{\Gamma \cap \bB_{\rho}}(q \cdot \overrightarrow{\eta})d\HH^{m-1}d\rho \\\label{eq:nonconcentration4}+CQ\int_0^{4} \chi^2(|\rho|) \rho^{m-1}\int_0^{\rho} t^{-m-1}\int_{\Gamma \cap \bB_t}(q \cdot \overrightarrow{\eta}) \, d\HH^{m-1}dt d\rho \end{align}
The term \eqref{eq:nonconcentration2} is dealt analogously.
The terms \eqref{eq:nonconcentration3}, \eqref{eq:nonconcentration4} are dealt by observing that 

\begin{equation*}
\int_{\Gamma \cap \bB_{\rho}}|q\cdot \overrightarrow{\eta}| d\HH^{m-1} = \int_{\Gamma \cap \bB_{\rho}}|\mathbf{p}_{T_{q}(\Gamma)^{\perp}}(q) \cdot \overrightarrow{\eta}| d\HH^{m-1} \lesssim \rho^{m-1} \sup_{q \in \Gamma \cap \bB_{\rho}} |\mathbf{p}_{T_{q}(\Gamma)^{\perp}}(q)| \lesssim \rho^{m+1}\mathbf{A}_{\Gamma}.
\end{equation*}

Thus the terms \eqref{eq:nonconcentration3}, \eqref{eq:nonconcentration4} are bounded by $\mathbf{A}_{\Gamma}$.
As for the term \eqref{eq:nonconcentration1}, we take the vector field
\begin{equation*}
X(q)=\chi(|q|) p_{V}^{\perp}(q).
\end{equation*}
The first variation formula gives us 
\begin{equation*}
\int \textup{div}_{\overrightarrow{T}}(X)=\mathbf{O}(\mathbf{A}_{\Sigma}^2+\mathbf{A}_{\Gamma}).
\end{equation*}

This gives us 
\begin{align*}
\int \chi^2(|q|) d|T|(q)- \int \chi^2(|q|)d|C|(q) \leq C \mathbf{A}_{\Sigma}^2 + C\mathbf{A}_{\Gamma}+ C \int _{B_{2r}}|x^{\perp}|^2 d|\!|T|\!|(q)\\+\int \chi(|q|)x \cdot \nabla_{V^{\perp}}\chi(|q|)d|\!|C|\!|(q).
\end{align*}

The proof then proceeds almost identically as in \cite{de2023fineIII} since we have the same estimates for inner, central and outer regions. The only term that remains are the boundary regions which can be bounded by $\mathbf{A}_{\Gamma}.$
\end{proof}
\begin{proof}[Proof of Lemma \ref{l:firstvariationestimate}]
See the proof of lemma 11.6 in \cite{de2023fineIII}. It follows from a first variation argument on the vector field 

\begin{equation*}
X(q):=\dist^2(q,C) \left(\max(r,|q|)^{-m-2-\kappa}-1\right)_{+}q
\end{equation*}
where $g_{+}(q):=\max(g(q),0).$

The only required modification is controlling the boundary term
\begin{equation*}
\int \textup{div}_{\overrightarrow{T}}(X)d|\!|T|\!|= Q \int_{\Gamma} (X \cdot \overrightarrow{\eta}) d\HH^{m-1} - (X \cdot H_{T})d|\!|T|\!|.
\end{equation*}
We estimate as in the boundary error term in the proof of Theorem \ref{t:Simon'sEstimates}
\begin{equation*}
    \int_{\Gamma} (X \cdot \overrightarrow{\eta}) d\HH^{m-1} \leq \int_{\Gamma \cap \bB_1} |q|^{-m-\kappa}|\mathbf{p}_{T_{q}(\Gamma)^{\perp}}(q)\cdot \overrightarrow{\eta}| d\HH^{m-1} \leq C_{\kappa} \mathbf{A}_{\Gamma}.
\end{equation*}
\end{proof}
\begin{theorem}\label{t:nonconcentration}
Let $T,\Gamma,\Sigma, C$ be as in Assumption \ref{A:Lipschitzapproximation}. 
For every $\alpha>0$
\begin{enumerate}
    \item For every cube $L$ in the cubical decomposition from the previous section
\begin{equation*}
\mathbf{E}(L) \lesssim_{\alpha} 2^{-\alpha\ell(L)}  \left(\mathbf{E}(T,C,\bB_4)+\mathbf{A}_{\Gamma}+\mathbf{A}_{\Sigma}^2\right)
\end{equation*}
\item 
    For every $\sigma$ and $r \leq \frac{1}{6\sqrt{m-1}}$:
\begin{equation*}
  \frac{1}{r^{m+2}}  \int_{\bB_r \cap B_{\sigma r}(V)}\dist(q,C)^2 d|\!|T\!|  \leq C_{\alpha}\sigma^{3-\alpha}  \left(\mathbf{E}(T,C,\bB_4)+\mathbf{A}_{\Gamma}+\mathbf{A}_{\Sigma}^2\right).
\end{equation*}
\item Moreover, 
 if $\bB_r \setminus B_{\sigma r}(V) \subset R^{out}$ then 
\begin{align*}
\frac{1}{r^{m+2}}\int_{\bB_r} \dist(q,C)^2 d|\!T\!| \leq C(\alpha,Q,m,n,\overline{n})\left( \mathbf{E}(T,C,\bB_4) + \mathbf{A}_{\Gamma}+\mathbf{A}_{\Sigma}^2 \right) \sigma^{3-\alpha} \\+ \frac{1}{r^{m+2}}\sum_{i}\int_{\bB_r \cap H_i \setminus B_{\sigma r}(V)} |u_i|^2 \\+C(Q,m,n,\overline{n})\left( \mathbf{E}(T,C,\bB_4)+r^2\mathbf{A}_{\Sigma}^2 \right)\left(\mathbf{E}(T,C,\bB_4)\sigma^{-m-2}+r^2\mathbf{A}_{\Sigma}^2\right)^{\gamma}
\end{align*}
\end{enumerate}
\end{theorem}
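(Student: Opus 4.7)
My plan is to derive all three conclusions from Corollary \ref{cor:nonconcentration}, combined with the Whitney cubical structure introduced in the previous section and, for Part (3), the coherent outer Lipschitz approximation from Proposition \ref{p:localapprox}. The three estimates proceed in a strict logical order: Part (1) is the individual cube estimate, Part (2) is a packing of Part (1) across the near-spine region, and Part (3) decomposes the full ball into a near-spine piece handled by (2) and an outer piece handled by the Lipschitz graphs.

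For Part (1), I would apply Corollary \ref{cor:nonconcentration} with a small weight parameter $\kappa$ tuned to $\alpha$, and then localize the weighted integral to $\mathbf{B}^h(L)$. Since on $\mathbf{B}^h(L)$ we have $|q|\sim 2^{-\ell(L)}$, pulling the weight $|q|^{-(m+2-\kappa)}$ out of the localized integral produces a bound of the form $\int_{\mathbf{B}^h(L)}\dist^2(q,C)\,d\|T\|\lesssim C_\kappa\,2^{-(m+2-\kappa)\ell(L)}(\mathbf{E}(T,C,\bB_4)+\mathbf{A}_\Gamma+\mathbf{A}_\Sigma^2)$, which, after rescaling by the normalization factor $2^{(m+2)\ell(L)}$ built into $\mathbf{E}(L)$, yields the claimed geometric factor $2^{-\alpha\ell(L)}$ once $\kappa$ is chosen appropriately against $\alpha$. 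The bounded overlap of the $\mathbf{B}^h(L)$ at each generation (property (4) of the Whitney lemma) is what prevents a loss in the exponent.

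For Part (2), I would cover the region $\bB_r\cap B_{\sigma r}(V)$ by Whitney cubes of generation $\ell\geq\ell_0\sim\log_2(1/(\sigma r))$ (all cubes of smaller generation sit at distance $\gtrsim 2^{-\ell}>\sigma r$ from $V$). Using the identity $\int_{\mathbf{B}^h(L)}\dist^2 d\|T\|=2^{-(m+2)\ell(L)}\mathbf{E}(L)$ together with Part (1), and packing with at most $\lesssim(r\,2^\ell)^{m-1}$ cubes per generation intersecting $\bB_r$, the total integral reduces to a geometric series in $2^{-(3-\alpha)\ell}$ starting at $\ell_0$. The series sums to $(\sigma r)^{3-\alpha}(\mathbf{E}(T,C,\bB_4)+\mathbf{A}_\Gamma+\mathbf{A}_\Sigma^2)$, which after dividing by $r^{m+2}$ gives the stated $\sigma^{3-\alpha}$ factor.

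For Part (3), I would split $\frac{1}{r^{m+2}}\int_{\bB_r}\dist^2 d\|T\|$ into the near-spine piece (controlled by Part (2)) and the outer piece on $\bB_r\setminus B_{\sigma r}(V)\subset R^{\mathrm{out}}$. On each outer cube $L$ I would decompose $\boldsymbol{\Omega}_i(L)$ into the good set $\mathbf{p}_{H_i}^{-1}(K_i(L))$, where the current coincides with the graph of $u_{L,i}$ and $\int \dist^2 d\|T\|$ reassembles across cubes into $\sum_i\int_{\bB_r\cap H_i\setminus B_{\sigma r}(V)}|u_i|^2$ via the coherent outer approximation, and the bad set, where the pointwise height estimate $\dist^2\lesssim 2^{-2\ell(L)}(\mathbf{E}(L)+2^{-2\ell(L)}\mathbf{A}_\Sigma^2)$ multiplied by the bad-set measure estimate $\lesssim 2^{-m\ell(L)}(\mathbf{E}(L)+2^{-2\ell(L)}\mathbf{A}_\Sigma^2)^{1+\gamma}$ from Proposition \ref{p:localapprox} produces precisely the residual error $(\mathbf{E}(T,C,\bB_4)+r^2\mathbf{A}_\Sigma^2)(\mathbf{E}(T,C,\bB_4)\sigma^{-m-2}+r^2\mathbf{A}_\Sigma^2)^\gamma$. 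The main obstacle sits in Part (1): extracting the \emph{individual} cube decay out of the \emph{integrated} weighted estimate requires carefully matching $\kappa$ to $\alpha$ and exploiting bounded overlap; once Part (1) is established, Part (2) is a straightforward geometric-series packing, and Part (3) is a bookkeeping exercise separating the graphical and non-graphical contributions of the Lipschitz approximation.
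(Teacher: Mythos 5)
Your overall plan — Part (1) for a single cube, Part (2) by packing cubes over the near‑spine region, Part (3) by splitting into a near‑spine piece and an outer graphical piece — mirrors the paper's structure for Parts (2) and (3). The problem lies entirely in Part (1), and it is a genuine gap rather than a cosmetic one.

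First, the claim ``on $\mathbf{B}^h(L)$ we have $|q|\sim 2^{-\ell(L)}$'' is false for a generic cube. The ball $\mathbf{B}^h(L)$ is centered at $y_L$, which ranges over the whole cube $L_0$ of unit size; a high‑generation cube can therefore sit at distance $\sim 1$ from the origin while being at distance $\sim 2^{-\ell(L)}$ from the spine $V$. On such a cube $|q|\sim 1$, and the weight $|q|^{-(m+2-\kappa)}$ in Corollary \ref{cor:nonconcentration} is $O(1)$, so nothing is gained. The paper signals exactly this when it says Part (1) follows from Simon's estimates ``at suitable boundary points'': one must recenter the non‑concentration estimate at a point $p_L\in\Gamma$ close to $y_L$, with the cone translated and tilted to be admissible there; this recentering is also the source of the $\mathbf{A}_\Gamma+\mathbf{A}_\Sigma^2$ error terms in the bound, which otherwise have no reason to appear. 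You cannot simply localize the origin‑centered corollary.

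Second, even after recentering, your exponent arithmetic does not close. With $|q-p_L|\sim 2^{-\ell(L)}$ on $\mathbf{B}^h(L)$ you obtain $\int_{\mathbf{B}^h(L)}\dist(q,C)^2\,d\|T\|\lesssim 2^{-(m+2-\kappa)\ell(L)}C_\kappa(\cdots)$, and multiplying by the normalization $2^{(m+2)\ell(L)}$ in the definition of $\mathbf{E}(L)$ gives $\mathbf{E}(L)\lesssim C_\kappa\,2^{+\kappa\ell(L)}(\cdots)$, a factor that \emph{grows} in $\ell(L)$. No choice of $\kappa\in(0,m+2)$ can turn this into a decaying factor $2^{-\alpha\ell(L)}$, because the weight $|q-p_L|^{-(m+2-\kappa)}$ is always weaker than $|q-p_L|^{-(m+2)}$. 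Your own Part (2) computation betrays the slip: packing $\sim (r2^\ell)^{m-1}$ cubes per generation against $\int_{\mathbf{B}^h(L)}\dist^2\lesssim 2^{-(m+2)\ell}\mathbf{E}(L)$ produces the ratio $2^{-(3-\alpha)\ell}$ that you quote only if $\mathbf{E}(L)\lesssim 2^{+\alpha\ell}(\cdots)$, not $2^{-\alpha\ell}(\cdots)$; with your stated Part (1) sign the ratio would be $2^{-(3+\alpha)\ell}$. So the single‑cube estimate that actually feeds Parts (2) and (3) is the mild growth bound $\mathbf{E}(L)\lesssim_\alpha 2^{\alpha\ell(L)}(\mathbf{E}(T,C,\bB_4)+\mathbf{A}_\Gamma+\mathbf{A}_\Sigma^2)$, which the recentered corollary does deliver and which is also how the estimate is used downstream (e.g.\ in the weighted Dirichlet‑energy bound for the blowup). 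Once you (i) recenter at a nearby boundary point and (ii) replace the claimed $2^{-\alpha\ell(L)}$ by $2^{\alpha\ell(L)}$, your Part (2) packing and Part (3) split become correct and match the paper.
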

\begin{proof}
The first inequality is a consequence of Theorem \ref{t:Simon'sEstimates} Simon's non-concentration estimate at suitable boundary points.

We prove the second inequality. We start covering $\bB_r \cap B_{\sigma r} (V)$ with balls of size comparable to $\sigma r$. For every such ball we have \begin{equation*}
    \int_{B_i} \dist(q,C)^2 \leq \left(\sigma r\right)^{m+2-\alpha}\left(\mathbf{E}(T,C,\bB_4)+\mathbf{A}_{\Gamma}+\mathbf{A}_{\Sigma}^2\right).
\end{equation*}
Part of the error, which is already accounted for corresponds to the fact that the earlier non-concentration estimates require the cone to be admissible. We thus have to tilt and translate the cone which introduces an error of $\mathbf{A}_{\Gamma}^2 +\mathbf{A}_{\Sigma}^2$. 

By taking a Besicovitch cover the balls will have bounded overlap. The number of balls will be thus bounded by $C(m)\sigma^{-m+1}$. This implies the second inequality in the lemma.

We estimate the remaining part of the integral in order to conclude the third inequality:

\begin{align*}
&r^{m+2}\mathbf{E}(T,C,\bB_r \setminus B_{\sigma r}(V)) \leq \sum_{L:  \sigma/2 \leq 2^{-\ell(L)} \leq 4r} \sum_{i} \int_{\mathbf{\Omega}(L)}\dist(p,H_i)^2d|T_{i,L}|\\& \leq \sum_{L:  \sigma/2 \leq 2^{-\ell(L)} \leq 4r} \sum_{i} \int_{\Omega_i(L)^2}|u_i|^2+C2^{-(m+2)\ell(L)}\left(\mathbf{E}(L)+2^{-2\ell(L)}\mathbf{A}_{\Sigma}^2\right)^{1+\gamma}\\ &\leq \sum_{i} \int_{\bB_{4r} \cap H_i \setminus B_{\sigma}(V)} |u_i|^2 +  C\left( \mathbf{E}(T,C,\bB_4)+r^2\mathbf{A}_{\Sigma}^2 \right)\left(\mathbf{E}(T,C,\bB_4)\sigma^{-m-2}+r^2\mathbf{A}_{\Sigma}^2\right)^{\gamma}.
\end{align*}
This type of estimate comes from Proposition \ref{p:localapprox}. There is an additional issue we deal with here which is that there is a slight mismatch between the current on the cylindrical region $\mathbf{R}(L)$ and where the graphical approximation is defined. This issue does appear in the above estimate and is addressed in Lemma 8.16 and Remark 8.17 of \cite{de2023fineIII}. 
\end{proof}
\end{section}
\begin{section}{The boundary linear problem}
In this section, we study the boundary linear problem of multi-valued Dirichlet minimizing functions on the half-space. We show this problem to have similar properties to the interior problem of Dirichlet minimizing functions treated in \cite{DS1}. We show the $1$-homogeneous Dir-minimizers to be linear and a suitable decay lemma. We conclude the section with a technical result which will later allow us to conclude that blowups are Dir-minimizing up to the boundary.

\begin{subsection}{Preliminaries}
We refer to \cite{DS1} for the detailed set up for the linear problem. 

Let $\RR_{+}^m=\left\{(y,t) \in \RR^{m-1} \times \RR^+\right\}$. Let $H= \{x: x_m=0\}$ and if $x \in H$ then $\bB_r^+(x)$ denote the half ball $\bB_r(x) \cap \RR^m_+$. We denote $\partial \bB_r^+(x)= \partial \bB_r(x) \cap \{x_m >0\}$. 
Given $\Omega \subset \RR_+^m$ be a Lipschitz domain there is a well-defined continuous trace operator 
\begin{equation*}
    \circ|_{\partial \Omega}:W^{1,2}(\Omega,\mathcal{A}_Q(\RR^n)) \rightarrow  L^2(\partial \Omega, \mathcal{A}_Q(\RR^n)).
\end{equation*}

\begin{definition}
Let $\Omega$ be a Lipschitz domain. We say $u$ is Dir-minimizing in $\Omega$ if
\begin{equation*}
    u \in W^{1,2}(\overline{\Omega},\mathcal{A}_Q(\RR^n))
\end{equation*}
and for every $v \in  W^{1,2}(\overline{\Omega},\mathcal{A}_Q(\RR^n)), \; \textup{we have} \; u \res \partial \Omega = v\res \partial \Omega$  \begin{equation*}
\int_ {\Omega} |Du|^2 \leq \int_{\Omega}|Dv|^2.
\end{equation*}
\end{definition}
The boundary linear problem is the following:
\begin{assumption}[Linear problem]\label{a:linearproblem}
Given a Dir-minimizing function $u \in W^{1,2}(\Omega,\mathcal{A}_Q(\RR^n))$. The boundary linear problem concerns functions $u$ satisfying $u \res \partial \Omega \cap \bB_1=Q\a{0}$.
\end{assumption}

In this work it will be enough for us to consider the linear problem on the half space (i.e., the following):

\begin{assumption}[Linear problem on the half space]\label{a:linearproblemhalfspace}
Given a Dir-minimizing function $u \in W^{1,2}(\bB_1^+,\mathcal{A}_Q(\RR^n))$. The boundary linear problem on the half space consists of functions $u$ such that $u \res H\cap \bB_1=Q\a{0}$.
\end{assumption}

In \cite{Jonas3} Hirsch proved Hölder regularity for Dir minimizers with Hölder regular boundary data on $C^1$ domains. A relevant consequence of his work for us is:
\begin{theorem}\label{t:Höldercontinuitydir}
    Let $u$ be as in Assumption \ref{a:linearproblem} for a $C^1$ domain $\Omega$. There exists $\alpha(Q,m,n)>0$ such that every $r \in (0,1)$,
    $u \in C^{0,\alpha}(\overline{\bB_r \cap \Omega} )$.
\end{theorem}
His work can handle any Hölder continuous boundary data over the boundary of a domain, and even thought the main statement of \cite{Jonas3} is written in a global way, the proof of it is purely local and implies the above theorem.

We follow the main properties of the frequency function as in \cite{DS1}
\begin{definition}
For $x \in H \cap \bB_1$, $0<r<1-|x|$, we define the Dirichlet energy, $L^2$ spherical height and the boundary frequency function:
\begin{equation*}
    D(x,r)=\int_{\bB_r^+(x)} |Du|^2, \; H(x,r)= \int_{\partial \bB_r^+(x)} |u|^2 \; \textup{and} \; I(x,r)= \frac{rD(x,r)}{H(x,r)}
\end{equation*}
\end{definition}

\begin{proposition}\label{Prop:Identities}
Let $u$ be as in  Assumption \ref{a:linearproblemhalfspace}. For every $x \in H \cap \bB_1$ and a.e. $r \in (0,1-|x|)$
\begin{equation*}
(m-2) \int_{\bB_r^+(x)}|Du|^2= r \int_{\partial \bB_r^+(x)} |Du|^2 - 2r\int_{\partial \bB_r^+(x)} \sum_{i} |\partial_{\nu}u_i|^2
\end{equation*}
\begin{equation*}
\int_{\bB_r^+(x)}|Du|^2= \int_{\partial \bB_r^+(x)} \sum\langle  \partial_v u_i , u_i\rangle
\end{equation*}
\begin{proof}
The proof is analogous to the corresponding version for classical Dir-minimizing functions (see Proposition  3.2. \cite{DS1}).
\item  For the first formula we use the same specific families of inner variations, which are diffeomorphisms that move $H$ tangentially (and thus preserve the zero boundary data condition). These families are of the form $\varphi(x)=\phi(|x|)x$.
\item For the second formula we use the same specific families of outer variations. The competitors are of the form 
\begin{equation*}
    \psi_{\varepsilon}(x)=\sum\a{u_i(x)+\varepsilon \phi(|x|)u_i(x)},
\end{equation*} which preserve the zero boundary data condition along $H$ and thus they are a valid $1$-parameter family of competitors.
\end{proof}

\end{proposition}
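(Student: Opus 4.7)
The plan is to obtain both identities as first-variation (stationarity) conditions, choosing two one-parameter families of admissible competitors that preserve the Dirichlet boundary condition $u\res H \cap \bB_1 = Q\a{0}$. The key structural observation, specific to the half-ball setting, is that the cutoffs should be \emph{radial} about the center $x_0 \in H$: both a radial diffeomorphism and a radial rescaling of values automatically respect $H$ and hence respect the boundary data.

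For the first identity I would use the inner variation
\[
\varphi_t(y) := y + t\,\phi(|y - x_0|)(y - x_0),
\]
with $\phi \in C^\infty_c([0, 1 - |x_0|))$. Since this is a radial map centered at $x_0 \in H$, it leaves $H$ invariant, so $u_t := u \circ \varphi_t^{-1}$ is an admissible competitor. Differentiating $\int |Du_t|^2$ at $t=0$ as in Proposition~3.2 of \cite{DS1} gives
\[
\int \bigl(|Du|^2\,\mathrm{div}(\phi(|y-x_0|)(y-x_0)) - 2\,\phi(|y-x_0|)\sum_i \langle D u_i, D u_i \cdot \tfrac{y-x_0}{|y-x_0|}\rangle \cdot |y-x_0|^{-1}\,|y-x_0|\bigr)\,dy = 0.
\]
Choosing $\phi$ to approximate $\mathbf{1}_{[0,r]}$ and using the coarea formula produces exactly $(m-2)D(x_0,r) - rH'_{\mathrm{en}}(r) + 2r\int_{\partial \bB_r^+(x_0)}\sum_i |\partial_\nu u_i|^2 = 0$, which is the desired formula. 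No contribution arises from the flat portion of the boundary $H \cap \bB_r(x_0)$ because the vector field $\phi(|y-x_0|)(y-x_0)$ is tangent to $H$ there, so its normal component vanishes.

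For the second identity I would use the outer variation
\[
u^{(t)}(y) := \sum_i \a{u_i(y) + t\,\phi(|y-x_0|)u_i(y)}.
\]
Since $u(y) = Q\a{0}$ on $H \cap \bB_1$ forces $u^{(t)}(y) = Q\a{0}$ there, this is a valid competitor. Differentiating $\int |Du^{(t)}|^2$ at $t=0$ and integrating by parts on $\bB_r^+(x_0)$ yields
\[
0 = \int\bigl(\phi(|y-x_0|)|Du|^2 + \phi'(|y-x_0|)\sum_i \langle \partial_\nu u_i, u_i\rangle\bigr)\,dy,
\]
and, again, letting $\phi \nearrow \mathbf{1}_{[0,r]}$ gives the identity for a.e.\ $r$. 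The boundary contribution along $H \cap \bB_r^+(x_0)$ carries a factor of $u$ on $H$, which is zero, so nothing survives there.

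The one subtlety, which I expect to be the only nontrivial point, is the passage from the smooth cutoff $\phi$ to the sharp indicator $\mathbf{1}_{[0,r]}$: one must argue that the two integrands in each display are absolutely continuous functions of $r$ (so the identities hold for a.e.\ $r$), and that the integration by parts is justified in $W^{1,2}$ with the corresponding trace on the spherical piece $\partial \bB_r^+(x_0)$. Both facts follow from Fubini in radial coordinates and the fact that $u \in W^{1,2}$ has well-defined $L^2$ traces on almost every sphere, identically as in the interior case of \cite{DS1}; the half-space restriction enters only through verifying that the competitors preserve the zero Dirichlet data, which the radial ansatz was designed to guarantee.
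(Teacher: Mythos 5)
Your proposal is correct and follows essentially the same route as the paper: both identities are derived from stationarity under radial inner and outer variations centered at the boundary point, whose radial form automatically preserves $H$ and hence the zero Dirichlet data, exactly as in Proposition 3.2 of De Lellis--Spadaro. The observations about the flat portion of the boundary contributing nothing (tangency for the inner variation, vanishing of $u$ on $H$ for the outer one) and about the absolute continuity in $r$ of the resulting integrals are the same points the paper implicitly relies on.
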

We can prove with the above the monotonicity of the frequency.
\begin{theorem} \label{t:monotonfrequency} The frequency function for a function $u$ satisfying Assumption \ref{a:linearproblemhalfspace} is monotone. For any $x \in H \cap \bB_1$ either there exists $\rho$ such that $u|_{B_{\rho}(x)}\equiv 0$ or $I(x,r)$ is an absolutely continous non decreasing positive function of $r$ on $(0,1-|x|)$.
\end{theorem}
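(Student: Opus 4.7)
My plan is to adapt the classical Almgren--De~Lellis--Spadaro frequency monotonicity argument to the half-space setting, exploiting that the Dirichlet condition $u \equiv 0$ on $H$ causes the flat portion of $\partial \bB_r^+(x)$ to contribute nothing in the integration-by-parts formulas, so that the computation closely parallels the interior case with the half-sphere replacing the full sphere. First I would compute $D'(x,r)$ and $H'(x,r)$ for almost every $r$. By the coarea formula $D$ is absolutely continuous with $D'(x,r) = \int_{\partial \bB_r^+(x)} |Du|^2$. For $H$, the scaling $H(x,r) = r^{m-1}\int_{\partial \bB_1^+} |u(x + r\omega)|^2\,d\omega$ (valid because $u$ vanishes on $\partial \bB_r(x) \cap H$) gives
\[
H'(x,r) \;=\; \frac{m-1}{r}\,H(x,r) \;+\; 2\int_{\partial \bB_r^+(x)} \sum_i \langle u_i,\partial_\nu u_i\rangle .
\]

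Next I would feed in the two identities of Proposition~\ref{Prop:Identities}. The first rearranges to
\[
D'(x,r) \;=\; \frac{m-2}{r}\,D(x,r) \;+\; 2\int_{\partial \bB_r^+(x)} \sum_i |\partial_\nu u_i|^2 ,
\]
while the second simplifies the boundary integral in the expression for $H'$ to $2D(x,r)$, giving $H'(x,r) = \tfrac{m-1}{r} H(x,r) + 2D(x,r)$. Taking the logarithmic derivative of $I(x,r) = rD(x,r)/H(x,r)$, the purely geometric terms $\tfrac{1}{r}+\tfrac{m-2}{r}-\tfrac{m-1}{r}$ cancel and leave
\[
\frac{I'(x,r)}{I(x,r)} \;=\; \frac{2}{D(x,r)}\int_{\partial \bB_r^+(x)} \sum_i |\partial_\nu u_i|^2 \;-\; \frac{2D(x,r)}{H(x,r)} .
\]
Applying Cauchy--Schwarz to the second identity of Proposition~\ref{Prop:Identities} gives $D(x,r)^2 \leq H(x,r)\int_{\partial \bB_r^+(x)} \sum_i |\partial_\nu u_i|^2$, and hence $I'(x,r) \geq 0$ at every $r$ where $H(x,r) > 0$.

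Finally I would handle the dichotomy. Since $D \geq 0$, the differential identity for $H$ shows that $r^{-(m-1)}H(x,r)$ is non-decreasing in $r$; in particular $H(x,r_0)=0$ forces $H(x,r) = 0$ for all $r\leq r_0$, so either $H(x,r) \equiv 0$ on some initial interval $(0,\rho)$ or $H(x,r) > 0$ throughout $(0, 1-|x|)$. In the first case, combining $u \equiv 0$ on $\partial \bB_r^+(x)$ for all $r<\rho$ with the H\"older regularity up to the boundary in Theorem~\ref{t:holdercontinuitydir} gives $u|_{\bB_\rho^+(x)} \equiv 0$, which is the vanishing alternative in the statement. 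In the second case, $H$ is continuous and bounded below away from zero on compact subintervals of $(0,1-|x|)$, so $I$ is a ratio of absolutely continuous functions, positive, and non-decreasing by the sign of $I'$. The main subtlety to verify is this last dichotomy: one must check that the non-decreasing quotient $r^{-(m-1)}H(x,r)$ is genuinely the right monotone quantity in our boundary setup and that the boundary trace condition is strong enough to upgrade spherical vanishing into ball vanishing; both reduce to standard facts about multi-valued Dir-minimizers once the Pohozaev identities of Proposition~\ref{Prop:Identities} are in hand.
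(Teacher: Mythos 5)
Your proposal is correct and follows essentially the same route as the paper, which likewise appeals to the two Pohozaev-type identities of Proposition~\ref{Prop:Identities} and then mirrors the interior frequency-monotonicity computation of De Lellis--Spadaro, concluding $I'\ge 0$ by Cauchy--Schwarz; the paper simply records the resulting formula for $I'(x,r)$ without writing out the intermediate $D'$ and $H'$ computations that you spell out. (One very small remark: to rule out the vanishing case you do not need the H\"older regularity of Theorem~\ref{t:holdercontinuitydir} --- once $H(x,r)=0$ for all $r<\rho$, integrating in $r$ gives $\int_{\bB_\rho^+(x)}|u|^2=0$ directly.)
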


\begin{corollary}\label{c:monotonfrequency}
We have $I(0,r) \equiv r$ for $r \in (0,1)$ if and only if $u$ is $\alpha$-homogeneous i.e. there is $f$ such that
\begin{equation*}
    u(y)=|y|^{\alpha}f\left(\frac{y}{|y|}\right)
\end{equation*}
Similar statements at different centers and scales can be obtained by suitable rescaling.
\end{corollary}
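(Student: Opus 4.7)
The plan is to combine the two Pohozaev-type identities from Proposition \ref{Prop:Identities} with a direct computation of $(\log I)'(r)$, and then read the equality case $I\equiv \alpha$ as a Cauchy--Schwarz rigidity on each half-sphere. Rigidity will force the radial derivative of each sheet of $u$ to be proportional to that sheet with coefficient $\alpha/r$, and integrating along rays will yield the $\alpha$-homogeneity.

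First I would compute $D'(r)$ and $H'(r)$. By Fubini, $D'(r)=\int_{\partial \bB_r^+}|Du|^2$, and combining with the first identity of Proposition \ref{Prop:Identities} gives
\begin{equation*}
D'(r)=\frac{m-2}{r}D(r)+2\int_{\partial \bB_r^+}\sum_i|\partial_\nu u_i|^2.
\end{equation*}
A polar-coordinates computation, using that $u$ vanishes on the flat part $H\cap \bB_r$ of $\partial \bB_r^+$ so no tangential integration by parts is needed there, yields $H'(r)=\frac{m-1}{r}H(r)+2\int_{\partial \bB_r^+}\sum_i\langle \partial_\nu u_i,u_i\rangle$, and by the second identity of Proposition \ref{Prop:Identities} this equals $\frac{m-1}{r}H(r)+2D(r)$. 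Combining,
\begin{equation*}
(\log I)'(r)=\frac{1}{r}+\frac{D'(r)}{D(r)}-\frac{H'(r)}{H(r)}=\frac{2}{D(r)H(r)}\left(H(r)\int_{\partial \bB_r^+}\sum_i|\partial_\nu u_i|^2 - D(r)^2\right),
\end{equation*}
which, using the second identity again, is exactly the Cauchy--Schwarz defect of the pair $(\partial_\nu u,u)$ on the half-sphere.

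The forward direction is then immediate: if $u(y)=|y|^\alpha f(y/|y|)$, the radial derivative satisfies $\partial_\nu u_i(y)=(\alpha/|y|)u_i(y)$ on each sheet, so the second identity gives $D(r)=(\alpha/r)H(r)$ and hence $I(0,r)\equiv \alpha$. Conversely, if the frequency is constant, then $(\log I)'\equiv 0$ forces
\begin{equation*}
\left(\int_{\partial \bB_r^+}\sum_i\langle \partial_\nu u_i,u_i\rangle\right)^2 = \int_{\partial \bB_r^+}\sum_i|u_i|^2\cdot \int_{\partial \bB_r^+}\sum_i|\partial_\nu u_i|^2
\end{equation*}
for a.e.\ $r$. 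By the standard rigidity of Cauchy--Schwarz for $\mathcal{A}_Q$-valued maps, as in \cite{DS1}, equality forces $\partial_\nu u_i(y)=\lambda(r)u_i(y)$ sheet-by-sheet, with a scalar $\lambda(r)$ depending only on $r=|y|$; substituting into the second identity then gives $\lambda(r)=\alpha/r$.

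Integrating the ODE $\partial_r u_i(y)=(\alpha/|y|)u_i(y)$ along each ray from the origin produces $u(y)=|y|^\alpha f(y/|y|)$ with $f:=u|_{\partial \bB_1^+}$. Statements at different centers $x\in H\cap \bB_1$ and scales follow by translating the origin to $x$, which preserves $H$, and rescaling. The main delicate point is the $\mathcal{A}_Q$-valued Cauchy--Schwarz rigidity, which requires a measurable selection of sheets on the support of $u$; this is handled exactly as in the interior case of \cite{DS1}, since the flat boundary only enters through the zero trace condition already used to derive the Pohozaev identities.
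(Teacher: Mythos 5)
Your proof is correct and follows exactly the route the paper invokes: the paper simply cites the frequency-function computation from \cite{DS1}, and your argument writes out that computation explicitly using the two identities of Proposition \ref{Prop:Identities}, identifying $(\log I)'$ as a Cauchy--Schwarz defect on the half-sphere and then integrating the resulting radial ODE in the rigidity case. (Note the paper's displayed condition ``$I(0,r)\equiv r$'' is a typo for ``$I(0,r)\equiv\alpha$'', which is what you correctly prove.)
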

\begin{corollary}\label{c:consequencesmonotonfrequency}
We set $x=0$ and consider radii up to $1$.
The following estimates hold:
\begin{enumerate}
    \item For almost every $r\leq s \leq t$,
    \begin{equation*}
        \frac{d}{d\tau}|_{\tau=s} \left[\textup{ln}\left( \frac{H(\tau)}{\tau^{m-1}} \right) \right]=\frac{2I(s)}{s}
    \end{equation*}
    and
    \begin{equation*}
        \left(\frac{r}{t}\right)^{2I(t)} \frac{H(t)}{t^{m-1}} \leq \frac{H(\tau)}{r^{m-1}} \leq       \left(\frac{r}{t}\right)^{2I(r)} \frac{H(t)}{t^{m-1}};
    \end{equation*}
    \item if $I(t)>0$, then
    \begin{equation} \label{eq:DirBound}
    \frac{I(r)}{I(t)} \left( \frac{r}{t} \right)^{2I(t)} \frac{D(t)}{t^{m-2}} \leq \frac{D(r)}{r^{m-2}}  \leq \left( \frac{r}{t} \right)^{2I(r)} \frac{D(t)}{t^{m-2}}.
    \end{equation}
\end{enumerate}
\end{corollary}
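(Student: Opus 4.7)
The plan is to derive the three displayed estimates in a standard frequency-function chain, starting from the second identity of Proposition \ref{Prop:Identities} and then using the monotonicity of $I$ established in Theorem \ref{t:monotonfrequency}.

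\textbf{Step 1 (logarithmic derivative of $H/\tau^{m-1}$).} First I would compute $H'(\tau)$ by a standard change of variables: write $H(\tau)=\tau^{m-1}\int_{\partial \bB_1^+}|u(\tau y)|^2\, d\mathcal{H}^{m-1}(y)$ and differentiate, which yields
\begin{equation*}
H'(\tau)=\frac{m-1}{\tau}H(\tau)+2\int_{\partial \bB_\tau^+}\sum_i\langle \partial_\nu u_i, u_i\rangle.
\end{equation*}
The boundary of $\bB_\tau^+$ decomposes into the spherical cap $\partial \bB_\tau^+$ (in the notation of the paper) and the flat part on $H$, but the latter contributes nothing because $u\equiv Q\a{0}$ on $H$. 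Combining with the second identity of Proposition \ref{Prop:Identities}, namely $\int_{\partial \bB_\tau^+}\sum_i\langle \partial_\nu u_i,u_i\rangle=D(\tau)$, gives $(H/\tau^{m-1})'(\tau)=2D(\tau)/\tau^{m-1}$, hence
\begin{equation*}
\frac{d}{d\tau}\ln\!\Bigl(\frac{H(\tau)}{\tau^{m-1}}\Bigr)=\frac{2D(\tau)}{H(\tau)}=\frac{2I(\tau)}{\tau},
\end{equation*}
which is the first claimed identity. (The non-vanishing-denominator hypothesis is the alternative dichotomy in Theorem \ref{t:monotonfrequency}; in the other case $u\equiv 0$ near $0$ and everything is trivial.)

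\textbf{Step 2 (integrating the identity).} I would then integrate the identity from $r$ to $t$:
\begin{equation*}
\ln\!\Bigl(\frac{H(t)/t^{m-1}}{H(r)/r^{m-1}}\Bigr)=\int_r^t\frac{2I(\tau)}{\tau}\,d\tau.
\end{equation*}
By the monotonicity of $I$ proven in Theorem \ref{t:monotonfrequency}, we have $I(r)\leq I(\tau)\leq I(t)$ on $[r,t]$, so the integral lies between $2I(r)\ln(t/r)$ and $2I(t)\ln(t/r)$. Exponentiating and rearranging yields exactly the double inequality of item (1).

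\textbf{Step 3 (passing from $H$ to $D$).} The final step is purely algebraic: by definition, $D(\tau)/\tau^{m-2}=I(\tau)H(\tau)/\tau^{m-1}$, so
\begin{equation*}
\frac{D(r)/r^{m-2}}{D(t)/t^{m-2}}=\frac{I(r)}{I(t)}\cdot\frac{H(r)/r^{m-1}}{H(t)/t^{m-1}}.
\end{equation*}
Plugging in the two-sided bound on the $H$-ratio from Step 2 gives a lower bound with the prefactor $I(r)/I(t)$ and the exponent $2I(t)$, which is precisely the left inequality of \eqref{eq:DirBound}. For the upper bound, the ratio $I(r)/I(t)$ is at most $1$ by monotonicity, so dropping it only weakens the bound, producing exactly $(r/t)^{2I(r)}\,D(t)/t^{m-2}$ on the right-hand side.

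There is no real obstacle here — this is bookkeeping around the two integration-by-parts identities, and the only point requiring a line of justification is the absence of the flat boundary term in $H'(\tau)$, which is immediate from the homogeneous boundary data $u\res H=Q\a{0}$.
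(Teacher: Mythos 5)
Your proof is correct and follows essentially the same route as the paper, which simply remarks that Theorem \ref{t:monotonfrequency}, Corollary \ref{c:monotonfrequency}, and Corollary \ref{c:consequencesmonotonfrequency} follow by the same computations as in \cite{DS1} (see Proposition 3.2 and Corollary 3.18 there). Your chain is the standard one: differentiate $H(\tau)/\tau^{m-1}$ via rescaling, insert the outer-variation identity $D(\tau)=\int_{\partial\bB_\tau^+}\sum\langle\partial_\nu u_i,u_i\rangle$ from Proposition \ref{Prop:Identities}, integrate the logarithmic derivative sandwiching $I(\tau)$ between $I(r)$ and $I(t)$ by monotonicity, and finally pass from $H$ to $D$ via $D(\tau)/\tau^{m-2}=I(\tau)\,H(\tau)/\tau^{m-1}$, dropping the factor $I(r)/I(t)\le 1$ for the upper bound. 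One small imprecision: your aside about the flat portion of $\partial(\bB_\tau^+)$ is not actually needed when differentiating $H$ — since $\partial\bB_\tau^+$ is by definition the spherical cap and the dilation $y\mapsto\tau y$ maps cap to cap (being tangent to $H$), no flat boundary term can appear. The role of the zero boundary data on $H$ is entirely absorbed in Proposition \ref{Prop:Identities}, which you are invoking rather than proving, so the remark is harmless but misplaced.
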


The proof of Theorem \ref{t:monotonfrequency}, Corollary \ref{c:monotonfrequency} and Corollary \ref{c:consequencesmonotonfrequency} follow analogously doing the same computations as in the interior setting in the Memoir \cite{DS1}.
For example for the proof of Theorem \ref{t:monotonfrequency} we analogously get 
\begin{equation*}
    I'(x,r)=\frac{(2-m)D(x,r)+rD'(x,r)}{H(x,r)}-2r\frac{D(x,r)^2}{H(x,r)^2}
\end{equation*} for a.e. $0<r<1-|x|$ by using identities \ref{Prop:Identities}, which allows us to conclude $I'(x,r)$ is non-negative by Cauchy Schwartz.

\begin{proposition}
The frequency function
\begin{equation*}
    I: H \cap \bB_1 \rightarrow \RR^+ \; \textup{defined by} \; I(x):=\lim_{r \rightarrow 0}I(x,r)
\end{equation*}
is an upper semicontinuous function on $H \cap \bB_1$.
\end{proposition}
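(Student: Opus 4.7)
The plan is to combine the monotonicity of the frequency (Theorem~\ref{t:monotonfrequency}) with the continuity of the frequency at a fixed scale as a function of the base point. Monotonicity gives the crucial upper bound
\begin{equation*}
I(x) \;=\; \lim_{r\downarrow 0} I(x,r) \;=\; \inf_{0<r<1-|x|} I(x,r) \;\leq\; I(x,r),
\end{equation*}
for every admissible $r$ and every $x\in H\cap\bB_1$ at which $u$ does not vanish identically near $x$. At the remaining points we set $I(x):=+\infty$ by convention, so that upper semicontinuity is vacuous there, and it suffices to prove $\limsup_{y\to x}I(y)\leq I(x)$ whenever $I(x)<\infty$, which forces $H(x,r)>0$ for every sufficiently small $r$.

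The main intermediate step is the following continuity claim: for almost every $r\in(0,1-|x|)$, the map $y\mapsto I(y,r)$ defined on a neighborhood of $x$ in $H\cap\bB_1$ is continuous at $y=x$. Granted the claim, for any sequence $x_k\to x$ and any admissible $r$ at which continuity holds, one has
\begin{equation*}
\limsup_{k\to\infty} I(x_k) \;\leq\; \limsup_{k\to\infty} I(x_k,r) \;=\; I(x,r),
\end{equation*}
and taking the infimum over such $r$ recovers $\limsup_k I(x_k)\leq I(x)$, as desired.

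The continuity claim reduces to continuity in $y$ of the numerator $D(y,r)=\int_{\bB_r^+(y)}|Du|^2$ and the denominator $H(y,r)=\int_{\partial\bB_r^+(y)}|u|^2$. Dominated convergence applied to the characteristic functions of the moving half-balls, together with $|Du|^2\in L^1(\bB_1^+)$, yields continuity of $D(\cdot,r)$ for every fixed $r<1-|x|$. For $H$, a Fubini/slicing argument based on the identity $\int_0^\rho H(y,r)\,dr=\int_{\bB_\rho^+(y)}|u|^2$ gives continuity at every Lebesgue point of $r\mapsto H(x,r)$, hence for a.e.\ $r$. Since $H(x,r)>0$ for small $r$ whenever $I(x)<\infty$, the ratio $I(y,r)=rD(y,r)/H(y,r)$ inherits the continuity.

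The main technical obstacle is the continuity of $H(y,r)$ in the base point $y\in H$: the integrand lives on a moving half-sphere whose intersection with the boundary hyperplane $H$ deforms with $y$, so one must rule out a null set of bad radii at which the trace of $|u|^2$ could concentrate. A cleaner alternative, used systematically in the Dir-minimizer literature and later in Section~10 via the smoothened distance function, is to replace $H$ by a smoothed spherical height of the form $\int\phi(|x-y|/r)\,|u|^2$, for which continuity in $y$ is immediate and the corresponding monotonicity statement is unchanged; this bypasses the bad-radii issue entirely and makes the whole argument routine.
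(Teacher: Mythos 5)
Your proof is correct and follows essentially the same route as the paper: monotonicity gives $I(x)\leq I(x,r)$, and continuity of $y\mapsto I(y,r)$ at a fixed scale then yields upper semicontinuity by passing to the infimum over $r$. The only difference is that you justify continuity of $H(\cdot,r)$ only for a.e.\ $r$ via a Fubini/Lebesgue-point argument, whereas the paper simply asserts $I(x_k,r_k)\to I(x,r)$; the latter is in fact valid for every $r$ because $u$ is H\"older continuous up to the boundary (Theorem~\ref{t:holdercontinuitydir}), which makes the a.e.-$r$ detour unnecessary, though harmless.
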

\begin{proof}
We take a sequence $x_k \rightarrow x$ and $r_k \rightarrow r$ we further assume $r_k< \frac{1}{2}\left(1-|x| \right)$. For $k$ large enough $r_k<1-|x_k|$, which makes the frequency well-defined.

We know that
\begin{equation*}
    I(x_k,r_k) \rightarrow I(x,r).
\end{equation*} 
By the monotonicity formula \begin{equation*}
I(x,r) \geq \limsup_{k \rightarrow \infty} I(x_k)
\end{equation*}
 for every $r \in (0, 1-|x|)$. By taking $r \rightarrow 0$ we get the desired upper semicontinuity.
\end{proof}

\begin{proposition}\label{p:DirEnergyBound}
If $x \in H$ and $I(x)=\alpha$ then for every radius $r<\frac{1-|x|}{2}$ we have that 
\begin{equation*} 
\int_{\bB_r^+(x)}|Du|^2 \leq C_xr^{m-2+2{\alpha}}.
\end{equation*} 
\end{proposition}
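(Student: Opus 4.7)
The plan is to derive the estimate directly from Corollary \ref{c:consequencesmonotonfrequency} together with the monotonicity of the frequency (Theorem \ref{t:monotonfrequency}). Fix $x \in H \cap \bB_1$ with $I(x)=\alpha$ and set $t:=\tfrac{1-|x|}{2}$, which plays the role of a reference radius.

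First I would dispose of the degenerate case: if $u$ vanishes on some half-ball $\bB_\rho^+(x)$, then $D(x,r)=0$ for all $r<\rho$ and the estimate is immediate. So we may assume $u$ is not identically zero on any neighbourhood of $x$, which by Theorem \ref{t:monotonfrequency} ensures that $I(x,r)$ is a well-defined non-decreasing absolutely continuous function on $(0,1-|x|)$. In particular,
\begin{equation*}
I(x,r) \;\geq\; \lim_{s\to 0^+} I(x,s) \;=\; I(x) \;=\; \alpha \quad \text{for every } r\in (0,1-|x|).
\end{equation*}

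Next, I would apply the upper bound in part (2) of Corollary \ref{c:consequencesmonotonfrequency} with this fixed $t$ and a variable $r\in (0,t]$, which gives
\begin{equation*}
\frac{D(x,r)}{r^{m-2}} \;\leq\; \left(\frac{r}{t}\right)^{2I(x,r)} \frac{D(x,t)}{t^{m-2}}.
\end{equation*}
Since $r/t \leq 1$ and $I(x,r)\geq \alpha$, we can bound the exponent factor by $(r/t)^{2\alpha}$, obtaining
\begin{equation*}
D(x,r) \;\leq\; \frac{D(x,t)}{t^{m-2+2\alpha}}\; r^{m-2+2\alpha}.
\end{equation*}
Setting $C_x := D(x,t)/t^{m-2+2\alpha}$ (a constant depending only on $x$ through $t=(1-|x|)/2$ and the fixed Dirichlet energy at that scale) yields the claimed bound.

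There is no real obstacle here: the only mild point is the choice of reference scale $t$ so that $D(x,t)$ is finite and the ratio $r/t$ lies in $(0,1]$, together with the observation that the monotonicity of $r\mapsto I(x,r)$ (rather than a quantitative closeness of $I(x,r)$ to $\alpha$) is exactly what makes the exponent comparison $(r/t)^{2I(x,r)} \leq (r/t)^{2\alpha}$ valid in the correct direction.
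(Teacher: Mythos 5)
Your proof is correct and follows essentially the same argument as the paper: both set $t=\tfrac{1-|x|}{2}$, invoke the upper bound in \eqref{eq:DirBound} from Corollary~\ref{c:consequencesmonotonfrequency}(2), and then use the frequency monotonicity $I(x,r)\geq I(x)=\alpha$ to replace the exponent $2I(x,r)$ by $2\alpha$. The only addition on your part is the explicit dispatch of the degenerate case where $u$ vanishes near $x$, which the paper leaves implicit.
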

\begin{proof}
We plug $t=r_x:=\frac{1-|x|}{2}$ in \eqref{eq:DirBound} which gives us for
$r \leq r_x$
\begin{equation*}
    \int_{\bB^+_r(x)}|Du|^2 \leq r^{m-2} \times \left(\frac{r}{r_x}\right)^{2I_x(r)} \int_{\bB^+_{r_x}(x)}|Du|^2 \leq C_x r^{m-2+2\alpha}.
\end{equation*}
In the last step we used the monotonicity formula.
\end{proof}\end{subsection}
\begin{subsection}{Blowup of Dir-minimizing $Q$ valued functions}

Let $y \in H \cap \bB_1$ assume that $\textup{Dir}(f,B_{\rho}(y))>0$ for every $\rho \leq 1-|y|$. The blowups of $f$ at $y$ are defined as:
\begin{equation*}
    f_{y,\rho}(x)= \frac{\rho^{\frac{m-2}{2}}f(\rho x+y )}{\sqrt{\textup{Dir}(f,B_{\rho}(y))}}
\end{equation*}

\begin{theorem} 
 Let $f \in W^{1,2}(\bB^+_1,\mathcal{A}_Q)$ be Dir-minimizing with zero boundary data and $\textup{Dir}(f,B_{\rho})>0$ for every $\rho \leq 1$. Then for any sequence $\rho_k \rightarrow 0$ there exists a subsequence, not relabeled, such that $f_{0,\rho}$ coverges locally uniformly and in weak $W^{1,2}(\RR_+^m)$ to a Dir-minimizing function $g:\RR^m_+ \rightarrow \mathcal{A}_Q(\RR^n)$ with the following properties:
 \begin{enumerate}
     \item $\textup{Dir}(g,\bB_1^+)=1$ and $g| \Omega$ is Dir-Minimizing for any bounded $\Omega$.
     \item $g(x)=|x|^{\alpha}g\left(\frac{x}{|x|}\right)$, where $\alpha=I_{0,f}(0)>0$ is the frequency of $f$ at $0$.
 \end{enumerate}
\end{theorem}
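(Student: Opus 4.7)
The strategy mirrors the classical Almgren/De Lellis--Spadaro blowup argument, with the additional care needed for the zero boundary trace. First I would note that by construction $\textup{Dir}(f_{0,\rho},\bB_1^+)=1$, and by Corollary \ref{c:consequencesmonotonfrequency} (the upper estimate in \eqref{eq:DirBound}) applied after rescaling, one obtains uniform bounds $\textup{Dir}(f_{0,\rho_k},\bB_R^+)\le C(R)$ on any half-ball of radius $R\geq 1$, with constants independent of $k$. Combining this with the uniform Hölder estimate supplied by Theorem \ref{t:holdercontinuitydir} (applied on each compact $\overline{\bB_R^+}\subset \RR^m_+\cup H$), Arzelà--Ascoli extracts, up to a diagonal subsequence, a locally uniform limit $g$ on $\overline{\RR^m_+}$; the uniform $W^{1,2}$ bound then upgrades the convergence to weak $W^{1,2}_{\textup{loc}}$. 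The uniform convergence persists up to $H$ by the Hölder bound, so $g\res H\equiv Q\a{0}$.

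Next I would establish that $g$ is Dir-minimizing on every bounded subdomain $\Omega\subset\RR^m_+\cup H$. The idea is the standard closure argument: given any competitor $h\in W^{1,2}(\Omega,\mathcal{A}_Q)$ with $h=g$ on $\partial\Omega\cap\RR^m_+$ and $h\res H\cap\Omega=Q\a{0}$, one constructs competitors $h_k$ for $f_{0,\rho_k}$ on a slightly larger half-ball by interpolating between $h$ and $f_{0,\rho_k}$ in a thin annular region, using the Luckhaus-type interpolation lemma for $\mathcal{A}_Q$-valued maps (cf.\ \cite{DS1}). The interpolation can be arranged to preserve the zero boundary condition on $H$ since both $h$ and $f_{0,\rho_k}$ vanish there; passing to the limit via the interpolation energy estimate and the minimality of $f_{0,\rho_k}$ gives $\textup{Dir}(g,\Omega)\le \textup{Dir}(h,\Omega)$. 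As a byproduct one obtains strong $W^{1,2}_{\textup{loc}}$ convergence and hence $\textup{Dir}(g,\bB_1^+)=1$.

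To verify the homogeneity, I would use the scale invariance of the frequency: $I_{f_{0,\rho_k}}(0,r)=I_f(0,\rho_k r)$ for all $r>0$. By Theorem \ref{t:monotonfrequency}, the right-hand side converges to $\alpha:=I_f(0)$ as $k\to\infty$, for every $r>0$. Strong $W^{1,2}_{\textup{loc}}$ convergence (and the fact that $\int_{\partial\bB_r^+}|u|^2$ is continuous under uniform convergence) then gives $I_g(0,r)=\alpha$ for all $r>0$. Corollary \ref{c:monotonfrequency} (appropriately rescaled) forces $g$ to be $\alpha$-homogeneous. Finally, one checks $\alpha>0$: if $\alpha=0$, the first inequality in Corollary \ref{c:consequencesmonotonfrequency} combined with $H(0,r)\to 0$ as $r\to 0$ (by Theorem \ref{t:holdercontinuitydir} and $f\res H=Q\a{0}$) would force $H(0,r)\equiv 0$ on $(0,1)$, contradicting $\textup{Dir}(f,\bB_\rho^+)>0$.

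\textbf{Main obstacle.} The delicate step is the minimizing property of the limit: one must ensure the Luckhaus-type interpolation between $h$ and $f_{0,\rho_k}$ both keeps the total Dirichlet energy under control and respects the boundary condition $\cdot\res H=Q\a{0}$. The zero Dirichlet data does help (the interpolation can be done in an annular collar away from $H$ or, on the part of the annulus touching $H$, one can use radial contraction toward $0$), but one must patch the two constructions without inflating the energy. The homogeneity argument also relies on strong $W^{1,2}_{\textup{loc}}$ convergence, which itself comes out of the competitor comparison, so the two steps are logically intertwined.
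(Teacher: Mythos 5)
Your strategy is the standard De Lellis--Spadaro blowup argument, which is exactly what the paper uses (it simply cites \cite{DS1} and says the proof is analogous). The compactness-via-Hölder, closure of minimality via a Luckhaus-type interpolation respecting the zero trace on $H$, and frequency rigidity are all the right ingredients, and your discussion of how to preserve the boundary condition in the interpolation is on target.

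There is, however, a genuine logical ordering problem in the way you have laid out the argument. You use Corollary~\ref{c:consequencesmonotonfrequency} at the very start to get the uniform bound $\textup{Dir}(f_{0,\rho_k},\bB_R^+)\le C(R)$, and you defer the verification of $\alpha=I(0)>0$ to the end. But that uniform bound actually requires $\alpha>0$ as an input. Writing $\textup{Dir}(f_{0,\rho},\bB_R^+)=D(\rho R)/D(\rho)$ and applying the \emph{first} (not the second) inequality of~\eqref{eq:DirBound} with $r=\rho$, $t=\rho R$, one gets
\begin{equation*}
\frac{D(\rho R)}{D(\rho)}\le R^{m-2}\,\frac{I(0,\rho R)}{I(0,\rho)}\,R^{2I(0,\rho R)},
\end{equation*}
and the factor $I(0,\rho R)/I(0,\rho)$ is only bounded uniformly in $\rho$ because $I(0,\rho)\ge I(0)=\alpha>0$ by monotonicity. (The second inequality of~\eqref{eq:DirBound}, to which you refer, in fact gives a \emph{lower} bound on $D(\rho R)/D(\rho)$ after rearrangement.) So the positivity of the frequency must be established \emph{before} the blowup, not after. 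Your idea for proving $\alpha>0$ is the right one: the boundary Hölder estimate of Theorem~\ref{t:holdercontinuitydir} gives $|f|\lesssim |x|^\gamma$ near $0$, hence $H(0,r)\lesssim r^{m-1+2\gamma}$, and combining this with $(r/t)^{2I(0,t)}H(0,t)/t^{m-1}\le H(0,r)/r^{m-1}$ forces $I(0,t)\ge\gamma$ for every $t$, whence $\alpha=I(0)\ge\gamma>0$ (for otherwise $H(0,t)=0$ and hence $f\equiv Q\a{0}$ on $\bB_t^+$, contradicting $\textup{Dir}(f,\bB_t^+)>0$). Move this step to the beginning and the rest of your proof goes through.
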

The proof is analogous to the one from \cite{DS1}.
\begin{lemma}{(Cylindrical Blowups)}

Let $g \in W^{1,2}(\RR^{m^+},\mathcal{A}_Q(\RR^n))$ be an $\alpha$ homogeneous and Dir-Minimizing function with zero boundary value, $\textup{Dir}(g,\bB_1^+)>0$ and $\beta=I_{z,g}(0)$. Suppose $z=\frac{e_1}{2}$. Then, the tangent functions $h$ to $g$ at $z$ satisfy
\begin{equation*}
h(x_1,x_2,...,x_m)=\hat{h}(x_2,...,x_m)
\end{equation*} where $\hat{h} \in W^{1,2}(\RR^{m-1}_+,\mathcal{A}_Q(\RR^n))$ is a Dir-Minimizing Q valued function with zero boundary values on $\RR^{m-1}_+$
\end{lemma}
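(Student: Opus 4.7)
The plan is to combine the existence of the tangent from the preceding blowup proposition with a translation invariance argument powered by the $\alpha$-homogeneity of $g$, and then to verify Dir-minimality of the reduced function by a slicing-plus-cutoff construction in one fewer dimension.

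First, I would apply the preceding blowup theorem at the boundary point $z=e_1/2\in H$. The very definition of $\beta = I_{z,g}(0)$ presumes $\textup{Dir}(g,\bB_{\rho}^+(z))>0$ for all sufficiently small $\rho$, so the theorem produces a sequence $\rho_k\downarrow 0$ along which the renormalizations
\[
g_{z,\rho_k}(x) := \rho_k^{(m-2)/2}\,g(z+\rho_k x)\big/\sqrt{\textup{Dir}(g,\bB_{\rho_k}^+(z))}
\]
converge locally uniformly and weakly in $W^{1,2}$ to a tangent $h$ which is $\beta$-homogeneous about the origin, vanishes on $\{x_m=0\}$, and is Dir-minimizing on every bounded subdomain of $\RR^m_+$.

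Next comes the key step. Exploiting $\alpha$-homogeneity of $g$, for $k$ large and any $t\in \RR$ we have
\[
g\bigl((1+\rho_k t)(z+\rho_k x)\bigr) = (1+\rho_k t)^\alpha\, g(z+\rho_k x).
\]
Since $(1+\rho_k t)(z+\rho_k x)=z+\rho_k(x+tz+\rho_k t x)$, this identity rewrites as
\[
g_{z,\rho_k}(x+tz+\rho_k t x) = (1+\rho_k t)^\alpha\, g_{z,\rho_k}(x).
\]
Letting $k\to\infty$ and invoking locally uniform convergence gives $h(x+tz)=h(x)$ for every $t\in\RR$ and every $x\in\RR^m_+$. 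Since $z=e_1/2$, this is translation invariance of $h$ in the $e_1$ direction, so $h(x_1,\ldots,x_m)=\hat{h}(x_2,\ldots,x_m)$ for a $Q$-valued function $\hat h$ defined on $\RR^{m-1}_+$ which, as a slice of $h$ at $x_1=0$, inherits $W^{1,2}_{loc}$ regularity and the vanishing boundary condition on $\{x_m=0\}$.

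Finally, to prove Dir-minimality of $\hat h$ I would argue by contradiction. Suppose there were a bounded open $U\subset \RR^{m-1}_+$, a compact $K\Subset U$, and a competitor $\tilde h$ with $\tilde h\equiv \hat h$ on $U\setminus K$ and $\textup{Dir}(\tilde h,U)<\textup{Dir}(\hat h,U)$. Then for large $R$ I would construct a competitor for $h$ on the cylinder $(-R,R)\times U$ that equals $\tilde h(x_2,\ldots,x_m)$ for $|x_1|\leq R-1$, equals $h$ at $|x_1|=R$, and is obtained by a geodesic convex combination in $\mathcal A_Q(\RR^n)$ across the strips $\{R-1\leq |x_1|\leq R\}$; because $\tilde h\equiv \hat h$ on $U\setminus K$, this interpolation is non-trivial only on $[R-1,R]\times K$ and has cost bounded independently of $R$. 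The energy gain over the interior of the cylinder is at least $(2R-2)(\textup{Dir}(\hat h,U)-\textup{Dir}(\tilde h,U))$, which dominates the transition cost for $R$ large, contradicting Dir-minimality of $h$. The main delicate point is the $\mathcal A_Q$-interpolation in the transition strip, for which I would invoke the standard extension/geodesic machinery for $Q$-valued Sobolev maps from Section 2 of the Memoir \cite{DS1}, since this is the only step that is not a direct computation.
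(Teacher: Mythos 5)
Your proof is correct and follows essentially the same route as the paper, which itself defers to the interior version in \cite{DS1} and carries out the cylinder-competitor construction in the next lemma (Lemma \ref{Lem:Invariance}): translation invariance of the blowup via the $\alpha$-homogeneity of $g$, and Dir-minimality of $\hat h$ via an $R$-dependent cylinder competitor with a fixed transition cost, using the interpolation result (Proposition \ref{Interpolation}). The only slight imprecision is the phrase ``geodesic convex combination'' --- $\mathcal{A}_Q(\RR^n)$ has no canonical geodesic interpolation between two $W^{1,2}$ maps, so one should go directly through the retraction/interpolation lemma you cite from \cite{DS1}, exactly as the paper does.
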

The proof is analogous to the interior version of this statement. We do require a small modification for the construction of the competitor, which we will do as part of the next lemma.
We recall here the interpolation result that we will use to construct the competitor in next section.
\begin{proposition}\label{Interpolation}
There is a constant $C=C(m,n,Q)$ with the following property. For every $g \in W^{1,2}(\partial  \bB_1,\mathcal{A}_Q)$, there is $h \in W^{1,2}(\bB_1,\mathcal{A}_Q)$ with $h|_{\partial \bB_1}=g$ and
\begin{equation*}
    \textup{Dir}(h,\bB_1) \leq C \textup{Dir}(g,\partial \bB_1)+C \int_{\partial \bB_1} \mathfrak{g}(g,Q\a{0})^2.
\end{equation*}
\end{proposition}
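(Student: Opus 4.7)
My plan is to construct $h$ by an explicit cone-type extension rather than by a Luckhaus-style comparison between two spheres. The key observation is that $\mathcal{A}_Q(\RR^n)$ carries a well-defined scalar multiplication: for $\lambda\in\RR$ and $P=\sum_{i=1}^Q\a{p_i}\in\mathcal{A}_Q(\RR^n)$ one sets $\lambda\cdot P:=\sum_i\a{\lambda p_i}$, and this defines a $|\lambda|$-Lipschitz map $\mathcal{A}_Q\to\mathcal{A}_Q$ which at $\lambda=0$ collapses every point to $Q\a{0}$. This is precisely the mechanism that lets us interpolate in a single formula between arbitrary boundary data on $\partial\bB_1$ and the constant value $Q\a{0}$ at the center.

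Concretely, in polar coordinates $x=r\omega$ with $\omega\in\partial\bB_1$ and $r\in[0,1]$, I would set
\[
h(r\omega):=
\begin{cases}
g(\omega), & r\in[1/2,1],\\[2pt]
(2r)\cdot g(\omega), & r\in[0,1/2].
\end{cases}
\]
The two definitions agree at $r=1/2$ (both giving $g(\omega)$) and $h$ satisfies $h(0)=Q\a{0}$, so $h$ is continuous on $\bB_1$ with boundary trace exactly $g$. The fact that $h$ belongs to $W^{1,2}(\bB_1,\mathcal{A}_Q)$ follows from the chain rule for post-composition of $\mathcal{A}_Q$-valued Sobolev maps with Lipschitz maps into $\mathcal{A}_Q$ (see \cite{DS1}), applied to the Lipschitz map $(r,P)\mapsto(2r)\cdot P$.

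To obtain the energy estimate I would use the polar decomposition $|Dh|^2=|\partial_r h|^2+r^{-2}|\nabla_{S^{m-1}}h|^2$ for $\mathcal{A}_Q$-valued functions. On the annulus $\{1/2\le r\le1\}$, $\partial_r h\equiv 0$ and $|\nabla_{S^{m-1}}h|^2=|\nabla_{S^{m-1}}g|^2$, so integrating $r^{m-3}\,dr$ over $[1/2,1]$ produces a contribution bounded by $C\,\mathrm{Dir}(g,\partial\bB_1)$. On the inner ball $\{0\le r\le 1/2\}$, the scalar-multiplication formulas give $|\partial_r h|^2=4|g(\omega)|^2$ and $|\nabla_{S^{m-1}}h|^2=4r^2|\nabla_{S^{m-1}}g(\omega)|^2$, whence $|Dh|^2=4|g|^2+4|\nabla_{S^{m-1}}g|^2$; integrating against $r^{m-1}\,dr\,d\sigma(\omega)$ on $\bB_{1/2}$ contributes $C\|g\|_{L^2(\partial\bB_1)}^2+C\,\mathrm{Dir}(g,\partial\bB_1)$. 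Adding the two and using $\mathfrak{g}(g(\omega),Q\a{0})^2=|g(\omega)|^2$ gives the stated inequality.

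The main point to verify carefully is the polar formula for $|Dh|^2$ in the $\mathcal{A}_Q$-valued setting, i.e.\ that the pointwise expressions for $\partial_r h$ and $\nabla_{S^{m-1}}h$ written via local sheet selections assemble correctly into the Dirichlet density of a Sobolev $\mathcal{A}_Q$-valued function. This is not a genuine obstacle but it is the only place where the multi-valued nature of the problem enters nontrivially; it is handled by the DLS chain rule together with the fact that scalar multiplication by $2r$ is smooth on each stratum of $\mathcal{A}_Q$ and globally Lipschitz with constant $O(r)$, so the chain-rule identities used above hold pointwise a.e.
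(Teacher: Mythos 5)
Your construction is correct and gives a self-contained proof. Note that the paper itself supplies no proof of Proposition \ref{Interpolation}; it simply cites Corollary 2.16 of \cite{DS1}. Your argument---$0$-homogeneous extension of $g$ on the annulus $\{1/2\le r\le 1\}$, then interpolation to $Q\a{0}$ by the radial scalar dilation $h(r\omega)=(2r)\cdot g(\omega)$ on the inner ball---is the natural direct construction, and the polar computation is exact: on each approximate sheet selection $\partial_r h=2g_i(\omega)$ and $\nabla_{S^{m-1}}h=2r\nabla_{S^{m-1}}g_i(\omega)$ are orthogonal, so $|Dh|^2=4|g|^2+4|\nabla_{S^{m-1}}g|^2$ on $\{r<1/2\}$ and $|Dh|^2=r^{-2}|\nabla_{S^{m-1}}g|^2$ on the annulus; integrating in $r^{m-1}\,dr\,d\sigma$ (and noting $\int_{1/2}^1 r^{m-3}\,dr<\infty$ for $m\ge 2$) gives exactly $C\,\textup{Dir}(g,\partial\bB_1)+C\int_{\partial\bB_1}\mathfrak{g}(g,Q\a{0})^2$. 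So you have in fact proved a result the paper only cites, by what is presumably the same cone construction underlying \cite{DS1}.

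Two small points of exposition, neither affecting validity. First, $h$ need not be \emph{continuous} on $\bB_1$ (for $m\ge 3$, $W^{1,2}(\partial\bB_1,\mathcal{A}_Q)$ is not contained in $C^0$); what the gluing actually needs is that the two formulas have the same $W^{1,2}$-trace $g(\omega)$ along $\partial\bB_{1/2}$, so they patch to an element of $W^{1,2}(\bB_1\setminus\{0\})$, which then extends to $W^{1,2}(\bB_1)$ because a point has zero $2$-capacity when $m\ge 2$. Second, $(r,\omega)\mapsto(2r)\cdot g(\omega)$ is not a post-composition of $g$ with a Lipschitz self-map of $\mathcal{A}_Q$ but the product of the $0$-homogeneous extension of $g$ with the Lipschitz scalar $\lambda(x)=\min(2|x|,1)$; the relevant chain/product rule on local sheet selections, $D(\lambda u_i)=u_i\otimes\nabla\lambda+\lambda Du_i$, is what produces the orthogonal radial and angular contributions you compute, and it is this rule (rather than the post-composition chain rule of \cite{DS1}) that should be invoked.
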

This is Corollary 2.16 of \cite{DS1}.
\end{subsection}
\begin{subsection}{Classification of 1 homogeneous Dir-minimizers}

\begin{lemma}\label{Lem:Invariance}Let $h$ be a $\alpha$ homogeneous Dir-minimizing function on $\RR^m_+$ with zero boundary value. Suppose that $I(z)=\alpha$ for $z=e_1/2$. Then $h(s,x)=\hat{h}(x)$ for a Dir-minimizing function $\hat{h}$ on $\RR^{m-1}_+$.
\end{lemma}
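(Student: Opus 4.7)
The plan follows the classical dimension reduction argument: upgrade the pointwise limit $I(z)=\alpha$ to the statement $I(z,r)\equiv \alpha$ on every scale, use this to conclude that $h$ is also $\alpha$-homogeneous about $z$, and then combine the two homogeneities to deduce translation invariance of $h$ along the direction $e_1$.

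The first step is the one I expect to be the main obstacle. By monotonicity of the frequency at $z$ (Theorem \ref{t:monotonfrequency}) and $I(z)=\alpha$, one automatically has $I(z,r)\geq \alpha$. For the reverse inequality, I would compare the growth of $H(z,\cdot)$ at large radii against the global growth of $h$ about the origin. Using $\alpha$-homogeneity from $0$, $H(0,r)=c_0 r^{m-1+2\alpha}$ and hence $\int_{\bB_R^+(0)}|u|^2\leq C R^{m+2\alpha}$. Since $\bB_R^+(z)\subset \bB_{R+|z|}^+(0)$ and $\int_0^R H(z,r)\,dr=\int_{\bB_R^+(z)}|u|^2$, this yields $\int_0^R H(z,r)\,dr\leq C(R+|z|)^{m+2\alpha}$ for large $R$. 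If instead some $r_0$ satisfied $I(z,r_0)=\alpha'>\alpha$, then monotonicity and Corollary \ref{c:consequencesmonotonfrequency} applied at $z$ would force $H(z,R)\geq c R^{m-1+2\alpha'}$ for every $R\geq r_0$, and integrating this lower bound contradicts the previous bulk upper bound as $R\to\infty$. Therefore $I(z,r)\equiv \alpha$, and Corollary \ref{c:monotonfrequency} applied at $z$ yields $h(z+\lambda(x-z))=\lambda^{\alpha}h(x)$ for every $\lambda>0$ and $x\in\RR^m_+$.

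Combining the two homogeneities gives translation invariance along $e_1$. Evaluating at a base point $p=(0,\bar p)$ with $\bar p\in\RR^{m-2}\times(0,\infty)$, the $z$- and $0$-homogeneities respectively give $h((1-t)/2,t\bar p)=t^{\alpha}h(0,\bar p)$ and $h(0,t\bar p)=t^{\alpha}h(0,\bar p)$, so $h((1-t)/2,\bar q)=h(0,\bar q)$ whenever $\bar q=t\bar p$. For fixed $\bar q\in\RR^{m-2}\times(0,\infty)$ the choice $\bar p=\bar q/t$ is allowed for every $t>0$, and as $t$ ranges over $(0,\infty)$ the parameter $s:=(1-t)/2$ ranges over $(-\infty,1/2)$. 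Rescaling the resulting identity by $\lambda$ via $0$-homogeneity extends it to every $s\in\RR$. Hence $h$ does not depend on $x_1$, and one may write $h(x_1,\bar x)=\hat h(\bar x)$ for some $\hat h:\RR^{m-1}_+\to\mathcal{A}_Q(\RR^n)$.

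Finally, I would verify Dir-minimality of $\hat h$ by a slab competitor argument. Suppose for contradiction some $\hat g$ on a bounded Lipschitz domain $\hat\Omega\subset\RR^{m-1}_+$ satisfies $\hat g|_{\partial\hat\Omega}=\hat h|_{\partial\hat\Omega}$ (and vanishes on $\partial\hat\Omega\cap\partial\RR^{m-1}_+$) with $\mathrm{Dir}(\hat g,\hat\Omega)<\mathrm{Dir}(\hat h,\hat\Omega)$. On the cylinder $\Omega_L=(-L-1,L+1)\times\hat\Omega\subset\RR^m_+$ construct a competitor $g_L$ to $h$ equal to $\hat g(\bar x)$ on $[-L,L]\times\hat\Omega$, equal to $\hat h(\bar x)$ on the outer annular strips, and interpolated in the two thin transition layers via Proposition \ref{Interpolation}. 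The interpolation contributes only $O(1)$ to the energy while the bulk parts give $\mathrm{Dir}(g_L,\Omega_L)\leq 2L\,\mathrm{Dir}(\hat g,\hat\Omega)+C$, to be compared with $\mathrm{Dir}(h,\Omega_L)=2(L+1)\,\mathrm{Dir}(\hat h,\hat\Omega)$. Choosing $L$ large contradicts the Dir-minimality of $h$ on $\Omega_L$, completing the proof.
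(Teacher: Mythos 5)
Your proposal is correct and follows the same three-step structure as the paper's proof: first upgrade $I(z)=\alpha$ to $I(z,r)\equiv\alpha$ and deduce $\alpha$-homogeneity about $z$, then combine the two homogeneities to obtain translation invariance in the $e_1$-direction, and finally verify Dir-minimality of $\hat h$ by a slab competitor built with Proposition~\ref{Interpolation}. Steps 2 and 3 are essentially identical to the paper's.

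The only place you diverge is in the technical tool used in step 1. The paper compares the Dirichlet energy $D(z,\cdot)$ at small and large radii against the homogeneous growth rate about the origin, deriving a growth/decay inequality on the ratio $I(z,r)/I(z,t)$ that forces $I(z,\cdot)$ to be constant. You instead argue with the spherical height $H(z,\cdot)$: the monotone log-derivative identity of Corollary~\ref{c:consequencesmonotonfrequency} gives $H(z,R)\gtrsim R^{m-1+2\alpha'}$ for $R\geq r_0$ if $I(z,r_0)=\alpha'>\alpha$, and integrating in $R$ contradicts the bulk bound $\int_{\bB_R^+(z)}|u|^2\leq C(R+|z|)^{m+2\alpha}$ that follows from $\alpha$-homogeneity about $0$ and the inclusion $\bB_R^+(z)\subset\bB_{R+|z|}^+(0)$. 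Both routes are legitimate one-line-conclusion applications of the same frequency identities; yours is if anything a bit cleaner to verify, since it rests directly on the integrated log-derivative bound rather than on mixed upper/lower bounds for $D$ at two different scales. One small point worth making explicit: your contradiction requires $H(z,r_0)>0$; in the degenerate case $H(z,r_0)=0$ the function $u$ vanishes near $z$, hence identically by the frequency/analyticity theory, and the lemma is trivial.
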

\begin{proof}
\textbf{We show that $y \rightarrow h(y-z)$ must be $\alpha$ homogeneous.}

We know that for $r \leq t$
\begin{equation*}
\frac{I(z,r)}{I(z,t)}\left(\frac{r}{t}\right)^{2I(z,t)}\frac{D(z,t)}{t^{m-2}}\leq \frac{D(z,r)}{r^{m-2}}.
\end{equation*}
Moreover by the $\alpha$ homogeneity 
\begin{equation*}
\int_{\bB_r(0)}|Dh|^2=r^{m-2+2\alpha}\int_{\bB_1(0)}|Dh|^2.
\end{equation*}
This implies that $D(z,t) \gtrsim t^{m-2+2\alpha}$ for $t \geq 2$. Moreover we also know that for $r \leq 1$
\begin{equation*}
D(z,r) \lesssim r^{m-2+2\alpha}.
\end{equation*}
This implies 
\begin{equation*}
\left(\frac{t}{r}\right)^{2I(z,t)-2\alpha} \lesssim \frac{I(z,t)}{I(z,r)} \lesssim \frac{I(z,t)}{\alpha}.
\end{equation*}
For this inequality to hold we must have $I(z,t)\equiv\alpha$ since when $r \rightarrow 0$ the left hand side blowsup otherwise.

As for the interior we can get invariance of the function on the $z$ direction. If $x \in \RR^{m-1}_+$ then for $s>0$
\begin{equation*}
    h(se_1+x')=(2s)^{\alpha}h \left (\frac{e_1}{2}+\frac{x'}{2s} \right)=h\left (\frac{e_1}{2}+x'\right)=\hat{h}(x').
\end{equation*} For $s<0$ the equality is analogous. This gets us invariance in the direction of $z$. 

\textbf{We show that $\hat{h}$ is Dir-minimizing. }If it is not, there exists a competitor $h'$ on a $m-1$ dimensional half ball $\bB^+$:

\begin{equation*}
\int_{\bB^+}|D\tilde{h}|^2 < \int_{\bB^+}|D\hat{h}|^2 \; \textup{and} \; \tilde{h} \res \partial \bB^+=\hat{h}\res \partial \bB^+.
\end{equation*}
where $h'$ also has zero boundary value along $H$.

We can as in the proof of lemma 3.24 in \cite{DS1} use $\tilde{h}$ to construct a competitor for $h$ in a sufficiently large half cylinder. The proof is analogous but we still give a briefs sketch on the construction of the competitor.

We assume $\bB^+$ is centered at $0$ and is of radius $1$ (up to rescaling and translating). Given $L \in \RR$ we take a competitor $\left\{\tilde{h} \neq h'\right\} \subset [-L,L] \times \bB_{1}^{+}$.
The competitor $h'$ is chosen so that it agrees with $\tilde{h}$ in $[-L+1,L-1] \times \bB_1{+}$. 

The transition regions are two half-cylinders of the form $[0,1] \times \bB_1^{+}$. In order to get the interpolating functions over the cylinders, which will be independent of $L$, we can use the interpolation result \ref{Interpolation} to get the desired interpolating function in the transition regions. The fact that the unit ball and the half-cylinder are bi-lipschitz equivalent allows to use the lemma despite it is stated originally for the unit ball.

When $L$ is large enough the gain obtained by using a less energy competitor for the slices dominates the loss of the interpolating function. The precise computation is the same as in lemma 3.24 in \cite{DS1}.
\end{proof}
\begin{lemma}\label{l:classificationof1homogeneous}
Let $u \in W^{1,2}(\RR_+^m,\mathcal{A}_Q(\RR^n))$ be Dir-minimizing with zero boundary value along $H$. Assume $u$ is $\alpha$-homogeneous. Then $\alpha \geq 1$. If $\alpha=1$, then $u$ is linear: $ \exists v_1,...,v_Q \in \RR^n$ such that
\begin{equation*}
u=\sum_{i=1}^Q\a{v_ix_m}.
\end{equation*}
\end{lemma}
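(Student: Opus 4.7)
I will proceed by induction on the dimension $m$, using the preceding cylindrical blowup construction to reduce the dimension by one at each step while preserving the homogeneity.

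For the base case $m=1$, an $\alpha$-homogeneous $u\colon \RR^+\to\mathcal{A}_Q(\RR^n)$ with $u(0)=Q\a{0}$ admits a global continuous selection, since $(0,\infty)$ is simply connected. I may therefore write $u(t)=\sum_{i=1}^Q\a{a_i t^\alpha}$ for some $a_i\in\RR^n$. Finiteness of the Dirichlet integral forces $\alpha>\tfrac{1}{2}$, and an explicit calculation gives
\[
\int_0^1 |Du|^2\,dt=\frac{\alpha^2}{2\alpha-1}\sum_i|a_i|^2.
\]
The linear competitor $v(t)=\sum_i\a{a_i t}$ agrees with $u$ at $t=0$ and $t=1$ and has Dirichlet energy $\sum_i|a_i|^2$, so Dir-minimality forces $(\alpha-1)^2\le 0$, giving $\alpha=1$ and $u=v$ linear.

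For the inductive step I first fix $z\in H\setminus\{0\}$ at which the cylindrical blowup is well defined, i.e.\ $\textup{Dir}(u,\bB_\rho^+(z))>0$ for every $\rho>0$. Such a $z$ exists because otherwise $u$ would vanish on a neighborhood of a dense subset of $H$, and by $\alpha$-homogeneity on an open cone; together with unique continuation for Dir-minimizers this would contradict non-triviality of $u$. The preceding lemma then produces a $\beta$-homogeneous Dir-minimizer $\hat h$ on $\RR_+^{m-1}$ with zero boundary, where $\beta=I_u(z)$. By monotonicity of the frequency and the fact that $I_u(z,r)\to \alpha$ as $r\to\infty$ (for $r\gg|z|$ the half-balls $\bB_r^+(z)$ and $\bB_r^+(0)$ differ by a set whose contribution is $O(|z|/r)$ times the main term, while $u$ is $\alpha$-homogeneous), I obtain $\beta\le\alpha$. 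The inductive hypothesis applied to $\hat h$ gives $\beta\ge 1$, and hence $\alpha\ge\beta\ge 1$.

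When $\alpha=1$, the same chain forces $\beta=1=\alpha$, so Lemma~\ref{Lem:Invariance} applies and $u$ is translation invariant in the direction of $z$. After rotating coordinates so that $z$ lies along $e_1$, I may write $u(x_1,\ldots,x_m)=\tilde u(x_2,\ldots,x_m)$, where $\tilde u$ is $1$-homogeneous Dir-minimizing on $\RR_+^{m-1}$ with zero boundary. The inductive hypothesis then yields $\tilde u(y,t)=\sum_{i=1}^Q\a{v_i t}$, and pulling back gives $u(x)=\sum_{i=1}^Q\a{v_i x_m}$ as claimed. The main obstacle I anticipate is making the choice of $z$ with a non-trivial cylindrical blowup rigorous, namely the unique-continuation property for $Q$-valued Dir-minimizers that vanish on an open set, together with the asymptotic identity $I_u(z,r)\to\alpha$ as $r\to\infty$; once these two points are settled, the reduction to one dimension and the base-case competitor calculation conclude the argument.
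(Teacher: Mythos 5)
Your proof is correct and follows essentially the same strategy as the paper: frequency monotonicity and cylindrical blowups to reduce the dimension down to $m=1$, where the classification is elementary, and Lemma~\ref{Lem:Invariance} to pass to a lower-dimensional Dir-minimizer once $\alpha=1$. The cosmetic differences (casting the iteration as an explicit induction on $m$, proving the $1$d base case by a direct competitor computation rather than quoting the known $1$d classification, and deriving $I(z)\le\alpha$ from $\lim_{r\to\infty}I(z,r)=\alpha$ together with radial monotonicity rather than from $I(\lambda z)=I(z)$ plus upper semicontinuity of $z\mapsto I(z)$) do not change the substance, and the obstacle you flag about choosing $z$ with a non-degenerate cylindrical blowup is also left implicit in the paper's argument.
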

\begin{proof}
In order to prove $\alpha \geq 1$, we argue by contradiction. Suppose $\alpha<1$. Given $z \neq 0$ by homogeneity $I(rz)=I(z)$ for all $r>0$. By upper semicontinuity of the frequency $I(z)<I(0)<1$. We take a blowup at $z$ and get an $I(z)$ homogeneous Dir mimizing function on $\RR_+^{m-1}$ with zero boundary data. We can repeat this procedure to get a Dir-minimizing $\beta$ homogeneous function with $\beta <1$ on $\RR^+$ with zero boundary data. The only Dir minimizers with zero boundary data in $1$d are linear functions so $\beta=1$ which reaches a contradiction.

We now show that if $\alpha=1$, then $u$ must be linear. By upper semicontinuity of the frequency $I(z)\leq I(0)=1$ for every $z \in H$. By the last proposition,  $I(z) \geq 1$ for every $z \in H$. Thus $I(z)=1$ for every $z \in H$. 

By Lemma \ref{Lem:Invariance}, the function $u$ is invariant in all directions parallel to the boundary by Lemma \ref{Lem:Invariance}. Thus there exists $u'$ a $1$ dimensional Dir-minimizing function on $\RR^+$ $u(y,t)=u'(t)$. Since all $1$ dimensional Dir-minimizing functions are linear, $u'$ must be linear:
\begin{equation*}
u'(t)=\sum_{i=1}^Q \a{v_it}
\end{equation*} w
for $v_i \in \RR^n$.
Thus we get the desired expression for $u$.
\end{proof}
\end{subsection}
\subsection{A decay lemma}
\begin{proposition}\label{p:heightdecay}
If $I(0)=\alpha$ then
\begin{equation*}
\int_{\bB_r^+}|u|^2 \leq\frac{1}{m+1}r^{m+2\alpha} \int_{\partial\bB_1^+}|u|^2.
\end{equation*}
In particular, if $I(0)>1$ then
\begin{equation*}
\lim_{r \rightarrow 0}\frac{1}{r^{m+2}}\int_{\bB_r^+}  |u|^2=0.
\end{equation*}
\end{proposition}
\begin{proof}
We have that
\begin{equation*}
\int_{\partial \bB_r^+}|u|^2 \leq r^{m-1+2\alpha} \int_{\partial \bB_1^+}|u|^2
\end{equation*}
from  Corollary \ref{c:consequencesmonotonfrequency}.
We integrate over the radius and get since $\alpha \geq 1$
\begin{equation*}
\int_{\bB_r^+}|u|^2 \leq \left(\int_0^r s^{m+1} \int_{\partial \bB_1^+}|u|^2\right) r^{2(\alpha-1)}= \frac{1}{m+1}r^{m+2\alpha} \int_{\partial\bB_1^+}|u|^2.
\end{equation*}
This gives us the desired estimate. The conclusion at the end of the proposition is immediate from the fact that $\alpha>1$ implies the exponent on $r$ is greater than $m+2$.
\end{proof}

The following lemma will allow us to get decay to either a sequence of linear functions or to zero.
\begin{lemma}\label{l:dirminmizersdecaylinearguy}
    Let $u$ be as in Assumption \ref{a:linearproblem}. Then for every sequence of radii $r_j \to 0$, there exists a subsequence, not relabeled, and linear maps $L_j$, each with zero boundary value, such that the following properties hold. We require for every $j$
    \begin{equation*}
        \int_{\bB_1} |D L_j|^2 \leq \int_{\bB_1} |Du|^2.
    \end{equation*}
    If $L_j = \sum_{i=1}^{N} \a{v_{i,j} x_n}$, we define the angle
   \begin{equation*}
    \alpha(L_j) := \min_{\substack{i_1, i_2 \\ v_{i_1,j} \neq v_{i_2,j}}} |v_{i_1,j} - v_{i_2,j}|
\end{equation*}
    If $L_j = Q\a{L'_j}$, then $\alpha(L_j) := 1$. One of the following cases holds:
    \begin{enumerate}
        \item $\liminf_{j \to \infty} \alpha(L_j) > 0$ and
        \begin{equation*}
            \lim_{j \to \infty} \frac{1}{r_j^{m+2}} \int_{\bB_{r_j}} \mathcal{G}(u, L_j)^2 = 0,
        \end{equation*}
        \item $L_j=Q\a{L'}$ and
        
        \begin{equation*}
            \lim_{j \to 0} \frac{1}{r_j^{m+2}} \int_{\bB_{r_j}} \left|u\ominus Q \a{L'}\right|^2 = 0.
        \end{equation*}
    \end{enumerate}
\end{lemma}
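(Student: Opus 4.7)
The plan is to split on the value of the boundary frequency $\alpha := I(0)$, which by Theorem \ref{t:monotonfrequency} exists, and which by Lemma \ref{l:classificationof1homogeneous} applied to any tangent map satisfies $\alpha \geq 1$. If $u \equiv 0$ on some neighbourhood of $0$ the statement is trivial with $L_j = Q\a{0}$, so throughout I assume $D(r) := \int_{\bB_r^+}|Du|^2 > 0$ for every small $r$.

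\textbf{Case $\alpha > 1$.} I would take $L_j := Q\a{0}$ (so $L' = 0$). The Dirichlet bound is trivial, and Proposition \ref{p:heightdecay} gives $r_j^{-(m+2)}\int_{\bB_{r_j}^+}|u|^2 \leq (m+1)^{-1} r_j^{2\alpha - 2}\int_{\partial \bB_1^+}|u|^2 \to 0$, placing us in case (2).

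\textbf{Case $\alpha = 1$.} Pass to a subsequence of $r_j \to 0$ so that the normalized blowups $f_{0,r_j}(z) := f(r_j z)/s_{r_j}$, with $s_\rho^2 := \rho^{2-m}D(\rho)$, converge weakly in $W^{1,2}$ and strongly in $L^2$ to a $1$-homogeneous Dir-minimizer $g$ on $\RR_+^m$ with zero boundary value and unit Dirichlet energy on $\bB_1^+$. Lemma \ref{l:classificationof1homogeneous} forces $g(z) = \sum_{i=1}^Q \a{w_i z_m}$, and the normalization gives $(\omega_m/2)\sum_i |w_i|^2 = 1$. Write $\tau_j := s_{r_j}/r_j = \sqrt{D(r_j)/r_j^m}$, so that $u(r_j z) = r_j \tau_j f_{0,r_j}(z)$. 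Corollary \ref{c:consequencesmonotonfrequency}(2) with $t=1$, combined with $I \geq 1$, yields $\tau_j^2 \leq D(1)$; pass to a further subsequence to arrange $\tau_j \to \tau_\infty \in [0, \sqrt{D(1)}]$.

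Now I distinguish three subcases. If $\tau_\infty = 0$, set $L_j = Q\a{0}$; by Poincare $\int_{\bB_1^+}|f_{0,r_j}|^2 \lesssim 1$, so $r_j^{-(m+2)}\int_{\bB_{r_j}^+}|u|^2 = \tau_j^2 \int_{\bB_1^+}|f_{0,r_j}|^2 \to 0$, which is case (2). If $\tau_\infty > 0$ and the $w_i$ are not all equal, set $L_j(y) := \tau_j \sum_i \a{w_i y_m}$; then $L_j(r_j z) = r_j \tau_j g(z)$, so the scaling identity gives $r_j^{-(m+2)}\int \mathcal{G}(u,L_j)^2 = \tau_j^2 \int_{\bB_1^+}\mathcal{G}(f_{0,r_j}, g)^2 \to 0$, while $\alpha(L_j) = \tau_j \min_{w_i \neq w_k}|w_i - w_k| \to \tau_\infty c > 0$, giving case (1). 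Finally, if $\tau_\infty > 0$ and all the $w_i$ equal a single vector $w$, set $L_j = Q\a{L'}$ for the fixed linear map $L'(y) := \tau_\infty w\, y_m$; the change of variables plus $\tau_\infty/\tau_j \to 1$ and strong $L^2$ convergence reduce the decay to $\tau_j^2 \int_{\bB_1^+}\mathcal{G}(f_{0,r_j}, Q\a{(\tau_\infty/\tau_j) w z_m})^2 \to 0$, giving case (2). In every subcase the Dirichlet bound $\int_{\bB_1^+}|DL_j|^2 = \tau_j^2 \leq D(1)$ follows from $(\omega_m/2)\sum |w_i|^2 = 1$.

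The main technical point is the strong $L^2$ convergence of the blowup sequence and the Dir-minimality of the limit: this is standard from weak $W^{1,2}$ compactness, Rellich, and the closedness of Dir-minimizers under this mode of convergence, as recorded in Section 10.2. Once this is available, everything else is essentially bookkeeping using frequency monotonicity from Corollary \ref{c:consequencesmonotonfrequency} and the classification of $1$-homogeneous minimizers from Lemma \ref{l:classificationof1homogeneous}.
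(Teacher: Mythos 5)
Your proof is correct, and it recovers the same underlying strategy---frequency dichotomy, blowup to a $1$-homogeneous linear Dir-minimizer when $I(0)=1$, rescaling by $\tau_j:=\sqrt{D(r_j)/r_j^m}$---but organizes the case split differently from the paper. The paper's proof reduces first to the average-free situation: it subtracts the harmonic average $\eta\circ u$, which vanishes on $V$ and hence is smooth up to the boundary with a well-defined linear part at $0$, classifies the average-free blowup $L$, which then necessarily has $\alpha(L)>0$, sets $L_j=c_jL$ and splits on $\liminf\alpha(L_j)$, and finally adds back $Q\a{D(\eta\circ u)(0)\cdot x}$ to pass to general $u$. You instead classify the blowup $g$ of $u$ directly, which may have all sheets coincident, pass to a further subsequence along which $\tau_j\to\tau_\infty$, and split on $\tau_\infty$ and on whether $g$ is collapsed. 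The two dichotomies are equivalent: your ``$\tau_\infty=0$'' is the paper's ``$\liminf\alpha(L_j)=0$'', and in your collapsed subcase the fixed map $L'=\tau_\infty w\,x_m$ is precisely the linear part $D(\eta\circ u)(0)\cdot x$ of the harmonic average. Your route buys a more self-contained argument that avoids the separate regularity step for $\eta\circ u$, at the modest price of passing to a further subsequence where $\tau_j$ converges, which the paper never needs explicitly. One point worth making explicit when writing this up: strong $L^2(\bB_1^+)$ convergence of $f_{0,r_j}$ to $g$ up to the flat boundary (not merely locally in the open half-space) is what makes the change-of-variables computations close; this follows from the equi-bounded $W^{1,2}(\bB_2^+)$ norms via frequency monotonicity and Poincar\'e, together with Rellich, as you indicate.
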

\begin{proof}
We start by proving the lemma first for the case that $u$ has zero average. If $I(u)>1$ or $u \equiv 0$ we have by Proposition \ref{p:heightdecay} that
\begin{equation*}
\lim_{j \rightarrow 0} \frac{1}{r_j^{m+2}}\int_{\bB_{r_j}} |u|^2=0
\end{equation*}
so we conclude.

If $I(u)=1$, given the sequence $r_j$, we can choose a subsequence, not relabeled, so that
\begin{equation*}
u_{0,r_j} \rightarrow L
\end{equation*}
is linear average free and with zero boundary value. This follows from Lemma \ref{l:classificationof1homogeneous}, since $L$ must be Dir minimizing $1$-homogeneous with zero boundary value function and thus it is linear. 
We have
\begin{align*}
\int_{\bB_{1}}\mathcal{G}(u_{0,r_j},L)^2&=\int_{\bB_1}\mathcal{G}\left(\frac{r_j^{m-2/2}}{\sqrt{\textup{Dir}(u,\bB_{r_j})}},u(r_jx),L(x)\right)^2dx\\&=\frac{1}{r_j^{m+2}}\frac{r_{j}^m}{\textup{Dir}(u,\bB_{r_j})} \int_{\bB_{r_j}}\mathcal{G}\left(u,\frac{\sqrt{\textup{Dir}(u,\bB_{r_j})}}{{r_j^{m/2}}}L\right)^2.
\end{align*}

We denote \begin{equation*}
    c_j:=\frac{\sqrt{\textup{Dir}(u,\bB_{r_j})}}{{r_j^{m/2}}}.
\end{equation*}
Thus
\begin{equation*}
\lim_{j \rightarrow 0} \frac{1}{r_j^{m+2}}\frac{1}{c_j^2}\int_{ \bB_{r_j}}\mathcal{G}(u,c_jL)^2 =0.
\end{equation*}

We notice that 
\begin{equation*}
\frac{\sqrt{\textup{Dir}(u,\bB_r)}}{r^{m/2}} \leq \sqrt{\textup{Dir}(u,\bB_1)}.
\end{equation*}
since the left hand side is monotone for a Dir-minimizing function with frequency at least $1$ (See Lemma \ref{p:DirEnergyBound}). This implies $c_j \leq \sqrt{\textup{Dir}(u,\bB_1)}$ and thus if $L_j=c_jL$, then $\int_{\bB_1} |DL_j|^2 \leq \int_{\bB_1}|Du|^2.$
We must have by definition that $\alpha(L_j)=c_j\alpha(L).$ 

We must show that if 
\begin{equation*}
\liminf_{j \rightarrow \infty}\alpha(L_j)=0
\end{equation*}
then up to a subsequence
\begin{equation*}
\lim_{j \rightarrow 0} \frac{1}{r_j^{m+2}}\int_{\bB_{r_j}}|u|^2=0.
\end{equation*}
This comes from
\begin{equation*}
\lim_{j \rightarrow 0}\frac{1}{r_j^{m+2}}\int_{\bB_{r_j}}|u|^2 \leq C\lim_{j \rightarrow 0}\frac{1}{r_j^{m+2}}\int_{\bB_{r_j}}\mathcal{G}(u,c_jL)^2 +C\lim_{j \rightarrow 0}\alpha(L_j)^2=0.
\end{equation*}
This comes from the triangle inequality coupled with the fact that
\begin{equation*}
\frac{1}{r_j^{m+2}}\int_{\bB_{r_j}} |c_jL|^2 \leq C c_j^2\alpha(L)^2=\alpha(L_j)^2.
\end{equation*}
Thus we conclude with the proof of the lemma for the case of zero average.

We extend it for the case when $u$ is not average free. We denote $\eta \circ u$  the average of $u$. In the first case, we define
\begin{equation*}
L_j=c_j L\oplus Q\a{D(\eta \circ u)(0)}. 
\end{equation*}
In the second case, we define 
\begin{equation*}
    L_j=Q\a{D(\eta \circ u)(0)}.
\end{equation*}
We obtain the same properties for the average
\begin{equation*}
\int_{\bB_1}|D(\eta \circ u)(0)|^2 \leq \int_{\bB_1}|D(\eta \circ u)|^2
\end{equation*}
and
\begin{equation*}
\lim_{j \rightarrow \infty} \frac{1}{r_j^{m+2}}\int_{\bB_{r_j}}|\eta \circ u(x)-D(\eta \circ u)(0)x|^2dx=0.
\end{equation*}
We obtain the desired conclusion by summing the average free estimate with the estimate with the average.
\end{proof}
\begin{subsection}{Local Dir-minimizers away from the halfplane}
This subsection proves a useful result for later. Namely a function with zero boundary data on $\bB_1^+$ which is Dir-minimizing away from the spine is Dir-minimizing on $\bB_1^+$ with zero boundary value along $H$.
\begin{theorem}\label{t:localdirminimizers}
Let $u \in W^{1,2}(\bB_1^+,\mathcal{A}_Q(\RR^n))$ have zero boundary value along $H$.
Assume that $u$ is Dir-minimizing on every subdomain of $\bB_1^+$ with smooth boundary. Then $u$ is Dir-minimizing in $\bB_1^+$ with zero boundary value along $H$.
\end{theorem}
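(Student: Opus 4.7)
The plan is to show that for any competitor $v \in W^{1,2}(\bB_1^+, \mathcal{A}_Q(\RR^n))$ with $v \res H \cap \bB_1 = Q\a{0}$ and $v \res (\partial \bB_1 \cap \{x_m > 0\}) = u \res (\partial \bB_1 \cap \{x_m > 0\})$, one has $\textup{Dir}(u, \bB_1^+) \leq \textup{Dir}(v, \bB_1^+)$. The strategy is to modify $v$ into a new function $v_{\sigma,\tau}$ that coincides with $u$ in a collar around $\partial \bB_1^+$ (both near $H$ and near the spherical part $\partial \bB_1 \cap \{x_m > 0\}$) and with $v$ in the middle region $\{x_m > 2\sigma\} \cap \bB_{1-2\tau}^+$. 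Then $v_{\sigma,\tau} - u$ is supported in a set with closure contained in $\bB_1^+$, so the local Dir-minimizing hypothesis applies on a smooth subdomain $\Omega$ with $\overline{\Omega}\subset \bB_1^+$ enclosing that support, yielding $\textup{Dir}(u, \Omega) \leq \textup{Dir}(v_{\sigma,\tau}, \Omega)$; the global inequality will then follow in the limit $\sigma, \tau \to 0$.

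Since $\mathcal{A}_Q(\RR^n)$-valued functions do not admit naive convex combinations, I would perform the interpolation through the Almgren bi-Lipschitz embedding $\xi : \mathcal{A}_Q(\RR^n) \to \RR^N$ and a Lipschitz retraction $\rho : \RR^N \to \xi(\mathcal{A}_Q(\RR^n))$. Fixing a smooth cutoff $\phi_{\sigma,\tau}$ equal to $1$ on $\{x_m > 2\sigma\} \cap \bB_{1-2\tau}^+$ and to $0$ on the collar $\{x_m < \sigma\} \cup (\bB_1^+ \setminus \bB_{1-\tau}^+)$, with $|\nabla \phi_{\sigma,\tau}| \leq C(\sigma^{-1} + \tau^{-1})$ supported in the two transition layers, I set $\tilde v_{\sigma,\tau} := \phi_{\sigma,\tau}(\xi \circ v) + (1 - \phi_{\sigma,\tau})(\xi \circ u)$ in $\RR^N$ and then $v_{\sigma,\tau} := \xi^{-1} \circ \rho \circ \tilde v_{\sigma,\tau}$, which lies in $W^{1,2}(\bB_1^+, \mathcal{A}_Q(\RR^n))$. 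Its boundary trace on $\partial \bB_1^+$ automatically matches that of $u$: on $H \cap \bB_1$ both $\xi \circ u$ and $\xi \circ v$ vanish, so $\tilde v_{\sigma,\tau} = 0$ and $v_{\sigma,\tau} = Q\a{0}$; on the spherical part $u = v$, so $v_{\sigma,\tau} = u$ regardless of the value of $\phi_{\sigma,\tau}$.

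The cost of the interpolation in the two transition layers is controlled by Poincaré inequalities adapted to the zero-trace conditions. In the strip $\{\sigma < x_m < 2\sigma\}$, the Lipschitz chain rule gives $|Dv_{\sigma,\tau}|^2 \lesssim \sigma^{-2} \mathcal{G}(u, v)^2 + |Du|^2 + |Dv|^2$; since both $u$ and $v$ have zero trace on $H$, a one-dimensional Poincaré inequality in the $x_m$ direction produces
\begin{equation*}
\int_{\{x_m < 2\sigma\}} \mathcal{G}(u, v)^2 \leq C \sigma^2 \int_{\{x_m < 2\sigma\}} (|Du|^2 + |Dv|^2),
\end{equation*}
so the contribution to $\textup{Dir}(v_{\sigma,\tau})$ from this strip is bounded by $C \int_{\{x_m < 2\sigma\}} (|Du|^2 + |Dv|^2)$, vanishing as $\sigma \to 0$ by absolute continuity. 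An analogous estimate in the radial shell $\bB_1^+ \setminus \bB_{1-2\tau}^+$ uses that the single-valued $\RR^N$-valued difference $\xi \circ v - \xi \circ u$ has zero trace on the spherical part of $\partial \bB_1^+$; a radial Poincaré inequality yields
\begin{equation*}
\int_{\bB_1^+ \setminus \bB_{1-2\tau}^+} |\xi \circ v - \xi \circ u|^2 \leq C \tau^2 \int_{\bB_1^+ \setminus \bB_{1-2\tau}^+} (|Du|^2 + |Dv|^2),
\end{equation*}
producing an analogous contribution that vanishes as $\tau \to 0$.

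To conclude, I would pick a smooth $\Omega$ with $\overline{\Omega}\subset \bB_1^+$ containing $\{x_m > \sigma\} \cap \bB_{1-\tau}^+$, so that $v_{\sigma,\tau} = u$ on $\partial \Omega$. The hypothesis then gives $\textup{Dir}(u, \Omega) \leq \textup{Dir}(v_{\sigma,\tau}, \Omega)$; adding the identical value $\textup{Dir}(u, \bB_1^+ \setminus \Omega) = \textup{Dir}(v_{\sigma,\tau}, \bB_1^+ \setminus \Omega)$ to both sides yields $\textup{Dir}(u, \bB_1^+) \leq \textup{Dir}(v_{\sigma,\tau}, \bB_1^+)$. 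Sending $\sigma, \tau \to 0$, the right-hand side converges to $\textup{Dir}(v, \bB_1^+)$ since the transition-layer contributions vanish, $\textup{Dir}(u, \textup{collar}) \to 0$ by absolute continuity, and $v_{\sigma,\tau} = v$ on an exhausting family of middle regions. The main technical obstacle is the $Q$-valued nature of the functions, which forces the Almgren-embedding detour for the interpolation; the necessity of a two-sided collar, rather than only a one-sided one near $H$, is dictated by the fact that the hypothesis provides Dir-minimality only on subdomains whose closures stay away from $\partial \bB_1^+$.
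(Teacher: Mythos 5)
Your proof is correct and takes essentially the same approach as the paper: interpolate between $u$ and the competitor $v$ through the Almgren bi-Lipschitz embedding $\xi$ and retraction $\rho$, using a cutoff supported in a collar around $\partial\bB_1^+$, then invoke the local Dir-minimality on a smooth subdomain enclosing the interpolation region and pass to the limit. You are somewhat more explicit than the paper about the Poincar\'e inequalities (using the zero trace of $u$ and $v$ along $H$ and $u=v$ on the spherical part) needed to kill the $|\nabla\phi|^2\mathcal{G}(u,v)^2$-type contribution in the transition layers, which the paper's proof leaves more implicit when it appeals to the weak convergence of the cutoff.
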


\begin{proof}
    Assume by contradiction $u$ is not Dir-minimizing in $\bB_1^+$. Then there exists $v$ with $u|_{\partial \bB_1^+}=v |_ {\partial \bB_1^+}$ and 
    \begin{equation*}
\int_{\bB_1^+} |Dv|^2< \int_{\bB_1^+}|Du|^2.
    \end{equation*}
We note that $v$ must also have zero boundary data along $H$.

If $\mathbf{\xi}$ is the bi-Lipschitz embedding from \cite{DS1} then the above inequality is equivalent to
 \begin{equation*}
 \int_{\bB_1^+}|D\mathbf{\xi}(v)|^2< \int_{\bB_1^+}|D\mathbf{\xi}(u)|^2.
     \end{equation*}
We interpolate between $u$ and $v$ so that they agree on a fattened neighborhood of the boundary. We consider $\alpha$ be a bump function increasing and smooth with $\alpha(t)=0, \forall t \in[0,1/2]$, $\alpha(t)=1$ for every $t \in[1,\infty)$. The bump function we care about will be $\phi_{\epsilon}(x)=\alpha(\dist(x,\partial \bB_1^+)/\epsilon)$. It is supported in 
\begin{equation*}
U_{\epsilon}:=\left\{x: \dist(x, \partial \bB_1^+)> \epsilon \right\}.
\end{equation*}

We define the embeddings $\overline{v}=\xi(v)$ and $\overline{u}=\xi(u)$.
The consider thus \begin{equation*}
    \overline{v}_{\epsilon}=\left( \Phi_{\epsilon}\mathbf{\xi}(v)+(1-\Phi_{\epsilon})\mathbf{\xi}(u)\right).
\end{equation*}
We define the interpolations $v_{\epsilon}$ as $v_{\epsilon}=\rho(\overline{v}_{\epsilon})$.

For every $\epsilon$, $v_{\epsilon}$ agrees with $u$ at $\bB_1^+ \setminus U_{\epsilon}$. We consider $V_{\epsilon}$ a subdomain of $\bB_1^+$ with smooth boundary such that $U_{\epsilon} \subset V_{\epsilon}$.

By the Dir-minimizing property on $V_{\epsilon}$
\begin{equation*}
\int_{\bB_1^+} |Du|^2 \leq \int_{\bB_1^+} |D\xi(v_{\epsilon})|^2.
\end{equation*}

We will show that 
\begin{equation*}
\int |D \overline{v}|^2= \lim_{\epsilon \rightarrow 0} \int|D \overline{v}_{\epsilon}|^2.
\end{equation*} This will allow us to conclude that $u$ is Dir-minimizing in $\bB_1^+$.

We compute
\begin{equation*}
D \mathbf{\xi}(v_{\epsilon})= D(\xi \circ \rho)(v_{\epsilon}) \cdot (\Phi_{\epsilon} D\xi(v)+ (1-\Phi_{\epsilon})D\xi(u)+ D\Phi_{\epsilon}(\xi(v)-\xi(u)).
\end{equation*}

We will show strong convergence in $L^2$ of the above $\epsilon \rightarrow 0$ to 
\begin{equation*}
D(\xi \circ \rho)(\xi(v)) \cdot D\xi(v).
\end{equation*}

It is enough to show that  $D(\xi \circ \rho)(v_{\epsilon})$ converges strongly in $L^2$ to $D(\xi \circ \rho)(\xi(v))$ and $\Phi_{\epsilon} D\xi(v)+ (1-\Phi_{\epsilon})D\xi(u)+ D\Phi_{\epsilon}(\xi(v)-\xi(u))$ converges weakly in $L^2$ to $D\xi(v)$.

For the first term we have
\begin{equation*}
\int_{\bB_1^+} |D (\mathbf{\xi} \circ \rho)(\overline{v}_{\epsilon}) - D (\mathbf{\xi} \circ \rho)(\overline{v})| ^2\lesssim |\left\{\overline{v}_{\epsilon} \neq \overline{v}\right\}|\end{equation*}.

In the above, we use that $\xi \circ \rho$ is bi-lipschitz. The $L^2$ convergence follows from construction since the right hand side of the inequality are neighborhoods of the boundary that converge to zero.

As for the weak $L^2$ convergence, it is an immediate consequence of the weak $L^2$ convergence of  $\Phi_{\epsilon}$ to $1$.

We conclude from the strong and weak convergence the Dir-minimizing property of $u$.
\end{proof}
\end{subsection}
\end{section}
\begin{section}{Proof of the excess decay lemma}
In this section we prove the excess decay lemma, introduced in Section 5, from which we deduced the main theorems in this paper.
\begin{lemma}[$N$th excess decay with small angle and small second fundamental forms] Let $T$ and $C$ be chosen as in Assumption \ref{a:currentandcone}. For every $\theta_N \in \RR^+$ there exists $\varepsilon_N \in \RR+$,  $\eta_N \in (0,1/2)$ such that if 
\begin{equation*}
\mathbb{E}(T,C,\mathbf{B}_1) \leq \varepsilon_N \alpha(C)^2 \: \: \textup{and} \: \: \mathbf{A}_{\Gamma}+\mathbf{A}_{\Sigma}^2 \leq \varepsilon_{N} \mathbb{E}(T,C,\mathbf{B}_1) 
\end{equation*}
(when $N=1$ the first inequality becomes $\mathbb{E}(T,C,\bB_1) \leq \varepsilon_1$)
there exists a radius $r_0 \in [\eta_{N},1/2]$ and an admissible open book $C'$ such 
that
\begin{equation*}
\mathcal{G}(C',C)^2 \leq \gamma(Q,m,n,\overline{n})\mathbb{E}(T,C,\bB_1)
\end{equation*}
and
\begin{equation*}
    \mathbb{E}(T,C',\bB_{r_0}) \leq \theta_N \min\left\{\alpha(C')^2,\mathbb{E}(T,C,\bB_1) \right\}.
\end{equation*}
 \end{lemma}

 We will prove the above lemma by contradiction. If the above lemma does not hold we will have to study the properties of certain blowup sequences. We rescale the ball $\bB_1$ to $\bB_8$ and we consider the following definition of a blowup sequence:

\begin{assumption}[Blowup Sequence]\label{a:blowupsequence}
A sequence of currents $T_k$ is said to be a blowup sequence if $T_k,\Gamma_k,\Sigma_k,C_k$ satisfy Assumption \ref{A:general} with $T_{0}(\Gamma_k)=\RR^{m-1}\times \left\{0 \right\}$, and $C_k=\sum_{i=1}^N Q_i\a{H_{i,k}}$ being open books with the same number of sheets and labeling. Moreover, the following limits hold:
\begin{equation*}
\frac{\mathbb{E}(T_k,C_k,\bB_8)}{\alpha(C_k)^2} \rightarrow 0\; \textup{and} \; \frac{\mathbf{A}_{\Gamma}+\mathbf{A}_{\Sigma}^2}{\mathbb{E}(T_k,C_k,\bB_8)} \rightarrow 0.
\end{equation*}
\end{assumption}
We wish to take a Dir-minimizing blowup out of a blowup sequence of currents. Up to a subsequence $H_{i,k} \rightarrow H_i$. Define $C:=\sum_{i=1}^N Q_i\a{H_i}$ then $C_k \rightarrow C$. We also have $T_k \rightarrow C$, $\Sigma_k \rightarrow \RR^{m+\overline{n}}$, $\Gamma_k \rightarrow \RR^{m-1} \times \left\{0\right\}$. 

If $k$ is large enough, $T_k$ satisfy the hypothesis \ref{A:Lipschitzapproximation}. The excess bound is obviously satisfied so it only remains to check the mass bound. This is a consequence of the monotonicity formula and the fact that $T_k  \rightarrow C$ with $C$ an open book of multiplicity $Q$.

We take the Lipschitz approximations $u_i^k$ defined on the outer regions, which are subdomains of $\left(\bB_1 \setminus B_{1/k}(V)\right)\cap H_{i,k} $. We do need them to be defined on $\bB_1 \setminus B_{1/k}(V) \cap H_i$. This is easy to achieve by applying a rotation we define for $i,k$ a rotation $\sigma_{i,k}$ on $\RR^{m+n}$ that takes $H_{i,k}$ to $H_i$, while fixing $\RR^{m-1}\times \left\{0\right\}$.  We choose $\sigma_{i,k}$ to be the admissible rotation closest to the identity. Clearly $\sigma_{i,k}\rightarrow_{k} \textup{Id}$. We define
\begin{equation*}
\bar{u}_i^k:=\frac{\mathbf{p}_{H_i^{\perp}} \circ \sigma_{i,k} \circ u_i^k\circ\sigma_{i,k}^{-1}}{\sqrt{\mathbb{E}(T_k,C_k,\bB_8)}}.
\end{equation*}

Up to a subsequence $\overline{u}_i^k$ converges weakly to a function $\overline{u}_i$. We will be able to estimate the functions $\overline{u}_i$ at the scale $r=\frac{1}{12\sqrt{m-1}}$ since we have the non-concentration estimate at scale $\frac{1}{6\sqrt{m-1}}$. The functions $\overline{u}_i$ will be the Dir-minimizing blowup of the blowup sequence. 

\begin{lemma}[Dir-minimizing blowup]
Let $r=\frac{1}{12\sqrt{m-1}}$. The following properties hold for the sequence $\overline{u}_i^k$ and their weak limit $\overline{u}_i$:
\begin{itemize}
\item The functions $\overline{u}_i^k$ have equibounded Dirichlet energy.
\item $\overline{u}_i^k$ converge strongly in $W^{1,2}(\bB_r^+,\mathcal{A}_{Q_i}(\RR^n))$ (up to a subsequence) to a function $\overline{u}_i$ in $W^{1,2}(\bB_r^+,\mathcal{A}_{Q_i}(\RR^n))$.
\item  The functions $\overline{u}_i$ are Dir-minimizing on $\bB_r^+$ with zero boundary value along $V$.
\end{itemize}
\end{lemma}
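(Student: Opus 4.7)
The plan is to adapt the standard blow-up compactness scheme for multi-valued graphical approximations, using Proposition \ref{p:blowup} and Simon's non-concentration estimates from Theorem \ref{t:nonconcentration} to control behaviour near the spine $V$, and to apply Theorem \ref{t:localdirminimizers} to upgrade local to global Dir-minimality. First I would establish a uniform Dirichlet bound on every subdomain staying away from $V$: conclusion (2) of Proposition \ref{p:blowup} gives
\begin{equation*}
\int_{(\bB_r\setminus B_\sigma(V))\cap H_{i,k}} |Du_i^k|^2 \;\leq\; C\sigma^{-2}\,\mathbf{E}(T_k,C_k,\bB_4) \,+\, \mathbf{A}_\Sigma^2 ,
\end{equation*}
and $\mathbf{E}(T_k,C_k,\bB_4)\lesssim \mathbb{E}(T_k,C_k,\bB_8)$ follows from the definition of strong excess (since $\phi_8\geq 1$ on $\bB_4$), while $\mathbf{A}_\Sigma^2=o(\mathbb{E}(T_k,C_k,\bB_8))$ by Assumption \ref{a:blowupsequence}. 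After absorbing the rotations $\sigma_{i,k}\to\mathrm{Id}$, dividing by $\mathbb{E}(T_k,C_k,\bB_8)$ produces a uniform bound of order $\sigma^{-2}$ for $\int_{(\bB_r\setminus B_\sigma(V))\cap H_i}|D\bar u_i^k|^2$. The compactness theory for multi-valued Sobolev functions from \cite{DS1} then extracts a weak $W^{1,2}$ and strong $L^2$ limit on each such subdomain, and a diagonal argument in $\sigma_j\to 0$ produces the limit $\bar u_i$ on $\bB_r^+\cap H_i\setminus V$.

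To pass to the entire half-ball and to recover the zero trace along $V$, I would use Theorem \ref{t:nonconcentration}(2): on the graph portion provided by Proposition \ref{p:localapprox} one has $\dist(q,C_k)\leq |u_i^k(\mathbf{p}_{H_{i,k}}(q))|$ and the projection has Jacobian comparable to $1$, so
\begin{equation*}
\int_{H_i\cap B_{\sigma r}(V)} |u_i^k|^2 \;\lesssim\; \sigma^{3-\alpha}\, r^{m+2}\bigl(\mathbf{E}(T_k,C_k,\bB_4)+\mathbf{A}_\Gamma+\mathbf{A}_\Sigma^2\bigr) .
\end{equation*}
The contribution of the non-outer cubes is an $o(1)$ correction thanks to the decay $\mathbf{E}(L)\lesssim 2^{-\alpha\ell(L)}$ of Theorem \ref{t:nonconcentration}(1) combined with the measure bound $|\Omega_i(L)\setminus K_i(L)|$ from Proposition \ref{p:localapprox}. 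Normalizing by $\mathbb{E}(T_k,C_k,\bB_8)$ and passing to the weak limit yields $\int_{H_i\cap B_{\sigma r}(V)}|\bar u_i|^2\lesssim \sigma^{3-\alpha}$, and sending $\sigma\to 0$ forces $\bar u_i$ to have zero trace along $V$.

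Finally, Proposition \ref{p:blowup}(3) applied with a diagonal sequence $\varsigma_k\to 0$ produces, on each $(\bB_r\setminus B_\sigma(V))\cap H_i$, Dir-minimizers whose $W^{1,2}$ distance to $\bar u_i^k$ tends to zero, so the limit $\bar u_i$ is Dir-minimizing on every subdomain disjoint from $V$. Coupled with the zero trace, Theorem \ref{t:localdirminimizers} upgrades this to global Dir-minimality on $\bB_r^+\cap H_i$ with boundary value $Q_i\a{0}$ along $V$. Strong $W^{1,2}$ convergence on each $(\bB_r\setminus B_\sigma(V))\cap H_i$ is immediate from Proposition \ref{p:blowup}(3), and the uniform $L^2$ height decay near $V$ extends this to strong convergence on the whole of $\bB_r^+\cap H_i$. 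The main technical obstacle I foresee is making the passage from the graph piece of $T_k$ to the whole current rigorous near $V$, which requires packing the non-outer cubes together with the small-measure sets $\Omega_i(L)\setminus K_i(L)$ from Proposition \ref{p:localapprox} into the non-concentration framework so that both the Dirichlet and $L^2$ height bounds survive the normalization.
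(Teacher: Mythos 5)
There is a genuine gap in the argument for equi-bounded Dirichlet energy and, as a consequence, for both the strong $W^{1,2}$ convergence and the applicability of Theorem~\ref{t:localdirminimizers}. Your Dirichlet bound away from the spine, via Proposition~\ref{p:blowup}(2), carries the factor $\sigma^{-2}$ and so \emph{blows up} as $\sigma\to 0$: the diagonal argument in $\sigma_j\to 0$ produces a limit $\bar u_i$ on $\bB_r^+\setminus V$, but it gives no control whatsoever on $\int_{\bB_r^+}|D\bar u_i|^2$, so you have not established that $\bar u_i \in W^{1,2}(\bB_r^+,\mathcal{A}_{Q_i}(\RR^n))$. That finiteness is a hypothesis of Theorem~\ref{t:localdirminimizers}, so the upgrade to global Dir-minimality does not go through. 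The estimate you invoke near $V$, from Theorem~\ref{t:nonconcentration}(2), bounds the \emph{$L^2$ height} $\int_{H_i\cap B_{\sigma r}(V)}|u_i^k|^2$; it controls the trace and the $L^2$ norm, but it says nothing about the gradient, so it cannot close the hole in the Dirichlet bound, nor can it promote the local strong $W^{1,2}$ convergence away from $V$ to strong convergence on the whole of $\bB_r^+$: Dirichlet energy could a priori concentrate on $B_\sigma(V)$ as $\sigma\to 0$.

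The missing ingredient, which is exactly what the paper's proof supplies in its step (4), is a weighted Dirichlet estimate obtained by combining the cube-wise excess decay $\mathbf{E}(L)\lesssim 2^{-\alpha\ell(L)}\left(\mathbf{E}(T_k,C_k,\bB_4)+\mathbf{A}_\Gamma+\mathbf{A}_\Sigma^2\right)$ from Theorem~\ref{t:nonconcentration}(1) with the per-cube Dirichlet bounds of Proposition~\ref{p:localapprox}(4), namely $2^{m\ell(L)}\|Du_{L,i}\|_{L^2}^2\lesssim \mathbf{E}(L)+2^{-2\ell(L)}\mathbf{A}_\Sigma^2$. Summing over generations and using the packing property $\sum_{L\in\mathcal{G}_\ell}\mathcal{H}^{m-1}(L)\lesssim 1$, the $\alpha$-gain makes the geometric series converge, yielding
\begin{equation*}
\int_{\bB_r^+}\dist(x,V)^{-\beta}|D\bar u_i|^2 \lesssim_{\alpha,\beta} 1
\end{equation*}
for suitable $\beta \in (0,1)$, and in particular $\int_{\bB_r^+}|D\bar u_i|^2\lesssim 1$. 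Without this, the conclusion simply does not follow from the facts you have cited, because the raw packing of the per-cube Dirichlet bounds diverges without the $2^{-\alpha\ell(L)}$ gain. Once the weighted bound is in hand, the equiboundedness is immediate, the strong $W^{1,2}$ convergence follows from equiboundedness in energy plus the local strong convergence, and Theorem~\ref{t:localdirminimizers} then applies to give the Dir-minimizing property. Your route to the zero trace via the $L^2$ height bound is fine and is an acceptable alternative to the paper's pointwise $L^\infty$ estimate $|\bar u_i|\lesssim_\alpha 2^{-(1-\alpha/2)\ell(L)}$, but it is the only one of the three bullet points that your argument actually secures.
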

\begin{proof}
We will show in order the following facts about the functions $\overline{u}_i^k$:
\begin{enumerate}
\item $\overline{u}_i^k$ converge locally strongly in $W^{1,2}(\bB_r^+,A_{Q_i}(\RR^n))$ (up to a subsequence) to a function $\overline{u}_i \in W^{1,2}(\bB_r^{+},\mathcal{A}_Q(\RR^n))$.
\item  The function $\overline{u}_i$  is Dir-minimizer on subdomains of $\bB_r^+$ with smooth boundary.

\item $|\overline{u}_i| \in W^{1,2}(\bB_r^+,\RR)$ are sub-harmonic.

\item The functions $\overline{u}_i^k$ have equi-bounded Dirichlet Energy, i.e., 
\begin{equation*}
\int |\overline{u}_i^k|^2 \lesssim 1.
\end{equation*}
\item The functions $\overline{u}_i^k$ have zero boundary value at $V$ in the classical sense.

\item 
$\overline{u}_i^k$ converge strongly in $W^{1,2}(\bB_r^+,A_{Q_i}(\RR^n))$ (up to a subsequence) to $\overline{u}_i$.

\item $\overline{u}_i$ are Dir-minimizing in $\bB_{r}^+$. \end{enumerate}

(1) By Lemma \ref{p:blowup}, the functions $\overline{u}_i^k$ converge locally strongly in $W^{1,2}(\bB_r^+,\mathcal{A}_Q(\RR^n))$ to a function which is locally Dir-minimizing in certain regions.

(2) Given a subdomain of $\Omega \subset \bB_r^+$ with smooth boundary, the functions $\overline{u}_i^k$ converge strongly in $W^{1,2}(\bB_r^+,\mathcal{A}_{Q_i}(\RR^n))$ to $\overline{u}_i$ and $\overline{u}_i$ is Dir-minimizer in $\Omega$. This is a consequence of the proof of Theorem 5.2 on the first harmonic approximation \cite{DS3}. The theorem uses balls as domains but it can be easily modified to be any domain with smooth boundary. We remark that at this stage we have not yet claimed the functions $\overline{u}_i$ to be non trivial.

We also know by \cite{DS3}
 that 
\begin{equation}\label{e:BadSetDirEstimate}
\int_{L_i \setminus \overline{K_i}(L)} |D\overline{u}_i^k|^2 \rightarrow_{k \rightarrow \infty} 0.
\end{equation}

(3) Since Dir-Minimizers have sub-harmonic norm, the local Dir-Minimizer property implies that $|\overline{u}_i|$ are sub-harmonic functions.

(4)We have for every $L \subseteq R^{out}$ that
\begin{equation*}
2^{2L(L)}\left\|\overline{u}_i^k\right\|^2_{L^\infty(L)}+2^{ml(L)} \int_{L}|D\overline{u}_i^k|^2 \lesssim \frac{\mathbf{E}(T_k,C_k,L)}{\mathbb{E}(T_k,C_k,\bB_8)}+2^{-2l(L)}\frac{\mathbf{A}_{\Sigma_{k}}^2}{\mathbb{E}(T_k,C_k,\bB_8)}.
\end{equation*}

By taking the limit as $k \rightarrow \infty$, we get that for any $\alpha>0$ there exists $C_{\alpha}$ such that $|\overline{u}_i| \leq C_{\alpha} 2^{-(1-\alpha/2)\ell(L)}.$ This implies that $\overline{u}_i$ have zero boundary value along $V$ (and in the classical sense).

Given $0<\beta<1$ we have the weighted estimate 
\begin{align*}
\int_{\bB_r^+} \dist(x,V)^{-\beta}|D\overline{u}_{i}|^2  \leq C\sum_{L} 2^{\ell(L)\beta} \limsup_{k \rightarrow \infty}\int_{L} |D\overline{u}_i^k|^2 \\\leq C(\alpha,Q,m,n,\overline{n}) \sum_{l=0}^{\infty} \frac{1}{2^{-l(m-1)}}2^{-l(m-\alpha+\beta)}=\sum_{l=0}^{\infty}2^{-l(1-\alpha+\beta)}\lesssim_{\alpha,\beta}1.
\end{align*}
The second inequality follows from \ref{t:nonconcentration} coupled with properties of the cubical decomposition. By taking $1-\beta<\alpha<1$ we get the desired convergence.

(5) The strong convergence is a classical consequence of equiboundedness in energy.

(6) We proved in Theorem \ref{t:localdirminimizers} that, in this context, we are able to obtain the Dir-minimizing property of the function if the function is locally Dir-minimizing in smooth subdomains.
\end{proof}

\begin{proof}[Proof of Excess Decay \ref{lem:saexcessdecay}] We rescale the ball $\bB_1$ to $\bB_8$. If the excess decay lemma does not hold there is sequence of currents as in the Assumption \ref{a:currentandcone} that fails the conclusions of the lemma. We can assume up to rotation that they satisfy Assumption \ref{a:blowupsequence}.
 The sequence of currents can be chosen such that 
\begin{equation*}
    \mathbb{E}(T_k,C'_k, \bB_r) \geq \theta_N  \left\{\alpha(C')^2,\mathbb{E}(T,C,\bB_1) \right\}.
\end{equation*}
for every $r \in [1/k,4]$ and every cone $C'_k$ such that 
\begin{equation*}
\mathcal{G}(C'_k,C_k)^2 \leq \gamma(Q,m,n,\overline{n}) \mathbb{E}(T_k,C'_k,\bB_8).
\end{equation*}

We will show that either there exists a sequence of radii $r_j \rightarrow 0$  and a sequence of cones $C'_{k,j}$, which are obtained by adjusting the open books $C_k$ satisfying the constraint
\begin{equation*}
\mathcal{G}(C'_{k,j},C_k)^2 \leq \gamma(Q,m,n,\overline{n}) \mathbb{E}(T_k,C'_k,\bB_8)
\end{equation*}
such that 
\begin{equation*}
\lim_{j \rightarrow 0}\limsup_{k \rightarrow 0}\frac{\mathbb{E}(T_k,C'_{k,j},\bB_{r_j})}{\mathbb{E}(T_k,C_k,\bB_8)}=0
\end{equation*}
and 
\begin{equation*}
\lim_{j \rightarrow 0} \limsup_{k \rightarrow 0}\frac{\mathbb{E}(T_k,C'_{k,j},\bB_{r_j})}{\alpha(C'_{k,j})^2}=0.
\end{equation*}
which will be a contradiction.

\textbf{The construction of the cones $C'_{k,j}$}
We take any sequence $r_j \rightarrow 0$ and choose $L_{i,j}$ as in Lemma \ref{l:dirminmizersdecaylinearguy} so that
\begin{equation}\label{eq:lineardecay}
\lim_{j \rightarrow 0} \frac{1}{\min_{1 \leq i \leq N}\alpha(L_{i,j})^2}\sum_{i=1}^N\frac{1}{{(4r_j)}^{m+2}}\int_{\bB_{4r_j}}\mathcal{G}(\overline{u}_i,L_{i,j})^2=0
\end{equation}
where if the second case of Lemma \ref{l:dirminmizersdecaylinearguy} holds we define $L_{i,j}:=0$ in order to unify the notation. We can apply Lemma \ref{l:dirminmizersdecaylinearguy} so that the same sequence subsequence of $r_j$ works simultaneously for every $i$.

We define then the adjusted cone as
\begin{equation*}
C'_{k,j}:=\sum_{i} \a{\sigma_{i,k}^{-1}(H_k\oplus \mathbb{E}(T_k,C_k,\bB_8)^{1/2}L_{i,j})}.\end{equation*}

We have that 
\begin{equation}\label{eq:angleadjustedcone}
\mathcal{G}(C'_{k,j},C_k)^2 \leq \gamma(Q,m,n,\overline{n}) \mathbb{E}(T_k,C_k,\bB_8). 
\end{equation}
The above equation follows from
\begin{equation*}
    \int_{\bB_1}|DL_{i,j}|^2 \leq \int_{\bB_1}|D\overline{u}_i|^2 \leq C(Q,m,n,\overline{n}).
\end{equation*}
There are two possible cases, the case where the cones $C'_{k,j}$ have all $N$ collapsed sheets and the case where $C'_{k,j}$ has more than $N$ sheets (which happens when for some $i$ the first case of the conclusions of Lemma \ref{l:dirminmizersdecaylinearguy} hold).

In the second case we have that for $k$ large enough
\begin{equation} \label{eq:anglenewcone}
\alpha(C'_{k,j}) \simeq \min_{1 \leq i \leq N}\alpha(L_{i,j}) \mathbb{E}(T_k,C_k,\bB_8)^{1/2}.
\end{equation}

\textbf{Bounding $\mathbf{E}(T_k,C'_{k,j},\bB_r)$ and $\mathbb{E}(T_k,C_k,\bB_r)$ with respect to $\mathbb{E}(T_k,C_k,\bB_8)$}:

We remind that
$\mathbf{E}(T_k,C_k,\bB_4) \lesssim \mathbb{E}(T_k,C_k,\bB_8)$. This allows us to estimate everything in terms of $\mathbb{E}(T_k,C_k,\bB_8)$.

We estimate for $r \leq \frac{1}{6\sqrt{m-1}}$:
\begin{align*}
\mathbf{E}(T_k,C'_{k,j},\bB_r) \leq& C\mathbf{E}(T_k,C_k,\bB_r \cap B_{\sigma}(V))+ C \mathcal{G}(C'_{k,j},C_k)^2\sigma^2 +\mathbf{E}(T_k,C'_{k,j},\bB_r \setminus B_{\sigma}(V)) \\\leq & C\sigma^{3-\alpha}\left(\mathbf{E}(T_k,C_k,\bB_4)+\mathbf{A}_{\Gamma}+\mathbf{A}_{\Sigma}^2\right) + C \sigma^2 \mathbb{E}(T_k,C_k,\bB_8) \\&+\mathbf{E}(T_k,C'_{k,j},\bB_r \setminus B_{\sigma}(V)).
\end{align*}
The first inequality comes from using the triangle inequality on the $L^2$ excess in the region $\bB_r \cap B_{\sigma}(V).$ In the second inequality the first term we used the non concentration estimate Theorem \ref{t:nonconcentration}, the second term we used equation \eqref{eq:angleadjustedcone}. 

We wish to bound the third term. It is possible to do so by a modification of the argument on Theorem \ref{t:nonconcentration}. In the same fashion we use that the current in each Whitney region is agrees up to a superlinear error the graphical approximation. We get the following bound for $k$ large enough
\begin{align*}
\mathbf{E}(T_k,C'_{k,j},\bB_{4r} \setminus B_{\sigma}(V)) \leq  & \frac{1}{r^{m+2}}\sum_{i }\int_{\bB_{4r} \setminus B_{\sigma}(V)} \mathcal{G}(u_i^k,\mathbb{E}(T_k,C_k,\bB_8)^{1/2}L_{i,j})^2
\\&+C\left( \mathbf{E}(T_k,C_k,\bB_4)+r^2\mathbf{A}_{\Sigma}^2 \right)\left(\mathbf{E}(T_k,C_k,\bB_4)\sigma^{-m-2}+r^2\mathbf{A}_{\Sigma}^2\right)^{\gamma} 
\end{align*}
We take first lim sup over $k$ and then $\sigma \rightarrow 0$ to get
\begin{equation*}
\limsup_{k \rightarrow \infty} \frac{\mathbf{E}(T_k,C'_{k,j},\bB_r)}{\mathbb{E}(T_k,C_k,\bB_8)} \leq C \sum_{i=1}^N\frac{1}{r^{m+2}}\int_{\bB_r}\mathcal{G}(\overline{u}_i,L_{i,j})^2.
\end{equation*}
The last term will disappear when taking $k \rightarrow \infty$ since it is superlinear with respect to the excess.

Moreover, by our choice of $L_{i,j}$ we have that 
\begin{equation}\label{eq:decayL2relativetoangle}
\lim_{j \rightarrow 0}\limsup_{k \rightarrow \infty} \frac{\mathbf{E}(T_k,C'_{k,j},\bB_{4r_j})}{\alpha(C'_{k,j})^2}=0
\end{equation}
and 
\begin{equation*}
\lim_{j \rightarrow 0}\limsup_{k \rightarrow \infty} \frac{\mathbf{E}(T_k,C'_{k,j},\bB_{4r_j})}{\mathbb{E}(T_k,C_k,\bB_8)}=0.
\end{equation*}
In the case that $C'_{k,j}$ has more sheets than $C_k$, \eqref{eq:decayL2relativetoangle} follows from \eqref{eq:anglenewcone} and \eqref{eq:lineardecay}. 

In the case that $C'_{k,j}$ has the same number of sheets as $C_k$: we obtain
\begin{equation*}
\limsup_{k \rightarrow \infty}\frac{\mathbb{E}(T_k,C_k,\bB_8)}{\alpha(C_k)^2} \rightarrow 0,
\end{equation*}
\begin{equation*}
|\alpha(C_k)- \alpha(C'_{k,j})| \leq C(Q,m)\mathcal{G}(C'_{k,j},C_k).
\end{equation*}
We can conclude from the above and \eqref{eq:angleadjustedcone} that for every $j$
\begin{equation*}
\lim_{k \rightarrow 0}\frac{\alpha(C_k)}{\alpha(C'_{k,j})}=1.
\end{equation*}
This immediately implies \eqref{eq:decayL2relativetoangle}.

\textbf{Separation estimates in the first case at scales comparable to $r_j$:}

As a consequence of \eqref{eq:decayL2relativetoangle}, for $j$ large enough and $k$ large enough depending on $j$
we must have that the $L^2-L^{\infty}$ height bound applies at scale $4r_j$ and thus for some $\delta$ which can be taken arbitrarily small
\begin{equation*}
\textup{spt}(T_k) \cap \left( \bB_{3_{r_j}} \setminus \bB_{10^{-10} r_j} \right)\subset \left\{x \in \bB_{3r_j} \setminus \bB_{10^{-10}r_j}: \dist(x,C'_{k,j}) \leq \delta \alpha(C'_{k,j}).\right\}
\end{equation*}
This must also hold at the level of the Dir-minimizers. 
Given $\delta>0$ there if $j$ is large enough
\begin{equation*}
\frac{1}{r_j^{m+2}}\sum_{i=1}^N \int_{\bB_{4r_j}} \mathcal{G}(\overline{u}_i,L_{i,j})^2 \leq \delta^2 \min_i \alpha(L_{i,j})^2.
\end{equation*}

This implies that
\begin{equation*}
\sup_{x \in \bB_{3r_j} \setminus \bB_{10^{-10}r_j}} \mathcal{G}(\overline{u}_i,L_{i,j}) \leq C(Q,m,n,\overline{n}) \delta \min_i \alpha(L_{i,j})^2. 
\end{equation*}

The graph $\overline{u}_i$ must separate at scales comparable to $r_j$ if $j$ is taken to be small enough. This implies that for $j$ large enough the graph of $u_i^k$ must separate into disjoint neighborhoods of $C'_{k,j}$ at scales comparable to $r_j$.

\textbf{Bounding $\mathbb{E}(T_k,C'_{k,j},\bB_{r_j})$ with respect to $\mathbf{E}(T_k,C'_{k,j},\bB_{4r_j})$}

We wish to use Lemma \ref{l:reverseexcessbound} to prove the reverse excess bound. In order to do that we must show that all large enough Whitney cubes with respect to $C'_{k,j}$ inside $\bB_{4r_j}$ are outer cubes whose multiplicities are correct. We give a brief sketch of what is left to conclude that from the separation we already obtained the outer cube property.

Let $L$ be a cube of size comparable to $r_j$ inside $\bB_{4r_j}$. For a bump function $\varphi_{L}$ we must have that 
\begin{equation*}
d(\varphi_{L} |T_k|, \varphi_{L}\sum_{i,l}\mathbf{G}_{u_i^k}) \leq C\left( \mathbf{E}(T_k,C_k,\bB_4)+r_j^2\mathbf{A}_{\Sigma}^2 \right)\left(\mathbf{E}(T_k,C_k,\bB_4)r_j^{-m-2}+r_j^2\mathbf{A}_{\Sigma}^2\right)^{\gamma}.
\end{equation*}
Thus for $k$ large enough, the separation estimates allow us to write $u_i^k$ as a graph over the cone $C'_{k,j}$. We must have that, up to reparameterization $u_i^k$ is a strong Lipschitz approximation of $T_k$ on top of the cone $C'_{k,j}$ with the correct multiplicity arrangement. This implies the outer cube property and allows us to conclude.

This implies that 
\begin{equation*}
\lim_{j \rightarrow \infty}\limsup_{k \rightarrow \infty} \frac{\mathbb{E}(T_k,C'_{k,j},\bB_{r_j})}{\mathbf{E}(T_k,C'_{k,j},\bB_{4r_j})} \leq C(Q,m,n,\overline{n})
\end{equation*}
which concludes with the desired estimate on $C'_{k,j}$ and thus gives us the desired estimates:
\begin{equation*}
\lim_{j \rightarrow 0}\limsup_{k \rightarrow 0}\frac{\mathbb{E}(T_k,C'_{k,j},\bB_{r_j})}{\mathbb{E}(T_k,C_k,\bB_8)}=0
\end{equation*}
and 
\begin{equation*}
\lim_{j \rightarrow 0} \limsup_{k \rightarrow 0}\frac{\mathbb{E}(T_k,C'_{k,j},\bB_{r_j})}{\alpha(C'_{k,j})^2}=0.
\end{equation*}
\end{proof}
\end{section}
\begin{section}{A further study of the linear problem}
In this section, we provide refined statements for theorems applicable to the linear problem. We will omit the proof, as they follow the same strategy as the proofs already performed on the nonlinear problem. In order to reduce the nonlinear problem to the decay of the linear problem we only needed to study the Dir-minimizing functions $u \in W^{1,2}(\bB_1^+ , \mathcal{A}_Q(\RR^n))$ with zero boundary value along $\bB_1 \cap \left(\RR^{m-1}\times \left\{0 \right\}\right)$.

The methods used before allow us to treat a more general version of the linear problem which we remind.
\begin{assumption}
Given a Dir-minimizing function $u \in W^{1,2}(\Omega,\mathcal{A}_Q(\RR^n))$. The boundary linear problem consists of functions $u$ such that $u \res \partial \Omega \cap \bB_1=Q\a{0}$.
\end{assumption}
This is equivalent to understanding the case in which a $u \res \partial \Omega \cap \bB_1=Q\a{\varphi}$ with suitable smoothness assumptions on $\varphi$ since removing the average for $u$ makes it have zero boundary value.

The frequency function must be defined in a different way since we have a curved boundary.
If $d(x,q)$ is a good distance function like in \cite{DDHM} and 
\begin{equation*}
\varphi (t) :=
\left\{
\begin{array}{ll}
1, &\mbox{for $0\leq t \leq \frac{1}{2}$,}\\
2 (1-t), &\mbox{for $\frac{1}{2}\leq t \leq 1$,}\\
0, &\mbox{for $t\geq 1$.}
\end{array}\right.
\end{equation*}

We define 
\begin{equation*}
D(q,r):=\int \varphi \left(\frac{d(q,x)}{r}\right)|Du(x)|^2 dx
\end{equation*}
and 
\begin{equation*}
H(q,r):=-\int  \varphi \left(\frac{d(q,x)}{r}\right)|\nabla d(x,q)|^2 \frac{|u(x)|^2}{d(x,q)}dx.
\end{equation*}
We define 
\begin{equation*}
I(q,r):=\frac{rD(q,r)}{H(q,r)}.
\end{equation*}
\begin{theorem}[Almost monotonicity of the frequency function] Assume that $\mathbf{A}_{\partial \Omega}$ is small enough. Then, for every $q \in \partial \Omega \cap \bB_{1/2}$ $0<s<r<1/2$
\begin{equation}
I(q,r) \geq e^{-C\mathbf{A}_{\partial \Omega}{\left(r/s\right)}} I(q,s).
\end{equation}
In particular
\begin{equation*}
I(q):=\lim_{r \rightarrow 0} I(q,r)
\end{equation*}
exists.
\end{theorem}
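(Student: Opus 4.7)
The plan is to mimic, in the multi-valued Dir-minimizer setting, the proof of Theorem 10.2 (monotonicity of the boundary frequency in the flat case), treating the curvature of $\partial \Omega$ and the fact that $d(\cdot,q)$ is only an approximate distance function as error terms of size $\mathbf{A}_{\partial \Omega}$. The key requirement is that the chosen distance function $d$ (as in \cite{DDHM}) and the accompanying vector fields respect the condition that competitors must have vanishing trace on $\partial \Omega$, so that $u$ is actually a valid minimizer against them.

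First I would derive a pair of integral identities generalizing Proposition 10.2:
\begin{itemize}
\item \emph{Outer variation.} Plug the competitor $\psi_\varepsilon(x)=\sum_i \a{u_i(x)\,(1+\varepsilon\,\varphi(d(q,x)/r))}$ into the Dirichlet energy. Since $\varphi(d(q,x)/r)$ does not affect the trace, $\psi_\varepsilon$ has zero boundary data on $\partial \Omega$, so $\partial_\varepsilon|_{\varepsilon=0}\int|D\psi_\varepsilon|^2=0$. This yields an identity of the form
\begin{equation*}
D(q,r) \;=\; -\frac{1}{r}\int \varphi'\!\left(\tfrac{d}{r}\right)\sum_i \langle D u_i\,\nabla d,\, u_i\rangle\, dx.
\end{equation*}
\item \emph{Inner variation.} Take a one-parameter family $\Phi_t(x)=x+tY(x)$ with $Y(x)=\varphi(d(q,x)/r)\,X(x)$, where $X$ is constructed so that (a) $X$ is tangent to $\partial\Omega$ wherever it meets $\partial\Omega$ (so the vanishing trace is preserved and $\Phi_t$ produces admissible competitors), and (b) $X(x)=d(x,q)\nabla d(x,q)$ up to errors of size $\mathbf{A}_{\partial\Omega}\,d$. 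Such an $X$ can be obtained by first taking $d(x,q)\nabla d$ and then projecting onto $T\partial\Omega$ near $\partial\Omega$, as in \cite{DDHM}. Computing $\partial_t|_{t=0}\int|D(u\circ\Phi_t^{-1})|^2\,|\det D\Phi_t|=0$ and organising terms via the coarea formula yields the analogue of the first identity in Proposition 10.2, namely
\begin{equation*}
(m-2)\,D(q,r) \;=\; r\,D'(q,r) - 2\int \varphi'\!\left(\tfrac{d}{r}\right)\frac{d}{r}\sum_i |\partial_{\nabla d}u_i|^2\,dx \;+\; E_{\text{in}}(r),
\end{equation*}
with $|E_{\text{in}}(r)| \leq C\,\mathbf{A}_{\partial\Omega}\,\bigl(D(q,r)+r^{-1}H(q,r)\bigr)$, the error coming from $|\nabla d|^2-1$, $\Delta d-(m-1)/d$, and the tangential projection used to define $X$.
\end{itemize}

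Next I would compute $H'(q,r)$ directly from the definition by differentiating the defining integral in $r$, obtaining
\begin{equation*}
H'(q,r) = \frac{m-1}{r}\,H(q,r) + \frac{2}{r}\int \varphi'\!\left(\tfrac{d}{r}\right)\sum_i\langle\nabla_{\nabla d}u_i, u_i\rangle\,dx + E_{H}(r),
\end{equation*}
with $|E_H(r)|\le C\,\mathbf{A}_{\partial\Omega} H(q,r)/r$. Combining with the outer variation identity rewrites $H'(q,r)$ entirely in terms of $H$, $D$ and a curvature error. Plugging both identities into
\begin{equation*}
\frac{I'(q,r)}{I(q,r)} = \frac{1}{r} + \frac{D'(q,r)}{D(q,r)} - \frac{H'(q,r)}{H(q,r)},
\end{equation*}
the main terms assemble into $\tfrac{2}{H^2}\bigl(\int\varphi'(d/r)(d/r)|\partial_{\nabla d}u|^2\bigr)\bigl(\int\varphi'(d/r)|u|^2\bigr) - \tfrac{2}{H^2}\bigl(\int\varphi'(d/r)\sum_i\langle\partial_{\nabla d}u_i,u_i\rangle\bigr)^2$, which is non-negative by Cauchy--Schwarz (with respect to the measure $-\varphi'(d/r)\,dx$, positive on the annular region where $\varphi'\le 0$). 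All remaining terms are controlled by $C\mathbf{A}_{\partial\Omega}$.

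This produces a differential inequality of the form $(\log I(q,r))' \geq -C\,\mathbf{A}_{\partial\Omega}/r$ (or equivalently the monotonicity of $e^{C\mathbf{A}_{\partial\Omega}\log r}I(q,r)$), which integrated on $[s,r]$ gives the stated bound. As a consequence $\lim_{r\downarrow 0} I(q,r)$ exists, since $I$ is bounded below by $1$ (by the flat blow-up analysis already done in Lemma \ref{l:classificationof1homogeneous}) and almost-monotone. The main obstacle is the construction and bookkeeping of the vector field $X$ tangent to $\partial\Omega$, together with the distance function $d$: one must ensure that the tangential projection of the radial vector field produces errors bounded purely in terms of $\mathbf{A}_{\partial\Omega}$ and the natural quantities $D,H$, without accumulating factors of $1/d$ that would not integrate.
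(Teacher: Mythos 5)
The paper does not actually write out a proof of this theorem; Section~10 explicitly defers to the analogy with the flat half-space case (Theorem~\ref{t:monotonfrequency} and Proposition~\ref{Prop:Identities}) and with the techniques of \cite{DDHM}. Your sketch is a correct reconstruction of the intended route: outer and inner first variations with competitors that respect the vanishing trace on $\partial\Omega$, a Cauchy--Schwarz identity for the leading terms, and integration of the resulting differential inequality. The construction of a tangential deformation field and the bookkeeping of curvature errors is indeed the only real work.

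There is, however, one genuine gap, which your closing remark hints at but your intermediate estimates do not resolve. You claim $|E_{\text{in}}(r)| \leq C\mathbf{A}_{\partial\Omega}\bigl(D(q,r)+r^{-1}H(q,r)\bigr)$ and $|E_H(r)|\leq C\mathbf{A}_{\partial\Omega}H(q,r)/r$, which after division gives $(\log I)'(r) \geq -C\mathbf{A}_{\partial\Omega}/r$. Integrating yields $I(q,r) \geq (s/r)^{C\mathbf{A}_{\partial\Omega}}I(q,s)$. Since $\log(r/s) < r/s$ for $r>s$, this does imply the displayed inequality, but it does \emph{not} imply the existence of $\lim_{r\downarrow 0}I(q,r)$: the quantity $r^{C\mathbf{A}_{\partial\Omega}}I(q,r)$ is nondecreasing, yet this is perfectly compatible with $I(q,r)$ oscillating in a fixed band, say $[1,2]$, along a geometric sequence of scales as $r\to 0$ (each downward excursion costs $(\log I)' = -C\mathbf{A}_{\partial\Omega}/r$, which only constrains the \emph{ratio} of consecutive scales, not their absolute size). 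The lower bound $I\geq 1$, which you invoke from Lemma~\ref{l:classificationof1homogeneous}, does not rule this out; it is also itself obtained from the blowup analysis which presupposes the existence of the limit, so one should not rely on it here.

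The resolution, built into the choice of a ``good'' distance function $d$ in the sense of \cite{DDHM}, is that the curvature errors carry an extra power of $d$: one has $|\nabla d(x,q)|^2-1 = O\bigl(\mathbf{A}_{\partial\Omega}\,d(x,q)\bigr)$, $d\Delta d-(m-1)=O\bigl(\mathbf{A}_{\partial\Omega}\,d\bigr)$, and the discrepancy between $d\nabla d$ and its tangential projection onto $T\partial\Omega$ is $O\bigl(\mathbf{A}_{\partial\Omega}\,d^2\bigr)$, not merely $O(\mathbf{A}_{\partial\Omega}\,d)$. After integrating against $\varphi(d/r)$ over $\{d\leq r\}$, these produce errors of size $\mathbf{A}_{\partial\Omega}\,r\,D(q,r)$ and $\mathbf{A}_{\partial\Omega}\,H(q,r)$ in the two identities, i.e.\ no inverse power of $r$ survives. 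The differential inequality then reads $(\log I)'(r) \geq -C\mathbf{A}_{\partial\Omega}$, so $e^{C\mathbf{A}_{\partial\Omega}r}I(q,r)$ is monotone; since $I\geq 0$ and $e^{C\mathbf{A}_{\partial\Omega}r}\to 1$, both the stated inequality and the existence of $\lim_{r\downarrow 0}I(q,r)$ follow. You should make the $d$-scaling of the curvature errors explicit (rather than leaving it as a worry) before integrating.
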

In loose terms we get the following
\begin{theorem}
Assume that $\Omega$ is a $C^2$ domain and $u$ as in Assumption \ref{a:linearproblem}. 
\begin{itemize}
    \item For all $q \in \partial \Omega$ the frequency can be defined in a neighbourhood of $q$ and is almost monotone. This means, \begin{equation*}
        e^{C\mathbf{A}_{\partial \Omega}r}I(r)
    \end{equation*} is monotone in $r$ for  an appropriate constant $C$.
    \item The frequency at the limit is independent of the choice of a distance function and $\forall q \in \partial \Omega$, $I(q) \geq 1$. 
    \item Either $I(q)=1$ or $I(q) \geq 1+\alpha(Q,m,n)$ for a positive constant $\alpha(Q,m,n)$. If $I(q)=1$ then the blowups at $q$ are linear, of the form
   \begin{equation}
    f(x)=\sum_{i=1}^N Q_i\a{v_i(x\cdot \boldsymbol{\eta}_{\partial \Omega,q})}
    \end{equation}
    with $\boldsymbol{\eta}_{\partial \Omega,q}$ the normal to $\partial \Omega$ at $q$,
    and unique. The uniqueness is quantitative and guarantees the existence of a well-defined multivalued normal derivative at the boundary, which is Hölder continuous.
\end{itemize}
\end{theorem}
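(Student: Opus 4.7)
The plan is to treat the four bullet points in order, recycling the machinery already developed for the nonlinear problem. I would begin with the almost-monotonicity of $I(q,r)$: choosing a smoothened distance function $d$ as in \cite{DDHM} so that $|\nabla d|\equiv 1$ on $\partial\Omega$ and $\nabla d$ is the inward unit normal along $\partial\Omega$, the inner and outer variations that gave Proposition \ref{Prop:Identities} in the flat case extend with error terms proportional to $\mathbf{A}_{\partial\Omega}$; the combination yields the almost-monotonicity of $I$ stated in the preceding theorem of this section, and in particular the limit $I(q):=\lim_{r\to 0}I(q,r)$ exists.

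For the second bullet, independence from the choice of smoothened distance function follows because any two admissible $d_1,d_2$ agree to first order at $q$, so the rescaled ratios $rD(q,r)/H(q,r)$ have the same limit. The bound $I(q)\ge 1$ comes from a blow-up: after rotating so that $T_q\partial\Omega=\RR^{m-1}\times\{0\}$ and rescaling at scale $r_k\to 0$, the rescaled functions $u_{q,r_k}$ converge (by equiboundedness of Dirichlet energy from the almost-monotonicity, Hirsch's H\"older regularity \ref{t:holdercontinuitydir}, and Sobolev compactness) strongly in $W^{1,2}_{\rm loc}$ to an $I(q)$-homogeneous Dir-minimizer $f_\infty$ on $\RR_+^m$ with zero trace on $\RR^{m-1}\times\{0\}$; Lemma \ref{l:classificationof1homogeneous} then gives $I(q)\ge 1$, and identifies $f_\infty$ as linear precisely when $I(q)=1$.

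For the density jump and the quantitative uniqueness, the plan is to run the Section 5-9 program in the linear setting. I would define a linear strong excess (an $L^2$ distance from $u$ to an admissible linear map $L=\sum_i Q_i\a{v_i\,x_m}$ together with a Wasserstein-type correction analogous to Section 2.1), prove the Simon-type non-concentration inequality as in Section 7 (the curved boundary contributes an $\mathbf{A}_{\partial\Omega}$ error exactly as in Theorem \ref{t:Simon'sEstimates}), and carry out the blow-up/excess-decay argument. The key linear input is Lemma \ref{l:dirminmizersdecaylinearguy}, which already produces decay of $u$ to a linear map or to zero along a subsequence; the iteration/contradiction scheme of Theorem \ref{t:sexcessuniqueness} then yields a unique tangent linear map and a power rate of decay. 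The density jump follows by a compactness contradiction identical to the nonlinear one: a sequence of Dir-minimizers with frequencies $\alpha_k\downarrow 1$ would produce a limit of frequency exactly $1$, hence linear by the classification, and the quantitative uniqueness would force the members of the sequence themselves to have frequency $1$.

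Finally, the H\"older continuity of the multivalued normal derivative on the relatively open set $U:=\{q\in\partial\Omega:I(q)=1\}$ is an immediate corollary of the quantitative uniqueness, mirroring Corollary \ref{c:normal}: once the decay $\mathbb{E}(u,L_q,\bB_r(q))\lesssim r^\alpha$ is known uniformly in $q\in U$, a centre-change at scale $r=2|q_1-q_2|$ gives $\mathcal{G}(L_{q_1},L_{q_2})^2\lesssim |q_1-q_2|^\alpha$. The main obstacle I expect is the bookkeeping needed to port the reverse-excess/Lipschitz-approximation pipeline of Sections 6-7 to multivalued Dir-minimizers on the curved half-domain and to verify that the curvature of $\partial\Omega$ enters only through admissible $\mathbf{A}_{\partial\Omega}$ error terms; but since the ambient object here is already a multivalued function (there is no current to approximate and no ambient $\Sigma$), most technical terms collapse and the argument is strictly simpler than in the nonlinear case.
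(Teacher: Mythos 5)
Your proposal tracks the paper's own (deliberately sketchy) argument for this theorem closely: the almost-monotone frequency built from a smoothened distance function, the lower bound $I(q)\ge 1$ and the linearity of frequency-one blowups via Lemma \ref{l:classificationof1homogeneous}, the Simon-type non-concentration estimate, the excess-decay iteration driven by Lemma \ref{l:dirminmizersdecaylinearguy}, and the H\"older normal derivative as a centre-change corollary are precisely the five steps the paper lists. One place where you import more machinery than is needed: you propose augmenting the linear excess with a Wasserstein-type correction as in Section 2.1, but in the linear setting the paper uses only the plain $L^2$ excess $\mathbf{E}(u,L)=\int_{\bB_1}\mathcal{G}(u,L)^2$. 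The measure-theoretic excess exists to track multiplicities of a current that is not a priori graphical over the cone; a $W^{1,2}$ multi-valued map already is a graph, so once the $L^2$--$L^\infty$ height bound has decomposed $u$ into layers over the sheets of $L$ there is nothing left for a transport plan to compensate, and the reverse-excess step becomes trivial. Your density-jump paragraph is stated a bit loosely: to make the compactness contradiction work you should say explicitly that the contradiction sequence consists of homogeneous Dir-minimizers normalized in $W^{1,2}$, so that quantitative uniqueness identifies each member with its own linear blowup and hence forces frequency exactly one, or else forces the linear part to vanish and the homogeneity exponent to exceed $1+\alpha/2$. With those adjustments the argument is the same as the paper's.
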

The theorem above is a consequence of an excess decay type theorem for the linear problem.
More precisely the excess decay theorem is the following
\begin{theorem}
Given a constant $M$, assume that $\int_{\bB_1} |Du|^2 \leq M$. Further assume that $0 \in \partial \Omega$. There exists $\varepsilon>0$ and $\kappa>0$ such that if $L: \RR^{m} \rightarrow \mathcal{A}_{Q}(\RR^n)$ is linear with $L \res T_{0}(\partial \Omega)=Q\a{0}$ and
    \begin{equation*}
    \max\left\{\int_{\bB_1} \mathcal{G}(u(p),L(p))^2 dp,\kappa^{-1}\mathbf{A}_{\partial \Omega}\right\} <\varepsilon
    \end{equation*}
    then there exists a linear map $\mathbf{L}$ with $L \res T_0 (\partial \Omega)=Q\a{0}$ such that
    \begin{equation*}
     \max\left\{\frac{1}{r^{m+2}}\int_{\bB_r} \mathcal{G}(u(p),\mathbf{L}(p))^2 dp,\kappa^{-1}\mathbf{A}_{\partial \Omega}r\right\} \leq r^{\alpha} \max\left\{\int_{\bB_1} \mathcal{G}(u(p),L(p))^2 dp,\kappa^{-1}\mathbf{A}_{\partial \Omega}\right\}.
     \end{equation*}
\end{theorem}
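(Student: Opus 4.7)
The plan is to follow the architecture of Theorem \ref{t:sexcessuniqueness}: prove a one-step decay lemma by a blow-up/contradiction argument, then iterate to the power rate. Throughout, the key point is that the zero boundary data at every scale makes the frequency monotonicity of Section 8 fully available and reduces matters to Lemma \ref{l:dirminmizersdecaylinearguy}.

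\textbf{One-step decay.} First I would establish that for every $\theta\in(0,1/2)$ there exist $\varepsilon,\kappa>0$ and $\eta\in(0,1/2)$ so that under the hypotheses there is a linear $\mathbf{L}'$ with $\mathbf{L}'\res T_0(\partial\Omega)=Q\a{0}$, $\int_{\bB_1}|D\mathbf{L}'|^2\lesssim \int_{\bB_1}|Du|^2$, and
\[
\max\!\left\{\tfrac{1}{\eta^{m+2}}\int_{\bB_\eta}\!\mathcal{G}(u,\mathbf{L}')^2,\ \kappa^{-1}\mathbf{A}_{\partial\Omega}\eta\right\}\leq \theta\,\max\!\left\{\int_{\bB_1}\!\mathcal{G}(u,L)^2,\ \kappa^{-1}\mathbf{A}_{\partial\Omega}\right\}.
\]
Arguing by contradiction, take a sequence $(u_k,L_k,\Omega_k)$ violating the conclusion at every admissible $\mathbf{L}'$, with excess $E_k:=\max\{\int_{\bB_1}\mathcal{G}(u_k,L_k)^2,\kappa^{-1}\mathbf{A}_{\partial\Omega_k}\}\to 0$ and $E_k^{-1}\mathbf{A}_{\partial\Omega_k}\to 0$. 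After a rotation bringing $T_0(\partial\Omega_k)$ to $\{x_m=0\}$, rescale: $\bar u_k:=(u_k\ominus L_k)/\sqrt{E_k}$. The $\bar u_k$ have equibounded Dirichlet energy and their boundary traces vanish on $\partial\Omega_k\cap\bB_1$, which $C^1$-converge to $\{x_m=0\}\cap\bB_1$ faster than $\sqrt{E_k}$.

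\textbf{Identifying the limit.} I would show that up to subsequence $\bar u_k\rightharpoonup\bar u$ weakly in $W^{1,2}$ and strongly in $L^2(\bB_1^+)$, with $\bar u\in W^{1,2}(\bB_1^+,\mathcal{A}_Q(\RR^n))$ having zero boundary trace along $\{x_m=0\}$. Strong convergence up to the flat limit boundary uses a weighted $\dist(\cdot,\{x_m=0\})^{-\beta}$ estimate of the type proved in Section 7, applied in the linear setting (where the excess is replaced by the squared $L^2$-distance to $L_k$). An Almgren-type outer-variation on every compactly supported variation of $\bar u$ shows that $\bar u$ is Dir-minimizing on every smooth subdomain of $\bB_1^+$, and Theorem \ref{t:localdirminimizers} then upgrades this to Dir-minimality on $\bB_1^+$ itself with zero boundary value on $\{x_m=0\}$. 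Apply Lemma \ref{l:dirminmizersdecaylinearguy} to $\bar u$ along any sequence $r_j\downarrow 0$: one obtains linear $\tilde L_j$ with zero trace along $\{x_m=0\}$ and $r_j^{-(m+2)}\int_{\bB_{r_j}}\mathcal{G}(\bar u,\tilde L_j)^2\to 0$ in both cases (1) and (2) of that lemma. Choose $j$ large enough that this quantity is $\ll\theta$ and set $\eta:=r_j$. Rotating $\tilde L_j$ back and rescaling by $\sqrt{E_k}$ produces $\mathbf{L}'_k:=L_k\oplus\sqrt{E_k}\,\tilde L_j$, which is admissible for $\partial\Omega_k$ modulo an error of order $\mathbf{A}_{\partial\Omega_k}\ll \sqrt{E_k}$ (this error is absorbed by the $\kappa^{-1}\mathbf{A}_{\partial\Omega_k}\eta$ slot, choosing $\kappa$ small). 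This contradicts the failure of the conclusion for $(u_k,L_k,\Omega_k)$.

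\textbf{Iteration.} Once the one-step lemma is in hand, I would iterate at geometric scales $\eta^k$ exactly as in the proof of Theorem \ref{t:sexcessuniqueness}. Denoting by $\mathbf{L}_k$ the linear map produced at the $k$-th step, the one-step lemma gives $|\mathbf{L}_{k+1}-\mathbf{L}_k|_{\bB_{\eta^k}}\lesssim \theta^{k/2}$, which is a Cauchy sequence whose limit $\mathbf{L}$ is the tangent map at $0$. The curvature error $\kappa^{-1}\mathbf{A}_{\partial\Omega}\eta^k$ shrinks linearly in the scale, faster than $\eta^{k\alpha}$ for $\alpha=\log(1/\theta)/\log(1/\eta)<1$, and is absorbed into the maximum on the right-hand side. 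Interpolating between dyadic scales with the reverse bound from the frequency identities in Proposition \ref{Prop:Identities} yields the full range $r\in(0,1)$.

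\textbf{Main obstacle.} The principal difficulty is the interaction between the curvature of $\partial\Omega$ and the linear comparison map at each scale: a map which vanishes on $T_0(\partial\Omega)$ is only approximately compatible with the actual boundary trace condition, with error $\mathbf{A}_{\partial\Omega}r$ at scale $r$. The maximum with $\kappa^{-1}\mathbf{A}_{\partial\Omega}r$ in the statement is precisely what lets the iteration close, but choosing $\varepsilon,\kappa,\theta,\eta$ so that both the excess and the curvature terms decay geometrically and simultaneously requires a careful balance. A secondary technical point is verifying that the normalized blow-ups $\bar u_k$ do satisfy a quantitative enough local almost-minimality (with errors of order $o(1)$ in Dirichlet energy) to pass strongly to a global Dir-minimizer across the flat limit boundary; this is where one uses the linear analogues of the Simon-type estimates of Section 7, which follow verbatim from the frequency-monotonicity argument of Theorem \ref{t:monotonfrequency} since $u$ has identically zero trace.
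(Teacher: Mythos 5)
Your overall blueprint (one-step decay by compactness, then geometric iteration) is the right shape, and the endgame via Lemma~\ref{l:dirminmizersdecaylinearguy} and Theorem~\ref{t:localdirminimizers} is exactly what the paper uses. But there is a genuine gap in the middle, and it is precisely the part the paper flags in its structural discussion: the treatment of multi-sheeted $L$.

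The rescaling $\bar u_k := (u_k \ominus L_k)/\sqrt{E_k}$ is not defined when $L_k$ has more than one distinct slope $v_i$. The operation $\ominus$ in $\mathcal{A}_Q$ only makes sense against a single-valued map; for a genuinely multi-valued linear $L_k = \sum_i Q_i \a{v_i x_m}$ there is no canonical subtraction, and hence no single $\mathcal{A}_Q$-valued normalized sequence to pass to a limit. What the paper does instead (and what your proposal skips) is: impose an angle condition $\int \mathcal{G}(u,L)^2 \leq \varepsilon_N \alpha(L)^2$ for an $N$-sheeted comparison map, invoke the $L^2$--$L^\infty$ height bound to split $u = \sum_i (u_i + v_i x_m)$ on $\bB_2^+ \setminus B_{1/32}(V)$ with $u_i \in W^{1,2}(\cdot,\mathcal{A}_{Q_i})$, run the Whitney decomposition and Simon non-concentration estimates sheet-by-sheet (Section~7), and pass to a blowup $\bar u_i$ for each $i$ separately. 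Only then does Lemma~\ref{l:dirminmizersdecaylinearguy} apply per sheet, and only then is the recombination $\mathbf{L}'_k = L_k \oplus \sqrt{E_k}\,\tilde L_j$ meaningful, because the sheet-pairing is provided by the decomposition.

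This in turn forces you to prove a \emph{family} of one-step decay lemmas indexed by the number $N$ of distinct sheets (analogous to Lemma~\ref{lem:saexcessdecay}) together with a pruning step (analogous to Proposition~\ref{prop:anglecases}) to reduce to the regime where $\int \mathcal{G}(u,L)^2 \ll \alpha(L)^2$, and during the iteration the number of sheets may increase, which changes which member of the family you apply (this is the case structure in the proof of Theorem~\ref{t:sexcessuniqueness}, where $N_l$ is tracked and non-decreasing). Without the angle hypothesis and the decomposition, the blowup step as you wrote it fails whenever $\alpha(L_k)^2$ is comparable to or smaller than the excess, which is precisely the hard case. Your iteration paragraph is then fine in outline, but it inherits the same omission: you need to handle the scales at which the number of sheets jumps and re-verify the angle condition. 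The rest (strong $L^2$ convergence via weighted estimates, local-to-global Dir-minimality on the half space, the $\max$ with the curvature term absorbing $\mathbf{A}_{\partial\Omega} r$) is correct and matches the paper's intent.
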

We are not going to give a detailed proof of these facts, since they are the direct analogue of the existing results and the proofs are in fact even simpler. 

We do a brief discussion of the structure of the proof in the particular case of the linear problem on the half-space and comment on some of the simplifications that occur in this setting.
\begin{enumerate}
\item
As we observed already, the lower bound of the frequency by $1$ gives monotonicity of the normalized Dirichlet energy
\begin{equation*}
\frac{1}{r^m}\int_{\bB_r(x)^+}|Du|^2.
\end{equation*}
This plays the role of the monotonicity formula for the area. If we considered a more general domain instead, the same monotonicity formula holds up to an error.
\item In this setting we do also need to have a family of excess decay theorems depending on the number of "sheets" of $L$, where with the number of sheets of $L$ we mean $N$ such that
\begin{equation*}
    L=\sum_{i=1}^N Q_i \a{x_nv_i}
\end{equation*} where $v_i \in \RR^n$ are distinct and $Q_i$ are positive integers with $\sum Q_i=Q.$
\item 
There is of course an $L^2-L^{\infty}$ height bound in this setting as in \cite{de2023fineIII} (in fact the one we used in the nonlinear setting is a consequence of the one for the linear problem). Thus if $L=\sum_{i=1}^N Q_i \a{x_nv_i}$ there exists $\varepsilon_k$ such that if
\begin{equation*}
\int_{\bB_4}\mathcal{G}(u(p),L(p))^2 dp + \mathbf{A}_{\partial \Omega}<\varepsilon_k \alpha(L)^2
\end{equation*}
then
\begin{equation*}
\max_{p \in \bB_{2}^+ \setminus B_{1/32}(V)} \mathcal{G}(u(p),L(p)) << \alpha(L).
\end{equation*}
This implies that the function $u$ decomposes in $\bB_{2}^+ \setminus B_{1/32}(V)$ as a sum $u=\sum_{i=1}^k \left(u_i+x_nv_i \right)$ with $u_i \in W^{1,2}(\bB_{2} \setminus B_{1/32}(V),\mathcal{A}_{Q_i}(\RR^n))$. 

The $u_i$ are Dir-minimizing and "parameterized" on top of $\a{x_nv_i}$. We perform the same Whitney decomposition into regions satisfying the same estimates established in Section 6.

\item The Simon's non-concentration estimates still hold in this setting and allows us to get energy convergence in a contradiction limit to the excess decay lemma. We consider a contradiction sequence to the excess decay theorem and obtain, Dir-minimizing limits $\overline{u}_i \in W^{1,2}(\bB_1^+,\mathcal{A}_{Q_i})$ with zero boundary value. 
\item We can conclude by, if necessary, adjusting $L$ at suitable scales (as in Lemma  \ref{l:dirminmizersdecaylinearguy}) so $Q_i\a{x_nv_i}$ is replaced by the linear function
$Q_i\a{x_nv_i}+\mathbf{E}(u_k,L)^{1/2}\overline{L}_{i,j}$ where we denote $\mathbf{E}(u,L):=\int_{\bB_1} \mathcal{G}(u(p),L(p))^2dp.$ 
\end{enumerate}
The strategy described above was carried out for the interior in \cite{krummel2017fine} to prove $\HH^{m-2}$ almost everywhere uniqueness of the blowup. The advantage we have in this setting is that in the same way that for the nonlinear problem we obtain a no holes condition since $\Theta(T,q) \geq Q/2$ for every $q \in \Gamma$, we have that $I(q) \geq 1$ for every $q \in \partial \Omega$.

\end{section}
\bibliographystyle{alpha}

\addtocontents{toc}{\protect\enlargethispage{\baselineskip}}
\bibliography{UniquenessArxiv}
\end{document}